\newcommand{\Exp}{\mathbf{E}}
\newcommand{\Prob}{\mathbf{P}}
\newcommand{\bk}{\mathbf{k}}
\renewcommand{\R}{\mathbb{R}}
\renewcommand{\N}{\mathbb{N}}
\renewcommand{\L}{\mathbb{L}}
\renewcommand{\Q}{\mathbb{Q}}
\newcommand{\T}{\mathbb{T}}
\renewcommand{\Z}{\mathbb{Z}}
\renewcommand{\E}{\mathbb{E}}
\renewcommand{\P}{\mathbb{P}}
\newcommand{\cA}{\mathcal{A}}
\newcommand{\cB}{\mathcal{B}}
\newcommand{\cC}{\mathcal{C}}
\newcommand{\cD}{\mathcal{D}}
\newcommand{\cE}{\mathcal{E}}
\newcommand{\cF}{\mathcal{F}}
\newcommand{\cG}{\mathcal{G}}
\newcommand{\cH}{\mathcal{H}}
\newcommand{\cI}{\mathcal{I}}
\newcommand{\cJ}{\mathcal{J}}
\newcommand{\cK}{\mathcal{K}}
\newcommand{\cL}{\mathcal{L}}
\newcommand{\cM}{\mathcal{M}}
\newcommand{\cN}{\mathcal{N}}
\newcommand{\cO}{\mathcal{O}}
\newcommand{\cS}{\mathcal{S}}
\newcommand{\cg}{\mathcal g}
\newcommand{\ch}{\mathcal h}
\newcommand{\dd}{\mathrm{d}}
\newcommand{\dis}{\mathrm{dis}}
\newcommand{\Id}{\mathrm{Id}}
\newcommand{\loc}{\mathrm{loc}}
\newcommand{\SH}{\mathscr{H}}
\newcommand{\bd}{\mathbf{d}}
\newcommand{\fe}{\mathfrak{e}}
\newcommand{\fw}{\mathfrak{w}}
\newcommand{\fm}{\mathfrak{m}^{\tau,\delta}}
\newcommand{\tfm}{\tilde{\mathfrak{m}}^{\tau,\delta}}
\def\one{\mathrm{(I)}}
\def\two{\mathrm{(II)}}
\def\three{\mathrm{(III)}}
\newcommand{\vertiii}[1]{{\vert\kern-0.25ex\vert\kern-0.25ex\vert #1 
    \vert\kern-0.25ex\vert\kern-0.25ex\vert}}
\newcommand{\1}{\mathds{1}}
\newcommand{\di}{\mathrm{diag}}
\newcommand{\oD}{\mathrm{off}}
\newcommand{\oDi}{\mathrm{off_1}}
\newcommand{\ooDi}{\mathrm{off_2}}
\colorlet{darkblue}{blue!90!black}
\colorlet{darkred}{red!90!black}
\colorlet{darkgreen}{green!50!black}
\colorlet{darkyellow}{yellow!90!black}
\newcommand{\fock}[3]{
    \def\order{#2}
    \ifx\order\empty
        \Gamma_{#1} L^2_{#3}
    \else
        \Gamma_{#1} H^{#2}_{#3}
    \fi
}
\newcommand{\core}{\boldsymbol{\Gamma}\mathbf{H}_\tau}
\newcommand{\fockcore}[3]{
 \def\order{#2}
    \ifx\order\empty
        \boldsymbol{\Gamma}_{#1} \mathbf{L}^2_{#3}
    \else
        \boldsymbol{\Gamma}_{#1} \mathbf{H}^{#2}_{#3}
    \fi
}
\newcommand{\gen}{\cL^\tau}
\newcommand{\gensy}{\cS^\tau}
\newcommand{\gensyx}{\cL_0^{\fe_1}}
\newcommand{\gena}{\cA^{\tau}}
\newcommand{\genap}{\cA^{\tau}_+}
\newcommand{\genam}{\cA^{\tau}_-}
\newcommand{\genash}{\cA^{\sharp, \tau}}
\newcommand{\genafl}{\cA^{\flat, \tau}}
\newcommand{\genapsh}{\cA^{\sharp, \tau}_+}
\newcommand{\genapfl}{\cA^{\flat, \tau}_+}
\newcommand{\genamsh}{\cA^{\sharp, \tau}_-}
\newcommand{\genamfl}{\cA^{\flat, \tau}_-}
\newcommand{\geneff}{\cL^{\mathrm{eff}}}
\newcommand{\nonlinp}{\cK^{+, \tau}}
\newcommand{\nonlinm}{\cK^{-, \tau}}
\newcommand{\eff}{\mathrm{eff}}
\newcommand{\p}{\mathrm{p}}
\newcommand{\e}{\mathrm{e}}
\newcommand{\we}{\mathrm{w}}
\def\DeclareSymbol#1#2#3{\expandafter\gdef\csname MH@symb@#1\endcsname{\tikzsetnextfilename{symbol#1}\tikz[baseline=#2,scale=0.15,draw=symbols]{#3}}\expandafter\gdef\csname MH@symb@#1s\endcsname{\scalebox{0.7}{\tikzsetnextfilename{symbol#1s}\tikz[baseline=#2,scale=0.15,draw=symbols]{#3}}}}
\def\<#1>{\csname MH@symb@#1\endcsname}
\definecolor{Red}{rgb}{1,0,0}
\definecolor{Blue}{rgb}{0,0,1}
\definecolor{Olive}{rgb}{0.41,0.55,0.13}
\definecolor{Yarok}{rgb}{0,0.5,0}
\definecolor{Green}{rgb}{0,1,0}
\definecolor{MGreen}{rgb}{0,0.8,0}
\definecolor{DGreen}{rgb}{0,0.65,0}
\definecolor{Yellow}{rgb}{1,1,0}
\definecolor{Cyan}{rgb}{0,1,1}
\definecolor{Magenta}{rgb}{1,0,1}
\definecolor{Orange}{rgb}{1,.5,0}
\definecolor{Violet}{rgb}{.5,0,.5}
\definecolor{Purple}{rgb}{.75,0,.25}
\definecolor{Brown}{rgb}{.75,.5,.25}
\definecolor{Grey}{rgb}{.7,.7,.7}
\definecolor{Black}{rgb}{0,0,0}
\definecolor{dr}{rgb}{0.8,0,0}
\definecolor{db}{rgb}{0,0,0.8}
\newcounter{list_size}
\newcommand{\connect}[4][dr]{
	\edef\myleftx{10000}
	\edef\myrightx{-10000}
	\edef\mycentery{0}
	\setcounter{list_size}{0}
	\foreach \j in {#2}{\stepcounter{list_size}};
	\edef\n{\arabic{list_size}}
	\ifthenelse{\n=1}
	{
		\draw (#3#2) -- (#4#2);
	}
	{
		\foreach \j in {#2} {
			\path (#3\j); \pgfgetlastxy{\XCoord}{\YCoord};
			\pgfmathsetmacro{\lx}{min(\myleftx,\XCoord)};
			\pgfmathsetmacro{\rx}{max(\myrightx,\XCoord)};
			\pgfmathsetmacro{\cy}{\mycentery+\YCoord/(2*\n)};
			\path (#4\j); \pgfgetlastxy{\XCoord}{\YCoord};
			\pgfmathsetmacro{\cy}{\cy+\YCoord/(2*\n)};
			\global\let\myleftx=\lx
			\global\let\myrightx=\rx
			\global\let\mycentery=\cy
		}
		\foreach \j in {#2} {
			\draw (#3\j) -- (#4\j);
		}
		\draw[thick,#1] (\myleftx pt,\mycentery pt) -- (\myrightx pt,\mycentery pt);
		\foreach \j in {#2} {
			\path (#3\j); \pgfgetlastxy{\XCoord}{\YCoord};
			\node at (\XCoord,\mycentery pt) [simple,fill=#1] {};
		}
	}
}
\newcommand{\eqlaw}{\stackrel{\mbox{\tiny law}}{=}}
\def\restriction#1#2{\mathchoice
              {\setbox1\hbox{${\displaystyle #1}_{\scriptstyle #2}$}
              \restrictionaux{#1}{#2}}
              {\setbox1\hbox{${\textstyle #1}_{\scriptstyle #2}$}
              \restrictionaux{#1}{#2}}
              {\setbox1\hbox{${\scriptstyle #1}_{\scriptscriptstyle #2}$}
              \restrictionaux{#1}{#2}}
              {\setbox1\hbox{${\scriptscriptstyle #1}_{\scriptscriptstyle #2}$}
              \restrictionaux{#1}{#2}}}
\def\restrictionaux#1#2{{#1\,\smash{\vrule height .8\ht1 depth .85\dp1}}_{\,#2}}
\let\PHI=\Phi
\let\Phi=\phi
\let\phi=\varphi
\let\epsilon=\varepsilon
\let\dis=\displaystyle
\title{Superdiffusive Central Limit Theorem for the Stochastic Burgers Equation at the critical dimension}
\begin{document}

\maketitle

\vspace{-2cm}

\noindent{\large \bf Giuseppe Cannizzaro$^1$,  Quentin Moulard$^2$, Fabio Toninelli$^2$}

\noindent{\small $^1$University of Warwick, UK\\
    $^2$Technical University of Vienna, Austria\\}
\noindent\email{giuseppe.cannizzaro@warwick.ac.uk,
quentin.moulard@tuwien.ac.at, \\
fabio.toninelli@tuwien.ac.at}
\newline

\bigskip\noindent
\begin{abstract}
  The Stochastic Burgers Equation (SBE) is a
  singular, non-linear Stochastic Partial Differential Equation (SPDE)
  that describes, on mesoscopic scales, the fluctuations of stochastic driven diffusive
  systems with a conserved scalar quantity. In space dimension $d=2$, the SBE is critical, being formally scale invariant under diffusive scaling. As such, it falls
  outside of the domain of applicability of the theories of Regularity
  Structures and paracontrolled calculus. 
  In apparent contrast with the formal scale invariance, we fully prove the conjecture first appeared in 
  [H. van Beijeren, R. Kutner, \& H. Spohn, 
  Phys. Rev. Lett., 1986]
  according to which the $2d$-SBE is
  logarithmically superdiffusive, i.e. its diffusion coefficient diverges like $(\log t)^{2/3}$ as $t\to\infty$, 
  thus removing subleading diverging multiplicative corrections in [D. De Gaspari \& L. Haunschmid-Sibitz, 
Electron. J. Probab., 2024] and in [H.-T. Yau, 
 Ann.
of Math., 2004] for $2d$-ASEP. We precisely identify the constant prefactor of the logarithm and show it 
is proportional to $\lambda^{4/3}$, for $\lambda>0$ the coupling constant, which, intriguingly, 
turns out to be exactly the same as for the \emph{one-dimensional} Stochastic Burgers/KPZ equation. 
More importantly, we prove that, under super-diffusive space-time rescaling, 
the SBE has an explicit Gaussian fixed point 
in the Renormalization Group sense, by deriving 
a superdiffusive central limit-type theorem for its solution. 

This is the first scaling limit result for a critical singular SPDE, beyond the weak coupling regime, 
and is obtained via a refined control, on all length-scales, 
of the resolvent of the generator of the SBE. We believe our methods 
are well-suited to study  other out-of-equilibrium driven diffusive systems at the critical dimension, 
such as $2d$-ASEP, which, we conjecture, have the same large-scale Fixed Point as SBE. 



\end{abstract}

\bigskip

\noindent{\it Key words and phrases.}
Stochastic Partial Differential equations, critical dimension, Stochastic Burgers equation, super-diffusion,
diffusion matrix, Asymmetric Simple Exclusion Process

\setcounter{tocdepth}{3}       
\tableofcontents

\section{Introduction}

In  recent years, the interest in singular Stochastic Partial
Differential Equations (SPDEs) has grown dramatically and the 
field has witnessed spectacular progress. Virtually any \emph{subcritical equation}, 
i.e.  whose non-linearity becomes weaker when \emph{zooming in at small scales}, 
can be given a rigorous path-wise meaning via  
the theory of Regularity Structures~\cite{Hai}, paracontrolled calculus~\cite{Para}, 
or the Flow approach~\cite{Kuppi, Duch}\footnote{Let us also mention the energy solution approach of~\cite{GJfirst,GPunique,GPP} which is though not path-wise and needs the explicit knowledge of the invariant measure, but is less technically demanding and is better suited to determine scaling-limits of interacting particle systems.}. Notable examples include the dynamical 
$\PHI^4_d$ model for $d<4$~\cite{Hai}, the Yang-Mills-Higgs equation in $d=2,3$~\cite{YM2d,YM} 
and the KPZ equation~\cite{HaiKPZ, GPKPZ}, just to mention a few. 
On the other extreme, the understanding of {\it supercritical equations}, 
i.e. whose non-linearity becomes weaker when \emph{zooming out at large scales}, 
is much less systematic but some progress has been recently made. 
The large-scale behaviour for the (non-)linear multiplicative stochastic heat equation~\cite{GRZ, GL, GHL}, 
the KPZ equation~\cite{CCM2,dunlap2020,CNN, LZ} and the 
Stochastic Burgers equation~\cite{CGT}, all in dimension $d\geq 3$, 
has been identified and convergence 
to the linear Stochastic Heat Equation (SHE) under diffusive scaling, proved. 

In contrast, \emph{critical SPDEs}, i.e. those which are formally scale-invariant, 
meaning that the non-linearity is left unchanged when {\it zooming either in or out}, 
are way less understood. 
Paradigmatic examples in dimension $d=2$ are the $2d$ KPZ equation, the Anisotropic KPZ equation 
and the $2d$ Stochastic Burgers Equation (SBE). While, in view of scale-invariance, naive 
scaling arguments fail to predict whether the effect of
non-linearity is relevant or not, non-rigorous
Renormalization Group techniques suggest that at large scales
their solution approaches a ``fixed point'', whose nature and scaling
exponents depend crucially on the symmetries of the equation at hand. 
For instance, for AKPZ and SBE,~\cite{W91} and~\cite{van1985excess} respectively 
predict that the diffusive scaling exponents should be modified by logarithmic
corrections but a Gaussian fixed point is still expected, while for the $2d$
KPZ equation the fixed point should be non-Gaussian~\cite{barabasi1995fractal} 
(and the scaling exponents are not known).  
Up to now, however, we are not aware of any singular, critical SPDE 
whose large-scale behaviour has been rigorously identified, 
except in the \emph{weak coupling regime}, where  the strength of the non-linearity
is artificially tuned and is forced to vanish as the scale of the system grows (see Section
\ref{sec:previous} for a discussion). 

In this work, we obtain the\emph{ first full scaling limit result} for a singular critical SPDE
 \emph{beyond} the weak coupling regime: we show that the $2d$ SBE 
 (with coupling constant of order $1$) 
converges  \emph{under logarithmically superdiffusive scaling}, to a 
linear Stochastic Heat Equation with explicit renormalised coefficients.  
\newline 

Before precisely stating our theorems, let us introduce the equation of interest 
and put it into context. In any dimension $d\geq 1$, 
the SBE  is formally defined on $\R_+ \times \R^d$ by
\begin{equ} \label{e:FormalSPDE}
\partial_t u = \frac{1}{2} \Delta u + \lambda \, (\mathfrak w \cdot \nabla) \, u^2 + \nabla \cdot \vec{\xi} \, ,
\end{equ}
where $\vec{\xi} = (\xi_1,\dots, \xi_d)$ is a d-dimensional vector-valued space-time white noise, 
and $\lambda > 0$ is the so-called \emph{coupling constant}. Here, $\fw$ is 
a unit vector in $\mathbb R^d$, but, by rotational invariance, the specific 
choice of $\fw$ is irrelevant.

The SBE was introduced in \cite{van1985excess} as an effective continuous model 
for the mesoscopic
fluctuations of driven diffusive systems with a single conserved
scalar quantity, such as the Asymmetric Simple Exclusion Process (ASEP),
the Zero-Range Process with asymmetric rates and other driven,
conservative, interacting particle systems. In fact, the three terms
at the right-hand side of \eqref{e:FormalSPDE} represent
respectively a diffusive term, the non-linear effect of the driving
field acting in direction $\fw$, and a microscopic noise of
conservative type. On the basis of the so-called mode-coupling
theory, \cite{van1985excess} predicts the large-scale behaviour of SBE in any dimension $d\geq 1$: 
in the subcritical dimension $d=1$, the authors conjecture strong superdiffusion
with a $t^{1/3}$ divergence of the
diffusion coefficient for large time $t$; in the critical dimension $d=2$, 
logarithmic enhancement of diffusion of order $(\log t)^{2/3}$; 
and, in the super-critical dimensions $d\geq 3$, classical 
diffusive behaviour. Let us point out that in $d=1$, the SBE is nothing but the 
space derivative of the KPZ equation which, by now, is mathematically well-understood (see~\cite{QS}, for a review): 
not only the $t^{1/3}$ behaviour was confirmed~\cite{BQS}, but the solution was shown 
to have a non-Gaussian and universal fixed point (the ``KPZ fixed
point'' \cite{KPZFP}), that also describes the large-scale behaviour of a large
class of one-dimensional driven diffusive processes such as ASEP~\cite{QS1} and
directed polymers in random environment~\cite{Virag}. 
In dimension $d\ge3$, the series of works \cite{LY, CLO,LOV} has
confirmed the conjectured normal diffusion of ASEP and the convergence
of the particle density fluctuations to a linear Stochastic Heat Equation at large
scales. A counterpart on the SPDE side is the recent \cite{CGT}, where
the same result is obtained for the SBE in $d\ge3$.
As for the critical dimension $d=2$, the breakthrough work \cite{Yau}
proved $(\log t)^{2/3}$ diffusivity for ASEP, while in~\cite{de2024log} 
the analogous was shown for the SBE. Let us stress though 
that in both cases, the upper and lower bounds on the diffusivity 
feature {\it diverging subleading corrections} and their statement is phrased 
in the {\it Tauberian sense}, i.e. in terms of Laplace transform.  
\medskip

The problem when analysing the SPDE \eqref{e:FormalSPDE} is that it is 
severely ill-defined due to the presence 
of the non-linearity. Since dimension $d=2$ is critical, it falls outside of the scope 
of any of the path-wise theories mentioned above which successfully addressed 
subcritical equations and, 
in fact, a local solution theory is not even 
expected.\footnote{In a direction orthogonal to ours, the noteworthy work~\cite{GPP} determines a local solution theory for a scaling-critical SPDE, the fractional SBE in $d=1$ but their approach does not apply to $d=2$ and, contrary to ours, their focus in on small-scales.} 
Our main results are the following. First, we determine the
\emph{sharp large-time asymptotic behaviour of the diffusion matrix}, 
including the explicit constant prefactor, thus  
significantly improving over both~\cite{Yau,de2024log} and 
fully addressing the long-standing conjecture of~\cite{van1985excess}. 
Second, we move well beyond that and obtain the (Gaussian) \emph{large-scale fixed point of the regularised SBE}, when space-time is rescaled in a
suitable, logarithmically superdiffusive way. 
\medskip

In addition to being
the first such result for critical SPDEs, our work opens intriguing
perspectives for other driven diffusive systems, including  interacting particle systems, as we will discuss more in detail at the end of Section \ref{sec:previous}.
In a broader perspective, this paper can be seen 
as an out-of-equilibrium counterpart of the celebrated work \cite{celebrated} 
that proves that the large-scale  limit of the Ising and $\PHI^4_d$ equilibrium measures, 
in the critical dimension $d=4$, is Gaussian, despite the presence of logarithmic corrections 
to the mean field critical exponents.

\subsection{The main results: asymptotic superdiffusion and  large-scale limit}

To give a meaning to the formal equation \eqref{e:FormalSPDE}, 
we regularise it with a smooth compactly supported, non-negative, even-symmetric
function $\rho$
whose mass equals $1$. Then, we consider 
\begin{equ} \label{e:SPDE1Strong}
\partial_t u = \frac{1}{2} \Delta u + \lambda \, \cN^1(u) + \nabla \cdot \vec{\xi}^{\,1} \, ,
\end{equ}
where $u=u(t, x)$, for $(t,x)\in\R_+\times\R^2$, 
$\vec{\xi}^{\,1} \eqdef \rho \ast \vec{\xi}\eqdef(\rho\ast \xi_1, \rho\ast\xi_2)$, and
\begin{equ}
  [e:NonLin1]
  \cN^1(f)\eqdef (\fw \cdot \nabla) \, \rho^{\ast 2} \ast f^2 \, .
\end{equ}
Note that, thanks to the convolution(s), the noise $\vec{\xi}^1$ is smooth 
and, as we will see in Proposition~\ref{p:Global}, the dynamics is globally well-posed 
and the solution $u$ smooth in space. Moreover, it is a Markov process 
(with respect to a suitable filtration) and stationary. 
Indeed, a fundamental property of~\eqref{e:FormalSPDE} is that, formally, it admits the law of a
spatial white noise $\eta$ on $\R^2$, denoted by $\mathbb P$, as a
stationary measure, which is the continuum analog of the
stationarity of Bernoulli i.i.d. measures for ASEP.
The regularisation in~\eqref{e:SPDE1Strong} has been chosen similarly 
to~\cite{FQ} (which is, however,  in $d=1$), in such a way that stationarity is 
preserved, which means that the law of the mollified space white noise 
$\eta^1 \eqdef \rho \ast \eta$, denoted by 
$\mathbb P^\rho$, is stationary for the solution to~\eqref{e:SPDE1Strong}. 
%
%

In order to study the large-scale behaviour of $u$, a convenient tool 
is the so-called diffusion matrix, which, for $t \geq 0$, is the $2 \times 2$ matrix whose entries are given by  
\begin{equ} \label{e:FormalDiffusivity}
D_{i,j}(t) = \frac{1}{ t} \int_{\R^2} x_i \, x_j \, \Big[S(t,x)-S(0,x)\Big] \, \mathrm{d}x \, ,
\quad S(t,x) \eqdef \mathbf{E}\big(u(0,0) \, u(t,x)\big) \,,
\end{equ}
with $\mathbf E$ denoting the expectation with respect to the product measure of $\eta^1$ 
and the space-time noise $\vec{\xi}^1$, and $u$ being the stationary solution of~\eqref{e:SPDE1Strong} 
(i.e. $u_0$ has the same law as $\eta^1$). 
Intuitively, if $e \in \R^2$ is an unit vector, one can interpret $\langle D(t) \, e, e \rangle$ as the ratio
$(L_e(t))^2/t$, with $L_e(t)$ the time-dependent
correlation length in the direction $e$ at time $t$, defined as the distance beyond which the correlation
$S(t,x)$ is negligible. While it is useful to keep this interpretation in mind, we will not try to make this connection rigorous. 

The integral defining $D(t)$ 
is convergent because $x \mapsto S(t,x)$ decays fast enough as $|x| \to +\infty$, 
as is proven in Section \ref{sec:preliminaries}. Further, upon taking $\lambda=0$ in~\eqref{e:SPDE1Strong}, 
which reduces the SBE to the linear 
stochastic heat equation, one has 
\begin{equ} 
  S(t,x)|_{\lambda=0} = \Big(\rho^{\ast 2} \ast \mathcal P_t\Big)
  (x)
\end{equ}
with $\mathcal P_t$ the usual heat semigroup,
and therefore
\begin{equ} 
D(t)|_{\lambda=0} =
\begin{pmatrix}
1 & 0 \\
0 & 1
\end{pmatrix} \, .
\end{equ}
In particular, if for large $t$ the diffusion matrix of a given system has $O(1)$ entries, 
then its large-scale behaviour is said to be diffusive, while if it grows with $t$ it is {\it superdiffusive} 
(and if it decreases, subdiffusive).  
Even though the formal scale invariance of
\eqref{e:FormalSPDE} could at first sight suggest the former, this turns out not 
to be the case. Our first main theorem 
determines the precise asymptotic behaviour of $D(t)$ as $t\to\infty$. 

\begin{theorem} \label{th:Diffusivity}
  Let $\lambda > 0$ and, without loss of generality, take $\fw=\mathfrak e_1$, where $(\fe_1, \fe_2)$ stands for the canonical basis of $\R^2$.
  Then as $t \to +\infty$ it holds
\begin{equ} \label{e:Diffusivity}
D(t) = \begin{pmatrix}
C_\eff(\lambda) \, (\log t)^{2/3} \, \big(1 + o(1)\big) & 0 \\
0 & 1
\end{pmatrix} \, ,
\end{equ}
where the constant $C_\eff(\lambda)$ is explicit and given by
\begin{equ}[e:EffConstant]
C_\eff(\lambda) \eqdef \Big(\dis \frac{3}{2 \pi}\Big)^{2/3} \, \lambda^{4/3}\,.
\end{equ}
\end{theorem}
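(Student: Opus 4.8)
\emph{Proof strategy.}\ The plan is to relate $D(t)$ to the resolvent of the generator of \eqref{e:SPDE1Strong} through a Green--Kubo formula, to analyse that resolvent along the Wick chaos expansion by a Renormalization-Group-type recursion, and to conclude with a Tauberian inversion. We first record that, since the Burgers drift is a total derivative in direction $\fw=\fe_1$ and $\rho$ is even, $S(t,\cdot)$ is even in $x_2$ and the spreading transverse to $\fe_1$ is purely diffusive: multiplying \eqref{e:SPDE1Strong} by $x_ix_j$, integrating, and using $\int S(t,x)\,\dd x=\int S(0,x)\,\dd x$, one gets $\tfrac{\dd}{\dd t}\int x_2^2\,S(t,x)\,\dd x=\int S(t,x)\,\dd x$ and $\tfrac{\dd}{\dd t}\int x_1x_2\,S(t,x)\,\dd x=0$, hence $D_{2,2}(t)\equiv1$ and $D_{1,2}(t)\equiv D_{2,1}(t)\equiv0$, so only $D_{1,1}$ is nontrivial. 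The classical Green--Kubo manipulation (to be carried out in Section~\ref{sec:preliminaries}) then gives, up to a normalisation absorbed into $\ff$,
\begin{equ}
D_{1,1}(t)\;=\;1\;+\;\frac{1}{t}\int_0^t\!\!\int_0^s\fockprod{\ff}{e^{r\cL}\ff}\,\dd r\,\dd s\,,
\end{equ}
where $\cL=\cL_0+\cA$ is the generator of \eqref{e:SPDE1Strong} on $L^2(\mathbb{P}^\rho)$, with $\cL_0$ the Ornstein--Uhlenbeck part of the regularised linear SHE and $\cA$ the antisymmetric operator carrying $\lambda\cN^1$, and $\ff$ is the instantaneous current, a fixed element of the second Wick chaos $\Gamma_2$ of the white noise. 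Taking the Laplace transform in $t$ and using $\int_0^\infty e^{-\mu s}\fockprod{\ff}{e^{s\cL}\ff}\,\dd s=\fockprod{\ff}{(\mu-\cL)^{-1}\ff}$, the theorem reduces to the asymptotics
\begin{equ}
\fockprod{\ff}{(\mu-\cL)^{-1}\ff}\;=\;C_\eff(\lambda)\,\big(\log\mu^{-1}\big)^{2/3}\,\big(1+o(1)\big)\,,\qquad\mu\downarrow0\,,
\end{equ}
together with a Tauberian inversion; the latter is available in its monotone form because $t\mapsto\int x_1^2\,S(t,x)\,\dd x$, hence $t\mapsto t\,D_{1,1}(t)$ up to a constant, is non-decreasing, so Karamata's theorem upgrades the Laplace asymptotics to the \emph{pointwise}-in-$t$ statement \eqref{e:Diffusivity}. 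It is exactly this pointwise, correction-free conclusion that improves on the Tauberian statements of \cite{Yau,de2024log}.

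To study $\fockprod{\ff}{(\mu-\cL)^{-1}\ff}$ we decompose $\cA=\cA_++\cA_-$, with $\cA_\pm\colon\Gamma_n\to\Gamma_{n\pm1}$ and $\cA_-=-\cA_+^{\ast}$ (the antisymmetry being equivalent to invariance of $\mathbb{P}^\rho$), so that on the cyclic subspace generated by $\ff$ the operator $\cL$ is tridiagonal in the chaos index. A Schur-complement (continued-fraction) identity then reduces $\fockprod{\ff}{(\mu-\cL)^{-1}\ff}$, up to the pendant $\Gamma_1$-contribution (which is $O(1)$, hence subleading), to $\fockprod{\ff}{(\mu-\cL_0+\cH^\mu_2)^{-1}\ff}_{\Gamma_2}$, where the effective operator $\cH^\mu_2$ on $\Gamma_2$ is obtained by integrating out all higher chaoses and solves the recursion
\begin{equ}
\cH^\mu_n\;=\;\cA_+\big(\mu-\cL_0+\cH^\mu_{n+1}\big)^{-1}\cA_-\,,\qquad n\ge2\,.
\end{equ}
A preliminary task, carried out in Section~\ref{sec:preliminaries} via It\^o-trick / Kipnis--Varadhan-type bounds, is to show that $\ff$ lies in the relevant form domain and that this recursion is well defined with enough quantitative control to be iterated.

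The heart of the proof is to solve this recursion on all the $\Theta(\log\mu^{-1})$ scales that contribute, thereby establishing the displayed asymptotics. In Fourier variables $\cL_0$ acts on $\Gamma_n$ as $-\tfrac12\sum_{i=1}^n|k_i|^2$, and the mechanism is that of mode-coupling / RG: one posits a diffusive, scalar ansatz $\cH^\mu_{n+1}\approx(\nueff-1)(-\cL_0)$ on $\Gamma_n$, with a running effective diffusivity $\nueff$, and checks self-consistency. Each elimination step $\cH^\mu_n=\cA_+(\mu-\cL_0+\cH^\mu_{n+1})^{-1}\cA_-$ generates a momentum integral of schematic form $\lambda^2\int_{\R^2}\tfrac{\dd k}{(2\pi)^2}\,\tfrac{|k|^2}{\mu+\nueff|k|^2+\cdots}$, which is logarithmically divergent in the critical dimension $d=2$; resumming the resulting nearly-geometric series along the chain and tracking the dependence of $\nueff$ on the running scale $\ell\eqdef\log\mu^{-1}$, one is led to the self-consistent relation
\begin{equ}
\frac{\dd}{\dd\ell}\,\nueff(\ell)^{3/2}\;=\;\frac{3\lambda^2}{2\pi}\,\big(1+o(1)\big)\,,
\end{equ}
whose solution is $\nueff(\ell)=\big(\tfrac{3\lambda^2}{2\pi}\big)^{2/3}\ell^{2/3}(1+o(1))=\big(\tfrac{3}{2\pi}\big)^{2/3}\lambda^{4/3}\ell^{2/3}(1+o(1))$. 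Since $\fockprod{\ff}{(\mu-\cL)^{-1}\ff}\sim\nueff(\log\mu^{-1})$, this, together with the reduction above and the Tauberian inversion, yields \eqref{e:Diffusivity} with exactly $C_\eff(\lambda)=\big(\tfrac{3}{2\pi}\big)^{2/3}\lambda^{4/3}$: the exponent $\tfrac23$ reflects the cubic self-consistency $\partial_\ell\nueff^{3/2}=\mathrm{const}$, and the prefactor $\tfrac{3}{2\pi}$ traces back to the coefficient $\tfrac{1}{2\pi}$ of the logarithmically divergent $2d$ momentum integral fed in at each elimination step.

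Everything above is routine except this last step, which carries essentially all the difficulty and constitutes the main contribution of the paper. \emph{Beyond the weak-coupling regime}, the effective coupling does not tend to $0$, so the chain does not truncate after finitely many chaoses: the recursion $\cH^\mu_n$ must be controlled up to very large $n$, with errors staying subleading even after $\Theta(\log\mu^{-1})$ iterations. Concretely one will need to (i) show that the scalar diffusive ansatz is self-improving, i.e.\ that the corrections to ``$\cH^\mu_n\approx(\nueff-1)(-\cL_0)$'' --- momentum-dependence of the effective diffusivity, non-scalar components, and the $\Gamma_n\to\Gamma_{n-1}\to\Gamma_n$ back-reaction --- do not accumulate along the chain; (ii) obtain \emph{matching two-sided bounds}, the upper one from the Dirichlet-form variational characterisation
\begin{equ}
\fockprod{\ff}{(\mu-\cL)^{-1}\ff}=\sup_{\phi}\Big\{2\,\fockprod{\ff}{\phi}-\fockprod{\phi}{(\mu-\cL_0)\phi}-\fockprod{\cA\phi}{(\mu-\cL_0)^{-1}\cA\phi}\Big\}
\end{equ}
evaluated at a near-optimal trial $\phi$, the lower one from the dual variational principle, both sharp enough to pin down the \emph{constant} and not merely the exponent $\tfrac23$; and (iii) control the pendant $\Gamma_1$-sector and verify that the $\rho$-dependent corrections do not affect the leading order. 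This ``refined control of the resolvent on all length-scales'' is precisely what the abstract advertises, and is where the bulk of the work lies.
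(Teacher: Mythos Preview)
Your high-level plan---Green--Kubo, chaos recursion, effective-diffusivity ODE---matches the paper's, but the route you propose to close the argument is the one of \cite{Yau,de2024log}, and it is precisely there that the present paper departs. Two concrete points.

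First, you invoke Karamata to pass from $\langle\ff,(\mu-\cL)^{-1}\ff\rangle$ to $D_{1,1}(t)$, asserting monotonicity of $t\mapsto\int x_1^2 S(t,x)\,\dd x$. This monotonicity is not obvious for the non-reversible SBE: by the Green--Kubo identity its derivative is $1+2\lambda^2\int_0^t\langle P_r\fm,\fm\rangle\,\dd r$, and $\langle P_r\fm,\fm\rangle$ need not be nonnegative when $P$ is not self-adjoint. The paper bypasses Tauberian inversion altogether: it rescales time so that $D_{1,1}(\tau)=\nu_\tau^{-1}D_{1,1}^\tau(1)$, writes $D_{1,1}^\tau(1)$ as the energy $\cE_1^\tau(\Phi^{\tau,\delta})$ of a parabolic problem (Proposition~\ref{p:GKenergy}), and compares that energy directly to the $\fock{}{1}{\tau}$-norm of the resolvent via an elementary Gr\"onwall bound (Lemma~\ref{l:ParabolicElliptic}). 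This is what makes the statement genuinely pointwise in $t$ rather than Tauberian.

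Second, and this is the real gap: your step~(i)---``the scalar diffusive ansatz is self-improving, corrections do not accumulate''---is exactly where the variational/graded-sector machinery of \cite{Yau,de2024log} loses the constant, producing the diverging $(\log\log t)^C$ corrections. The obstruction is structural: in $\cA_+(\cdots)^{-1}\cA_-$ on $\Gamma_n$, the off-diagonal pairings (input and output particles with partial overlap) are small in the regularisation but grow polynomially in $n$, and the graded sector condition \emph{fails uniformly in the regularisation} (Remark~\ref{rem:WeakvsStrong}). The paper's remedy is a decomposition $\cA_\pm=\cA_\pm^\sharp+\cA_\pm^\flat$ (Section~\ref{sec:Asharp}) in which $\cA_\pm^\sharp$ only creates high momenta and has \emph{identically zero} off-diagonal part (Lemma~\ref{l:SharpDiag}), while $\cA_\pm^\flat$ is bounded, uniformly in the regularisation, by the \emph{anisotropic} seminorm $\|(-\nu_\tau\gensyx)^{1/2}\cdot\|$ rather than the full $H^1$-norm (Proposition~\ref{p:GSCgenafl}). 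The ansatz is then built from $\cA_+^\sharp$ alone, and its anisotropic seminorm is propagated down the chaos chain by a Recursive Replacement Lemma (Lemma~\ref{p:2ndReplLemma}): each step replaces a weight $f$ by $\cI_\tau(f)$, the Picard map of the linearisation of your ODE, and the factor $\tfrac12$ in $\cI_\tau$---a direct consequence of anisotropy, see Remark~\ref{rem:1/22}---is what makes the iteration sum to $O(\nu_\tau^{1/2})$ rather than $O(1)$, killing the subdominant corrections. Your outline does not identify this mechanism, and without it the matching variational bounds you propose will not meet at the constant level.
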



As mentioned above, Theorem \ref{th:Diffusivity} substantially improves on the previously
known bounds \cite{de2024log} (and the analog for $2d$ ASEP in~\cite{Yau}). Our result is for $D$ itself and not 
for the Laplace transform of $t\mapsto t D(t)$, as was the case in the above-mentioned 
references. More importantly, it eliminates spurious
multiplicative, subdominant corrections of order $(\log\log\log t)^C$
in~\cite{de2024log} (respectively, of order $\exp[(\log\log \log t)^2]$ for
ASEP in \cite{Yau}) and it identifies the precise constant
$C_\eff(\lambda)$. Further, the ``$o(1)$'' in the first entry is explicit and our estimates actually 
show that $D_{1,1}(t) = C_\eff(\lambda) \log(t)^{2/3} + O(\log(t)^{1/2 + o(1)})$ (see \eqref{e:DiffFinalEstimate}). 

Let us also underline two important implications of~\eqref{e:Diffusivity} and~\eqref{e:EffConstant}. 
First, the effective diffusivity in direction $\fw$ is {\it independent of the 
microscopic diffusivity in the same direction}, 
which means that it is {\it fully produced by the non-linearity}. 
To see this, take $\fw=\fe_1$, $\nu_1,\nu_2>0$ and $Q$ 
to be the $2\times2$ diagonal matrix such that $Q_{i,j}=\nu_i\delta_{i,j}$, $i,j=1,2$. 
Consider the solution $v$ of 
\begin{equ}
\partial_t v = \frac{1}{2} \, \nabla \cdot Q \, \nabla v + \lambda \, \tilde\cN^1(v) + \nabla \cdot \sqrt{Q} \, \vec{\xi}^1
\end{equ}
where $\tilde\cN^1$ coincides with $\cN^1$ in~\eqref{e:NonLin1} but 
with the mollifier $\rho$ replaced by $(\nu_1\nu_2)^{-1/2}\rho(Q^{-1/2}\cdot)$. 
Then, easy scaling arguments show that $u(t,x)\eqlaw(\nu_1\nu_2)^{1/4}v(t, \sqrt{Q} x)$, 
for $u$ the solution to~\eqref{e:SPDE1Strong} with $\lambda$ replaced by $\lambda\nu_1^{-3/4}\nu_2^{-1/4}$, 
and the diffusion matrix associated to $v$, $D^v$ satisfies
\begin{equ}[e:ScalingDiffMat]
D^v_{i,j}(t)=(\nu_i\nu_j)^{1/2} D_{i,j}(t)\,.
\end{equ} 
Therefore, as $t\to\infty$,~\eqref{e:Diffusivity} and~\eqref{e:EffConstant} imply 
that $D^v_{1,2}(t)=D^v_{2,1}(t)=0$, $D^v_{2,2}=\nu_2$ and especially that 
$D^v_{1,1}(t)=C(\nu_1,\nu_2,\lambda)(\log t)^{2/3}(1+o(1))$ with 
\begin{equ}
C(\nu_1,\nu_2,\lambda)=\nu_2^{-1/3}C_\eff(\lambda)
\end{equ}
so that overall $D^v$ is {\it independent of $\nu_1$}. 

\begin{remark}\label{rem:scaling}
The scaling argument above 
can be reversed, to show that the lack of dependence of the diffusion matrix on the parameter $\nu_1$ forces the
 $\lambda^{4/3}$ dependence of $C_\eff(\lambda)$. In fact, by~\eqref{e:ScalingDiffMat}, 
 $C(\nu_1,\nu_2,\lambda)=\nu_1 \, C(1,1,\lambda \, \nu_1^{-3/4}\nu_2^{-1/4})$, 
which means that, assuming $C(\nu_1,\nu_2,\lambda)$  is independent of $\nu_1$, we get that 
$C_\eff(\lambda)=C(1,1,\lambda)$ is proportional to $\lambda^{4/3}$.

The proportionality $C_\eff(\lambda)\propto \lambda^{4/3}$ is quite
intriguing, especially because  exactly the same holds
for the \emph{one-dimensional} Stochastic Burgers equation\footnote{This follows immediately from elementary scaling properties of the 1d SBE, plus the fact that its diffusion coefficent grows proportionally to  the cube root of time}, despite the fact that the $t$-dependence of the diffusion coefficients are
completely different in the two cases (of order $t^{1/3}$ in one
dimension, in contrast with $(\log t)^{2/3}$ in two dimensions). 
The equality of the exponents of $\lambda$ in $d=1,2$ is \emph{not} a
coincidence, but rather a reflection of the fact
that the diffusion coefficient is asymptotically (for large $t$)
independent of the microscopic diffusion coefficient in the
direction where the non-linearity acts. 
\end{remark}

The second implication of Theorem~\ref{th:Diffusivity} is the {\it precise identification 
of the scaling} under which one can expect~\eqref{e:SPDE1Strong} to approach its Fixed Point.  
If one naively tries the classical diffusive rescaling, i.e. 
$u_{\mathrm{diff}}^{\tau}(t,x)\eqdef\tau^{1/2} \, u(\tau \, t, \tau^{1/2} \, x)$, 
then an argument similar to that used to deduce~\eqref{e:ScalingDiffMat}, 
shows that $D_{\mathrm{diff}}^{\tau}(t) = D(\tau t)$ which, for $t>0$ fixed and $\tau\to\infty$, 
blows up as $(\log\tau)^{2/3}$ by~\eqref{e:Diffusivity}. 
Thus, the diffusive scaling must be adjusted logarithmically 
and the above statement strongly suggests 
that one should instead look at
\begin{equ} \label{e:GoodScaling}
u^\tau(t,x) \eqdef \tau^{1/2} (\log\tau)^{1/6} \, u(\tau \, t, \tau^{1/2} R_\tau^{-1/2} \, x)  \, , \quad R_\tau \eqdef \begin{pmatrix}
(\log\tau)^{-2/3} & 0 \\
0 & 1
\end{pmatrix} \, ,
\end{equ}
as, under such scaling, the entries of the diffusion matrix are asymptotically $O(1)$ (see~\eqref{e:DiffRescaling}). 
In principle, Theorem~\eqref{th:Diffusivity} does not fix the size of the fluctuations, i.e. 
the prefactor in the definition of $u^\tau$, but this needs to be chosen  
in such a way that the law $\mathbb P^\tau$ of the mollified white noise $\rho_\tau \ast \eta$, 
with $\rho_\tau(\cdot) \eqdef \tau (\log\tau)^{1/3} \, \rho(\tau^{1/2} R_\tau^{-1/2} \cdot)$,
is a stationary measure for $u^\tau$. 

The equation solved by $u^\tau$ then becomes 
\begin{equ} \label{e:ScaledSPDE}
  \partial_t u^\tau = \frac{1}{2} \, \nabla \cdot R_\tau \, \nabla u + \frac{\lambda}{\sqrt{\log\tau}} \, \cN^\tau(u^\tau) + \nabla \cdot \sqrt{R_\tau} \, \vec{\xi}^\tau \, ,
\end{equ}
with $\vec{\xi}^{\tau} \overset{\mathrm{law}}{=} \rho_\tau \ast \vec{\xi}$ and 
$\cN^\tau(f) \eqdef \cN^{\rho_\tau}(f)$, see~\eqref{e:NonLin1}. 
Note that, for $\tau\to\infty$, $\rho_\tau$ is converging to a $\delta$-function so that, morally, 
identifying the large-scale limit of $u^\tau$  coincides with making sense of 
the singular critical SPDE~\eqref{e:FormalSPDE}. 

Let us also stress that all the terms at the right hand side of~\eqref{e:ScaledSPDE} 
describing the dynamics in the direction $\fe_2$, i.e. the Laplacian 
$\partial_2^2 u^\tau$ and the noise $\partial_2\xi^\tau_2$, 
are unaffected by the scaling, meaning that their coefficients are constant in $\tau$, 
and those in $\fe_1$, 
i.e. $\partial_1^2 u^\tau$, the non-linearity and $\partial_1\xi^\tau_1$, 
are all multiplied by $\tau$-dependent quantities that vanish as $\tau\to\infty$. 
The main challenge we have to face is that,  even if the 
vanishing coefficients should tame the growth caused by the non-linearity, 
they also lessen the regularising properties 
of the linear part of the equation, making it apriori unclear what to expect in the limit. 

This is precisely what is addressed by our second main result, which more broadly provides 
a full \emph{central limit theorem for $u^\tau$}. It shows that as $\tau\to\infty$, 
$u^\tau$ converges in finite-dimensional distributions to a Gaussian process, 
given by the solution of an {\it anisotropic linear Stochastic Heat Equation} with {\it explicit} 
{\it renormalised} coefficients. 

\begin{theorem}\label{th:cor}
Let $\Q$ be a probability measure, absolutely continuous with respect to $\mathbb P$ and 
for $\tau>0$, let $(u^\tau(t))_{t \geq 0}$ be the solution of \eqref{e:ScaledSPDE} 
with $\fw=\fe_1$ and initial condition $u^\tau(0) =  \rho_\tau \ast u_0$ for $u_0 \sim \mathbb Q$. 
Then, for any $k\in \mathbb N$, $0\leq t_1\le \dots \le t_k$ and $\phi_1, ..., \phi_k \in \cS(\R^2)$, 
\begin{equ}
\big(\langle u^\tau(t_1), \phi_1 \rangle,\cdots, \langle u^\tau(t_k), \phi_k \rangle\big)\xrightarrow[]{\tau \to +\infty}\big(\langle u^\eff(t_1), \phi_1 \rangle,\cdots, \langle u^\eff(t_k), \phi_k \rangle\big)
\end{equ}
where the convergence is in law and $(u^\eff(t))_{t \geq 0}$ is the process that starts at 
$u^\eff(0)=u_0$ and solves the anisotropic linear Stochastic Heat Equation 
given by 
\begin{equ}[e:EffectiveSBE]
\partial_t u^{\eff} = \frac{1}{2} \, \nabla \cdot D_{\eff} \, \nabla u^\eff + \nabla \cdot \sqrt{D_{\eff}} \, \vec{\xi} \, , \quad\text{for}\quad D_{\eff} \eqdef \begin{pmatrix}
C_\eff(\lambda) & 0 \\
0 & 1
\end{pmatrix} \, , 
\end{equ}
where $\vec \xi$ is a space-time white noise 
and the constant $C_\eff(\lambda)$ is that in \eqref{e:Diffusivity}.
\end{theorem}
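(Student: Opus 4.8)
The plan is to realise $u^\tau$ through a martingale problem and show that its limit points as $\tau\to\infty$ solve the well-posed martingale problem associated with the linear equation~\eqref{e:EffectiveSBE}; the asserted convergence of finite-dimensional distributions then follows. First I would recall that $u^\tau$ is a stationary Markov process whose invariant measure is the law $\mathbb P^\tau$ of the mollified white noise $\rho_\tau\ast\eta$, and whose generator $\mathcal L^\tau$, realised on the Fock space $L^2(\mathbb P^\tau)$, splits as $\mathcal L^\tau=\mathcal S^\tau+\mathcal A^\tau$, with $\mathcal S^\tau$ symmetric and negative (the generator of the \emph{linear} part of~\eqref{e:ScaledSPDE}) and $\mathcal A^\tau$ antisymmetric, carrying the nonlinearity $\tfrac{\lambda}{\sqrt{\log\tau}}\cN^\tau$; with respect to the Wiener chaos grading one has $\mathcal A^\tau=\mathcal A^\tau_++\mathcal A^\tau_-$, with $\mathcal A^\tau_\pm$ raising/lowering the chaos degree by one and $\mathcal A^\tau_-=-(\mathcal A^\tau_+)^*$. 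Since $\mathbb Q\ll\mathbb P$, the law of $\rho_\tau\ast u_0$ is absolutely continuous with respect to $\mathbb P^\tau$, so a standard change-of-measure argument would let me transfer the $L^2(\mathbb P^\tau)$-estimates below to statements valid under the prescribed out-of-equilibrium initial datum, the limit $u^\eff$ being initialised at the \emph{same} $u_0$.

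Next, for $\phi\in\cS(\R^2)$ I would apply Dynkin's formula to $F_\phi(u)\eqdef\langle u,\phi\rangle$, obtaining
\begin{equ}
\langle u^\tau(t),\phi\rangle=\langle u^\tau(0),\phi\rangle+\int_0^t\big(\mathcal L^\tau F_\phi\big)(u^\tau(s))\,\mathrm{d} s+M^{\tau,\phi}_t,
\end{equ}
with $M^{\tau,\phi}$ a martingale of quadratic-variation rate $\langle R_\tau\nabla\phi,\rho_\tau^{\ast 2}\ast\nabla\phi\rangle\to\langle\mathrm{diag}(0,1)\nabla\phi,\nabla\phi\rangle$ --- note that the $\fe_1$-component of this ``bare'' martingale \emph{vanishes} in the limit, since $R_\tau\to\mathrm{diag}(0,1)$. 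Since $\mathcal L^\tau F_\phi=F_{\frac12\nabla\cdot R_\tau\nabla\phi}+g^{\tau,\phi}$ with $g^{\tau,\phi}\eqdef\mathcal A^\tau F_\phi=\mathcal A^\tau_+F_\phi$ a mean-zero second-chaos element, I would then, following Kipnis--Varadhan, introduce for $\mu>0$ the corrector $h^\tau_\mu\eqdef(\mu-\mathcal L^\tau)^{-1}g^{\tau,\phi}$; using $\mathcal L^\tau(F_\phi+h^\tau_\mu)=\mathcal S^\tau F_\phi+\mu\,h^\tau_\mu$ and Dynkin's formula once more, this rewrites
\begin{equ}
\langle u^\tau(t),\phi\rangle=\langle u^\tau(0),\phi\rangle+\big(h^\tau_\mu(u^\tau(0))-h^\tau_\mu(u^\tau(t))\big)+\int_0^t\big(\tfrac12\langle u^\tau(s),\nabla\cdot R_\tau\nabla\phi\rangle+\mu\,h^\tau_\mu(u^\tau(s))\big)\,\mathrm{d} s+\widetilde M^{\tau,\phi}_\mu(t),
\end{equ}
with $\widetilde M^{\tau,\phi}_\mu$ a martingale whose quadratic-variation rate is the carr\'e du champ $\Gamma^\tau(F_\phi+h^\tau_\mu)$.

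I would then take the limit $\tau\to\infty$ followed by $\mu\to0$, feeding in the all-scales estimates on $(\mu-\mathcal L^\tau)^{-1}$ that are the technical heart of the paper (the same ones that, in the $\mu\to0$ regime, yield Theorem~\ref{th:Diffusivity}). These should give, schematically: \emph{(i)} the chaos-$\ge2$ part of $h^\tau_\mu$ is $o(1)$ in $L^2(\mathbb P^\tau)$ as $\tau\to\infty$, so its boundary and martingale contributions drop out; \emph{(ii)} the first-chaos part $F_{\psi^\tau_\mu}$ of $h^\tau_\mu$, \emph{together with the boundary and $\mu$-error terms it generates}, conspires --- along $\tau\to\infty$, $\mu\to0$ --- to renormalise the bare drift $\tfrac12\nabla\cdot R_\tau\nabla\phi$ into $\tfrac12\nabla\cdot D_\eff\nabla\phi$, the $\fe_1$-entry $C_\eff(\lambda)$ being \emph{entirely produced by the nonlinearity} (cf.\ Remark~\ref{rem:scaling}); \emph{(iii)} correspondingly, $\Gamma^\tau(F_\phi+h^\tau_\mu)\to\langle D_\eff\nabla\phi,\nabla\phi\rangle$, the extra $C_\eff(\lambda)\langle\partial_1\phi,\partial_1\phi\rangle$ over the bare martingale being the renormalised noise dictated by the fluctuation--dissipation relation (stationarity of $\mathbb P$ for~\eqref{e:EffectiveSBE}); and \emph{(iv)} $\mu\,\|h^\tau_\mu\|_{L^2(\mathbb P^\tau)}^2\to0$ (in the order $\tau\to\infty$ then $\mu\to0$), killing the $\mu$-error. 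Inserting \emph{(i)}--\emph{(iv)} into the decomposition and invoking a martingale central limit theorem, $\langle u^\tau(t),\phi\rangle-\langle u^\tau(0),\phi\rangle-\tfrac12\int_0^t\langle u^\tau(s),\nabla\cdot D_\eff\nabla\phi\rangle\,\mathrm{d} s$ would converge in law to a continuous Gaussian martingale with quadratic variation $\int_0^t\langle D_\eff\nabla\phi,\nabla\phi\rangle\,\mathrm{d} s$, i.e.\ subsequential limits of $u^\tau$ solve the martingale problem of~\eqref{e:EffectiveSBE}. Combined with tightness of $(u^\tau)_\tau$ in $C([0,T],\cS'(\R^2))$ --- which I would extract from the same decomposition via stationarity (uniform moment bounds), Burkholder--Davis--Gundy together with the quadratic-variation control, and the corrector estimates for an equicontinuity (Kolmogorov-type) bound --- and the immediate uniqueness of the linear Gaussian martingale problem~\eqref{e:EffectiveSBE} started from $u_0$, this gives convergence in law in $C([0,T],\cS'(\R^2))$, hence in particular the asserted convergence of $(\langle u^\tau(t_j),\phi_j\rangle)_{j\le k}$.

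\textbf{Main obstacle.}
Granting the resolvent estimates, the crux is step \emph{(ii)}--\emph{(iii)}: upgrading the Tauberian, Laplace-transform-level information behind Theorem~\ref{th:Diffusivity} to the \emph{process level}, i.e.\ showing --- uniformly across \emph{all} length scales and over the relevant range of $\mu$ --- that the corrector splits into a first-chaos piece renormalising the $\fe_1$-diffusivity to precisely $C_\eff(\lambda)$ and a remainder generating exactly the matching $\sqrt{C_\eff(\lambda)}$ effective noise, with all the remaining contributions (higher chaoses, boundary terms, $\mu$-error) controlled uniformly in $\tau$. This multi-scale control of $(\mu-\mathcal L^\tau)^{-1}$ --- as opposed to the single $\mu\to0$ limit sufficient for the diffusivity asymptotics --- is exactly what the bulk of the paper supplies, and it is also what forces the precise matching between the renormalised diffusion matrix and the renormalised noise in~\eqref{e:EffectiveSBE}.
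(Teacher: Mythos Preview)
Your route via the Kipnis--Varadhan corrector and the martingale problem is substantially different from the paper's. The paper deduces Theorem~\ref{th:cor} in one line from Theorem~\ref{MT} (strong $\L^\theta(\P)$ convergence of the semigroup $P^\tau_t\to P^\eff_t$, uniformly in $t$), which is itself obtained from strong convergence of the resolvent $(1-\gen)^{-1}\to(1-\geneff)^{-1}$ (Theorem~\ref{thm:StrongCvResL2}) via Trotter--Kato. This bypasses tightness, martingale-problem identification and carr\'e-du-champ computations altogether, and in fact delivers the stronger conclusion~\eqref{e:FinDimCv} (convergence of conditional laws in $\L^\theta(\P(\dd u_0))$, not merely in law).

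Your step~(ii) has a genuine gap. In the decomposition you wrote, the only first-chaos drift that survives is $\cS^\tau F_\phi=\tfrac12\langle\,\cdot\,,\nabla\cdot R_\tau\nabla\phi\rangle$, whose $\fe_1$-component vanishes as $\tau\to\infty$; the remaining terms are the boundary contribution $h^\tau_\mu(u^\tau_0)-h^\tau_\mu(u^\tau_t)$ and the error $\mu\!\int h^\tau_\mu$. You attribute the missing $\fe_1$-drift $\tfrac12 C_\eff(\lambda)\langle\,\cdot\,,\partial_1^2\phi\rangle$ to the first-chaos piece $F_{\psi^\tau_\mu}$ of the corrector via its boundary terms, but this mechanism does not close: the boundary contribution $\langle u^\tau_0-u^\tau_t,\psi^\tau_\mu\rangle$ is a linear increment, not a time-integrated drift, and re-expanding it via Dynkin reintroduces a nonlinearity $\genap F_{\psi^\tau_\mu}$ that restarts the corrector construction. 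What your approach actually requires is the process-level fluctuation--dissipation identity
\[
\int_0^t \genap F_\phi(u^\tau_s)\,\dd s \;=\; \tfrac12\,C_\eff(\lambda)\!\int_0^t\langle u^\tau_s,\partial_1^2\phi\rangle\,\dd s \;+\; \widehat M^\tau_t \;+\; o(1),
\]
with $\widehat M^\tau$ a martingale of quadratic-variation rate $C_\eff(\lambda)\|\partial_1\phi\|^2$. Establishing this is essentially equivalent in strength to the paper's resolvent convergence (and uses the same Replacement Lemma machinery), but the Trotter--Kato packaging avoids having to disentangle the renormalised drift from the renormalised noise by hand: both are encoded at once in the limit $(1-\gen)^{-1}\phi\to(1-\geneff)^{-1}\phi$, and semigroup convergence follows automatically.
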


The previous theorem fully characterises the large-scale behaviour of~\eqref{e:SPDE1Strong} 
and, more generally, it strongly indicates what to expect for other out-of-equilibrium driven diffusive systems 
with one conserved quantity at the critical dimension. Phenomenologically, it says that 
in the $\fe_2$ direction the original diffusion is unaltered (with the mollifier replaced by the 
$\delta$ function), while, in the $\fe_1$ direction, the linear dynamics vanishes 
and the non-linear one produces {\it a new noise and a new Laplacian} whose strength 
explicitly depends on the coupling constant $\lambda$, i.e. on the microscopic strength of the non-linearity. 

Even though the creation of new Laplacian/noise from the non-linearity has already been observed in the 
context of critical SPDEs (e.g.~\cite{CSZ} for KPZ,~\cite{AKPZweak} for AKPZ or, more explicitly,~\cite{CC} 
for the multiplicative SHE), 
this is the first time, up to the authors' knowledge, where this is coupled with a vanishing microscopic 
diffusivity.
\medskip

Theorem~\ref{th:cor} is a corollary of Theorem \ref{MT} below, 
which concerns the convergence of the semigroup of the solution $u^\tau$ of~\eqref{e:ScaledSPDE} 
to that of $u^\eff$.  
To properly state it, we need a few preliminary definitions. 
Let $\mathbf P_{u_0}^\tau$ and $\mathbf P_{u_0}^\eff$ be  
the laws (with respective expectations $\mathbf E_{u_0}^\tau$ and $\mathbf E_{u_0}^\eff$) 
of  $u^\tau$ and $u^\eff$ given their initial condition $u_0^\tau=u_0\ast\rho_\tau$ and $u_0$, 
and denote by $(P_t^\tau)_{t \geq 0}$ and $(P_t^{\mathrm{\eff}})_{t \geq 0}$ 
their Markov semigroups. 

Our analysis of $P^\tau$ and $P^\eff$ will be carried out on the spaces 
$\L^\theta(\P^\tau)$ (to deal with $u^\tau$) and 
$\L^\theta(\P)$ (to deal with $u^{\eff}$), for $\theta\in[1,\infty)$, 
where we recall that $\P$ and
$\P^\tau$ stand for the law of the spatial white noise $\eta$
and its mollified counterpart $\eta^\tau=\rho_\tau \ast \eta$. 
To relate $\mathbb L^\theta(\mathbb P^\tau)$ and 
$\mathbb L^\theta(\mathbb P)$, we define the isometric embedding  
$\iota_\tau \colon \L^\theta(\P^\tau) \to \L^\theta(\P)$ 
which sends the functional $\big(\eta^\tau \to F(\eta^\tau)\big)$ 
to the functional $\big(\eta \to F(\rho_\tau \ast \eta)\big)$ 
(for more on it see Lemma~\ref{l:Emb} below). 
With a slight abuse of notation, we will say that a sequence 
$(F^\tau)_{\tau > 0}$, with $F^\tau\in \L^\theta(\P^\tau)$, converges 
to $F\in\L^\theta(\P)$ if $\iota_\tau F^\tau \to F$ in $\L^\theta(\P)$ as $\tau \to +\infty$.

\begin{theorem} \label{MT}
For any $\theta \in [1, +\infty)$ and sequence $(F^\tau)_{\tau > 0}$ converging to $F$ in $\L^\theta(\P)$, 
$(P_t^\tau F^\tau)_{\tau > 0}$ converges to $P_t^{\mathrm{eff}} F$ in $\L^\theta(\P)$ 
uniformly over $t \geq 0$, that is to say
\begin{equ} \label{e:StrongCv}
\sup_{t \geq 0} \, \E\Big[\big|\iota_\tau P^{\tau}_t F^\tau - P^{\mathrm{eff}}_t F \big|^\theta\Big]\xrightarrow[]{\tau \to +\infty}  0 \, .
\end{equ}
As a consequence, for every $k \geq 1$, $(F_1^\tau)_{\tau > 0}, ..., (F_k^\tau)_{\tau > 0}$ converging to $F_1, ..., F_k$ in $\mathbb L^\theta(\mathbb P)$ for all $\theta \in [1, +\infty)$, and $0 \leq t_1 \leq ... \leq t_k$, we have 
\begin{equ} \label{e:FinDimCv}
\Exp^\tau_{u_0}\big[F_1^\tau\big(u^{\tau}(t_1)\big) \, ... \, F_k^\tau \big(u^{\tau}(t_k)\big)\big] \xrightarrow[]{\tau \to +\infty} \Exp^{\mathrm{eff}}_{u_0}\big[F_1\big(u^{\mathrm{eff}}(t_1) \big) \, ... \, F_k\big(u^{\mathrm{eff}}(t_k)\big)\big] \, ,
\end{equ}
where the convergence holds in  $\mathbb L^\theta\big(\mathbb P(\dd u_0)\big)$ for every $\theta \in [1, +\infty)$.
\end{theorem}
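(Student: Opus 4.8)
The plan is to work at the level of generators and resolvents rather than semigroups directly. Write $\gen^\tau$ for the generator of $u^\tau$ on $\L^2(\P^\tau)$ and $\geneff$ for the generator of $u^\eff$ on $\L^2(\P)$. Both decompose as a symmetric (Ornstein--Uhlenbeck-type) part plus an antisymmetric part coming from the non-linearity: schematically $\gen^\tau = \cS^\tau + \tfrac{\lambda}{\sqrt{\log\tau}}\, \cA^\tau$, where $\cS^\tau$ is the generator of the Gaussian dynamics with diffusion matrix $R_\tau$ and $\cA^\tau$ is the (antisymmetric in $\L^2(\P^\tau)$) first-order operator implementing $\cN^\tau$. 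The key point is that $\P^\tau$, resp.\ $\P$, is invariant, so both dynamics are, respectively, a reversible part perturbed by an antisymmetric drift. Via the embedding $\iota_\tau$, I would push everything to $\L^2(\P)$ and show that the resolvents converge: for every $\mu>0$,
\begin{equ}
\iota_\tau (\mu - \gen^\tau)^{-1} (\iota_\tau)^{-1} \;\longrightarrow\; (\mu - \geneff)^{-1}
\end{equ}
strongly on $\L^2(\P)$ as $\tau\to\infty$, with a rate that is \emph{uniform in $\mu>0$} — this uniformity is exactly what upgrades pointwise-in-$t$ convergence of semigroups to the uniform-in-$t\ge 0$ statement \eqref{e:StrongCv} (after the standard $\theta=2$ case, the general $\theta\in[1,\infty)$ follows by interpolation/contractivity of Markov semigroups together with $\L^\theta$-boundedness of $\iota_\tau$ from Lemma~\ref{l:Emb}). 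The second assertion \eqref{e:FinDimCv} is then a routine consequence: expand the multi-time expectation as an alternating product of semigroups $P^\tau_{t_{j+1}-t_j}$ and multiplication operators $F^\tau_j$, and use \eqref{e:StrongCv} together with uniform integrability (from the $\L^\theta$ bounds, valid for all $\theta$) to pass to the limit in each factor; Theorem~\ref{th:cor} is the special case $F_j^\tau = e^{i\langle\,\cdot\,,\phi_j\rangle}$ after noting these are bounded and converge under $\iota_\tau$.

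The heart of the matter is the resolvent convergence, and here I would use the duality/Fock-space structure already set up in the paper (the chaos decomposition of $\L^2(\P^\tau)$ into $n$-particle sectors, and the action of $\cA^\tau$ as a sum of a "creation" piece $\genap$ raising the chaos degree by one and an "annihilation" piece $\genam$ lowering it by one). The strategy is a quantitative two-sided bound on the resolvent along the lines of the Yau-type / It\^o--Tauberian scheme, but made sharp: one writes, for $f$ in the first chaos (which is what matters for the diffusivity and for testing against linear functionals), the Dirichlet-form identity
\begin{equ}
\langle f, (\mu - \gen^\tau)^{-1} f\rangle_{\L^2(\P^\tau)} = \langle f, (\mu + \mathscr{H}^{\mu,\tau})^{-1} f\rangle
\end{equ}
where $\mathscr{H}^{\mu,\tau}$ is the effective operator on the first chaos obtained by integrating out all higher chaoses — a continued-fraction / Schur-complement expression $\mathscr{H}^{\mu,\tau} = \tfrac{\lambda^2}{\log\tau}\,\genam(\mu - \cS^\tau_{\ge 2} - \cdots)^{-1}\genap$ restricted to the first sector. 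The core technical input, which I expect is the main obstacle, is showing that this effective operator converges, as $\tau\to\infty$ and \emph{on all relevant length scales simultaneously}, to the Fourier multiplier $\tfrac12(C_\eff(\lambda) k_1^2 + k_2^2) - \tfrac12(k_1^2+k_2^2)$ that is the difference between the $\geneff$ and the bare Laplacian — i.e.\ that the non-linearity's net contribution renormalises precisely the $\fe_1$-Laplacian from $1$ to $C_\eff(\lambda) = (3/2\pi)^{2/3}\lambda^{4/3}$ and produces nothing else in the limit. This requires a careful multi-scale analysis of the recursion for $\mathscr{H}^{\mu,\tau}$: a logarithmic change of scale makes the iteration look like a (discrete) ODE whose fixed point gives the $(\log)^{2/3}$ and the cube-root-of-coupling law, and one must control the error terms uniformly in $\mu$ (equivalently, over all scales from the ultraviolet cutoff $\tau^{1/2}$ down to the macroscopic scale), ruling out that higher-order or off-diagonal terms survive. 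This is where the "refined control of the resolvent on all length-scales" advertised in the abstract does the real work; the rest is soft functional analysis (self-adjointness, sector condition, Trotter-type approximation, interpolation in $\theta$).

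One subtlety worth flagging in the write-up: the initial datum in \eqref{e:FinDimCv} is only assumed $\Q \ll \P$, not stationary, so the semigroup convergence \eqref{e:StrongCv} must be leveraged through the Radon--Nikodym derivative $\dd\Q/\dd\P \in \L^1(\P)$ — a density argument reducing to $F$'s for which $\dd\Q/\dd\P$ is bounded, then monotone class / dominated convergence — and likewise the embedding $\iota_\tau$ applied to $u_0\ast\rho_\tau$ must be checked to intertwine the two dynamics' initial laws correctly (this is precisely what the choice of prefactor $\tau^{1/2}(\log\tau)^{1/6}$ in \eqref{e:GoodScaling} and of $\rho_\tau$ guarantees, so that $\P^\tau$ is stationary for $u^\tau$). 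None of this is deep, but it needs to be done carefully so that the $\L^\theta$-convergence in \eqref{e:FinDimCv} — for \emph{every} $\theta<\infty$, not just $\theta=2$ — genuinely follows; the $\L^\infty$-type bounds needed here come from hypercontractivity of the Gaussian measures $\P^\tau$ (uniform in $\tau$) applied to the fixed finite chaos expansions of the test functionals.
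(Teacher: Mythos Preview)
Your high-level architecture (resolvent convergence $\Rightarrow$ Trotter--Kato $\Rightarrow$ semigroup convergence, then Markov-property induction for \eqref{e:FinDimCv}) matches the paper's. But there is a genuine gap in how you obtain the \emph{uniformity over all $t\ge 0$} in \eqref{e:StrongCv}. You claim this follows from resolvent convergence ``with a rate that is uniform in $\mu>0$''. This is neither established nor the correct mechanism: the resolvent has norm $\mu^{-1}$, so uniform-in-$\mu$ convergence is not available (the paper only proves it at $\mu=1$, Theorem~\ref{thm:StrongCvResL2}), and even if it were, it is not a standard route to uniform-in-$t$ semigroup convergence. What the paper actually does is: Trotter--Kato gives $\sup_{t\in[0,T]}\|\iota^\tau P_t^\tau F^\tau - P_t^\eff F\|_{\L^2(\P)}\to 0$ for each fixed $T<\infty$; the extension to $T=\infty$ comes from the $\L^2$-\emph{mixing} of the limit process $u^\eff$ (Lemma~\ref{l:LimitGen}, \eqref{e:EffErgodicity}). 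Concretely, for mean-zero $F$ and $t\ge T$ one uses contractivity to bound $\|\iota^\tau P_t^\tau F^\tau - P_t^\eff F\|\le \|\iota^\tau P_T^\tau F^\tau - P_T^\eff F\| + 2\|P_T^\eff F\|$, then sends $\tau\to\infty$ and finally $T\to\infty$. You have missed this ergodicity ingredient entirely.

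Two smaller points. First, your passage from $\theta=2$ to general $\theta\in[1,\infty)$ via ``hypercontractivity'' is not how the paper proceeds and is not obviously sufficient for arbitrary $F\in\L^\theta$ (not in finite chaos): the paper instead truncates $F$ by $\chi_M(x)=(-M)\vee(x\wedge M)$, uses that \eqref{e:StrongCv} for $\theta=2$ plus $\L^\infty$-contractivity \eqref{e:Contractivity} gives the result for bounded $F$, and then removes the truncation by contractivity in $\L^\theta$. Second, your sketch of the resolvent analysis (Schur complement / continued fraction $\mathscr H^{\mu,\tau}$) is in the right spirit but omits precisely the innovations that make it work here beyond weak coupling: the splitting $\gena=\genash+\genafl$ (Section~\ref{sec:Asharp}) which decouples the Fourier singularity from the growth in chaos, and the Recursive Replacement Lemma~\ref{p:2ndReplLemma} controlling the anisotropic $\fock{}{1}{\tau}$-norm of the ansatz. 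Without these, the ``sector condition'' you allude to does \emph{not} hold uniformly in $\tau$ (Remark~\ref{rem:WeakvsStrong}), and the iteration does not close.
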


The above theorem is substantially stronger than Theorem \ref{th:cor}. 
In fact, \eqref{e:FinDimCv} says that the finite
dimensional distributions of the process $u^\tau$ converge to those of
the limit process not just in average, but in
$\L^\theta(\P)$ with respect to the initial
condition. In this sense, the convergence stated in~\eqref{e:FinDimCv} is stronger than both 
annealed and stable convergence (and that in~\cite[Definition 2.6]{KLO}), 
and is closer in spirit to a quenched central limit theorem, 
although we make no claim of almost-sure convergence. 
Note also that, because of the supremum in time in \eqref{e:StrongCv} due to the ergodicity of the limit process, 
we have a uniform control of the law of the process $u^\tau$, {\it over all time scales}, 
which rules out pathological deviations from the scaling limit. 
Moreover, while we formulated Theorem \ref{MT} for $\lambda$ of order $1$, 
this being the most interesting scenario, 
our techniques would allow to obtain the scaling limit of the process 
in the whole {\it crossover window} where $\lambda=\lambda(\tau)$ 
tends to $0$ with $\tau$ in any desired way, 
between the weak-coupling regime (see next section) and the case $\lambda=O(1)$. 
\medskip

The proof of Theorems~\ref{th:Diffusivity} and~\ref{MT} relies on a careful analysis 
of the resolvent of $u^\tau$. We will present the overall strategy and 
provide a thorough account of the novelties of the present work
in Section~\ref{sec:HeuRM}, 
after deriving the main properties of $u^\tau$ for fixed $\tau$ and studying the action of its 
generator $\gen$ on $\L^2(\P^\tau)$. For now, let us only point out that  
our arguments give a \emph{scale-dependent control of the effective diffusion operator}, in the
spirit of Renormalization Group. 
Indeed, what our proof shows is that, for $F\in\L^2(\P^\tau)$ sufficiently regular 
(think of $F=\eta(\phi)$ for some $\phi$ smooth), $(1-\gen)^{-1}F$ 
approaches $(1-\cD^\tau_\eff)^{-1}F$, which is the resolvent of the 
solution to the non-homogeneous Stochastic Heat Equation given by
\begin{equ}
\partial_t u_\eff^\tau=\tfrac12 \nabla\cdot D_\eff^\tau\nabla u_\eff^\tau + \nabla \cdot \sqrt{D^\tau_\eff}\vec{\xi}
\end{equ}
and $D_\eff^\tau$ is the operator whose Fourier multiplier is 
\begin{equ}[e:Diffk]
D_\eff^\tau(k)=\begin{pmatrix}
\dis (\log\tau)^{-2/3}+g^\tau\big(L^\tau(\tfrac12|\sqrt{R_\tau} k|^2)\big) & 0 \\
0 & 1
\end{pmatrix} \, .
\end{equ}
Above, $R^\tau$ is the matrix in~\eqref{e:GoodScaling}, $L^\tau$ is a function 
proportional to $x\mapsto\log(1+\tau/x)/\log\tau$ (see~\eqref{e:Leps} for the precise definition) 
and, interestingly, $g^\tau$ is the solution of the ODE
\begin{equation}
  \label{eq:ode}
\dot{g}^\tau(x)=\frac1{\sqrt{\nu_\tau+g^\tau(x)}},\qquad g^\tau(0)=0\,. 
\end{equation}
Inspecting~\eqref{e:Diffk} we see that for momenta $k$ of order $1$ 
and $\tau$ large, one has $D^\tau_\eff(k)\approx D_\eff$ with the latter defined as in \eqref{e:EffectiveSBE}, 
which in turn implies the proximity of $u^\tau$ and $u^\eff$. 
But more than that, since $L^\tau(x)$ \emph{is small when $x$ is large}
and $g^\tau$ satisfies an \emph{initial value} problem,
the effective diffusivity $g^\tau\big(L^\tau(\tfrac12|\sqrt{R_\tau} k|^2)\big)$ 
at scale $k$ \emph{encodes the cumulative effect of momenta larger than $k$}, 
that is, \emph{of smaller spatial scales}. In other words, the ODE \eqref{eq:ode} corresponds 
to what  would be called in Renormalization Group a 
\emph{flow equation for the scale-dependent coefficient $g^\tau$}.

\subsection{Relation with the literature and future perspectives}\label{sec:previous}

As alluded to earlier, most of the work on critical SPDEs (or more general out-of-equilibrium critical systems) 
concerns the so-called {\it weak coupling} 
or {\it intermediate disorder} scaling, which, in a sense, provides a bridge 
between subcritical and critical equations and can be thought of as a way to zoom into 
the singular part of the equation at hand. 
More precisely, starting from a given SPDE (say~\eqref{e:SPDE1Strong}), 
this amounts to introduce  \emph{a vanishing coupling constant} multiplying 
the nonlinearity, the noise or the initial condition (for~\eqref{e:SPDE1Strong}, 
set $\lambda=\lambda_\tau = \bar\lambda/\sqrt{\log \tau}$), 
and then take the usual diffusive scaling (i.e. $u\mapsto \sqrt{\tau}u(\tau t, \sqrt\tau x)$). 
Even though there is currently no systematic way to analyse such setting, 
several interesting results in a variety of different contexts have recently appeared. 
This is the case e.g. for the KPZ equation~\cite{CD,Gu2020,CSZ}, 
the AKPZ equation~\cite{CES,AKPZweak}, the SBE~\cite{CGT}, (self-)interacting diffusions~\cite{CG,yang2024weak}
the Allen-Cahn equation with weakly critical random initial data~\cite{GRZallen}, 
and, especially, the multiplicative (non-)linear Stochastic Heat Equation in $d=2$ 
~\cite{BC, CSZ1, DG, Tao}.  The latter in particular culminated in the
ground-breaking work~\cite{caravenna2023critical}, which first
constructed the scaling limit of directed polymers in random
environment in the intermediate disorder regime at the critical value
of the disorder strength, a process that now goes under the name
of critical ``$2d$-Stochastic Heat Flow'' (see also~\cite{Tsai}, for a
more recent characterisation).  As already stressed, the present work
goes beyond weak coupling as the vanishing coefficients at the right
hand side of~\eqref{e:ScaledSPDE} are not put there ``by hand''
but 
come from simply {\it zooming out at the correct scale}, which for the
SBE corresponds to~\eqref{e:GoodScaling}. 

Another critical, out-of-equilibrium model which has attracted significant attention due to
its physical importance in connection with turbulence, is the DCGFF,
i.e.  a singular SDE describing the motion of a Brownian particle in
two dimensions, subject to a divergence-free drift given by the curl
of a log-correlated random field (e.g. the $2d$ GFF). The analysis of
its mean-square displacement was initiated in~\cite{TothValko} and
later continued first in~\cite{CHT}, using functional analytical
techniques similar in flavour to those in the present work, and then
in~\cite{Morfe}, via stochastic homogeneisation arguments, where the
conjectured annealed logarithmic superdiffusivity (see~\cite{TothValko}) was established. 
The remarkable recent work~\cite{armstrong}, which proves the
analog of Theorem~\ref{th:cor} in this context, obtains the full conjecture 
of~\cite{LeDou} as it derives a
quenched invariance principle under a logarithmically adjusted scaling
(that said, the tools exploited therein are completely different from
ours and it is unclear whether they could be adapted to the SPDE
setting).  \medskip

Ultimately, the major achievement of the present paper is that it establishes a rigorous framework in which 
the large-scale behaviour of stationary space-time systems at criticality can be analysed.  
Moreover, in relation to the literature, we believe it raises a number of intriguing questions:
\begin{itemize}[noitemsep]
\item the convergence, under similar logarithmically adjusted 
  scaling, of  the anisotropic self-repelling polymer of~\cite{TothValko}, of  the density fluctuation profile of    $2d$
  ASEP and of other critical interacting particle systems, to a non-trivial Gaussian limit process.  We are not aware
  of any rigorous statement or even explicit conjecture in this sense;

\item the investigation of the role and universality of the $\lambda^{4/3}$  dependence \eqref{e:EffConstant} of the diffusion coefficient on the coupling constant  (see Remark~\ref{rem:scaling}) 
for the above-mentioned systems at {\it and above} the critical dimension;
\item the study of large-scale behaviour of other critical SPDEs with (non-)quadratic non-linearity, 
as the AKPZ equation or the stochastic Navier-Stokes equation, and, in the long-run, 
away from stationarity. 
\end{itemize}
At last, a very fascinating line of works~\cite{OW,MOW} has displayed compelling features 
of the DCGFF such as intermittency properties and an unexpected connection with a diffusion on 
the Lie group ${\bf SL}(2)$, 
and it would be interesting to see if analogous relations can be found in the present context or even 
more broadly for other systems at the critical dimension.

\subsection*{Organization of the paper}

The rest of the paper is organised as follows. In subsection~\ref{sec:notation}, we recall 
basic tools from Wiener space analysis. 
Section \ref{sec:thereg}
states basic results of the SBE~\eqref{e:ScaledSPDE} for fixed value of $\tau>0$
such as global well-posedness, Markovianity and stationarity. Moreover, 
the two-point correlation function is shown to decay polynomially fast 
and a Green-Kubo formula for the diffusion matrix 
in~\eqref{e:FormalDiffusivity} is rigorously derived, which we believe is of independent 
interest. The semigroup and the generator of~\eqref{e:ScaledSPDE}, together with 
the action of the latter on Fock spaces, are 
analysed in Section~\ref{sec:generator} where we further recall 
basic properties of the limit equation~\eqref{e:EffectiveSBE}. In Section~\ref{sec:HeuRM}, 
we provide a high-level roadmap that on the one hand outlines the overall strategy of proofs, 
and should thus guide the reader throughout, 
while on the other introduces the main novelties of the paper. 
Some of these are then detailed in Section
\ref{sec:Est} such as: a new 
splitting of the antisymmetric part of the generator that allows to
single out the singularity in Fourier and growth in the ``Wiener
chaos index'' (Section~\ref{sec:Asharp}); a (refined version of the) 
Replacement Lemma (Lemma~\ref{p:1stReplLemma}) and 
the Recursive Replacement Lemma (Lemma~\ref{p:2ndReplLemma}). 
Section~\ref{sec:CVRes} applies these tools to derive the needed estimates 
on the resolvent, as summarised in its main statement Theorem~\ref{thm:MainSec4}. 
Theorems~\ref{th:cor} and~\ref{MT}, 
concerning the superdiffusive central limit theorem are proven in Section~\ref{sec:CLT}, 
while Theorem~\ref{th:Diffusivity} about the diffusion matrix in Section \ref{sec:D}. 
At last, Appendices~\ref{a:BasicsSBE} and~\ref{a:Basics} contain the 
proofs of the statements in Section~\ref{sec:preliminaries}, 
while technical bounds and a general replacement estimate (which we deem interesting in its own right) 
can be found in Appendix~\ref{s:TechnicalEstimate}.

\subsection*{Notations and Function spaces}

Let $\fe_1$ and $ \fe_2$ be the canonical basis vectors of $\R^2$ and $|\cdot|$ be
the usual Euclidean norm.
For $p_1, ..., p_n \in \R^2$, we set $p_{1:n} \eqdef (p_1, ..., p_n) \in \R^{2n}$ and,
given a $2 \times 2$ matrix $M$, $M p_{1:n}\eqdef(M p_1, ..., M p_n)$.
We write (for $e \in \R^2$)
\begin{equ} \label{e:NotationsR2n}
|p_{1:n}| \eqdef \Big(\sum_{i=1}^n|p_i|^2\Big)^\frac{1}{2}\,,\qquad |e \cdot p_{1:n}| \eqdef \Big(\sum_{i=1}^n |e \cdot p_i|^2\Big)^\frac{1}{2} \,.
\end{equ}
For $\tau>0$ let the matrix $R_\tau$ and $\nu_\tau > 0$ be respectively given according to
\begin{equ}[e:nutau]
R_\tau\eqdef\begin{pmatrix}
\dis \nu_\tau & 0 \\
0 & 1
\end{pmatrix}\qquad\text{and}\qquad\nu_\tau \eqdef \frac{1}{\big(1 \vee \log\tau\big)^{2/3}} \, ,
\end{equ}
and note that it holds
\begin{equ} \label{e:NotationsQ}
|\sqrt{R_\tau} p_{1:n}|^2 = \nu_\tau |\fe_1 \cdot p_{1:n}|^2 + |\fe_2 \cdot p_{1:n}|^2 \, .
\end{equ}

Let $\cS(\R^d)$ be the Schwartz space of smooth functions whose derivatives decay faster
then any polynomial and $\cS'(\R^d)$ be its dual. For $\phi\in\cS(\R^d)$, we define its Fourier transform by
\begin{equ}
\cF(\phi)(p) = \hat \phi(p) \eqdef \frac{1}{(2 \pi)^{d/2}} \int_{\R^n} \phi(x) \, e^{- \iota \, p \cdot x} \, \dd x \, ,
\end{equ}
while, for $f\in\cS'(\R^d)$, it is given via duality.

Throughout the paper we fix a mollifier $\rho\in\cS(\R^2)$ (but the results are independent of it) which
is smooth, of mass $1$, even and compactly supported in a ball of radius $1$. For $\tau>0$ and $n\in\N$,
we set
\begin{equ}[e:Mollifiers]
\rho_\tau(\cdot) \eqdef \tau \, \nu_\tau^{-1/2} \, \rho(\tau^{1/2} R_\tau^{-1/2} \cdot)\,,\quad\text{and}\quad
\Theta_\tau(\cdot) \eqdef (2 \pi \hat{\rho}_\tau(\cdot))^2=(2 \pi \hat{\rho}(\tau^{-1/2} R_\tau^{1/2} \cdot))^2\,,
\end{equ}
and define the measure $\Xi_n^\tau$ on $\R^{2n}$ according to
\begin{equ}[e:RegMeas]
\Xi_n^\tau(\dd p_{1:n}) \eqdef \prod_{i=1}^n \Theta_\tau(p_i) \, \dd p_{1:n} \, ,\qquad p_{1:n}\in\R^{2n}
\end{equ}
with the convention that, for $\tau = \infty$, $\Theta_\tau \equiv 1$ and $\Xi_n=\Xi_n^\infty$ is the Lebesgue measure.

For $\tau> 0$ and $\gamma\in\R$, we introduce the $\tau$-dependent
(anisotropic) Sobolev spaces with respect to $\Xi_n^\tau$,
which are given by the completion of $\cS(\R^{2n})$ under the (semi-)norm\footnote{As the zero-set of $\Theta_\tau$ is of empty interior since $\Theta_\tau$ is analytic, these are actually norms. }
\begin{equ} \label{e:normHeps}
\|f\|_{H^\gamma_\tau(\R^{2n})}^2 \eqdef \int \Big(1 + \frac{1}{2} \, |\sqrt{R_\tau} p_{1:n}|^2\Big)^\gamma \, |\hat f(p_{1:n})|^2 \, \Xi_n^\tau(\dd p_{1:n}) \, .
\end{equ}
For $\gamma = 0$, this corresponds to the space $L_\tau^2(\R^{2n}) \eqdef L^2(\R^{2n}, \Xi_n^\tau)$,
whose scalar product will be denoted by $\langle \cdot, \cdot \rangle_{L^2_\tau(\R^{2n})}$.
\medskip

Finally, we will write $a \lesssim b$ if there exists a constant $C > 0$ independent of any quantity relevant for the result,
such that $a \leq C b$ and $a \asymp b$ if $a \lesssim b$ and $b \lesssim a$.
If we want to highlight the dependence of the constant $C$ on a specific quantity $Q$, we write instead $\lesssim_Q$.

\subsection{Gaussian white noise and Fock spaces} 
\label{sec:notation}

Let $\eta$ be a space white noise on $\R^2$, i.e. an isonormal Gaussian process
on $L^2(\R^2)$ (see~\cite[Definition 1.1.1]{Nualart})
and, for $\tau\geq 0$, set $\eta^\tau\eqdef\rho_\tau\ast\eta$ (with $\eta^\infty\equiv\eta$).
Then, $\eta^\tau$ is a centred Gaussian field with covariance
\begin{equ}
\E^\tau[\eta^\tau(\phi)\eta^\tau(\psi)] =\langle\phi,\psi\rangle_{L_\tau^2(\R^2)} = \int_{\R^{2}}\hat \phi(p)\,\overline{\hat \psi(p)}\,\Xi_1^\tau(\dd p)\,,\qquad \phi,\psi\in L^2_\tau(\R^2)\,.
\end{equ}
We will denote its law by $\P^\tau$ and by $\L^\theta(\P^\tau)$ the space of $\theta$-integrable random variable
with respect to $\P^\tau$. A random variable $F\in\L^\theta(\P^\tau)$ is said to be {\it cylinder} if it is
of the form $F=f(\eta^\tau(\phi_1),\dots,\eta^\tau(\phi_n))$ for $f\colon \R^n\to\R$ smooth and
growing at most polynomially at infinity, and $\phi_1,\dots,\phi_n\in\cS(\R^2)$.  
The set of cylinder random variables is dense in
$\L^\theta(\P^\tau)$, for $\theta<\infty$.
\medskip

According to~\cite[Theorem 1.1.1]{Nualart},
$\L^2(\P^\tau)$ admits the orthogonal
decomposition $\L^2(\eta^\tau) = \overline{\bigoplus_{n \geq 0} \SH_n^\tau}$, where
$\SH_n^\tau$ is the {\it $n$-th homogeneous Wiener chaos}, i.e.
the closure in $\L^2(\P^\tau)$ of
$\mathrm{Span}\big\{H_n(\eta^\tau(h)) \, | \, h \in \cS(\R^2), \, \|h\|_{L_\tau^2(\R^2)} = 1\big\}$
and $H_n$ is the $n$-th Hermite polynomial.
Further,~\cite[Theorem 1.1.2]{Nualart} ensures the existence of a canonical isometry
$I^\tau$ onto $\L^2(\P^\tau)$
which is the unique extension of the map that, for every $n\geq 0$,
assigns $h^{\otimes n}$ to $n! \, H_n(\eta^\tau(h)) \in \cH^\tau_n$ for
$h\in\cS(\R^2)$ with $L^2_\tau(\R^2)$-norm equal to $1$.
The space on which $I^\tau$ is defined is the Fock space $\fock{}{}{\tau}$,
and is given by the closure of $\oplus_{n\geq 0} \fock{n}{}{\tau}$,
where $\fock{n}{}{\tau}$ is the subset of
$L^2_\tau(\R^{2n})$ made of functions which are symmetric with respect to
permutations of their variables, endowed with the norm
\begin{equ}[e:normFock]
\|f\|_\tau^2=\|f\|_{\fock{}{}{\tau}}^2\eqdef \sum_{n\geq 0} n! \|f_n\|_{L^2_\tau(\R^{2n})}^2\,,
\end{equ}
for $f=(f_n)_{n\geq 0}\in \oplus_{n\geq 0}\fock{n}{}{\tau}$.
Note that, by construction, the restriction $I^\tau_n$ of $I^\tau$ to $\fock{n}{}{\tau}$ is an isometry with values in
$\SH_n^\tau$ and, again by~\cite[Theorem 1.1.2]{Nualart}, for any $F\in\L^2(\P^\tau)$
there exists $f=(f_n)_n\in\fock{}{}{\tau}$ such that $F=\sum_n I_n(f_n)$ and $\E^\tau[F^2]=\|f\|_\tau^2$.
\medskip

The bulk of our analysis will focus on various operators acting on (subspaces of) $\L^2(\P^\tau)$.
In view of the isometry $I^\tau$, we will abuse notation and identify them
with the corresponding operator acting on $\fock{}{}{\tau}$.
An important notion is that of {\it diagonal} operator that we now  define.

\begin{definition}\label{def:DiagOp}
An operator $\cD$ on $\fock{}{}{\tau}$ is said to be {\it diagonal}
if there exists a family of measurable kernels $\mathbf{d} = (\bd_n)_{n \geq 0}$
such that for all $n\geq 0$ and $\phi\in\fock{n}{}{\tau}\cap\cS(\R^{2n})$, it holds
$\cF{\cD \phi} (p_{1:n}) = \bd_n(p_{1:n}) \, \widehat \phi(p_{1:n})$ for (Lebesgue-almost-) every $p_{1:n} \in \R^{2n}$.
If $\cD^{(1)}$ and $\cD^{(2)}$ are diagonal, we write $\cD^{(1)} \leq \cD^{(2)}$ if their kernels satisfy
$\bd^{(1)} \leq \bd^{(2)}$ and say that $\cD^{(1)}$ is positive if $\cD^{(1)} \geq 0$.
Given a diagonal operator $\cD$ with kernel $\bd=(\bd_n)_{n \geq 0}$
and a measurable scalar function $f$, we define $f(\cD)$ to be the diagonal operator
whose kernel is $f(\bd)=(f \circ \bd_n)_{n \geq 0}$.
\end{definition}


\begin{definition}\label{def:NoOp}
We define the number operator $\cN$ to be the diagonal operator
acting on $\psi=(\psi_n)_n\in\fock{}{}{\tau}$ such that $\|\psi_n\|_\tau$ decays faster than any polynomial in $n$,
as $\cN\psi_n\eqdef n\psi_n$.
\end{definition}

In order to discuss the domain and well-posedness of the operators we will encounter,
it is convenient to introduce, for $\gamma\in\R$, the Sobolev space
corresponding to $\fock{}{}{\tau}$, which we denote by $\fock{}{\gamma}{\tau}$
and whose norm is the same as that in~\eqref{e:normFock}
but with $\|\cdot\|_{H^\gamma_\tau(\R^{2n})}^2$ replacing $\|\cdot\|_{L^2_\tau(\R^{2n})}^2$.
\medskip

To conclude this preliminary section, let us 
clarify the relation between $\L^\theta(\P^\tau)$ and $\L^\theta(\P)=\L^\theta(\P^\infty)$, for $\theta\in[1,\infty)$.
Recall that the operator $\iota^\tau \colon \L^\theta(\P^\tau) \to \L^\theta(\P)$ which maps the functional
$\eta^\tau \mapsto F(\eta^\tau)$ to the functional $\eta \mapsto F(\rho_\tau * \eta)$, is an isometric embedding.
In the next basic lemma we show that $\iota^\tau$ is an isometry which admits an inverse $j^\tau$.

\begin{lemma}\label{l:Emb}
For every $\theta \in [1, +\infty)$, the map $\iota^\tau \colon \mathbb L^\theta(\mathbb P^\tau) \to \mathbb L^\theta(\mathbb P)$ is an isometry which admits an inverse, that we set to be $j^\tau \eqdef (\iota^\tau)^{-1}$.
Furthermore, for $\theta = 2$, $\iota^\tau$ and $j^\tau$ are respectively the operators
from $\fock{}{}{\tau}$ to $\fock{}{}{}=\fock{}{}{\infty}$ and viceversa,
that for every $\Phi\in\fock{n}{}{\tau}$, $\psi\in\fock{n}{}{}$,
$n\in\N$, satisfy
\begin{equ}[e:iotaeps]
\cF\big(\iota^\tau(\Phi)\big)(p_{1:n})=\hat\Phi(p_{1:n})\prod_{i=1}^n \Theta_\tau(p_i)^{1/2}\,,\qquad \cF\big(j^\tau(\psi)\big)(p_{1:n})=\frac{\hat\psi(p_{1:n})}{\prod_{i=1}^n \Theta_\tau(p_i)^{1/2}}\,.
\end{equ}
\end{lemma}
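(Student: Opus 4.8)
\textbf{Proof strategy for Lemma~\ref{l:Emb}.}

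The plan is to handle the general-$\theta$ statement first and then specialize to $\theta=2$ where the Fock-space structure becomes available. For the general case, recall that $\iota^\tau$ sends $\eta^\tau\mapsto F(\eta^\tau)$ to $\eta\mapsto F(\rho_\tau\ast\eta)$, and that $\rho_\tau\ast\eta$ has exactly the law $\P^\tau$ of $\eta^\tau$ under $\P$. Hence for any measurable $F$,
\begin{equ}
\E\big[|\iota^\tau F(\eta)|^\theta\big]=\E\big[|F(\rho_\tau\ast\eta)|^\theta\big]=\E^\tau\big[|F(\eta^\tau)|^\theta\big]\,,
\end{equ}
which is precisely the isometry statement. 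The only genuine content is surjectivity, i.e.\ the existence of the inverse $j^\tau$: one must show that the image of $\iota^\tau$ is all of $\L^\theta(\P)$. The key observation is that $\rho_\tau\ast\eta$ \emph{generates} the same $\sigma$-algebra as $\eta$ up to $\P$-null sets, because the map $\eta\mapsto\rho_\tau\ast\eta$ is invertible on the level of (tempered-distribution-valued) Gaussian fields: since $\hat\rho_\tau$ is analytic and hence nonvanishing outside a Lebesgue-null set (as already noted in the footnote to~\eqref{e:normHeps}), one can recover $\eta$ from $\rho_\tau\ast\eta$ by dividing by $\hat\rho_\tau$ in Fourier space. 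More operationally, the cylinder functionals of $\eta^\tau$ — i.e.\ $f(\eta^\tau(\phi_1),\dots,\eta^\tau(\phi_n))$ with $\phi_i\in\cS(\R^2)$ — are dense in $\L^\theta(\P^\tau)$, and their images under $\iota^\tau$ are $f(\eta(\rho_\tau\ast\phi_1),\dots,\eta(\rho_\tau\ast\phi_n))$; since $\rho_\tau\ast\cS(\R^2)$ is dense in $L^2(\R^2)$ (again because $\hat\rho_\tau\ne0$ a.e., so $\{\hat\rho_\tau\hat\phi:\phi\in\cS\}$ is dense in $L^2$), these generate $\L^\theta(\P)$. Therefore $\iota^\tau$ has dense range; being an isometry between Banach spaces, its range is also closed, so it is onto, and we define $j^\tau\eqdef(\iota^\tau)^{-1}$.

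For $\theta=2$, it remains to compute $\iota^\tau$ and $j^\tau$ explicitly on Fock space. The clean way is to check the chaos-by-chaos formula on a generating family. Fix $n$ and take $\Phi=I^\tau_n(h^{\otimes n})$ with $h\in\cS(\R^2)$, $\|h\|_{L^2_\tau}=1$, so that $\Phi=n!\,H_n(\eta^\tau(h))$. Then $\iota^\tau\Phi=n!\,H_n(\eta(\rho_\tau\ast h))$. Writing $g\eqdef\rho_\tau\ast h$, one has $\|g\|_{L^2(\R^2)}^2=\int|\hat\rho_\tau(p)|^2|\hat h(p)|^2\,\dd p=(2\pi)^{-2}\int\Theta_\tau(p)|\hat h(p)|^2\,\dd p=\|h\|_{L^2_\tau}^2=1$ (using $\Theta_\tau=(2\pi\hat\rho_\tau)^2$ and the definition of $\|\cdot\|_{L^2_\tau}$), so $n!\,H_n(\eta(g))=I_n(g^{\otimes n})$ lies in $\SH_n^\infty$ and corresponds to the Fock vector $g^{\otimes n}$. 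In Fourier variables, $\hat g(p)=2\pi\hat\rho_\tau(p)\hat h(p)=\Theta_\tau(p)^{1/2}\hat h(p)$, whence
\begin{equ}
\cF\big(\iota^\tau(\Phi)\big)(p_{1:n})=\prod_{i=1}^n\hat g(p_i)=\Big(\prod_{i=1}^n\Theta_\tau(p_i)^{1/2}\Big)\prod_{i=1}^n\hat h(p_i)=\Big(\prod_{i=1}^n\Theta_\tau(p_i)^{1/2}\Big)\hat\Phi(p_{1:n})\,,
\end{equ}
which is exactly~\eqref{e:iotaeps}. Since vectors of the form $h^{\otimes n}$ (with the normalization $\|h\|_{L^2_\tau}=1$, then by polarization and density arbitrary symmetric tensors) span a dense subspace of $\fock{n}{}{\tau}$, and both sides of the claimed identity are bounded linear (the multiplier $\prod\Theta_\tau(p_i)^{1/2}$ being bounded since $\hat\rho$ is), the formula extends to all of $\fock{n}{}{\tau}$; summing over $n$ gives the statement on $\fock{}{}{\tau}$. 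The formula for $j^\tau$ follows by inverting the multiplier — legitimate since $\Theta_\tau>0$ a.e. — and one checks $j^\tau\iota^\tau=\Id$, $\iota^\tau j^\tau=\Id$ directly from~\eqref{e:iotaeps}.

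The main obstacle, such as it is, is the surjectivity argument: one has to be careful that $\rho_\tau\ast\eta$ really does carry the same information as $\eta$, i.e.\ that no information is lost under mollification at the level of $\sigma$-algebras. This hinges entirely on $\hat\rho_\tau$ being nonzero almost everywhere, which is guaranteed by analyticity of $\hat\rho$ (a compactly supported $\rho$), and it is the one place where the specific choice of mollifier class matters. Everything else — the isometry, the density of cylinder functionals, the Fock-space computation — is routine once this point is in place.
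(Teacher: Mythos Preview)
Your proof is correct and rests on the same crucial point as the paper's --- the analyticity of $\hat\rho_\tau$, hence its nonvanishing almost everywhere --- but you organize the surjectivity step differently. The paper constructs the inverse $j^\tau$ explicitly: it first defines a deconvolution map $(\rho_\tau\ast)^{-1}\colon L^2(\R^2)\to L^2_\tau(\R^2)$ via $\cF\big((\rho_\tau\ast)^{-1}f\big)(p)=\hat f(p)/\Theta_\tau(p)^{1/2}$, observes that $\eta=\eta^\tau\circ(\rho_\tau\ast)^{-1}$, and then sets $j^\tau F(\eta^\tau)\eqdef F(\eta^\tau\circ(\rho_\tau\ast)^{-1})$, checking $\iota^\tau\circ j^\tau=\Id$ directly. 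You instead argue abstractly: the range of $\iota^\tau$ contains the cylinder functionals in the variables $\eta(\rho_\tau\ast\phi)$, these are dense in $\L^\theta(\P)$ because $\rho_\tau\ast\cS$ is dense in $L^2(\R^2)$, and an isometry has closed range. Your route is slightly cleaner for the bare surjectivity claim; the paper's gives the Fock-space formula~\eqref{e:iotaeps} immediately from the definition of $(\rho_\tau\ast)^{-1}$, whereas you need the separate chaos-by-chaos computation on $h^{\otimes n}$ (which is fine, and arguably more transparent).

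One small slip: in your norm check you write $\|g\|_{L^2}^2=\int|\hat\rho_\tau|^2|\hat h|^2\,\dd p$, but with $\hat g=2\pi\hat\rho_\tau\hat h$ (your own convolution identity) this should be $\int(2\pi)^2|\hat\rho_\tau|^2|\hat h|^2\,\dd p=\int\Theta_\tau|\hat h|^2\,\dd p$. The intermediate factor of $(2\pi)^{-2}$ is spurious and the conclusion $\|g\|_{L^2}=\|h\|_{L^2_\tau}$ is correct once this is fixed.
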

\begin{proof}
As $\iota^\tau$ is an isometric embedding, to show that it is an isometry, it suffices to prove that
it is surjective. To do so, we will build its inverse, i.e. a map $j^\tau \colon \L^\theta(\P) \to \L^\theta(\P^\tau)$
such that $\iota^\tau \circ j^\tau = \mathrm{Id}$.
First, let $(\rho_\tau *)^{-1} \colon L^2(\R^2) \to L^2_\tau(\R^2)$ be given as
\begin{equ}
\cF\big((\rho_\tau *)^{-1} f\big)(p) \eqdef \frac{\hat{f}(p)}{\Theta_\tau(p)^{1/2}} \, ,
\end{equ}
which is well defined, as $\rho$ is compactly supported, its Fourier transform is analytic
and thus the set of points at which $\Theta_\tau$ vanishes has zero Lebesgue measure.
Furthermore, $(\rho_\tau *)$ and $(\rho_\tau *)^{-1}$ are both isometries
respectively from $L^2(\R^2)$ to $L^2_\tau(\R^2)$ and viceversa, and are inverses of one another.
Now, note that for $\phi \in L^2(\R^2)$, $\eta(\phi) = \eta^\tau\big((\rho_\tau *)^{-1} \phi\big)$,
that is to say $\eta = \eta^\tau \circ (\rho_\tau *)^{-1}$. Hence, the map
$j^\tau \colon \L^\theta(\P) \ni F(\eta) \mapsto F(\eta^\tau \circ (\rho_\tau *)^{-1}) \in \L^\theta(\P^\tau)$
is also well-defined, is an isometry and it is the inverse of $\iota^\tau$.

At last,~\eqref{e:iotaeps} can be immediately deduced by the definitions of $(\rho_\tau\ast)^{-1}$ and $j^\tau$.
\end{proof}

\section{Preliminaries, novelties and roadmap}\label{sec:preliminaries}

In this section, we present some preliminary results concerning the
solution to the regularised scaled SBE in~\eqref{e:ScaledSPDE}.
The statements  are essential, since they represent the foundations on which our analysis relies,
but basic, in that their proof uses classical
techniques from PDE theory and functional analysis.
This is the reason why we preferred to postpone most of the arguments (some of which quite involved)
to Appendices~\ref{a:BasicsSBE} and~\ref{a:Basics}, and could be skipped at first read.

%

After studying basic properties of the solution (Section~\ref{sec:thereg}),
its semigroup and generator (Section~\ref{sec:generator}),
in Section~\ref{sec:HeuRM} we provide a more detailed heuristic as to how the proof of the main result works
and what are the novel ideas behind it.

Throughout this section, the regularisation parameter $\tau>0$ is {\it fixed} and, unless otherwise specified,
in the estimates herein no uniformity in its value is claimed (or true).

\subsection{The regularised stochastic Burgers equation} \label{sec:thereg}


To rigorously discuss the equation of interest, let $(\cO,\cF,\Prob)$ be a probability space
supporting a couple of independent spatial white noise $\eta$ on $\R^2$ and
$2$-dimensional space-time white noise $\vec{\xi}=(\xi_1,\xi_2)$ on $\R \times \R^2$,
whose laws are respectively denoted by $\P$ and $P$, and $\Prob \eqdef \P \otimes P$.
Let $(\cF_t)_{t \geq 0}$ be the filtration in which $\cF_t$ is the (completed)
$\sigma$-algebra generated by $\eta$ and $\restriction{{\vec \xi} \,}{(-\infty,t]}$.
Let $\rho\in\cS(\R^2)$ be the mollifier (i.e. smooth, even-symmetric, compactly supported in a ball of radius $1$ and
with total mass equal to $1$) introduced above
and, for $\tau>0$,
$\rho_\tau$ be as in~\eqref{e:Mollifiers}. 
Let $\eta^\tau \eqdef  \eta\ast\rho_\tau$ and $\vec{\xi}^{\tau} \eqdef \vec{\xi}\ast\rho_\tau=(\xi_1\ast\rho_\tau,\xi_2\ast\rho_\tau)$
the rescaled white noises, whose joint law is $\Prob^\tau \eqdef \P^\tau \otimes P^\tau$.
\medskip

Now, we want to study the SPDE in~\eqref{e:ScaledSPDE} given by
\begin{equ}\label{e:SBEsystem}
\partial_t u^\tau = \frac{1}{2} \, \nabla \cdot R_\tau \, \nabla u^\tau + \lambda \, \nu_\tau^{3/4} \, \cN^\tau[u^\tau] + \nabla \cdot \sqrt{R_\tau} \, \vec{\xi}^\tau \, ,\qquad u^\tau_0 = \eta^\tau\,,
\end{equ}
where $u^\tau=u^\tau(t,x)=u^\tau_t(x)$ for $(t,x)\in\R_+ \times \R^2$ and $\cN^\tau$ is given as in~\eqref{e:NonLin1} 
but with $\rho_\tau$ in place of $\rho$,
and $R_\tau$, $\nu_\tau$ as in~\eqref{e:nutau}.
To do so, we need to introduce a
space in which $u^\tau$ lives and a suitable notion of solution. The former is
$C^\beta_\loc(\R_+, \CC^\alpha(\p_a))$ for $\beta,\alpha,a>0$, i.e.
the space of locally $\beta$-H\"older continuous functions in time with
values in the weighted Besov-H\"older space $\CC^\alpha(\p_a)$ of regularity $\alpha$. The precise
definition of $\CC^\alpha(\p_a)$ will be given in Appendix~\ref{a:Besov}
but, for now, the reader can think of the space of $\alpha$-H\"older continuous functions
whose norm restricted to a ball of radius $x$ grows at most as $\p_{-a}(x)=(1+|x|)^a$ (in particular this is true
for all derivatives of order smaller than $\lfloor\alpha\rfloor$).
Concerning the notion of solution, it will be convenient for us to work with mild solutions.

\begin{definition}\label{def:SolRegBurgers}
We say that $u^\tau$ is a mild solution of~\eqref{e:SBEsystem} driven by $(\eta^\tau,\vec{\xi}^{\tau})$
if $u^\tau$ is $(\cF_t)_{t \geq 0}$-adapted,
it belongs to $C_\loc(\R_+,\CC^\alpha(\p_a))$ $\Prob^\tau$-a.s., and for every $t \geq 0$ it holds
\begin{equ}[e:SBEregMild]
u^\tau_t = K^\tau_t \eta^\tau + \lambda\nu_\tau^{3/4} \int_0^t K^\tau_{t-s} \, \cN^\tau[u^\tau_s] \, \dd s+\int_0^t K^\tau_{t-s} \, \nabla\cdot \, R_\tau\vec{\xi}^{\tau}(\dd s) \, ,
\end{equ}
where the last summand is to be interpreted as a stochastic convolution in the sense of Walsh~\cite{Walsh},
and $K^\tau_t\eqdef e^{\frac{t}{2}\nabla \cdot R_\tau \, \nabla}$ is the heat semigroup associated to $ \frac12\nabla \cdot R_\tau \, \nabla$.
\end{definition}

\begin{remark}\label{rem:MildvsWeak}
In previous works (and especially~\cite{CGT}), the notion of solution considered is that of
analytically weak solution, which is obtained by multiplying both sides of~\eqref{e:SBEsystem} by a
function $\phi$, integrating in time and space, and requiring the equality to hold for a sufficiently large
class of $\phi$'s. In general, the notion of mild solution is stronger than that of weak but,
given the regularity of the noise (and the initial condition) for $\tau$ fixed, the two can be easily shown to be equivalent
arguing as in~\cite[Theorem 5.4]{DPZ}.
\end{remark}

The next proposition summarises the main properties of $u^\tau$ we are concerned with, i.e.
global well-posedness, Markovianity and stationarity. See Appendix~\ref{a:SBE}, for the proof. 

\begin{proposition}\label{p:Global}
Let $\tau>0$ be fixed. For any $\alpha > 0$ and $a > 0$,
the regularised stochastic Burgers equation~\eqref{e:SBEsystem} driven by
$(\eta^\tau,\vec{\xi}^{\tau})$ admits a unique mild solution
whose paths belong to $C_{\mathrm{loc}}(\R_+,\CC^\alpha(\p_a))$
(actually to $C_{\mathrm{loc}}^\beta(\R_+,\CC^\alpha(\p_a))$
for any $\beta \in [0,1/2)$) and path-by-path uniqueness holds.
Moreover, for every $\alpha,a, p, T > 0$ and $\beta \in [0,1/2)$,
there exists a positive constant $C=C(\tau,\alpha,\beta,a, p, T)<\infty$ such that
\begin{equ}[e:apriori]
\Exp^\tau\Big[\|u^\tau\|^p_{C^\beta([0,T], \cC^\alpha(\p_{a}))}\Big] \leq C \, .
\end{equ}
At last, $(u^\tau_t)_{t \geq 0}$ defines a $(\cF_t)_{t \geq 0}$-Markov process on $\CC^\alpha(\p_a)$,
which is stationary, that is, for any $t\geq 0$, $u^\tau_t\eqlaw \eta^\tau$; and
skew-reversible, in the sense that, for any $T\geq 0$,
$\{u^\tau_{T-t}\colon t\in[0,T]\}\eqlaw\{v^\tau_t\colon t\in[0,T]\}$ where $v^\tau$
is the unique mild solution to~\eqref{e:SBEsystem} driven by
$(\tilde\eta^\tau,\vec{\tilde\xi}^{\tau})\eqlaw(\eta^\tau,\vec{\xi}^{\tau})$ with $\lambda$ replaced by $-\lambda$.
\end{proposition}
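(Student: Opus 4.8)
The plan is to establish the four assertions of Proposition~\ref{p:Global} --- global well-posedness with the stated H\"older regularity, path-by-path uniqueness, the moment bound~\eqref{e:apriori}, and the Markov/stationarity/skew-reversibility properties --- by a fixed-point argument followed by explicit use of the invariant measure. Since $\tau$ is fixed and no uniformity is claimed, all the noise and initial data are smooth, so the nonlinearity $\cN^\tau[u^\tau] = (\fe_1\cdot\nabla)\,\rho_\tau^{\ast 2}\ast (u^\tau)^2$ is genuinely a smooth, locally Lipschitz map from $\CC^\alpha(\p_a)$ into (a higher weighted H\"older space, but certainly) $\CC^\alpha(\p_{2a})$, because the convolution with $\rho_\tau^{\ast 2}\in\cS(\R^2)$ smooths and the quadratic map is locally Lipschitz on H\"older balls with the weight exponent at most doubling. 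First I would set up the stochastic convolution $Z^\tau_t \eqdef \int_0^t K^\tau_{t-s}\nabla\cdot R_\tau\vec\xi^\tau(\dd s)$ together with the deterministic-forced-by-smooth-initial-datum term $K^\tau_t\eta^\tau$; by the smoothness of $\rho_\tau$ and standard Walsh-integral/Kolmogorov estimates (done in detail in Appendix~\ref{a:SBE}) one shows $Z^\tau\in C^\beta_\loc(\R_+,\CC^\alpha(\p_a))$ a.s.\ for every $\alpha>0$, $a>0$, $\beta\in[0,1/2)$, and similarly for $K^\tau_\cdot\eta^\tau$. Then, working pathwise, I would solve the remainder equation $v_t = \lambda\nu_\tau^{3/4}\int_0^t K^\tau_{t-s}\cN^\tau[v_s + K^\tau_s\eta^\tau + Z^\tau_s]\,\dd s$ by Banach fixed point on $C([0,T_\ast],\CC^\alpha(\p_a))$ for $T_\ast$ small (depending on the $\CC^\alpha(\p_a)$-norm of the data on $[0,T]$), using that $K^\tau_t$ is a bounded analytic semigroup on the weighted H\"older scale and that $\cN^\tau$ is locally Lipschitz; the weight growth under the quadratic map is controlled because $K^\tau_t$ maps $\p_{2a}$-weighted spaces back into $\p_a$-weighted ones at fixed $t$ with a cost blowing up only polynomially in $1/t$, integrable in $s$.

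Next I would \emph{globalize}: a priori there is no energy estimate that is both cheap and $\tau$-uniform, but here we do not need uniformity. The key point is stationarity of the white-noise law, which furnishes a global-in-time a priori bound in probability. Concretely, I would first prove local existence up to a (possibly finite) blow-up time $T^\star$ of the $\CC^\alpha(\p_a)$-norm; then observe that the regularised equation~\eqref{e:SBEsystem} formally admits $\P^\tau$ (the law of $\eta^\tau$) as an invariant measure --- this is the continuum analogue of the Bernoulli-product stationarity for ASEP, and the mollification in~\eqref{e:NonLin1} was chosen (following~\cite{FQ}) precisely so that the drift $\cN^\tau$ is divergence-free with respect to $\P^\tau$ (the $\fe_1\cdot\nabla$ in front makes $\int \eta^\tau(\fe_1\cdot\nabla(\rho_\tau^{\ast2}\ast\phi))\,\Prob^\tau$-type integration by parts vanish), so that the generator's antisymmetric part annihilates constants and $\P^\tau$ is stationary for the Galerkin/finite-dimensional truncations, hence for the limit. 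From stationarity, $\Exp^\tau[\|u^\tau_t\|^p_{\CC^\alpha(\p_a)}]$ is constant in $t$ (equal to its value at the white-noise initial datum, which is finite for $a>0$ large enough --- and by monotonicity of weights for all $a>0$), which rules out finite-time blow-up a.s.\ and upgrades local to global well-posedness; combining with deterministic Schauder/Duhamel estimates on the stochastic-convolution and semigroup terms over $[0,T]$ yields~\eqref{e:apriori} with $C=C(\tau,\alpha,\beta,a,p,T)$. Path-by-path uniqueness is immediate from the pathwise Banach fixed point (the contraction constant depends only on the realised noise path).

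For the remaining structural properties: Markovianity follows from the mild formulation~\eqref{e:SBEregMild} and the independent-increments structure of $\restriction{\vec\xi}{(-\infty,t]}$ --- the solution map $(u_s, \vec\xi|_{[s,t]})\mapsto u_t$ is measurable and the shifted noise is independent of $\cF_s$, so $(u^\tau_t)_{t\ge0}$ is $(\cF_t)$-Markov on $\CC^\alpha(\p_a)$; stationarity $u^\tau_t\eqlaw\eta^\tau$ was just established. Skew-reversibility I would obtain by computing the generator $\gen$ on a core of cylinder functions, decomposing $\gen = \gensy + \gena$ into its $\L^2(\P^\tau)$-symmetric and antisymmetric parts (the symmetric part being the Ornstein--Uhlenbeck/heat generator $\tfrac12\nabla\cdot R_\tau\nabla$ acting on Fock space, the antisymmetric part being the Burgers drift, whose antisymmetry is exactly the divergence-free condition above), and then noting that the adjoint semigroup $(P^\tau_t)^\ast$ corresponds to flipping the sign of $\gena$, i.e.\ of $\lambda$; a standard duality/time-reversal identity for stationary Markov processes then gives $\{u^\tau_{T-t}\}\eqlaw\{v^\tau_t\}$ with $v^\tau$ solving~\eqref{e:SBEsystem} with $-\lambda$. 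The main obstacle I anticipate is \emph{not} the fixed point or the Markov property, which are routine for smooth data, but rather making the stationarity argument fully rigorous: one must justify that the formal invariance of $\P^\tau$ survives the limit from finite-dimensional Galerkin truncations (for which invariance is a genuine finite-dimensional Fokker--Planck computation) to the infinite-dimensional mild solution, and in particular that no probability mass escapes to infinity in the weighted H\"older space --- this is where the careful choice of weight $\p_a$ and the a priori bounds of Appendix~\ref{a:Besov} do the real work, and it is presumably why the authors relegate the bulk of the proof to Appendix~\ref{a:SBE}.
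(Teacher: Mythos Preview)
Your proposal has a genuine gap in the treatment of weights. You claim that ``$K^\tau_t$ maps $\p_{2a}$-weighted spaces back into $\p_a$-weighted ones at fixed $t$ with a cost blowing up only polynomially in $1/t$''. This is false: the heat semigroup does \emph{not} improve polynomial growth, since convolution with a Gaussian of a function growing like $|x|^{2a}$ still grows like $|x|^{2a}$ (cf.\ the paper's estimate~\eqref{e:Semigroup}, where the same weight appears on both sides). Since the quadratic map $u\mapsto\cN^\tau[u]$ takes $\CC^\alpha(\p_a)$ to $\CC^\alpha(\p_{2a})$, a naive fixed point in $\CC^\alpha(\p_a)$ does not close, and iterating loses weight exponentially. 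This is precisely the obstruction that forces the paper's more elaborate route.

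The paper proceeds quite differently. For existence and the moment bound, it first constructs \emph{periodic} approximations $u^M$ on tori of side $M$ (Lemma~\ref{l:Tightness}), where existence, stationarity and skew-reversibility are known from~\cite{CGT} and weights are irrelevant; the uniform bound~\eqref{e:aprioriM} then follows by combining stationarity with the smoothness of $\rho$. To pass to the limit $M\to\infty$ and to prove path-by-path uniqueness, the paper uses a deterministic comparison estimate (Lemma~\ref{l:AprioriLinear}) for the \emph{difference} of two solutions, worked out in \emph{time-dependent exponential} weights $\e^\sigma_{b+t}$ via the Hairer--Labb\'e trick~\eqref{e:WeightsB}: the key inequality $\e^\sigma_{b+t}/(\p_a\,\e^\sigma_{b+s})\lesssim (t-s)^{-a/\sigma}$ absorbs the polynomial weight loss from the quadratic term into an integrable-in-time singularity, allowing a Gr\"onwall argument. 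An interpolation between the compact embedding $C^{\beta'}([0,T],\CC^{\alpha'}(\p_{a'}))\hookrightarrow C^\beta([0,T],\CC^\alpha(\p_a))$ and the exponential-weight estimate then upgrades convergence back to polynomial weights. Stationarity and skew-reversibility are inherited from the periodic approximations by convergence in law, rather than argued directly via Galerkin/Fokker--Planck on the full space --- so the circularity you anticipated in your globalisation step is sidestepped entirely.
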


In light of the previous proposition, we not only know that~\eqref{e:SBEsystem}
admits a unique mild solution $u^\tau$ driven by $(\eta^\tau,\vec{\xi}^{\tau})$ but also that $u^\tau$
is smooth (and stationary).
Therefore, we can define its two-point correlation function $S^\tau\colon\R_+\times\R^2\to\R$ according to
\begin{equ}[e:Corr]
S^\tau_t(x)= \Exp^\tau[u^\tau_t(x) \, u^\tau_0(0)]\,,\qquad (t,x)\in\R_+\times\R^2\,.
\end{equ}
Note that, since $u^\tau$ is centred, $S^\tau$ is indeed the covariance of $u^\tau_t(x)$ and $u^\tau_0(0)$.
Further, it is smooth and its derivatives, which, for $\bk\in\N^{2}$, will be denoted by
$\partial_\bk^{|\bk|} S^\tau=\partial_1^{k_1}\partial_2^{k_2}S^\tau$
(with the convention that if $k_i=0$ then there is no derivative in the $i$-th direction),
can be obtained by commuting the expectation and the derivative operator
(this is allowed as, locally, the sup-norm of the derivatives of $u^\tau_t$ have finite moments of all orders
since, in law, $u^\tau_t$  equals $\eta^\tau$ by stationarity).

In the next proposition, we continue our analysis of $u^\tau$ and show that its correlations, encoded via $S^\tau$, and
their spatial derivatives decay in space faster than any polynomial.

\begin{proposition}
\label{p:corrdecay}
  Let $S^\tau$ be defined according to \eqref{e:Corr}. Then, for every $T, \gamma > 0$ and $\bk\in\N^{2}$,
  there exists a positive constant $C=C (\tau,\gamma, T, \bk) < \infty$ such that for every $t \leqslant T$  
  and $x\in \R^2$. 
  \begin{equation}
    \label{e:Sdecays}
    | \partial_\bk^{|\bk|} S^\tau_t (x) | \leqslant C  \, | x |^{- \gamma} .
  \end{equation}
Similarly, for  $\bk\in\N^2$, we have
\begin{equ}[e:thesame]
\Big|\mathbf E^\tau[\partial_\bk^{|\bk|}\cM^\tau[u^\tau_t](x) \; u^\tau_0(0)]\Big|\vee \Big|\mathbf E^\tau[\partial_\bk^{|\bk|}\cM^\tau[u^\tau_t](x) \;  \cM^\tau[u^\tau_0](0)]\Big| \leqslant C \, | x |^{- \gamma} \,,
\end{equ}
where $\cM^\tau[f]$ is given by
\begin{equ}[e:cM]
\cM^\tau[f] \eqdef \rho_\tau^{\ast 2} \ast f^2 - \mathbb E^\tau\big[\rho_\tau^{\ast 2} \ast f^2 \, (0)\big].
\end{equ}
\end{proposition}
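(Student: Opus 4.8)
The plan is to establish the faster-than-polynomial decay of $S^\tau_t$ (and of its derivatives, and of the mixed correlators involving $\cM^\tau$) by exploiting the \emph{finite speed of correlation propagation} built into the mild formulation~\eqref{e:SBEregMild}, combined with the stationarity and smoothness from Proposition~\ref{p:Global}. The starting observation is that the initial datum $\eta^\tau=\rho_\tau\ast\eta$ has correlations supported in a ball of radius $2/\sqrt\tau$ (since $\rho$ is compactly supported), and the driving noise $\vec\xi^\tau$ is likewise locally correlated in space; the only mechanism spreading correlations is the heat semigroup $K^\tau_t=e^{\frac t2\nabla\cdot R_\tau\nabla}$, whose kernel has Gaussian (hence super-polynomial) spatial tails uniformly on $[0,T]$. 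So morally the bound should follow by iterating~\eqref{e:SBEregMild} and tracking how far correlations can travel in time $\leq T$.

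Concretely, I would proceed as follows. First, fix $t\le T$ and $x$ with $|x|$ large, and write a Duhamel/mild expansion: iterating~\eqref{e:SBEregMild} $m$ times expresses $u^\tau_t(x)$ as a sum of multilinear terms in the noises $\eta^\tau$ and $\vec\xi^\tau$ (smoothed by products of heat kernels $K^\tau$) plus a remainder of order $(\lambda\nu_\tau^{3/4})^{m+1}$ involving $m+1$ factors of $u^\tau$. Pairing with $u^\tau_0(0)$ and taking $\mathbf E^\tau$, each explicit term is a spatial convolution of heat kernels integrated against the (compactly supported) covariance kernels of $\eta$ and $\vec\xi$; since every such term carries at least one heat kernel $K^\tau_s$ with $s\le T$ evaluated at a point at distance $\gtrsim|x|$ from the origin (because the sources are localized near $0$), Gaussian tail bounds on $K^\tau$ give a bound $\lesssim e^{-c|x|^2/T}$ on each term, \emph{but} the number and combinatorial weight of terms at level $m$ grows, and one must check the geometric series in $m$ converges — here is where the a priori moment bound~\eqref{e:apriori} on $\|u^\tau\|_{C^\beta([0,T],\cC^\alpha(\p_a))}$ enters, to control the remainder: $|\mathbf E^\tau[(\text{remainder})\,u^\tau_0(0)]|$ is bounded using Cauchy–Schwarz, the $\cC^\alpha(\p_a)$ bound on the $u^\tau$ factors, the polynomial weight $\p_a$, and crucially the fact that the iterated heat-kernel convolution localizes the remainder's "mass" away from $0$. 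An alternative, cleaner route — which I would actually prefer to write — is to avoid the full expansion and instead use a localization/commutator argument: multiply~\eqref{e:SBEregMild} by a smooth cutoff $\chi(x)$ vanishing near $0$, estimate $\|\chi u^\tau_t\|$ via the mild equation, and use that $\chi K^\tau_s(1-\tilde\chi)$ has super-polynomially small operator norm when $\chi,\tilde\chi$ have well-separated supports (finite-propagation-type bound for the heat kernel against polynomial weights), iterating in $t$ over $[0,T]$ via a Gronwall argument in the weighted norm. Either way, stationarity lets us commute $\partial_\bk$ past $\mathbf E^\tau$, so the derivative bounds~\eqref{e:Sdecays} follow from the same estimates applied to $\partial_\bk K^\tau_s$, which enjoys the same Gaussian tails (with constants depending on $\bk$); and~\eqref{e:thesame} follows identically since $\cM^\tau[u^\tau_t](x)=\rho_\tau^{\ast2}\ast(u^\tau_t)^2(x)-(\text{const})$ is again a local, smooth functional of $u^\tau_t$ near $x$, so the mixed correlators $\mathbf E^\tau[\partial_\bk\cM^\tau[u^\tau_t](x)\,u^\tau_0(0)]$ and $\mathbf E^\tau[\partial_\bk\cM^\tau[u^\tau_t](x)\,\cM^\tau[u^\tau_0](0)]$ decay for the same reason (using $\mathbf E^\tau[\,\cdot\,]\le(\mathbf E^\tau[\,\cdot\,^2])^{1/2}$ and~\eqref{e:apriori} to handle the quadratic nonlinearity).

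The main obstacle I expect is the \emph{combinatorial control of the Duhamel iteration}: the nonlinearity is quadratic, so the $m$-th iterate has $\sim C^m m!$-type term counts, and one must show that the smallness coming from separated heat-kernel supports (essentially $e^{-c|x|^2/T}$ per term, or a product of such factors split among the spatial legs) beats this growth — equivalently, that one can choose $m=m(|x|)$ growing slowly enough that both the explicit terms and the remainder are $\lesssim|x|^{-\gamma}$. This is exactly why I lean toward the cutoff/Gronwall formulation, which sidesteps the factorial bookkeeping: there one only needs, at each time step, that the cutoff commutes with $K^\tau_s$ up to a super-polynomially small error in the separation distance, and a standard Gronwall inequality in $t\in[0,T]$ closes the estimate in the weighted Hölder norm. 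The genuinely technical input is then a single clean lemma: for $\chi$ supported in $\{|x|\ge R\}$ and $\tilde\chi$ supported in $\{|x|\le R/2\}$, and for $s\le T$, the kernel of $\chi\,K^\tau_s\,\partial_\bk\,\tilde\chi$ is bounded in the relevant weighted operator norm by $C(\tau,T,\bk,\gamma)\,R^{-\gamma}$ for every $\gamma$ — which is immediate from the explicit Gaussian form of $K^\tau_s$ — and the rest is routine PDE/functional-analytic bookkeeping of the kind relegated to the appendices.
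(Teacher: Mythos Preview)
There is a genuine gap. In Route~B, bounding $\|\chi u^\tau_t\|$ in any weighted norm gives nothing: by stationarity the law of $u^\tau_t$ is translation-invariant, so this quantity is of order~$1$ no matter where $\chi$ is supported. What you need is not the size of $u^\tau_t(x)$ but the part of it that \emph{depends on the data near~$0$}, and a spatial cutoff on $u^\tau_t$ does not isolate that. Route~A has the mirror problem in the remainder: after $m$ Duhamel iterations the remainder is still a nonlinear functional of $u^\tau_s$ at intermediate times, and your claim that the heat kernels ``localize the remainder's mass away from~$0$'' is circular --- after pushing $K_{t-s_1}(x-\cdot)$ through, correlating the remainder at $x$ with $u^\tau_0(0)$ reduces to correlating $u^\tau_{s_1}(y)$ with $u^\tau_0(0)$ for $|y|\sim|x|$, which is the unknown. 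Cauchy--Schwarz plus~\eqref{e:apriori} on the remainder gives only a bound independent of~$|x|$.

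The missing idea, which the paper supplies, is a \emph{coupling}. Introduce $\tilde\eta=\sqrt{\varphi_{x_0}}\,\eta'+\sqrt{1-\varphi_{x_0}}\,\eta$ with $\eta'$ an independent spatial white noise and $\varphi_{x_0}$ a bump supported in $B_0(|x_0|/4)$; then $\tilde\eta^1=\rho\ast\tilde\eta$ agrees with $\eta^1$ outside a ball of radius $R\sim|x_0|/4$ but is independent of $\eta^1$ near~$0$. Solving the SPDE from $\tilde\eta^1$ with the \emph{same} $\vec\xi$ gives $\tilde u\eqlaw u$ with $\tilde u_t(x_0)$ independent of $u_0(0)$, so $\partial_\bk^{|\bk|} S_t(x_0)=\mathbf E[\partial_\bk^{|\bk|} v_t(x_0)\,u_0(0)]$ for $v\eqdef u-\tilde u$. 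Now $v$ solves the \emph{linear} (in $v$) equation $v_t=K_t v_0+\lambda\int_0^t K_{t-s}\cN^1[v_s,w_s]\,ds$ with $w=u+\tilde u$ and initial datum $v_0$ \emph{compactly supported} in $B_0(R)$. On the high-probability event where $|w|\le R^\delta\p_{-\delta}$ and $\sup_{B_0(R)}|v_0|\le R^\delta$ (complement controlled via~\eqref{e:apriori}), $|v|$ is dominated pointwise by the solution $Z$ of a deterministic nonlocal PDE $\partial_t Z=(\tfrac12\Delta+L)Z+gZ$ with compactly supported data, and a Feynman--Kac representation for the associated jump-diffusion yields $Z_t(x_0)\lesssim e^{-c|x_0|}$. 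The bounds~\eqref{e:thesame} follow by writing $\cM^1[u_t]-\cM^1[\tilde u_t]=\rho^{\ast2}\ast(v_t w_t)$. The coupling is exactly what converts the problem into one where your Route-B intuition --- Gronwall-type control of a process with compactly supported initial data --- actually applies.
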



\begin{proof}
Throughout the proof, since $\tau$ is fixed, without loss of generality, we take it to be equal to $1$ and remove it from
the notation. Note that this gives $\nu_\tau=1$ and $R_\tau=\Id$, the identity matrix.
The result for general $\tau$ can then be deduced by scaling.
Further, we will denote by $C>0$ a constant only depending on
the stated quantities, and whose value may change from line to line.

Let us begin with a few preliminary definitions and observations.
Let $\varphi : \R^2 \to [0, 1]$ be a compactly supported smooth function such
  that $\varphi \equiv 1$ on $B_0  (1 / 8) \subset \R^2$, i.e. the ball
  centred at the origin of radius $1 / 8$, and $\varphi \equiv 0$ on $B_0  (1
  / 4)^c$, and set, for $x_0 \in \R^2$, $|x_0|>4$,
$\varphi_{x_0} (y) = \varphi (y / | x_0 |)$.
  Let $\eta'$ be a space white noise on $\R^2$ independent of $\eta$ and
  define $\tilde\eta$ according to
  \begin{equ}
\tilde\eta = \sqrt{\varphi_{x_0}} \eta' + \sqrt{1 -
    \varphi_{x_0}} \eta\,.
  \end{equ}
Note that $\tilde\eta$ is distributed according to a spatial white noise on $\R^2$ and,
setting as usual $\tilde\eta^1\eqdef\tilde\eta\ast\rho$, we have that $\tilde\eta^1$ coincides with $\eta^1$
on $B_0 (R)^c$ where $R \eqdef | x_0 | / 4 + 1$ (because $\rho$ has support in the ball of radius $1$), and
$\restriction{\tilde\eta^1}{B_0  (|x_0 | / 8 - 1)}$ is independent of
$\restriction{\eta^1}{B_0  (|x_0 | / 8 - 1)}$.

Let $\tilde{u}$ be the unique mild solution of \eqref{e:SBEsystem} driven by $(\tilde\eta^1,\vec{\xi})$
and $u$ that driven by $(\eta^1,\vec{\xi}^1)$, so that $\tilde{u}\eqlaw u$.
Set $v\eqdef u-\tilde u$ and $w\eqdef u+\tilde u$ and note that they satisfy~\eqref{e:EquationDifference}
with $X= K_t v_0=K_t (\eta^1-\tilde\eta^1)$.
Then, thanks to the previous observations, the $\bk$-th derivative, for $\bk\in\N^{2}$, of
the two-point correlation function $S$
at $x_0$ can be written as
%
%
  \begin{equ} \label{e:tispezzoindue}
   \partial_\bk^{|\bk|} S_t (x_0) = \mathbf{E} [\partial_\bk^{|\bk|}\tilde{u}_t (x_0)\, u_0 ( 0)] +
    \Exp [\partial_\bk^{|\bk|} v_t(x_0)\, u_0 (0)] = \Exp [\partial_\bk^{|\bk|} v_t(x_0)\, u_0 (0)]
  \end{equ}
  where we used that $\partial_\bk^{|\bk|}\tilde{u}$ is independent of
  $\restriction{\partial_\bk^{|\bk|}\eta^1}{B_0  (|x_0 | / 8 - 1)}$.

At this point, we need to control $\partial_\bk^{|\bk|} v_t(x_0)$.
To do so, we will first take advantage of~\eqref{e:apriori} to get rid of the
randomness of $w$, $\eta^1$ and $\tilde\eta^1$ so to be able to estimate it with
the solution to deterministic PDEs and ultimately prove that these decay sufficiently fast.
For $\delta>0$ and $R$ as above, define the event
\begin{equ} \label{e:EventE}
E=\Big\{\sup_{(t,x)\in[0,T]\times\R^2}|\p_\delta(x)\,\partial_\bk^{|\bk|}w_t(x)|\vee\sup_{x\in B_0(R)}|\partial_\bk^{|\bk|}v_0(x)|\leq R^\delta\Big\}
\end{equ}
and notice that thanks to~\eqref{e:apriori} and~\eqref{e:BoundSHE} (the latter with $t=0$),
the weighted norm of $w$ and the
local sup-norm of $v_0=\eta^1-\tilde\eta^1$ have finite moments of all orders,
which immediately implies that $\Prob(E^c)\lesssim R^{-\nu}$ for any given $\nu>0$. Therefore,
\begin{equs} 
    | \partial_\bk^{|\bk|} S_t (x_0) | &\leq \Exp[|  \partial_\bk^{|\bk|}v_t ( x_0) u_0 (0) | \1_{E^c}] + \Exp[| \partial_\bk^{|\bk|}v_t ( x_0) u_0 ( 0) | \1_{E}]\\
    & \lesssim R^{- \nu} \sqrt{\Exp [| \partial_\bk^{|\bk|}v_t ( x_0)|^2 |u_0 (
    0)|^2]} + \sqrt{\Exp[| \partial_\bk^{|\bk|}v_t ( x_0)|^2\1_{E}] \Exp[|u_0 ( 0) |^2]}\\
  & \lesssim R^{- \nu}+ \sqrt{\Exp[| \partial_\bk^{|\bk|}v_t ( x_0)|^2\1_{E}]}
  \label{e:Decay1}
\end{equs}
where, in the first step we applied Cauchy-Schwarz, while, in the last, again Cauchy-Schwarz on the first expectation
and then stationarity and finiteness of moments of $u_0(0)=\eta^1(0)$ and of $\partial_\bk^{|\bk|}\eta^1(0)$,
to conclude that the
bound is uniform in $t$ and $x_0$. Thus, we are left to show that, on $E$, $| \partial_\bk^{|\bk|}v_t ( x_0)|$
can be estimated by the right hand side of~\eqref{e:Sdecays}.

Recall the definition of $\cN^1[\cdot,\cdot]$ in~\eqref{e:BilinearForm},
that $v$ solves~\eqref{e:EquationDifference} and that, by construction, $v_0$ is compactly supported in a ball of
radius $R$. Then, on $E$, for every $(t,x)\in[0,T]\times\R^2$, we have
\begin{equs}[e:Decay2]
| \partial_\bk^{|\bk|}v_t ( x)|&\leq K_t| \partial_\bk^{|\bk|}v_0| ( x)+\int_0^t K_{t-s}\big|\partial_\bk^{|\bk|}\cN^1[v_s,w_s]\big|(x)\dd s\\
&\leq  K_t (R^\delta\1_{B_0(R)})(x_0)+R^\delta \int_0^t K_{t-s}\ast\big|\partial_\bk^{|\bk|}(\fw\cdot\nabla)\rho^{\ast2}\big|\ast(|v_s|\p_{-\delta})(x)\dd s\,.
\end{equs}
Let us consider first the case of $|\bk|=0$.
As $\rho^{\ast2}$ has
compact support and $\p_{-\delta}$ is a weight,~\eqref{e:PropWeight} ensures the existence
of a constant $c_\delta>0$ such that the convolution in the second summand above can be estimated by
\begin{equs}[e:Decay3]
\big|(\fe_1\cdot\nabla)\rho^{\ast2}\big|\ast(|v_s|\p_{-\delta})(z)&=\int \big|\partial_1\rho^{\ast2}\big|(z-y) |v_s(y)|\p_{-\delta}(y)\dd y\\
&\leq c_\delta p_{-\delta}(z)\int \big|\partial_1\rho^{\ast2}\big|(z-y) |v_s(y)|\dd y\\
&= p_{-\delta}(z)\int \PHI(z-y) [|v_s(y)|-|v_s(z)|]\dd y + m_\PHI  p_{-\delta}(z)|v_s(z)|
\end{equs}
where $\PHI\eqdef c_\delta |\partial_1\rho^{\ast2}|$ (which is compactly supported, positive and even)
and  $m_\PHI$ is its total mass.
Therefore,~\eqref{e:Decay2} and~\eqref{e:Decay3} imply that, uniformly in
$(t,x)\in[0,T]\times\R^2$, $v$ is bounded by the deterministic function $Z=Z_t(x)$ which solves
\begin{equ}[e:Z]
\partial_t Z_t(x)=\Big(\tfrac12\Delta +L_{\PHI,R^\delta \p_{-\delta}}\Big)Z_t(x)+m_\PHI R^\delta \p_{-\delta}(x) Z_t(x)\,,\quad Z_0(x)=R^\delta \1_{B_0(R)}(x)\,,
\end{equ}
for $L_{\PHI,R^\delta \p_{-\delta}}$ the operator acting on functions $u\colon\R^2\to\R$ as
\begin{equ}[e:L]
L_{\PHI,R^\delta \p_{-\delta}}u(x)\eqdef R^\delta \p_{-\delta}(x)\int\PHI(x-y) [u(y)-u(x)]\dd y\,.
\end{equ}
As detailed in Appendix~\ref{a:FK},
the advantage of singling out the operator $L_{\PHI,R^\delta \p_{-\delta}}$ is that
$\frac12 \Delta + L_{\PHI,R^\delta \p_{-\delta}}$ is the generator of a Markov
process on $\mathbb{R}^2$, and this allows to derive an explicit expression for the solution to~\eqref{e:Z}
via a Feynman-Kac formula, which in turn implies the (exponential) decay.
Indeed, it is not hard to see that Lemma~\ref{l:decayZ} applies to $Z$ with the choices of
$\PHI$ as above, $f=  R^\delta \p_{-\delta}$, $g=m_\PHI R^\delta \p_{-\delta}$ and $h=R^\delta\1_{B_0(R)}$.
Hence, recalling that $R=|x_0|/4+1$ (so that, since $h$ is supported in a ball of radius $r_h=R$, we have $|x_0|\geq 2r_h$ as $|x_0|> 4$, as required by Lemma~\ref{l:decayZ}),
we deduce that on the event $E$
\begin{equ}
|v_t(x_0)|\leq Z_t(x_0)\lesssim e^{-C |x_0|}
\end{equ}
which, once plugged into~\eqref{e:Decay1} with $|\bk|=0$, implies~\eqref{e:Sdecays}.

For $|\bk|\geq0$, we go back to~\eqref{e:Decay2} and upper-bound $|v|$ at the right hand
side by $Z$. Then, arguing as in~\eqref{e:Decay3},
we deduce that $|\partial_\bk^{|\bk|}v|$ is upper bounded by
the solution $Z^\bk$ of the non-homogeneous deterministic heat equation given by
\begin{equ}
\partial_t Z^\bk_t(x)=\frac12\Delta Z^{\bk}_t(x) + c_\delta R^\delta \p_{-\delta}(x)\Big(\big|\partial_\bk^{|\bk|}\partial_1\rho^{\ast2}\big|\ast Z_t\Big)(x)\,,\qquad Z_0(x)=R^\delta \1_{B_0(R)}(x)\,.
\end{equ}
Since the initial condition is compactly supported and, by Lemma~\ref{l:decayZ},
the second summand decays exponentially (outside the ball of radius $2R=|x_0|/2+2$),
it can be easily checked that also $Z^\bk_t(x_0)$ decays exponentially
in $|x_0|$ from which~\eqref{e:Sdecays} follows at once.
\medskip

The proof of~\eqref{e:thesame} requires only minor changes.
For instance, arguing as in~\eqref{e:tispezzoindue},
the first term at the right hand side can be rewritten as
\begin{equ}
  \mathbf E\Big[\partial_\bk^{|\bk|}(\cM^1[u_t](x_0)-\cM^1[\tilde u_t](x_0))u_0(0)\Big].
\end{equ}
On the other hand,
\begin{equ}
  \partial_\bk^{|\bk|}(\cM^1[u_t](x_0)-\cM^1[\tilde u_t](x_0))=\partial_\bk^{|\bk|}\rho^{\ast 2}\ast(u^2_t-\tilde u^2_t)(x_0)=
  \partial_\bk^{|\bk|}\rho^{\ast 2}\ast(v_tw_t)(x_0).
\end{equ}
At this point, one proceeds exactly as above, removing $|w|$ as in \eqref{e:Decay1} and
then bounding $|v|$ by $Z$.
We omit obvious details.
\end{proof}

A first consequence of Proposition \ref{p:corrdecay} is that the two-point correlation function $S^\tau$
has total mass equal to $1$, i.e.
\begin{equ}
  [e:afirst]
  \int S^\tau_t(x) \, \dd x=\int S^\tau_0(x) \, \dd x=\int
  \rho_\tau^{\ast 2}(x)\, \dd x=1
\end{equ}
independently of  $t\ge0$.
To see this, note first that, since $u^\tau$ is a (spatially smooth) mild solution to~\eqref{e:SBEsystem},
then it is standard to show that for every $(t,x)\in\R_+\times\R^2$ we have
\begin{equ}[e:SBEWeakMild]
u^\tau_t(x) = \eta^\tau(x) + \frac12\int_0^t \nabla \cdot R_\tau \, \nabla u^\tau_s(x) \ \dd s + \lambda\nu_\tau^{3/4} \int_0^t \cN^\tau[u^\tau_s](x) \, \dd s + \int_0^t \nabla \cdot \sqrt{R_\tau} \vec{\xi}^{\tau}(\dd s, x) \,
\end{equ}
which, by multiplying both sides by $u^\tau_0(0)=\eta^\tau(0)$ and taking expectation, gives
\begin{equ}
  \label{eq:Sintegrata}
  S^\tau_t(x)-S^\tau_0 (x)=\frac{1}2\int_0^t\nabla \cdot R_\tau\, \nabla S^\tau_s(x) \, \dd s+\lambda\nu_\tau^{3/4}\int_0^t \Exp\Big[\partial_1\cM^\tau[u^\tau_s](x) u^\tau_0(0)
  \Big]\dd s\,,
\end{equ}
where we used that the initial condition and the driving noise are independent.
Integrating with respect to $x$, all terms in the right-hand side vanish, as follows by integration by parts, which is
allowed by the decay properties in Proposition \ref{p:corrdecay}.
A similar argument delivers the second important consequence, namely a
Green-Kubo formula for the diffusivity matrix $D^\tau$ defined 
as in~\eqref{e:FormalDiffusivity} but with $u$ replaced by the rescaled process $u^\tau$ in~\eqref{e:GoodScaling}. 

\begin{proposition}\label{p:GK}
Let $S^\tau$ be the two-point correlation function in~\eqref{e:Corr}.
Then, for every $t\geq 0$, the diffusivity matrix $D^\tau$ in~\eqref{e:FormalDiffusivity} with $S^\tau$ in place of $S$ 
is well-defined and admits the
following Green-Kubo representation
\begin{equ}[e:Dnew]
 D^\tau_{1,1}(t) = 
\nu_\tau + \frac{2\lambda^2\nu_\tau^{3/2}}t\int_0^t \int_0^s\int_{\R^2}\Exp\Big[
    \cM^\tau[u^\tau_r](x) \cM^\tau[u^\tau_0](0) \Big] \, \dd x \, \dd r \, \dd s\,,
\end{equ}
where $\cM^\tau$ is defined in \eqref{e:cM}, while $D^\tau_{2,2}(t) = 1$ and $D^\tau_{1,2}(t)=D^\tau_{2,1}(t)=0$.
\end{proposition}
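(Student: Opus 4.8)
The plan is to integrate the identity~\eqref{eq:Sintegrata} against $x_ix_j$. First note that $D^\tau$ is well defined: both $S^\tau_t$ (by Proposition~\ref{p:corrdecay}) and $S^\tau_0=\rho_\tau^{\ast2}$ decay faster than any polynomial, so $\int x_ix_j[S^\tau_t(x)-S^\tau_0(x)]\,\dd x$ converges, and the same decay legitimises all the interchanges below. Multiplying~\eqref{eq:Sintegrata} by $x_ix_j$ and integrating, the linear term gives, after two integrations by parts and using~\eqref{e:afirst},
\[
\tfrac12\int_{\R^2} x_ix_j\,\nabla\cdot R_\tau\nabla S^\tau_s\,\dd x=\tfrac12\int_{\R^2}\big[(\nu_\tau\partial_1^2+\partial_2^2)(x_ix_j)\big]S^\tau_s\,\dd x=\nu_\tau\delta_{i,1}\delta_{j,1}+\delta_{i,2}\delta_{j,2},
\]
hence contributes $t(\nu_\tau\delta_{i,1}\delta_{j,1}+\delta_{i,2}\delta_{j,2})$ once integrated in $s$. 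In the nonlinear term, since $\partial_1\cM^\tau[f]=\partial_1(\rho_\tau^{\ast2}\ast f^2)$ and the constant in~\eqref{e:cM} drops against the centred $u^\tau_0(0)$, one integration by parts in $x_1$ turns $\int x_ix_j\,\Exp[\partial_1\cM^\tau[u^\tau_s](x)\,u^\tau_0(0)]\,\dd x$ into $-\int(\delta_{i,1}x_j+\delta_{j,1}x_i)\,\Exp[\cM^\tau[u^\tau_s](x)\,u^\tau_0(0)]\,\dd x$. For $(i,j)=(2,2)$ this vanishes, so $D^\tau_{2,2}(t)=1$; for $(i,j)\in\{(1,2),(2,1)\}$ it is a multiple of $\int x_2\,\Exp[\cM^\tau[u^\tau_s](x)\,u^\tau_0(0)]\,\dd x=0$, because~\eqref{e:SBEsystem} and the noises are invariant under $x_2\mapsto-x_2$ (the nonlinearity carries only $\partial_1$ and $\rho_\tau$ is even), so $x\mapsto\Exp[\cM^\tau[u^\tau_s](x)\,u^\tau_0(0)]$ is even in $x_2$; thus $D^\tau_{1,2}(t)=D^\tau_{2,1}(t)=0$. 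There remains $(i,j)=(1,1)$, where we are left with
\[
t\,D^\tau_{1,1}(t)=\nu_\tau t-2\lambda\nu_\tau^{3/4}\int_0^t H_s\,\dd s,\qquad H_s\eqdef\int_{\R^2}x_1\,\Exp\big[\cM^\tau[u^\tau_s](x)\,u^\tau_0(0)\big]\,\dd x,
\]
so it suffices to prove that $H_s=-\lambda\nu_\tau^{3/4}\int_0^s\int_{\R^2}\Exp[\cM^\tau[u^\tau_r](x)\cM^\tau[u^\tau_0](0)]\,\dd x\,\dd r$, which upon substitution gives~\eqref{e:Dnew}.

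To identify $H_s$, the key idea is to use the skew-reversibility of Proposition~\ref{p:Global}. Fixing $s$, let $v^\tau$ be the stationary solution of~\eqref{e:SBEsystem} with $-\lambda$ in place of $\lambda$, driven by a copy $(\tilde\eta^\tau,\vec{\tilde\xi}^{\tau})$ of $(\eta^\tau,\vec{\xi}^{\tau})$; then $(\cM^\tau[u^\tau_s](x),u^\tau_0(0))$ and $(\cM^\tau[\tilde\eta^\tau](x),v^\tau_s(0))$ have the same law, so $H_s=\int x_1\,\Exp[\cM^\tau[\tilde\eta^\tau](x)\,v^\tau_s(0)]\,\dd x$. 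We now expand $v^\tau_s(0)$ via the weak formulation~\eqref{e:SBEWeakMild} (with $-\lambda$). The initial-datum term $\int x_1\,\Exp[\cM^\tau[\tilde\eta^\tau](x)\,\tilde\eta^\tau(0)]\,\dd x$ vanishes, being an odd moment of the Gaussian field $\tilde\eta^\tau$; the stochastic-convolution term vanishes since $\vec{\tilde\xi}^{\tau}$ is independent of $\tilde\eta^\tau$ and both factors are centred; the linear term, using translation invariance to write $\Exp[\cM^\tau[\tilde\eta^\tau](x)\,(\nabla\cdot R_\tau\nabla v^\tau_r)(0)]=(\nabla\cdot R_\tau\nabla q_r)(x)$ with $q_r(x)\eqdef\Exp[\cM^\tau[\tilde\eta^\tau](x)\,v^\tau_r(0)]$, contributes $\tfrac12\int_0^s\!\int(\nabla\cdot R_\tau\nabla x_1)\,q_r(x)\,\dd x\,\dd r=0$; and the nonlinear term $-\lambda\nu_\tau^{3/4}\int_0^s(\partial_1\cM^\tau[v^\tau_r])(0)\,\dd r$, after pairing with $\cM^\tau[\tilde\eta^\tau](x)$, multiplying by $x_1$ and integrating by parts once, contributes $-\lambda\nu_\tau^{3/4}\int_0^s\int_{\R^2}\Exp[\cM^\tau[\tilde\eta^\tau](x)\cM^\tau[v^\tau_r](0)]\,\dd x\,\dd r$. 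Finally, translation invariance rewrites the inner integral as $\int_{\R^2}\Exp[\cM^\tau[v^\tau_r](y)\cM^\tau[v^\tau_0](0)]\,\dd y$, and since the reflection $x_1\mapsto-x_1$ sends the law of the $(-\lambda)$-equation to that of the $(+\lambda)$-equation while $\cM^\tau$ is equivariant under it (as $\rho_\tau$ is even), this equals $\int_{\R^2}\Exp[\cM^\tau[u^\tau_r](y)\cM^\tau[u^\tau_0](0)]\,\dd y$. Summing the four contributions yields the claimed formula for $H_s$, hence~\eqref{e:Dnew}.

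All the interchanges of the $x$-integral with the time-integral, the expectation and the spatial derivatives, and the vanishing of the boundary terms in the integrations by parts, are justified by the fast polynomial decay of Proposition~\ref{p:corrdecay}---in particular~\eqref{e:thesame} and minor variants of it, such as the decay of $\Exp[\cM^\tau[\tilde\eta^\tau](x)(\nabla\cdot R_\tau\nabla v^\tau_r)(0)]$ and of $\Exp[\cM^\tau[\tilde\eta^\tau](x)\cN^\tau[v^\tau_r](0)]$, which follow from the same decoupling / Feynman-Kac argument and hold for the $(-\lambda)$-equation as well---together with the moment bounds~\eqref{e:apriori} of Proposition~\ref{p:Global}. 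The genuinely delicate point is the identity for $H_s$: expanding $H_s$ \emph{forward} in time (e.g.\ by Itô's formula applied to $(u^\tau_s)^2$) leaves the linear term $\int x_1\,\Exp[u^\tau_s(x)(\nabla\cdot R_\tau\nabla u^\tau_s)(x)\,u^\tau_0(0)]\,\dd x$, which is not manifestly zero; it is precisely the time-reversal step that both makes this term disappear and exhibits the surviving contribution as the current--current correlator $\int\Exp[\cM^\tau\cM^\tau]$.
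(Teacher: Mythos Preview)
Your proof is correct and follows essentially the same strategy as the paper: multiply \eqref{eq:Sintegrata} by $x_ix_j$, integrate by parts for the linear term, and use skew-reversibility together with the weak formulation \eqref{e:SBEWeakMild} to convert the mixed linear/quadratic correlator into the current--current correlator. The differences are cosmetic: you perform the integration by parts in $x_1$ \emph{before} time-reversing (the paper keeps $\partial_1\cM^\tau$ throughout and does two integrations by parts at the end), you kill the initial-datum term directly via the odd Gaussian moment (the paper instead subtracts $u^\tau_s(0)$ beforehand, using $\Exp[\cM^\tau[u^\tau_s](x)u^\tau_s(0)]=0$, and works with the increment $\tilde u_s-\tilde u_0$), you dispose of the off-diagonal entries via the $x_2\mapsto-x_2$ symmetry rather than by a further integration by parts, and you are explicit about the $x_1\mapsto-x_1$ reflection that identifies the $(-\lambda)$- and $(+\lambda)$-correlators, a step the paper leaves implicit in its identification of $\tilde u$ with the time-reversed process.
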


In previous works~\cite{de2024log,CET}, the right hand side of~\eqref{e:Dnew} (or the corresponding one for the AKPZ equation) with
$\tau=\nu_\tau=1$ was adopted as \emph{definition} of bulk diffusion coefficient,
but the equivalence with an expression similar to that in \eqref{e:FormalDiffusivity} was only argued
heuristically.
\begin{proof}
First, by~\eqref{e:Sdecays}, the integral at the right hand side of~\eqref{e:FormalDiffusivity} is absolutely convergent 
and thus the diffusion matrix $D^\tau$ is well-defined. 
On the other hand, the proof of~\eqref{e:Dnew} is essentially the same as in \cite[App. B]{de2024log},
except that the authors consider the bulk diffusion coefficient, which is a scalar quantity, and
the argument therein is formal, since it involves an unjustified integration by parts,
which holds in view of the decay of correlations shown in Proposition \ref{p:corrdecay}.

Start from \eqref{eq:Sintegrata}, multiply by $x_ix_j/t$, for $i,j\in\{1,2\}$, and integrate with respect to $x$, to obtain
  \begin{equs}[e:Dnew1]
    D_{i,j}^\tau(t)=&
    \frac{1}{2t}\int_0^t \int_{\R^2}x_ix_j\,\nabla \cdot R_\tau \, \nabla S^\tau_s(x)\dd x\dd s\\
    &+
    \frac{\lambda\nu_\tau^{3/4}}{t}\int_0^t\int_{\R^2}x_ix_j\,\Exp^\tau\Big[\partial_1\cM^\tau[u^\tau_s](x) (u^\tau_0(0)-u^\tau_s(0))
  \Big]\dd x\dd s
\end{equs}
where we used the fact that $\cM^\tau[u_s]$ is quadratic while $u^\tau_s(0)$ is linear in $u^\tau$,
and that, for every $s\geq 0$, $u^\tau_s\eqlaw\eta^\tau$ with $\eta^\tau$ Gaussian, so that
$\Exp[\cM^\tau[u^\tau_s](x) u^\tau_s(0)] = \E[\cM^\tau[\eta^\tau](x) \, \eta^\tau(0)] = 0$.
For the first term at the right hand side of~\eqref{e:Dnew1}, we apply integration by parts twice
and use that $\nabla \cdot R_\tau \nabla x_ix_j=2\nu_\tau \1_{i=j=1}+2\1_{i=j=2}$ 
together with~\eqref{e:afirst}.
As for the second, we use translation invariance, the tower property and stationarity to write
\begin{equs}
\Exp^\tau\Big[\partial_1\cM^\tau[u^\tau_s](x) (u^\tau_0(0)-u^\tau_s(0))
  \Big]=\E^\tau\Big[\partial_1\cM^\tau[\tilde\eta^\tau](0) \Exp^\tau_{\tilde\eta^\tau}[\tilde u^\tau_s(x)-\tilde u^\tau_0(x)]\Big]
\end{equs}
where $\tilde\eta^\tau \eqdef u_s^\tau \, (= \tilde u_0^\tau)$ is a regularised spatial white noise and $\tilde u$ is the time-reversed process,
i.e. $\tilde u_r\eqdef u_{s-r}$. Thanks to the skew-reversibility of $u$ stated in Proposition~\ref{p:Global},
in law, $\tilde u$ satisfies the same equation as~\eqref{e:SBEWeakMild} with $\lambda$ replaced by $-\lambda$ and
a (different) space-time white noise which is independent of $\tilde\eta$.
Hence, by plugging in such expression in place of the increment of $\tilde u$, integrating by parts
and using the independence of initial condition and driving noise, we obtain
\begin{equs}
  -\frac{\lambda^2\nu_\tau^{3/2}}{t}\int_0^t &\int_0^s\int_{\R^2}x_ix_j \Exp\Big[
\partial_1\cM^\rho[u^\tau_r](x) \partial_1\cM^\rho[u^\tau_0](0)
\Big]\dd r \dd s\dd x\\
&=\frac{\lambda^2\nu_\tau^{3/2}}{t}\int_0^t \int_0^s\int_{\mathbb  R^2}x_ix_j\mathbf E\Big[
\partial_1^2\mathcal M^\rho[u_r](x) \mathcal M^\rho[u_0](0)
\Big]\dd r \dd s\dd x.
\end{equs}
Another integration by parts, allowed by \eqref{e:thesame}, ensures that the above
coincides with the second summand in~\eqref{e:Dnew} if $i=j=1$ and is $0$ otherwise.
Putting first and second summands together, the statement follows.
\end{proof}

\subsection{The semigroup and the generator of SBE}\label{sec:generator}

In Proposition~\ref{p:Global}, we showed that, for every $\tau > 0$,~\eqref{e:SBEsystem} admits a
unique solution $u^\tau$ and that such solution is a stationary Markov process.
As such, it possesses
both a semigroup $P^\tau$ and a generator $\cL^\tau$, and all of the analysis detailed in the next sections
hinges on determining their behaviour as $\tau$ goes to $+\infty$.
Prior to that, we need to ensure that, for fixed $\tau>0$,
$P^\tau$ satisfies suitable basic properties and to study the
action of $\gen$ on its domain. 
\medskip

For $t\geq0$ and $F\in\L^1(\P^\tau)$, the semigroup $P^\tau$ of $u^\tau$ is given by
\begin{equ}[e:semigroup]
P_t^\tau F \eqdef \E^\tau\big[F(u^\tau_t) \, \big| \, \cF_0\big] \, ,
\end{equ}
which is clearly well-defined. 

\begin{lemma}\label{lem:Contractivity}
Let $\theta\in[1,\infty)$. Then, $(P^\tau_t)_{t\geq0}$ in~\eqref{e:semigroup}
defines a contractive strongly continuous semigroup on $\L^\theta(\P^\tau)$
such that, for every $F\in \L^\theta(\P^\tau)$, 
\begin{equ}[e:Contractivity]
\|P_t^\tau F\|_{\mathbb L^\theta(\mathbb P^\tau)} \leq \|F\|_{\mathbb L^\theta(\mathbb P^\tau)}
\end{equ}
holds uniformly in $\tau$ and also for $\theta=\infty$.
In particular,
the generator $\cL^\tau \colon \mathrm{Dom}(\gen) \subset \L^\theta(\P^\tau) \to \L^\theta(\P^\tau)$
associated to $P^\tau$ is closed and, for every $\mu>0$, 
its resolvent $(\mu-\gen)^{-1}$ is bounded uniformly in $\tau$ by $\mu^{-1}$ 
and has range given by $\mathrm{Dom}(\gen)$.
\end{lemma}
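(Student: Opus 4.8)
The plan is to obtain Lemma~\ref{lem:Contractivity} as a routine consequence of the structural facts in Proposition~\ref{p:Global}: that for fixed $\tau$, $u^\tau$ is a stationary $(\cF_t)_{t\ge0}$-Markov process with invariant law $\P^\tau$ and $\Prob^\tau$-a.s.\ continuous paths in the weighted H\"older space of Proposition~\ref{p:Global}, together with the moment bound~\eqref{e:apriori}. First I would observe that, by the Markov property, $P^\tau_tF=\Exp^\tau[F(u^\tau_t)\mid\cF_0]=\Exp^\tau[F(u^\tau_t)\mid\sigma(u^\tau_0)]$ is a $\sigma(\eta^\tau)$-measurable functional, hence is naturally identified with an element of $\L^\theta(\P^\tau)$ (that $F(u^\tau_t)$ makes sense for a mere equivalence class $F$ uses $u^\tau_t\eqlaw\eta^\tau$); the semigroup identity $P^\tau_{t+s}=P^\tau_sP^\tau_t$ and $P^\tau_0=\Id$ then follow from the tower property combined with the Markov property.

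For the contraction bound I would apply the conditional Jensen inequality to the convex map $x\mapsto|x|^\theta$, giving $|P^\tau_tF|^\theta\le\Exp^\tau[|F(u^\tau_t)|^\theta\mid\cF_0]$ pointwise; taking $\Prob^\tau$-expectation and using stationarity, $u^\tau_t\eqlaw\eta^\tau$, yields
\[
\|P^\tau_tF\|_{\L^\theta(\P^\tau)}\le\|F(u^\tau_t)\|_{\L^\theta(\P^\tau)}=\|F\|_{\L^\theta(\P^\tau)}\,.
\]
No $\tau$-dependent constant enters, so the bound is uniform in $\tau$, and for $\theta=\infty$ it is immediate from the $\Prob^\tau$-almost-sure bound $|\Exp^\tau[F(u^\tau_t)\mid\cF_0]|\le\|F\|_{\L^\infty(\P^\tau)}$.

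The only step that genuinely uses path regularity is strong continuity. I would first establish right-continuity at $t=0$ on the dense set of cylinder functionals $F=f(\eta^\tau(\phi_1),\dots,\eta^\tau(\phi_n))$: for such $F$ the map $t\mapsto F(u^\tau_t)=f(\langle u^\tau_t,\phi_1\rangle,\dots,\langle u^\tau_t,\phi_n\rangle)$ is $\Prob^\tau$-a.s.\ continuous at $0$, since pairing against a Schwartz function is continuous on the path space of Proposition~\ref{p:Global}; moreover, by stationarity and the polynomial growth of $f$, $\sup_{t\in[0,1]}\Exp^\tau[|F(u^\tau_t)|^{\theta+1}]<\infty$, so $\{|F(u^\tau_t)|^\theta\}_{t\in[0,1]}$ is uniformly integrable and hence $F(u^\tau_t)\to F$ in $\L^\theta(\P^\tau)$; since conditional expectation is an $\L^\theta$-contraction, $P^\tau_tF\to F$ in $\L^\theta(\P^\tau)$. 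Density of cylinder functionals in $\L^\theta(\P^\tau)$ for $\theta<\infty$ together with the uniform bound $\|P^\tau_t\|\le1$ extends this to all $F$ by an $\epsilon/3$ argument, and the semigroup identity with contractivity ($\|P^\tau_{t+h}F-P^\tau_tF\|\le\|P^\tau_hF-F\|$ and $\|P^\tau_{t-h}F-P^\tau_tF\|\le\|F-P^\tau_hF\|$ for $0<h\le t$) promotes right-continuity at $0$ to continuity of $t\mapsto P^\tau_tF$ on all of $\R_+$.

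With $(P^\tau_t)_{t\ge0}$ a strongly continuous contraction semigroup on the Banach space $\L^\theta(\P^\tau)$, the Hille--Yosida theory applies verbatim: the generator $\gen$ is closed (and densely defined), and for $\mu>0$ the resolvent is given by the Laplace transform $(\mu-\gen)^{-1}=\int_0^\infty e^{-\mu t}P^\tau_t\,\dd t$, so that
\[
\|(\mu-\gen)^{-1}F\|_{\L^\theta(\P^\tau)}\le\int_0^\infty e^{-\mu t}\,\|P^\tau_tF\|_{\L^\theta(\P^\tau)}\,\dd t\le\mu^{-1}\|F\|_{\L^\theta(\P^\tau)}\,,
\]
uniformly in $\tau$ because the contractivity bound is, while $\mathrm{Range}\big((\mu-\gen)^{-1}\big)=\mathrm{Dom}(\gen)$ is part of the same standard statement. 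I expect the only non-formal ingredient to be the uniform-integrability point in the strong-continuity step, i.e.\ the passage from a.s.\ path continuity to $\L^\theta$-continuity of $t\mapsto F(u^\tau_t)$, which rests on stationarity (or, if one prefers, on~\eqref{e:apriori}); everything else is soft functional analysis.
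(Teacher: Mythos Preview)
Your proposal is correct and follows essentially the same route as the paper: contractivity via conditional Jensen plus stationarity, strong continuity checked on cylinder functionals using path continuity of $u^\tau$, and the generator/resolvent conclusions by Hille--Yosida. The only cosmetic difference is that the paper restricts to \emph{bounded} cylinder functionals (so the uniform-integrability step you spell out is not needed), but your argument with polynomial-growth cylinder functionals and stationarity-based uniform integrability is equally valid.
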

\begin{remark}
Even though for most of the paper we will focus on the case $\theta=2$,
we stated the previous result for generic $\theta\in[1,\infty)$ as the contractivity in~\eqref{e:Contractivity}
(which is uniform in $\tau>0$) will be essential
to upgrade the convergence of the {\it two}-point distribution of $u^\tau$ (i.e. of random vectors
of the form $(u^\tau_{t_1}(\phi_1), u^\tau_{t_2}(\phi_2)$) to that of {\it finite} dimensional
distributions (see Section~\ref{sec:CLT} for more details).
\end{remark}
\begin{proof}
The contractivity bound~\eqref{e:Contractivity} follows by a standard application of Jensen's inequality
for conditional expectations and stationarity of $u^\tau$. To see that $P^\tau$ is
a strongly continuous semigroup on $\L^\theta(\P^\tau)$ for $\theta\in[1,\infty)$, it suffices to
verify that, for a given bounded cylinder random variable $F$ (or $F$ in any dense subset of $\L^\theta(\P^\tau)$),
$P^\tau_tF$ converges to $F$ as $t\to0$ in $\L^\theta(\P^\tau)$.
This in turn is a consequence the continuity of $u^\tau$ in time
and smoothness of $F$.
At last, the properties of the generator listed in the statement
follow by classical Hille-Yosida theory (see e.g.~\cite[Theorem 1.2.6]{EK86}).
\end{proof}

We now want to study the action of $\gen$ on $\L^2(\P^\tau)$, which, by abusing notation,
is identified with the Fock space $\fock{}{}{\tau}$. To obtain an explicit
expression for such action, let us introduce the subset $\core$ of
$\fock{}{}{\tau}$ consisting of elements
$f = (f_n)_{n \geq 0} \in \fock{}{2}{\tau}$
such that the sequence $(\|f_n\|_{\fock{n}{2}{\tau}})_{n \geq 0}$
decays faster than any inverse power. Then, the following lemma, whose proof is 
postponed to Appendix~\ref{a:Basics}, holds. 

\begin{lemma}\label{l:ActionGen}
Let $\tau>0$ be fixed.
Let $\gensy$, $\genap$ and $\genam$ be the operators
such that $\gensy(\fock{0}{}{\tau})=\genap(\fock{0}{}{\tau})=\genam(\fock{1}{}{\tau})=0$, 
and that, for any $n\in\N$, map $\fock{n}{}{\tau}\cap\cS(\R^{2n})$ to
$\fock{n}{}{\tau}, \fock{n+1}{}{\tau}$ and $\fock{n-1}{}{\tau}$, respectively,
and whose action on $\psi\in\fock{n}{}{\tau}\cap\cS(\R^{2n})$ is given by
\begin{equs}
\cF(\gensy \psi)\,(p_{1:n})&\eqdef \cF\big((\nu_\tau\cL_0^{\fe_1}+\cL_0^{\fe_2})\psi\big)\,(p_{1:n})=-\frac{1}{2} \,  |\sqrt{R_\tau} p_{1:n}|^2 \, \hat\psi(p_{1:n})\qquad \label{e:gensy} \\
\cF(\genap \psi) \, (p_{1:n+1})
&\eqdef \lambda\nu_\tau^{3/4} \,\frac{2}{n+1} \sum_{1 \leq i < j \leq n+1}
\nonlinp_{p_i,p_j} \, \hat{\psi}\big(p_i + p_j, p_{1:n+1 \setminus \{i,j\}}\big) \, , \label{e:genap:fock}
\\
\cF(\genam \psi) \, (p_{1:n-1})
&\eqdef \lambda\nu_\tau^{3/4} \,
n \sum_{i=1}^{n-1}
\int_{\R^2} \nonlinm_{q, p_i-q} \, \hat{\psi} \big(q,p_i-q, p_{1:n-1\setminus\{i\}}\big)  \, \dd q \,  . \label{e:genam:fock}
\end{equs}
where 
\begin{equs}
\cF(\cL_0^{\fe_j}\psi)\,(p_{1:n})&\eqdef -\frac{1}{2}|\fe_j\cdot p_{1:n}|^2\hat\psi(p_{1:n})\,,\qquad j=1,2,\label{e:L0w}\\
\nonlinp_{q,q'} \eqdef -\frac{\iota}{2 \pi} \, [\fe_1 \cdot (q+q')] \, \Theta_\tau(q+q') \,& , \quad\nonlinm_{q,q'} \eqdef -\frac{\iota}{2 \pi} \, [\fe_1 \cdot (q+q')] \, \Theta_\tau(q)\Theta_\tau(q')\,,\quad \label{e:K+K-}
\end{equs}
and $\Theta_\tau$ is as in~\eqref{e:Mollifiers}.
Then, $\gen$ coincides with $\gensy+\genap+\genam$ on $\core$, which is contained in ${\rm Dom}(\gen)$.
Further, on $\core$, $\gensy$ and $\gena\eqdef\genap+\genam$ are respectively the
symmetric and skew-symmetric parts of $\gen$ and $(\cA^\tau_{\pm})^*=- \cA^\tau_{\mp}$.
At last, $\core$ is a core for $\gen$.
\end{lemma}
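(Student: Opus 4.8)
The strategy is to pass from the mild formulation of the SPDE \eqref{e:SBEregMild} to an It\^o-type differential identity for $F(u^\tau_t)$ when $F$ is a cylinder functional, extract the generator from the drift, and then identify its Fock-space action by a direct Fourier computation. First I would fix $\tau>0$ and take $F=f(\eta^\tau(\phi_1),\dots,\eta^\tau(\phi_m))$ a smooth cylinder functional with $\phi_j\in\cS(\R^2)$; by the smoothness and the a priori moment bounds~\eqref{e:apriori} of Proposition~\ref{p:Global}, It\^o's formula applies to $t\mapsto F(u^\tau_t)$ and yields a decomposition into a martingale part (with bracket computed from the noise $\nabla\cdot\sqrt{R_\tau}\,\vec\xi^\tau$) plus a bounded-variation part $\int_0^t(\gen F)(u^\tau_s)\,\dd s$; reading off the drift gives
\begin{equ}
\gen F = \tfrac12\sum_{j,k}\partial_{j}\partial_{k}f\,\langle \nabla\cdot R_\tau\nabla\phi_j,\phi_k\rangle_{L^2_\tau}\,\,?\,\,
\end{equ}
more precisely a diffusion term $\tfrac12\sum_j \partial_j f\,\eta^\tau(\nabla\cdot R_\tau\nabla\phi_j)$ plus a quadratic-noise Ornstein--Uhlenbeck correction $\tfrac12\sum_{j,k}\partial_j\partial_k f\,\langle\sqrt{R_\tau}\nabla\phi_j,\sqrt{R_\tau}\nabla\phi_k\rangle_{L^2_\tau}$ plus the nonlinear drift $\lambda\nu_\tau^{3/4}\sum_j\partial_j f\,\cN^\tau[u^\tau](\phi_j)$. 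This is a routine but careful computation; the membership $\core\subset\mathrm{Dom}(\gen)$ follows because the right-hand side lies in $\L^2(\P^\tau)$ (again by stationarity and~\eqref{e:apriori}), and $F\in\mathrm{Dom}(\gen)$ with $\gen F$ equal to the drift by the standard characterization of the generator via $\lim_{t\to0}t^{-1}(P^\tau_tF-F)$.

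Next I would transfer this expression to the Fock/Wiener-chaos side using the isometry $I^\tau$. The quadratic (in $\eta^\tau$) parts of $\gen F$ correspond to the symmetric operator: the Ornstein--Uhlenbeck generator of the Gaussian measure $\P^\tau$ combined with the deterministic drift $\tfrac12\nabla\cdot R_\tau\nabla$ produces exactly the multiplier $-\tfrac12|\sqrt{R_\tau}p_{1:n}|^2$ on $\SH^\tau_n$, which is~\eqref{e:gensy} and~\eqref{e:L0w}; this is the classical computation for the generator of an infinite-dimensional OU process with drift $\tfrac12\nabla\cdot R_\tau\nabla$ and is consistent with stationarity of $\P^\tau$. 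The nonlinear term is the new part: writing $\cN^\tau[u^\tau] = (\fe_1\cdot\nabla)\rho_\tau^{\ast2}\ast(u^\tau)^2$ and expanding $(u^\tau)^2$ in Wiener chaos via the product formula for Hermite polynomials (Wick product plus contraction), one sees that it maps $\SH^\tau_n$ into $\SH^\tau_{n+1}\oplus\SH^\tau_{n-1}$; taking Fourier transforms and bookkeeping the symmetrization (the $\tfrac{2}{n+1}\sum_{i<j}$ in~\eqref{e:genap:fock} and the combinatorial factor $n$ together with the integration-over-$q$ contraction in~\eqref{e:genam:fock}) gives precisely the stated kernels $\nonlinp,\nonlinm$ of~\eqref{e:K+K-}, the $\Theta_\tau$ factors coming from the mollification $\rho_\tau^{\ast2}$ and the $\fe_1\cdot$ factor from the derivative $(\fe_1\cdot\nabla)$. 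The skew-symmetry and the adjoint relation $(\genap)^*=-\genam$ then follow either by a direct pairing computation on Fock space using~\eqref{e:K+K-} (noting $\nonlinp_{q,q'}$, $\nonlinm_{q,q'}$ are related by exactly the kernel identity that makes creation/annihilation adjoint up to sign), or more conceptually from the skew-reversibility in Proposition~\ref{p:Global}: reversing time flips $\lambda\mapsto-\lambda$, hence flips $\gena\mapsto-\gena$ while fixing $\gensy$, which forces $\gena^*=-\gena$ and, splitting by chaos degree, $(\genap)^*=-\genam$.

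Finally, to show $\core$ is a core for $\gen$ I would invoke the general criterion (e.g.~\cite[Proposition 1.3.1]{EK86} or the analogous resolvent characterization): $\core$ is dense in $\L^2(\P^\tau)$, it is contained in $\mathrm{Dom}(\gen)$, and it is invariant under the semigroup $P^\tau_t$ up to approximation — alternatively, and more robustly, I would show $(\mu-\gen)\core$ is dense in $\L^2(\P^\tau)$ for some $\mu>0$, which by Lemma~\ref{lem:Contractivity} (the resolvent has range $\mathrm{Dom}(\gen)$ and is bounded) implies the core property. Density of $(\mu-\gen)\core$ can be obtained by noting that $\mu-\gensy$ already maps $\core$ onto a dense set (its Fourier multiplier $\mu+\tfrac12|\sqrt{R_\tau}p|^2$ is bounded below and the rapidly-decaying condition defining $\core$ is preserved under multiplication and division by such symbols), and that $\gena$ is, on $\core$, a relatively small perturbation in an appropriate graded sense — or simply that any element orthogonal to $(\mu-\gen)\core$ must vanish by testing against a dense family. \textbf{The main obstacle} I anticipate is the bookkeeping in the second step: correctly tracking all symmetrization and combinatorial factors when expanding $\cN^\tau[u^\tau]$ across Wiener chaoses and matching them to~\eqref{e:genap:fock}--\eqref{e:genam:fock}, together with a careful justification that It\^o's formula and the interchange of expectation with the chaos expansion are licit for $F\in\core$ (not just cylinder $F$) — this is where the a priori estimates of Proposition~\ref{p:Global} and a density/approximation argument from cylinder functionals to $\core$ do the real work.
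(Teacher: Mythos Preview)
Your overall strategy matches the paper's: It\^o's formula on cylinder functionals to extract the drift, then the Wiener-chaos product formula to identify $\gensy$, $\genap$, $\genam$ on Fock space, with the adjoint/skew-symmetry relations checked directly from the kernels. That part is fine and essentially what the paper does (it works first on $I^\tau_n(h^{\otimes n})$ and extends by polarisation, then uses continuity of the operators on $\core$ plus closedness of $\gen$ to get $\core\subset\mathrm{Dom}(\gen)$).

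The gap is in your core argument. Neither of your two suggestions quite works as stated. Semigroup-invariance of $\core$ is not obvious (and the paper does not use it). Your perturbative suggestion --- ``$\mu-\gensy$ maps $\core$ onto a dense set and $\gena$ is a relatively small perturbation'' --- is where the difficulty actually lies: $\gena$ is \emph{not} a bounded perturbation of $\gensy$ on the full Fock space, since $\|\gena_\pm\psi\|_\tau\lesssim n^{3/2}\|\psi\|_\tau$ for $\psi\in\fock{n}{}{\tau}$, and the graded sector condition~\eqref{e:GSC} only gives $\|\gena_\pm\psi\|_{\fock{}{-1}{\tau}}\lesssim\sqrt{n}\,\|\psi\|_{\fock{}{1}{\tau}}$, which still grows in chaos. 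So a naive Kato--Rellich or Neumann-series argument does not close.

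The paper instead shows density of $(1-\gen)\core$ via a Galerkin scheme: it truncates to $\fock{\leq n}{}{\tau}$, defines $\gen_n\eqdef P^n(\gensy+\gena)P^n$ (now $P^n\gena P^n$ is bounded and skew, so $(1-\gen_n)^{-1}$ exists on the finite-chaos space), sets $r^{(n)}\eqdef(1-\gen_n)^{-1}P^n\phi\in\fock{\leq n}{2}{\tau}\subset\core$, and then shows $(1-\gen)r^{(n)}\to\phi$. The crucial input you are missing is an \emph{a priori polynomial decay in chaos} for $r^{(n)}$, i.e.\ $\sup_{k,n}(1+k^\alpha)\|r^{(n)}_k\|_\tau<\infty$ for all $\alpha\ge0$ (imported from~\cite[Proposition~2.8]{CGT}); this is precisely what beats the $n^{3/2}$ growth of $\genap$ on the top component $r^{(n)}_n$ and makes the error $\|\genap r^{(n)}_n\|_\tau\lesssim n^{3/2}\|r^{(n)}_n\|_\tau\to0$. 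Without this decay estimate (or something equivalent), your density argument does not go through.
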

\begin{remark}
The importance of $\core$ is that, compared to the domain of $\gen$,
which is only implicitly given, its elements possess an explicit description and, since
it is a core, any property or statement regarding $\gen$ only need to be checked on it.
Such a feature will be used throughout the paper.
\end{remark}

  Before proceeding let us comment on the nature of the operators $\cS^\tau$, $\genap$ and $\genam$,
and provide some intuition regarding their action on $\fock{}{}{\tau}$.
The Fock space $\fock{n}{}{\tau}$ can be seen as the span of ``$n$-particle states'',
each particle carrying a momentum $p_i$, $i\in\{1,\dots,n\}$.
In this basis, while $\gensy$ is diagonal and its action simply consists of multiplying by ($(-1/2)$ times)
the sum of squared scaled momenta, $\genap$ and $\genam$ are not.
Indeed, $\genap$ takes one of the $n$ particles, say that of momentum $p_i+p_j$,
and {\it creates} two particles, whose momenta are $p_i$ and $p_j$. Dually,
$\genam$ takes two particles, say with momenta $p_i$ and $p_j$,
and merges (or {\it annihilates}) them into one of momentum $p_i+p_j$.
The kernels $\nonlinp,\nonlinm$ determine the rates at which these transitions occur.

The difficulty in analysing $\gena$ (and more specifically $\genap$) 
lies precisely in the fact that, even though $p_i+p_j$ might be small,
there is no mechanism, uniform in $\tau$, that prevents $p_i$ and $p_j$ from being arbitrarily large.
To see this, let $\phi$ be as regular as it gets, say in $\cS(\T^2)$, 
and consider the $\fock{}{-1}{\tau}$-norm of $\gena\phi=\genap\phi$ (the importance of such norm 
will become apparent in the next lemma). 
Then, it is not hard to see that 
\begin{equs}
\|\gena\phi\|_{\fock{}{-1}{\tau}}^2 &= \frac{\lambda^2}{4\pi^2}\int_{\R^2} \dd p \, |\fe_1\cdot p|^2|\hat\phi(p)|^2\Big(\nu_\tau^{3/2}\int\dd q \frac{\Theta_\tau(p)\Theta_\tau(q)\Theta_\tau(p-q)}{1+\frac12|\sqrt{R_\tau}(q,p-q)|^2}\Big)\\
&\gtrsim \nu_\tau\log\tau \int_{\R^2}\dd p \, |\fe_1\cdot p|^2|\hat\phi(p)|^2\label{e:explosion}
\end{equs}
which explodes as $\tau\to\infty$ since $\nu_\tau\log\tau\sim(\log\tau)^{1/3}$. 
\medskip

Finally, let us conclude with the following statement which will allow us to
perform apriori estimates on the resolvent and approximations thereof.

\begin{lemma} \label{lem:Apriori}
Let $\tau>0$ be fixed. Then $\mathrm{Dom}(\gen) \subset \fock{}{1}{\tau}$ and for $\psi \in \mathrm{Dom}(\gen)$ it holds
\begin{equ} \label{e:Apriori}
\|\psi\|_{\fock{}{1}{\tau}}^2 = \langle (1 - \gen) \, \psi, \psi\rangle_\tau \, .
\end{equ}
\end{lemma}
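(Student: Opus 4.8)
The plan is to reduce the statement to the explicit description of $\gen$ on the core $\core$ provided by Lemma~\ref{l:ActionGen}, prove the identity there, and then extend it to all of $\mathrm{Dom}(\gen)$ by closure. First I would fix $\psi=(\psi_n)_{n\ge0}\in\core$. By Lemma~\ref{l:ActionGen}, $\psi\in\mathrm{Dom}(\gen)$, $\gen\psi=\gensy\psi+\gena\psi$ with $\gena\eqdef\genap+\genam$, $\gensy$ is symmetric and $\gena$ is skew-symmetric on $\core$. Since the elements of $\core$ are real-valued and the $\fock{}{}{\tau}$-pairing of real elements is real, skew-symmetry forces $\langle\gena\psi,\psi\rangle_\tau=-\langle\gena\psi,\psi\rangle_\tau$, hence $\langle\gena\psi,\psi\rangle_\tau=0$ and only the symmetric part survives. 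By~\eqref{e:gensy}, $\gensy$ acts on $\fock{n}{}{\tau}$ as the Fourier multiplier $-\tfrac12|\sqrt{R_\tau}p_{1:n}|^2$, so, unfolding the definitions in~\eqref{e:normHeps}--\eqref{e:normFock},
\begin{equs}
\langle(1-\gen)\psi,\psi\rangle_\tau
&=\langle\psi,\psi\rangle_\tau-\langle\gensy\psi,\psi\rangle_\tau\\
&=\sum_{n\ge0}n!\int_{\R^{2n}}\Big(1+\tfrac12|\sqrt{R_\tau}p_{1:n}|^2\Big)\,|\hat\psi_n(p_{1:n})|^2\,\Xi_n^\tau(\dd p_{1:n})=\|\psi\|_{\fock{}{1}{\tau}}^2\,.
\end{equs}
All sums and integrals converge absolutely because $\psi\in\fock{}{2}{\tau}$ with chaos norms decaying faster than any power of $n$, which legitimises splitting the pairing chaos-by-chaos; this proves~\eqref{e:Apriori} for $\psi\in\core$.

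Next, given a general $\psi\in\mathrm{Dom}(\gen)$, I would use that $\core$ is a core for $\gen$ to choose $\psi_k\in\core$ with $\psi_k\to\psi$ and $\gen\psi_k\to\gen\psi$ in $\fock{}{}{\tau}$. As $\core$ is a linear subspace and $\gen$ is linear on it, the identity just proved applies to $\psi_k-\psi_j\in\core$, whence by Cauchy--Schwarz
\begin{equs}
\|\psi_k-\psi_j\|_{\fock{}{1}{\tau}}^2&=\langle(1-\gen)(\psi_k-\psi_j),\psi_k-\psi_j\rangle_\tau\\
&\le\|(1-\gen)(\psi_k-\psi_j)\|_{\fock{}{}{\tau}}\,\|\psi_k-\psi_j\|_{\fock{}{}{\tau}}\xrightarrow[]{k,j\to\infty}0\,,
\end{equs}
the first factor being bounded and the second tending to $0$. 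Hence $(\psi_k)_k$ is Cauchy in $\fock{}{1}{\tau}$, and its limit there must coincide with $\psi$ since $\fock{}{1}{\tau}$ embeds continuously into $\fock{}{}{\tau}$. Therefore $\psi\in\fock{}{1}{\tau}$ and $\|\psi_k\|_{\fock{}{1}{\tau}}\to\|\psi\|_{\fock{}{1}{\tau}}$; letting $k\to\infty$ in $\|\psi_k\|_{\fock{}{1}{\tau}}^2=\langle(1-\gen)\psi_k,\psi_k\rangle_\tau$ and using continuity of the pairing gives~\eqref{e:Apriori} together with $\mathrm{Dom}(\gen)\subset\fock{}{1}{\tau}$.

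I do not anticipate a real obstacle: the analytic content sits entirely in Lemma~\ref{l:ActionGen}. The only point deserving care is the Cauchy estimate in $\fock{}{1}{\tau}$ above --- it works precisely because $\langle(1-\gensy)\cdot,\cdot\rangle_\tau$ already dominates the full $\fock{}{1}{\tau}$-norm while the skew-symmetric part $\gena$, which is genuinely unbounded as $\tau\to\infty$ (cf.~\eqref{e:explosion}), contributes nothing to the quadratic form $\langle(1-\gen)\cdot,\cdot\rangle_\tau$, so no uniform-in-$\tau$ control of $\gena$ is required for this particular lemma.
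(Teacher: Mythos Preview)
Your proof is correct and follows essentially the same route as the paper: establish~\eqref{e:Apriori} on $\core$ via skew-symmetry of $\gena$ and the explicit form of $\gensy$, then pass to $\mathrm{Dom}(\gen)$ by a Cauchy argument using that $\core$ is a core. The only cosmetic difference is that the paper bounds $\|\psi\|_{\fock{}{1}{\tau}}\le\|(1-\gen)\psi\|_{\fock{}{-1}{\tau}}$ via the $\fock{}{1}{\tau}/\fock{}{-1}{\tau}$ duality, whereas you use the plain $\fock{}{}{\tau}$-Cauchy--Schwarz; both yield the Cauchy property in $\fock{}{1}{\tau}$.
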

\begin{proof}
Thanks to Lemma~\ref{l:ActionGen},~\eqref{e:Apriori} is satisfied for $\psi \in \core$.
Indeed, using the skew-symmetry of $\gena$,  it holds
\begin{equ} \label{e:AprioriCore}
\|\psi\|_{\fock{}{1}{\tau}}^2 = \langle (1 - \gensy) \, \psi, \psi \rangle_\tau = \langle (1 - \gen) \, \psi, \psi \rangle_\tau \, ,
\end{equ}
which, by Cauchy-Schwarz, implies
\begin{equ} \label{e:BoundRes}
\|\psi\|_{\fock{}{1}{\tau}} \leq \|(1 - \gen) \, \psi\|_{\fock{}{-1}{\tau}} \, .
\end{equ}
Now, let $\psi \in \mathrm{Dom}(\gen)$, and $(\psi^n)_{n \geq 0}\subset\core$ be such that $\psi^n \to \psi$ and
$(1 - \gen) \, \psi^n \to (1 - \gen) \, \psi$, in $\fock{}{}{\tau}$ (the existence of such a sequence is due to
$\core$ being a core). Then, applying \eqref{e:BoundRes} to $\psi_n - \psi_m$,
we deduce that $(\psi_n)_{n \geq 0}$ is Cauchy in $\fock{}{1}{\tau}$.
As it already converges to $\psi$ in $\fock{}{}{\tau}$, the convergence must necessarily occur in
$\fock{}{1}{\tau}$ which implies that $\psi \in \fock{}{1}{\tau}$. Hence,
$\mathrm{Dom}(\gen) \subset \fock{}{1}{\tau}$. Upon applying~\eqref{e:AprioriCore} to $\psi_n$
and taking the limit in $n$,~\eqref{e:Apriori} follows.
\end{proof}

Let us conclude this section by collecting a few facts on the semigroup, $P^\eff$,
and the generator, $\geneff$, of the solution $u^\eff$ of the limiting (effective) equation in~\eqref{e:EffectiveSBE},
which we here recall
\begin{equ}[e:Eff]
\partial_t u^{\eff} = \frac{1}{2} \, \nabla \cdot D_{\eff} \, \nabla u^\eff + \nabla \cdot \sqrt{D_{\eff}} \, \vec{\xi} \, , \quad D_{\eff} \eqdef \begin{pmatrix}
C_\eff(\lambda) & 0 \\
0 & 1
\end{pmatrix} \,,
\end{equ}
where $C_\eff(\lambda)$ is as in~\eqref{e:EffConstant}.

\begin{lemma}\label{l:LimitGen}
The semigroup $(P^\eff_t)_{t\geq 0}$ of the solution $u^\eff$ to~\eqref{e:Eff}
is strongly continuous on $\L^\theta(\P)$ for $\theta\in[1,\infty)$ and contractive on $\L^\theta(\P)$ for $\theta\in[1,\infty]$.
Its generator $\geneff$ is closed and has bounded resolvent. Further,
$\geneff = C_\eff(\lambda) \, \cL_0^{\fe_1} + \cL_0^{\fe_2}$, where $C_\eff(\lambda)$ is
the constant given in~\eqref{e:EffConstant}
and the operators $\cL_0^{\fe_1}$, $\cL_0^{\fe_2}$ were defined in~\eqref{e:L0w}.

At last, $u^\eff$ is $\L^2(\P)$-mixing, in the sense that for all $F\in\L^2(\P)$, we have
\begin{equ} \label{e:EffErgodicity}
\lim_{t\to\infty}\|P_t^{\eff} F - \mathbb E[F]\|=0 \, .
\end{equ}
\end{lemma}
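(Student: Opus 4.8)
The plan is to realise $u^\eff$ as an infinite-dimensional Ornstein--Uhlenbeck process and to read off every claim from its Wiener chaos decomposition, in close parallel with the finite-$\tau$ arguments of Lemmas~\ref{lem:Contractivity} and~\ref{l:ActionGen}. I would first let $A$ be the self-adjoint operator on $L^2(\R^2)$ with Fourier multiplier $-\tfrac12(D_\eff p)\cdot p=-\tfrac12\big(C_\eff(\lambda)(\fe_1\cdot p)^2+(\fe_2\cdot p)^2\big)$, so that~\eqref{e:Eff} reads $\partial_t u^\eff=A u^\eff+B\,\vec\xi$ with $B=\nabla\cdot\sqrt{D_\eff}$ and $BB^\ast=-2A$. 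A one-line computation then shows that the stationary covariance of this process is $Q_\infty=\int_0^\infty e^{sA}BB^\ast e^{sA}\,\dd s=-\int_0^\infty\tfrac{\dd}{\dd s}e^{2sA}\,\dd s=\mathrm{Id}$, where we used that $e^{2sA}\to0$ strongly on $L^2(\R^2)$ as $s\to\infty$ (dominated convergence in Fourier, the multiplier being strictly negative off the origin since $C_\eff(\lambda)>0$). Hence the spatial white noise $\P$ is the invariant measure of $u^\eff$, so that contractivity of $(P^\eff_t)_{t\ge0}$ on $\L^\theta(\P)$ for all $\theta\in[1,\infty]$ follows from Jensen's inequality and stationarity exactly as in Lemma~\ref{lem:Contractivity}, strong continuity for $\theta\in[1,\infty)$ from the time-continuity of $u^\eff$ together with the density of cylinder functionals, and the closedness and bounded resolvent of $\geneff$ from Hille--Yosida theory.

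For the explicit form of $\geneff$ I would use that, $u^\eff$ being a linear Gaussian evolution satisfying the reversibility relation $BB^\ast=-2A$ with $A$ self-adjoint, its semigroup is the second quantisation of $e^{tA}$: it leaves each homogeneous chaos $\SH_n$ of $\eta$ invariant and, under the identification of $\SH_n$ with the symmetric functions in $L^2(\R^{2n})$, acts as $(e^{tA})^{\otimes n}$. Differentiating at $t=0$ on the core of finite linear combinations of Hermite polynomials $H_n(\eta(\phi))$, $\phi\in\cS(\R^2)$, then shows that $\geneff$ acts on $\fock{n}{}{}$ as multiplication in Fourier by $-\tfrac12\sum_{i=1}^n(D_\eff p_i)\cdot p_i$, which by~\eqref{e:L0w} is exactly $C_\eff(\lambda)\,\cL_0^{\fe_1}+\cL_0^{\fe_2}$; equivalently, this is the specialisation of Lemma~\ref{l:ActionGen} to $\lambda=0$, $\Theta_\tau\equiv1$ and $R_\tau$ replaced by $D_\eff$. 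The only point deserving care is that~\eqref{e:Eff} carries no mollifier, so $u^\eff$ has to be read as a distribution-valued process, but this is standard for linear equations with additive noise (cf.~\cite{DPZ}) and affects none of the above.

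Finally, for the mixing property~\eqref{e:EffErgodicity} I would decompose a generic $F=\sum_{n\ge0}I_n(f_n)\in\L^2(\P)$, so that $\E[F]=I_0(f_0)$ and $\sum_{n\ge0}n!\,\|f_n\|^2_{L^2(\R^{2n})}<\infty$, and invoke the previous paragraph to write
\begin{equ}
\|P^\eff_t F-\E[F]\|^2=\sum_{n\ge1}n!\,\big\|(e^{tA})^{\otimes n}f_n\big\|^2_{L^2(\R^{2n})}\,.
\end{equ}
For each fixed $n\ge1$, $e^{tA}$ is a contraction on $L^2(\R^2)$ with $\|e^{tA}g\|^2_{L^2}=\int e^{-t(D_\eff p)\cdot p}\,|\hat g(p)|^2\,\dd p\to0$ as $t\to\infty$ by dominated convergence, so $(e^{tA})^{\otimes n}$ is a contraction on $L^2(\R^{2n})$ that converges strongly to $0$; hence each summand above tends to $0$ while being dominated by the summable sequence $n!\,\|f_n\|^2$, and dominated convergence in $n$ yields~\eqref{e:EffErgodicity}. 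None of these steps is hard; if anything, the only mildly delicate point is making the statement ``$P^\eff_t$ is the second quantisation of $e^{tA}$'' precise on an explicit core, which is however classical Ornstein--Uhlenbeck theory and mirrors the computations already carried out at finite $\tau$ in Lemma~\ref{l:ActionGen}.
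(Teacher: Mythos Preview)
Your proposal is correct and follows essentially the same approach as the paper: the paper's proof merely states that the semigroup and generator properties are standard and can be obtained by the same procedure as in Lemmas~\ref{lem:Contractivity} and~\ref{l:ActionGen}, and that mixing follows from the explicit action of the semigroup on Fock space, which is precisely what you have spelled out in detail. Your argument via second quantisation of $e^{tA}$ and dominated convergence across chaoses is exactly the ``explicit expression for the action of the semigroup on $\fock{}{}{}$'' that the paper alludes to.
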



\begin{proof}
Basic properties of the semigroup and generator of $u^\eff$ are standard and can be proved, e.g., using the
same procedure as in Lemmas~\ref{lem:Contractivity} and~\ref{l:ActionGen}.
The $\L^2(\P)$-mixing property is well-known and straightforward to verify (a possible way to do so
is by using the explicit expression for the action of the semigroup on $\fock{}{}{}$).
%
%
\end{proof}

\subsection{Heuristics, novelties and roadmap}\label{sec:HeuRM}

Before delving into the details of the proof, we give here a
high-level roadmap that outlines the strategy and the main novelties, and
should guide the reader through the next sections.

As stated in Theorem~\ref{MT}, our goal is to prove
that the semigroup $(P^\tau_t)_{t\ge0}$ of the solution to the regularised SBE in~\eqref{e:SBEsystem} 
converges strongly to that of the effective limit equation~\eqref{e:Eff},
$(P^{\rm eff}_t)_{t\ge0}$. Via a suitable version of the classical
Trotter-Kato theorem, this follows if we are able to show strong
convergence of the resolvent $(\mu-\gen)^{-1}$ to $(\mu-\geneff)^{-1}$ for some $\mu>0$,
which amounts to prove that for every $\phi\in\L^2(\P)$, we have
\begin{equ}[e:StrongCvHeu]
(\mu - \gen)^{-1}  \phi \xrightarrow[\tau\to\infty]{\L^2(\P)} (\mu - \cL^{\mathrm{eff}})^{-1} \phi \,
\end{equ}
(see Proposition \ref{thm:StrongCvResL2} below for a more precise statement).
Establishing~\eqref{e:StrongCvHeu} requires a thorough understanding of its left-hand side,
whose structure is highly non-trivial, and a way to see how, as $\tau$ goes to $\infty$, the relatively
simple right hand side arises. Let us fix $\mu=1$ and, for the sake of the present informal discussion,
a smooth element in the first chaos, i.e. take $\phi\in\cS(\R^2)$, and
set $r^\tau\eqdef (1 - \gen)^{-1}  \phi$.
\medskip

A crucial observation is that the behaviour of the resolvent is intimately related to
the following operator fixed-point-equation
\begin{equ}[e:OpFP]
  \ch^{\tau}=-\genam \cH_\tau\genap\,, \qquad \text{with}\quad \cH_\tau\eqdef (1-\gensy+\ch_\tau)^{-1}.
\end{equ}
(For a heuristic as to where~\eqref{e:OpFP} comes from see~\cite[Section 5.1]{CT}.)
Indeed, assume that~\eqref{e:OpFP} admits a solution $\ch_\tau$ and that $\ch_\tau$ is diagonal in chaos,
i.e. it maps $\fock{n}{}{\tau}$ to itself. Now, define $\tilde r^\tau$ formally as 
\begin{equ}[e:seriesr]
\tilde r^\tau\eqdef \sum_{n\geq 0} (\cH_\tau\genap)^n\cH_\tau\phi\,,
\end{equ}
which means that
$\tilde r^\tau_0=0$, $\tilde r^\tau_1=\cH_\tau\phi$ and, when projected onto the $n$-th chaos component for $n>1$,
$\tilde r^\tau_n= (\cH_\tau\genap)^{n-1}\cH_\tau\phi=\cH_\tau\genap\tilde r^\tau_{n-1}$.
In particular, it satisfies
\begin{equ}[e:True]
\tilde r^\tau=\cH_\tau\genap\tilde r^\tau +\cH_\tau\phi\,.
\end{equ}
We claim that $\tilde r^\tau$ coincides with $r^\tau$.
Indeed, applying $(1-\gen)$ to $\tilde r^\tau$ we obtain
\begin{equs}
(1-\gen) \, \tilde r^\tau &= (1-\gensy+\ch_\tau) \, \tilde r^\tau-\genap\tilde r^\tau-\genam \tilde r^\tau-\ch_\tau \tilde r^\tau\\
&=\big[\cH_\tau^{-1}\tilde r^\tau-\genap\tilde r^\tau\big]+\big[-\genam \cH_\tau\genap-\ch_\tau\big]\tilde r^\tau-\genam\cH_\tau\phi
\end{equs}
where in the first step we added and subtracted $\ch_\tau\tilde r^\tau$ and in the second we used the definition
of $\cH_\tau$ and plugged~\eqref{e:True} into $-\genam \tilde r^\tau$.
Now, by~\eqref{e:True} the first term coincides with $\phi$, the second
is $0$ thanks to~\eqref{e:OpFP}, and the last vanishes since, by Lemma~\ref{l:ActionGen},
$\genam(\fock{1}{}{\tau})\equiv 0$. In other words, $(1-\gen)\tilde r^\tau=\phi=(1-\gen) r^\tau$ which,
by applying the resolvent at both sides, proves the claim.

What the previous shows is that the operator $\ch_\tau$ is the building block
at the core of $(1-\gen)^{-1}$ as it provides a (semi-){\it explicit representation}
for $r^\tau$. With it in hands,~\eqref{e:StrongCvHeu} would follow provided we could prove 
that
\begin{enumerate}[label=(\arabic*),noitemsep]
\item\label{i:point1}  $(r^\tau_n)_{n\geq 2}$ vanishes as $\tau\to0$ in $\fock{}{}{\tau}$, 
\item\label{i:point2}  $r^\tau_1=\cH_\tau\phi\in\fock{1}{}{\tau}$ is close to $(1-\geneff)^{-1}\phi$.
\end{enumerate}
\medskip

The problem with the above is that, even assuming
existence and uniqueness of the fixed point, $\ch^{\tau}$ is likely to be a very complicated
operator to which we have {\it no direct access}.
The way out, first explored in~\cite{CGT} (and~\cite{AKPZweak,CG}) in the so-called weak coupling
scaling 
is to identify an operator
$\cG^\tau$ that, on one hand, is an {\sl approximate} solution of~\eqref{e:OpFP},
and on the other has a sufficiently simple
form (in particular, it is diagonal in the sense of Definition~\ref{def:DiagOp}) that allows to
rigorously show (1) and (2) in an appropriate sense.
To move beyond weak-coupling though, this is still not enough.

The difficulty lies in the fact that the error in the approximate fixed point equation
(referred to as the ``Replacement Lemma'',~\cite[Lemma 2.5]{CGT}), while
small in $\tau$ in each given Fock space component  $\fock{n}{}{\tau}$, grows
polynomially in the chaos index $n$. Such growth is caused by the
``off-diagonal terms'' (see \eqref{e:diag+offdiag}) in scalar products involving $\gena$
as explained in the proof of Lemma~\ref{l:offDi}.
Let us remark that this problem is {\it structural} and somewhat intrinsic to
stationary non-reversible Markov processes (see~\cite{KLO}).
It lead to the introduction of the so-called {\it graded sector condition}
(see~\cite{Balint}) in the study of Gaussian
fluctuations for such systems and is responsible for the
sub-dominant corrections to the $(\log t)^{2/3}$-divergence of the diffusion coefficient
$D(t)$ of $2d$-ASEP \cite{Yau} and of the $2d$-SBE \cite{de2024log},
as well as those to the $(\log t)^{1/2}$ divergence of $D(t)$ of the AKPZ equation \cite{CET}.
\medskip

The main novelty of the present work is the identification of a way that allows
{\it both} to determine a diagonal approximate fixed point for~\eqref{e:OpFP} 
{\it and} to tame the growth in
chaos, thereby establishing~\eqref{e:StrongCvHeu}
in the strong coupling regime (thus beyond the range of applicability of the
graded sector condition, see Remarks~\ref{rem:GSCgenafl} and~\ref{rem:WeakvsStrong}),
and obtaining the precise asymptotic for $D(t)$, as well as a (superdiffusive) central limit theorem. 
To understand how the strategy came about, there are two 
aspects of the antisymmetric part 
of the generator that should be taken into account, namely, the {\it magnitude} and the {\it direction} of the momenta it produces.  

To see the former, note that $\gena$ (and more specifically $\genap$) 
is ``dangerous'' for two very
distinct reasons. As was discussed after Lemma
\ref{l:ActionGen}, the operator $\genap$, even when acting on
smooth functions with localised momenta, 
creates {\it new} momenta which can range from low to arbitrarily high.
The creation of high momenta causes the output to be very irregular in real space (or spread out in momentum space) and
is the mechanism responsible for the growth of $D(t)$ and, under the correct scaling, for the origin of $\geneff$.
When instead the new momenta are of the same magnitude as the original ones, then
new and old ``resonate'' and generate contributions which are lower order in $\tau$
but grow polynomially in the number of variables (i.e. the chaos number).

For the latter, note that even though $\gena$ takes as an input momenta from every direction, the singularity it produces 
is concentrated {\it only in the direction $\fe_1$}, 
which is the direction of the ``drift'', i.e. the non-linearity, of SBE~\eqref{e:SBEsystem}. 
To witness, the right hand side of~\eqref{e:explosion} (but also of every upper bound we will obtain 
on $\gena$) only involves the {\it anisotropic} $H^1$-norm of $\phi$, i.e. the $H^1$-norm in the direction $\fe_1$, 
which is $\|(-\gensyx)^{1/2}\phi\|_\tau=\frac12\|(\fe_1\cdot\nabla)\phi\|_\tau$.  
Further, in such direction, the linear part of the dynamics is vanishing 
at an order $\nu_\tau\ll \nu_\tau^{3/4}$, the latter being the coefficient of $\cN^\tau$ (and unsurprisingly 
the rate of convergence we derive is $\nu_\tau^{1/4}$). 
\medskip

To translate the previous observations in mathematical terms, we introduce two novel tools. 
First, we identify a splitting of $\genap$ (and, by duality, $\genam$) which decouples  
the singularity in Fourier and the growth in chaos (see Section~\ref{sec:Asharp}). 
More specifically, we write it as the sum of 
$\genapsh$ and $\genapfl$, with $\genapsh$ only producing high modes,
and no growth in the chaos number,
while $\genapfl$ keeping track of the latter and regular (actually, small) in Fourier. 
Second, we introduce a {\it Recursive Replacement Lemma} (see Lemma~\ref{p:2ndReplLemma}) 
which (morally) allows to recursively estimate each of the summands at the right hand side of~\eqref{e:seriesr}   
and precisely pins down the contribution of the anisotropic $\fock{}{1}{\tau}$-norm of the resolvent $r^\tau$, 
i.e. the $\fock{}{1}{\tau}$-norm in the direction $\fe_1$. 

With the previous at hand, the strategy of proof becomes
\begin{enumerate}[label=(\Roman*), noitemsep]
\item\label{i:pointI} find an approximate fixed point $\cg^\tau$ to~\eqref{e:OpFP} but with $\genapsh$ and $\genamsh$
in place of $\genap$ and $\genam$ (so to avoid growth in the chaos in the estimates);
\item\label{i:pointII} define the Ansatz for the resolvent $s^\tau$ as the right hand side of~\eqref{e:seriesr} but with
$\cG^\tau\eqdef (\mu-\gensy+\cg^\tau)^{-1}$ and $\genapsh$ instead of $\cH_\tau$ and $\genap$;
\item\label{i:pointIII} control the anisotropic $\fock{}{1}{\tau}$-norm of $s^\tau$, i.e. $\|(-\nu_\tau\gensyx)^{1/2} s^\tau\|_\tau$, 
with the Recursive Replacement Lemma and 
show that it vanishes in the limit for $\tau\to\infty$. This will show both that the contribution of $\genafl s^\tau$
is small and that $s^\tau$ is sufficiently close to the resolvent applied to $\phi$;
\item\label{i:pointIV} verify that $r^\tau$ satisfies~\ref{i:point1} and~\ref{i:point2} above, thus proving~\eqref{e:StrongCvHeu}.
\end{enumerate}
The proof of these points will be carried out in Sections~\ref{sec:Est} and~\ref{sec:CVRes}.
In the former, we will introduce: the splitting $\genash$/$\genafl$,
the approximate fixed point (point~\ref{i:pointI}); the Recursive Replacement Lemma; 
and the main estimates needed in the next.
Section~\ref{sec:CVRes} puts these tools to practice and addresses points~\ref{i:pointII}-\ref{i:pointIV}.

\section{Estimates on the generator acting in Fock spaces}\label{sec:Est}

This section is devoted to the analysis of the antisymmetric part $\gena$ of the generator $\gen$. 
We will rigorously present the novel tools mentioned in Section~\ref{sec:HeuRM}
and obtain the bounds we will need in order to establish~\eqref{e:StrongCvHeu}. 

\subsection{Diagonal and off-diagonal contributions}

As should be clear by~\eqref{e:OpFP}, a crucial role will be played by operators of the form
$-\genam \cD \genap$ with $\cD$ a non-negative diagonal operator on $\fock{}{}{\tau}$ 
according to Definition~\ref{def:DiagOp}, with kernel $\bd = (\bd_n)_{n \geq 0}$ 
and that we assume to be well-defined on $\core$ (this is the case, e.g., if $\cD$ is bounded). 
Notice first that, since by Lemma~\ref{l:ActionGen} $-\genam = (\genap)^*$ on $\core$ and diagonal 
operators are always self-adjoint (on their natural domain), it holds 
\begin{equ} \label{e:Duality1}
\langle - \genam \cD \genap \Phi, \psi \rangle_\tau = \langle \cD \genap \Phi, \genap \psi \rangle_\tau \quad\text{and}\quad \langle - \genam \cD \genap \Phi, \Phi \rangle_\tau = \|\cD^{1/2} \genap \Phi\|_\tau^2
\end{equ}
for $\Phi,\psi\in\core$. 

Let us now introduce the  {\it diagonal} and {\it off-diagonal terms} of operators $- \genam \cD \genap$, with $\cD$ as above. 
For $\Phi, \psi \in\core$, the former is given by 
\begin{equs}[e:diag]
\langle-\genam\cD\genap\Phi,\psi\rangle_{\tau,\di} \eqdef \sum_n &n! \, \int \Xi_n^\tau(\dd p_{1:n}) \, \hat\Phi_n(p_{1:n}) \, \overline{\hat\psi}_n(p_{1:n})\\
&\times \sum_{i=1}^n (\fe_1\cdot p_i)^2 \, \frac{\lambda^2 \nu_\tau^{3/2}}{2 \pi^2} \int_{\R^2} \dd q \, \bd_{n+1}(q, p_i-q, p_{1:n\setminus\{i\}}) \, \cJ_{q,p_1-q}^\tau
\end{equs}
for $\cJ_{q,q'}^\tau\eqdef \Theta_\tau(q)\Theta_\tau(q')\Theta_\tau(q+q')$, while
the others, referred to as the {\it off-diagonal terms} of type $1$ and $2$ respectively,
are
\begin{equs}
\langle - \genam \cD \genap \, \Phi, \psi\rangle_{\tau, \oDi}\eqdef& \sum_n n! \, c_{\oDi}(n) \ \lambda^2 \nu_\tau^{3/2} \int \Xi_{n+1}^\tau(\dd p_{1:n+1}) \, \bd_{n+1}(p_{1:n+1}) \, \nonlinp_{p_1,p_2} \, \overline{\nonlinp_{p_1,p_3}}\\
&\qquad \times \hat{\Phi}(p_1+p_2, p_{1:n+1\setminus\{1,2\}}) \ \overline{\hat \psi}(p_1+p_3, p_{1:n+1\setminus\{1,3\}})\,,  \label{e:offdiag1} \\
\langle - \genam \cD \genap \, \Phi, \psi\rangle_{\tau, \ooDi}\eqdef& \sum_n n! \, c_{\ooDi}(n) \ \lambda^2 \nu_\tau^{3/2} \int \, \Xi_{n+1}^\tau(\dd p_{1:n+1})\,\bd_{n+1}(p_{1:n+1}) \, \nonlinp_{p_1,p_2} \, \overline{\nonlinp_{p_3,p_4}} \\
&\qquad\times \hat{\Phi}(p_1+p_2, p_{1:n+1\setminus\{1,2\}}) \ \overline{\hat{\psi}}(p_3+p_4, p_{1:n+1\setminus\{3,4\}}) \label{e:offdiag2}
\end{equs}
where $c_{\oD_1}(n)\eqdef 4 n (n-1)$ and $c_{\oD_2}(n)\eqdef n (n-1) (n-2)$, 
and $\nonlinp_{q,q'}$ is as in~\eqref{e:K+K-}. In the next lemma, we show that ~\eqref{e:diag}-\eqref{e:offdiag2} indeed provide a decomposition of $- \genam \cD \genap$.

\begin{lemma}\label{l:offDi}
Let $\cD$ be a non-negative diagonal operator on $\fock{}{}{\tau}$ which is well-defined on $\core$. 
Then, the operator $-\genam\cD\genap$ preserves the chaos, in the sense
that it commutes with the number operator $\cN$, and for every $\Phi,\psi\in\core$ we have 
\begin{equ}[e:diag+offdiag]
\langle-\genam\cD\genap\Phi,\psi\rangle_\tau=\langle-\genam\cD\genap\Phi,\psi\rangle_{\tau,\di}+\sum_{i=1}^2\langle-\genam\cD\genap\Phi,\psi\rangle_{\tau,\oD_i}
\end{equ}
where the summands at the right hand side are respectively given by~\eqref{e:diag},~\eqref{e:offdiag1} and~\eqref{e:offdiag2}.

As a consequence, for every $n\in\N$, we have 
\begin{equ} \label{e:CSoffdiag}
\Big|\langle - \genam \cD \genap \, \Phi_n, \psi_n\rangle_{\tau, \oD_i}\Big| \lesssim n^2 \langle-\genam\cD\genap\Phi_n,\Phi_n\rangle_{\tau,\di}^{1/2} \langle-\genam\cD\genap\psi_n,\psi_n\rangle_{\tau,\di}^{1/2} \, ,
\end{equ}
for all $\Phi,\psi\in\core$. 
\end{lemma}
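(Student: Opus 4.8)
The commutation of $-\genam\cD\genap$ with $\cN$ is immediate from Lemma~\ref{l:ActionGen} and Definition~\ref{def:DiagOp}: $\genap$ raises the chaos index by one, $\cD$ preserves it and $\genam$ lowers it, so $-\genam\cD\genap$ maps $\fock{n}{}{\tau}$ into itself. Consequently every form in the statement splits over chaos and it suffices to argue at a fixed $n$, taking $\Phi=\Phi_n,\psi=\psi_n\in\fock{n}{}{\tau}$ (first with Schwartz Fourier transforms, then passing to general elements of $\core$ by density, all integrals below being absolutely convergent since $\Theta_\tau$ is Schwartz and $\bd_{n+1}$ is under control because $\cD$ is well-defined on $\core$). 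For the decomposition \eqref{e:diag+offdiag} I would start from \eqref{e:Duality1}, which gives
\begin{equ}
\langle-\genam\cD\genap\Phi_n,\psi_n\rangle_\tau=\langle\cD\genap\Phi_n,\genap\psi_n\rangle_\tau=(n+1)!\int\bd_{n+1}(p_{1:n+1})\,\cF(\genap\Phi_n)(p_{1:n+1})\,\overline{\cF(\genap\psi_n)(p_{1:n+1})}\,\Xi_{n+1}^\tau(\dd p_{1:n+1})\,,
\end{equ}
and then insert the explicit formula \eqref{e:genap:fock} for $\cF(\genap\,\cdot\,)$. This makes the integrand a double sum over pairs $\{i,j\},\{k,l\}\subseteq\{1,\dots,n+1\}$, which I split according to $|\{i,j\}\cap\{k,l\}|\in\{2,1,0\}$; in each class I use the full permutation symmetry of $\bd_{n+1}$, $\Xi_{n+1}^\tau$, $\hat\Phi_n$, $\hat\psi_n$ to relabel every term to one canonical representative, the numbers of terms being $\binom{n+1}{2}$, $(n+1)n(n-1)$ and $\binom{n+1}{2}\binom{n-1}{2}$ respectively (these sum to $\binom{n+1}{2}^2$, as a check). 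After simplifying the $(n+1)!/(n+1)^2$ prefactors, the classes $|\cap|=1$ and $|\cap|=0$ reproduce \eqref{e:offdiag1} with $c_{\oD_1}(n)=4n(n-1)$ and \eqref{e:offdiag2} with $c_{\oD_2}(n)=n(n-1)(n-2)$, while $|\cap|=2$ gives $\langle-\genam\cD\genap\Phi_n,\psi_n\rangle_{\tau,\di}=2n\,n!\,\lambda^2\nu_\tau^{3/2}\,\cQ_n(\Phi_n,\psi_n)$ with
\begin{equ}
\cQ_n(\Phi,\psi)\eqdef\int\bd_{n+1}(p_{1:n+1})\,|\nonlinp_{p_1,p_2}|^2\,\hat\Phi(p_1+p_2,p_{3:n+1})\,\overline{\hat\psi(p_1+p_2,p_{3:n+1})}\,\Xi_{n+1}^\tau(\dd p_{1:n+1})\,;
\end{equ}
that this equals \eqref{e:diag} is a one–line change of variables $(p_1,p_2)\mapsto(q,p_1+p_2)$ using $\cJ^\tau_{q,p-q}=\Theta_\tau(q)\Theta_\tau(p-q)\Theta_\tau(p)$ together with symmetrisation over which of the $n$ arguments of $\hat\Phi,\hat\psi$ carries the split momentum.

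Given this, \eqref{e:CSoffdiag} follows from a single Cauchy--Schwarz. Since $\cD\ge0$ and $\bd_{n+1}$ is permutation-invariant, $\cQ_n$ is a non-negative Hermitian form and $\bd_{n+1}\,\Xi_{n+1}^\tau$ is a positive measure. For the type-$1$ term I apply Cauchy--Schwarz to the integral in \eqref{e:offdiag1} against $\bd_{n+1}\,\Xi_{n+1}^\tau$, factoring the integrand as $[\nonlinp_{p_1,p_2}\hat\Phi(p_1+p_2,p_3,p_{4:n+1})]\cdot\overline{[\nonlinp_{p_1,p_3}\hat\psi(p_1+p_3,p_2,p_{4:n+1})]}$: the first squared factor integrates exactly to $\cQ_n(\Phi,\Phi)$, and the second integrates to $\int\bd_{n+1}\,|\nonlinp_{p_1,p_3}|^2\,|\hat\psi(p_1+p_3,p_2,p_{4:n+1})|^2\,\Xi_{n+1}^\tau$, which upon relabelling the dummy variables $p_2\leftrightarrow p_3$ is exactly $\cQ_n(\psi,\psi)$ — here it is crucial that this is a plain permutation, under which both $\bd_{n+1}$ and $\Xi_{n+1}^\tau$ are invariant, so no Jacobian or shift of mass intervenes. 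Hence, with $c_{\oD_1}(n)=4n(n-1)$,
\begin{equ}
\big|\langle-\genam\cD\genap\Phi_n,\psi_n\rangle_{\tau,\oD_1}\big|\le n!\,c_{\oD_1}(n)\,\lambda^2\nu_\tau^{3/2}\,\cQ_n(\Phi,\Phi)^{1/2}\cQ_n(\psi,\psi)^{1/2}=\tfrac{c_{\oD_1}(n)}{2n}\langle-\genam\cD\genap\Phi_n,\Phi_n\rangle_{\tau,\di}^{1/2}\langle-\genam\cD\genap\psi_n,\psi_n\rangle_{\tau,\di}^{1/2}\,,
\end{equ}
and $c_{\oD_1}(n)/(2n)=2(n-1)\le n^2$. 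The type-$2$ term is handled identically, relabelling instead the pair $(p_1,p_2)\leftrightarrow(p_3,p_4)$ in the $\psi$-factor, with $c_{\oD_2}(n)/(2n)=\tfrac12(n-1)(n-2)\le n^2$; this yields \eqref{e:CSoffdiag}.

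The only genuine subtleties are of bookkeeping, not analysis: keeping track of the three combinatorial multiplicities (and checking they sum to $\binom{n+1}{2}^2$), and — this is the point of the Cauchy--Schwarz above — arranging that each half is matched to the diagonal form $\cQ_n$ by a permutation of momenta alone. The tempting alternative, to Cauchy--Schwarz and then change variables $(p_1,p_2,p_3)\mapsto(p_1,\,p_1+p_2,\,p_1+p_3)$ so as to clear the arguments of $\hat\Phi$ and $\hat\psi$ simultaneously, does \emph{not} work: that change is a shift and the weight $\prod_k\Theta_\tau(p_k)$ of $\Xi_{n+1}^\tau$ is not translation invariant, so mass is moved between $\Theta_\tau(p_1)$ and $\Theta_\tau(p_2+q)$. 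Splitting first keeps every relabelling a permutation, which $\Xi_{n+1}^\tau$ and $\bd_{n+1}$ respect.
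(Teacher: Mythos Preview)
Your proof is correct and follows essentially the same route as the paper's: start from the duality identity \eqref{e:Duality1}, expand the double sum coming from \eqref{e:genap:fock}, split according to $|\{i,j\}\cap\{k,l\}|\in\{2,1,0\}$ and use permutation symmetry to reduce to the canonical representatives, then apply Cauchy--Schwarz against the symmetric positive measure $\bd_{n+1}\,\Xi_{n+1}^\tau$ and relabel by a pure permutation. The paper gives only a sketch (referring to \cite[Lemma~3.6]{CET}) and states \eqref{e:CSoffdiag} as ``Cauchy--Schwarz plus $c_{\oD_i}(n)=O(n^3)$'', which is exactly your computation $c_{\oD_i}(n)/(2n)\lesssim n^2$; your explicit combinatorial check and the remark on why a shift change of variables fails are welcome additions but do not change the argument.
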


\begin{proof}
  The fact that $-\genam\cD\genap$ preserves the chaos follows
  immediately from Lemma~\ref{l:ActionGen},
  while~\eqref{e:diag+offdiag} can be argued as in~\cite[Lemma
  3.6]{CET} so that we will only sketch the proof.
  Consider the first equality in~\eqref{e:Duality1}.  By~\eqref{e:genap:fock}, the Fourier
  kernels of $\genap \Phi$ and $\genap \psi$ contain a sum over
  indices $i<j$, so that, when multiplying them and expanding the
  product, we obtain a sum over $i<j$, $i'<j'$. The diagonal part then
  corresponds to the sum over $i=i'<j=j'$, the off-diagonal of type
  $1$ to that over $|\{i,j\}\cap\{i',j'\}|=1$ while the off-diagonal
  of type $2$ to that over $|\{i,j\}\cap\{i',j'\}|=0$.
  Equations~\eqref{e:offdiag1} and~\eqref{e:offdiag2} are then obtained via a
  change of variables and are a consequence of the fact that $\Phi$
  and $\psi$ are symmetric with respect to permutation of their
  variables. The constants
  $c_{\oD_i}(n)$, $i=1,2$, are then simple combinatorial factors
     arising from counting the number of possible choices of
    $i<j,i'<j'$ satisfying $|\{i,j\}\cap\{i',j'\}|=1$ or
  $|\{i,j\}\cap\{i',j'\}|=0$. At last,~\eqref{e:CSoffdiag} is a
  straightforward application of Cauchy-Schwarz and the bound $c_{\oD_i}(n) = O(n^3)$.
\end{proof}

The expressions~\eqref{e:diag},~\eqref{e:offdiag1} and~\eqref{e:offdiag2} are quite suggestive
in regards to the expected contributions from these terms. 
An inspection of the first hints at where the 
diffusivity in the direction $\fe_1$ comes from. Indeed, its right hand side features an integral in $q$ 
which {\it involves neither $\phi$ nor $\psi$},
so that its convergence as $\tau\to\infty$ is totally independent of them. In particular, their
decay in Fourier plays no role, as we saw in~\eqref{e:explosion} for $\cD=(1-\cS^\tau)^{-1}$.
Further, if we assume for a moment that, for the right choice of $\cD$, such integral
were a constant $D$ independent of $p_{1:n}$,
then we would have $\langle-\genam\cD\genap\Phi,\psi\rangle_{\tau,\di}=\langle(-D\cL_0^{\fe_1})\Phi,\psi\rangle_{\tau}$. 
Our goal is to show that for the (approximate) fixed point to~\eqref{e:OpFP}, 
in the limit for $\tau\to\infty$ such integral indeed converges to a constant 
and that such constant coincides with the first entry of $D_\eff$ in~\eqref{e:EffectiveSBE}.

On the other hand, the value of $c_{\oD_i}(\cN)$, $i=1,2$, in~\eqref{e:offdiag1} and~\eqref{e:offdiag2}, 
and the trivial bound~\eqref{e:CSoffdiag} say that off-diagonal terms can be controlled by the diagonal one, 
times a factor growing like the square of the chaos index.
This bound alone is way too brutal for our purposes 
since, as hinted at above, we expect the effective diffusivity to arise from the diagonal terms alone. 
Thus, the off-diagonal terms should actually be {\it negligible} in the large-scale limit 
but, as we will discuss in next section, to see this we need to perform 
a much more refined analysis of the operators $-\genam \cD \genap$.

\subsection{Singling out momentum and chaos singularities}\label{sec:Asharp}

In order to set apart the issues coming from the singularity in Fourier and the chaos-growth, we now introduce
a splitting of the operator $\gena$ which is one of the main novelties of the present work.
For any $n\in\N$ and $\psi\in\fock{n}{}{\tau}\cap\cS(\R^{2n})$, we define the operators $\genapsh$ and $\genamsh$
according to
\begin{equs}
\cF(\genapsh \psi) \, (p_{1:n+1}) &\eqdef \lambda \nu_\tau^{3/4} \frac{2}{n+1} \sum_{1 \leq i < j \leq n+1} \nonlinp_{p_i,p_j} \, \chi_{p_i, p_j}^{\sharp,\tau}(p_i+p_j, p_{1:n+1\setminus\{i,j\}}) \\
&\qquad\qquad\qquad\qquad\qquad\qquad\qquad\qquad\times\hat{\psi}(p_i+p_j, p_{1:n+1\setminus\{i,j\}})\label{e:A+sharp}\\
\cF(\genamsh \psi) \, (p_{1:n-1})
&\eqdef \lambda \nu_\tau^{3/4}  \,
n \sum_{i=1}^{n-1}
\int_{\R^2} \chi_{q,p_i-q}^{\sharp,\tau}(p_{1:n-1}) \, \nonlinm_{q, p_i-q} \, \hat{\psi} \big(q,p_i-q, p_{1:n-1\setminus\{i\}}\big) \dd q \qquad\label{e:A-sharp}
\end{equs}
and set $\genash\eqdef\genapsh+\genamsh$, where $\chi^\sharp$ is given by
\begin{equ}\label{e:ChiSharp}
\chi_{q,q'}^{\sharp,\tau}(p_{1:N}) \eqdef \1_{|\sqrt{R_\tau} q| \wedge |\sqrt{R_\tau} q'| > 2 \, |\sqrt{R_\tau} p_{1:N}|} \, .
\end{equ}
Upon setting $\chi^{\flat, \tau} \eqdef 1 - \chi^{\sharp, \tau}$, we also introduce the operator $\genafl\eqdef \genapfl+\genamfl$,
where $\genapfl$ and $\genamfl$ are given as in~\eqref{e:A+sharp} and~\eqref{e:A-sharp} but with $\chi^\flat$
in place of $\chi^\sharp$. Before discussing their features, let us state in the next lemma
their basic properties.

\begin{lemma}\label{l:Asharp}
Let us fix $\tau > 0$. Then the operators $\genash$ and $\genafl$ are continuous on $\core$
and their sum coincides with $\gena$. Further, on $\core$,
they are skew-symmetric, and
$(\genapsh)^\ast= - \genamsh$, $(\genapfl)^\ast= - \genamfl$ (in particular \eqref{e:Duality1} remains true upon 
replacing $\gena_{\pm}$ by $\genash_{\pm}$ or $\genafl_{\pm}$).
At last, for any bounded diagonal operator $\cD$ satisfying the assumptions of Lemma~\ref{l:offDi},
both $-\genamsh\cD\genapsh$ and $-\genamfl\cD\genapfl$ preserve the chaos, and
the same decomposition into diagonal and off-diagonal terms given in~\eqref{e:diag+offdiag}
holds with $\chi^\sharp$ and $\chi^\flat$ appropriately placed at the right hand sides of~\eqref{e:diag}-\eqref{e:offdiag2}.
\end{lemma}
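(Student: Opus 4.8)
The plan is to deduce everything from the already-established structure of $\gena = \genap + \genam$ (Lemma~\ref{l:ActionGen} and its proof) and of the operators $-\genam \cD \genap$ (Lemma~\ref{l:offDi}), using that the only new ingredient — the cutoffs $\chi^{\sharp,\tau}$ and $\chi^{\flat,\tau} = 1 - \chi^{\sharp,\tau}$ — are bounded by $1$, real-valued, and, as is clear from \eqref{e:ChiSharp} and \eqref{e:NotationsR2n}, symmetric both under permutations of the momenta $p_{1:N}$ and under the exchange of the two lower indices $q \leftrightarrow q'$.

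First, since $\chi^{\sharp,\tau}_{q,q'} + \chi^{\flat,\tau}_{q,q'} \equiv 1$, comparing the kernels \eqref{e:A+sharp}, \eqref{e:A-sharp} and their $\flat$-analogues term by term with \eqref{e:genap:fock}, \eqref{e:genam:fock} gives $\genapsh + \genapfl = \genap$ and $\genamsh + \genamfl = \genam$, hence $\genash + \genafl = \gena$ on $\core$. Continuity on $\core$ follows because inserting a factor bounded by $1$ into the Fourier kernels cannot increase the Fock-space norm estimates for $\genap$ and $\genam$ underlying Lemma~\ref{l:ActionGen}; hence $\genapsh, \genamsh, \genapfl, \genamfl$ obey the same bounds and map $\core$ continuously into $\fock{}{}{\tau}$.

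Next I would establish the adjoint relations by rerunning the duality computation that proves $(\genap)^\ast = -\genam$ on $\core$. Pairing $\genapsh \Phi$ against $\psi$ for $\Phi, \psi \in \core$, one uses the permutation-symmetry of the elements of $\core$ and of the measures $\Xi_n^\tau$ to collapse the sum over index pairs, performs the change of variables $(p_i, p_j) \mapsto (p_i,\, p_i + p_j)$, and invokes the pointwise identities $\Theta_\tau(q)\Theta_\tau(p - q)\,\nonlinp_{q, p-q} = \Theta_\tau(p)\,\nonlinm_{q, p-q}$ and $\overline{\nonlinm_{q, q'}} = -\nonlinm_{q, q'}$. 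The factor $\chi^{\sharp,\tau}$ is simply transported through this computation: being real it passes through the conjugation, and after the substitution its momentum argument becomes that of the intermediate $n$-particle state, which, by permutation-symmetry of $\chi^{\sharp,\tau}$, is exactly the argument appearing in \eqref{e:A-sharp}. This yields $(\genapsh)^\ast = -\genamsh$ on $\core$; taking adjoints gives $(\genamsh)^\ast = -\genapsh$, so $\genash = \genapsh + \genamsh$ is skew-symmetric, and \eqref{e:Duality1} transfers verbatim with $\gena_\pm$ replaced by $\genash_\pm$. The identical argument with $\chi^{\flat,\tau}$ handles $\genafl_\pm$.

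Finally, for the last assertion: $\genapsh$ raises the chaos index by one, $\cD$ preserves it, and $\genamsh$ lowers it by one, so $-\genamsh \cD \genapsh$ — and likewise $-\genamfl \cD \genapfl$ — commutes with $\cN$. For the decomposition I would write $\langle -\genamsh \cD \genapsh \Phi, \psi \rangle_\tau = \langle \cD^{1/2} \genapsh \Phi,\, \cD^{1/2} \genapsh \psi \rangle_\tau$ (using $(\genamsh)^\ast = -\genapsh$ and self-adjointness of the bounded $\cD$), expand the two kernels as sums over index pairs each carrying its $\chi^{\sharp,\tau}$-factor, and split the resulting double sum according to whether the two pairs coincide, overlap in one index, or are disjoint, exactly as in the proof of Lemma~\ref{l:offDi}. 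On the coinciding (diagonal) term the two $\chi^{\sharp,\tau}$-factors have the same arguments and, being an indicator, their product collapses to a single $\chi^{\sharp,\tau}_{q, p_i - q}(p_{1:n})$, inserted into the $q$-integrand of \eqref{e:diag}; on the two off-diagonal terms the factors carry distinct arguments and are kept as the corresponding product, inserted into \eqref{e:offdiag1}, \eqref{e:offdiag2}. The combinatorial coefficients $c_{\oD_1}(n), c_{\oD_2}(n)$ and the Cauchy–Schwarz bound \eqref{e:CSoffdiag} are unaffected, since the latter only uses boundedness of the kernels and non-negativity of the diagonal term; the $\flat$ case is identical. I expect no genuine difficulty here — the only point requiring care is the bookkeeping of the $\chi$-arguments through the duality change of variables, i.e. that the ``$p_{1:N}$-slot'' of $\chi^{\sharp,\tau}$ in \eqref{e:A+sharp} matches, after the substitution, the one in \eqref{e:A-sharp}, which is precisely where the permutation-symmetry of $\chi^{\sharp,\tau}$ in its momentum arguments is used.
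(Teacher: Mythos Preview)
Your proposal is correct and takes essentially the same approach as the paper, which dispatches the lemma in two lines by referring continuity to Lemma~\ref{l:GSC} and declaring the remaining properties ``immediate from the definitions and straightforward computations.'' You have simply fleshed out those computations---carrying the bounded, real, permutation-symmetric cutoffs $\chi^{\sharp,\tau}$, $\chi^{\flat,\tau}$ through the existing duality and diagonal/off-diagonal arguments---which is exactly what the paper intends.
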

\begin{proof}
Continuity can be argued as in Lemma~\ref{l:GSC}, while the other properties
follow immediately by the definition of the operators and straightforward computations.
\end{proof}

The advantage of the previous decomposition is that
$\genash$ and $\genafl$, while singular in Fourier and chaos respectively, 
are ``regular'' in the other. To substantiate such claims, we first show that
$\genash$ does not produce any off-diagonal contribution (and thus its bounds will not display any  
dependence in $\cN$). 

\begin{lemma} \label{l:SharpDiag}
Let $\cD$ satisfy the assumptions of Lemma~\ref{l:offDi}. Then,
the operator $-\genamsh \, \cD \, \genapsh$ is a non-negative diagonal operator
according to Definition~\ref{def:DiagOp} on $\core$.
In other words, for any $\Phi \in \core$ and $\psi \in \fock{}{}{\tau}$ it holds
\begin{equ} \label{e:SharpDiag}
\langle - \genamsh \cD \genapsh \, \Phi, \psi\rangle_\tau = \langle - \genamsh \cD \genapsh \, \Phi, \psi\rangle_{\tau,\di}\,.
\end{equ}
\end{lemma}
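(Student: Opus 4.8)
The plan is to exploit the indicator $\chi^{\sharp,\tau}$ in~\eqref{e:A+sharp}--\eqref{e:A-sharp}, which forces the two momenta created by $\genapsh$ to be strictly larger — in the $\sqrt{R_\tau}$-metric — than all the momenta not touched by the transition. This rigidity turns out to be incompatible with the index bookkeeping underlying the off-diagonal terms~\eqref{e:offdiag1}--\eqref{e:offdiag2}, so that, once $\genap$ and $\genam$ are replaced by $\genapsh$ and $\genamsh$, only the diagonal term~\eqref{e:diag} survives; and the latter is manifestly a non-negative diagonal operator in the sense of Definition~\ref{def:DiagOp}.

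First I would reduce the statement to the vanishing of the off-diagonal contributions. By Lemma~\ref{l:ActionGen}, $\genapsh$ raises the chaos index by one and $\genamsh$ lowers it by one, so $-\genamsh\cD\genapsh$ commutes with $\cN$. Then, arguing exactly as in the proof of Lemma~\ref{l:offDi} — use $(\genapsh)^\ast=-\genamsh$ from Lemma~\ref{l:Asharp} to write $\langle-\genamsh\cD\genapsh\Phi,\psi\rangle_\tau=\langle\cD\genapsh\Phi,\genapsh\psi\rangle_\tau$, expand the two symmetrised sums in~\eqref{e:A+sharp} over pairs $\{i,j\}$ and $\{i',j'\}$, and split according to $|\{i,j\}\cap\{i',j'\}|\in\{2,1,0\}$ — one obtains the decomposition~\eqref{e:diag+offdiag} with $\chi^{\sharp,\tau}$ inserted in~\eqref{e:diag}--\eqref{e:offdiag2} as in Lemma~\ref{l:Asharp}. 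It therefore suffices to prove that the type-$1$ and type-$2$ terms (carrying $\chi^\sharp$) vanish; this is first checked for $\psi\in\core$ and then extended to all $\psi\in\fock{}{}{\tau}$ by density, since $-\genamsh\cD\genapsh\Phi\in\fock{}{}{\tau}$ and both sides of~\eqref{e:SharpDiag} are continuous in $\psi$.

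The heart of the matter — and really the only non-formal point — is the incompatibility of the two sharp cutoffs off the diagonal. Consider the type-$1$ term with, say, $\{i,j\}=\{1,2\}$ coming from $\genapsh\Phi$ and $\{i',j'\}=\{1,3\}$ from $\genapsh\psi$; all other index configurations reduce to this one by the permutation symmetry of $\Phi$ and $\psi$. The factor carried by $\genapsh\Phi$ is $\chi^{\sharp,\tau}_{p_1,p_2}(p_1+p_2,p_3,\dots,p_{n+1})$, which by~\eqref{e:ChiSharp} and $|\sqrt{R_\tau}(p_1+p_2,p_3,\dots,p_{n+1})|\ge|\sqrt{R_\tau}p_3|$ forces $|\sqrt{R_\tau}p_2|>2|\sqrt{R_\tau}p_3|$; the factor carried by $\genapsh\psi$ is $\chi^{\sharp,\tau}_{p_1,p_3}(p_1+p_3,p_2,p_4,\dots,p_{n+1})$, which forces $|\sqrt{R_\tau}p_3|>2|\sqrt{R_\tau}p_2|$. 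Combining, $|\sqrt{R_\tau}p_2|>4|\sqrt{R_\tau}p_2|$, i.e. $p_2=0$, a Lebesgue-null — hence $\Xi_{n+1}^\tau$-null — set, so the integral in~\eqref{e:offdiag1} is zero. The type-$2$ term is handled identically: with $\{i,j\}=\{1,2\}$ and $\{i',j'\}=\{3,4\}$ the two cutoffs yield $|\sqrt{R_\tau}p_1|>2|\sqrt{R_\tau}p_3|$ and $|\sqrt{R_\tau}p_3|>2|\sqrt{R_\tau}p_1|$, again confining a momentum to a null set, so~\eqref{e:offdiag2} vanishes.

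With both off-diagonal contributions gone,~\eqref{e:diag+offdiag} collapses to $\langle-\genamsh\cD\genapsh\Phi,\psi\rangle_\tau=\langle-\genamsh\cD\genapsh\Phi,\psi\rangle_{\tau,\di}$, which is~\eqref{e:SharpDiag}. Reading off the multiplier from the $\chi^\sharp$-counterpart of~\eqref{e:diag} exhibits $-\genamsh\cD\genapsh$, on each $\fock{n}{}{\tau}\cap\cS(\R^{2n})$, as multiplication by a measurable kernel, hence a diagonal operator; and since $\cD\ge0$ forces its kernel $\bd\ge0$ while every other factor appearing in~\eqref{e:diag} is non-negative, the operator is non-negative (equivalently, $\langle-\genamsh\cD\genapsh\Phi,\Phi\rangle_\tau=\|\cD^{1/2}\genapsh\Phi\|_\tau^2\ge0$ by~\eqref{e:Duality1}). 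The main obstacle is thus entirely organisational: one has to match the relabelling of the integration variables produced by the change of variables in~\eqref{e:offdiag1}--\eqref{e:offdiag2} with the arguments at which $\chi^{\sharp,\tau}$ is evaluated, so that the two indicators are read on the correct — and, as above, mutually exclusive — sets of momenta. Beyond this bookkeeping I anticipate no analytic difficulty.
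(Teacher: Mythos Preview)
Your proposal is correct and follows essentially the same approach as the paper: reduce to showing the off-diagonal terms vanish, then use the incompatibility of the two $\chi^{\sharp,\tau}$ cutoffs (the paper derives $|\sqrt{R_\tau}p_1|>|\sqrt{R_\tau}p_3|>|\sqrt{R_\tau}p_1|$ for type $1$, you derive $|\sqrt{R_\tau}p_2|>4|\sqrt{R_\tau}p_2|$, which is the same contradiction with different bookkeeping). Your explicit treatment of type $2$ and the density extension to $\psi\in\fock{}{}{\tau}$ are details the paper leaves implicit, but the core argument is identical.
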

\begin{proof}
Let us begin by showing~\eqref{e:SharpDiag}. Thanks to Lemma~\ref{l:offDi} (and Lemma~\ref{l:Asharp}), 
it follows provided $\langle - \genamsh \cD \genapsh \, \Phi, \psi\rangle_{\tau,\oD_i} = 0$ for $i=1,2$. 
We will focus on the case $i=1$ as the other is completely analogous. 
By definition, we have 
\begin{equs}
\langle - \genamsh \cD \genapsh \, \Phi, \psi\rangle_{\tau, \oDi}=& \sum_n n! \, c_{\oDi}(n) \ \lambda^2\nu_\tau^{3/2} \int \Xi_{n+1}^\tau(\dd p_{1:n+1}) \, \bd_{n+1}(p_{1:n+1}) \, \nonlinp_{p_1,p_2} \, \overline{\nonlinp_{p_1,p_3}}\\
&\qquad \times \chi^{\sharp,\tau}_{p_1,p_2}(p_1+p_2, p_{1:n+1\setminus\{1,2\}}) \, \chi^{\sharp,\tau}_{p_1,p_3}(p_1+p_3, p_{1:n+1\setminus\{1,3\}})\\
&\qquad \times \hat{\Phi}(p_1+p_2, p_{1:n+1\setminus\{1,2\}}) \ \overline{\hat \psi}(p_1+p_3, p_{1:n+1\setminus\{1,3\}})\,.
\end{equs}
Now, the product of the $\chi^\sharp$'s can be different from $0$ only if they are both 
equal to $1$. By~\eqref{e:ChiSharp}, this holds provided that
\begin{equ}
|\sqrt{R_\tau} p_1| \wedge |\sqrt{R_\tau} p_2| > 2 \,|\sqrt{R_\tau} (p_1+p_2,p_{1:n+1\setminus\{1,2\}})| \geq |\sqrt{R_\tau} p_3|
\end{equ}
and
\begin{equ}
|\sqrt{R_\tau} p_2| \wedge |\sqrt{R_\tau} p_3| > 2 \, |\sqrt{R_\tau}(p_2+p_3,p_{1:n+1\setminus\{2,3\}})| \geq |\sqrt{R_\tau} p_1|
\end{equ}
 i.e. $|\sqrt{R_\tau} p_1| > |\sqrt{R_\tau} p_3| > |\sqrt{R_\tau} p_1|$ which is impossible. Hence,~\eqref{e:SharpDiag} follows. 
 
Since the second line at right hand side of~\eqref{e:diag} defines a non-negative Fourier multiplier
(as it only depends on $p_{1:n}$) and $\genapsh$, $\genamsh$ and $\cD$ are well-defined on 
$\core$, we conclude that $- \genamsh \cD \genapsh$ defines
a diagonal operator according to Definition~\ref{def:DiagOp} on $\core$, so that the proof of the statement is complete.
\end{proof}

On the other hand, the operator norm of $\genafl$ from $\fock{}{1}{\tau}$ to $\fock{}{-1}{\tau}$ is bounded 
{\it uniformly in $\tau$} (but the bound depends polynomially on the number operator $\cN$), 
which, together with~\eqref{e:explosion}, says that the singularity in Fourier is fully captured by $\genash$. 
The next proposition states an even more refined estimate. 

\begin{proposition} \label{p:GSCgenafl}
There exists a constant $C>0$ independent of $\tau$ such that for every $\Phi \in \core$ it holds
\begin{equ} \label{e:GSCgenafl}
\|\genafl_{\pm} \Phi\|_{\fock{}{-1}{\tau}} \leq C \lambda \, \|\cN (- \nu_\tau \gensyx)^{1/2} \Phi\|_\tau \, .
\end{equ}
\end{proposition}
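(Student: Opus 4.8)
The plan is to estimate $\|\genapfl\Phi\|_{\fock{}{-1}{\tau}}$ directly from the Fourier expression~\eqref{e:A+sharp}; the bound for $\genamfl$ then follows by the duality $(\genapfl)^\ast=-\genamfl$ from Lemma~\ref{l:Asharp}, since $\|\genamfl\Phi\|_{\fock{}{-1}{\tau}}=\sup_{\|\psi\|_{\fock{}{1}{\tau}}\le 1}\langle\genamfl\Phi,\psi\rangle_\tau=\sup\langle\Phi,-\genapfl\psi\rangle_\tau$, and a Cauchy–Schwarz estimate on $\langle\Phi,\genapfl\psi\rangle_\tau$ that keeps the roles of the two arguments symmetric will give the same right-hand side (with $(-\nu_\tau\gensyx)^{1/2}$ hitting $\Phi$). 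So fix $\Phi\in\core$ with components $\Phi_n$ and compute, for each $n$, the $L^2_\tau(\R^{2(n+1)})$-norm of the degree-$(n+1)$ component of $\genapfl\Phi$, weighted by $(1+\tfrac12|\sqrt{R_\tau}p_{1:n+1}|^2)^{-1}$. Since $\Phi_n$ is symmetric, the $\binom{n+1}{2}$ terms in the sum over $i<j$ are equal after relabelling, and the combinatorial prefactor $\frac{2}{n+1}\binom{n+1}{2}=n$ together with the symmetrisation over the output slot $p_i+p_j$ produces, after the triangle inequality, a factor of order $n$ from the number of choices — this is precisely where the $\cN$ appears. On each term the integrand carries $|\nonlinp_{p_i,p_j}|\,\chi^{\flat,\tau}_{p_i,p_j}(p_i+p_j,\dots)\,|\hat\Phi_n(p_i+p_j,\dots)|$, and by~\eqref{e:K+K-} $|\nonlinp_{p_i,p_j}|\le\frac{1}{2\pi}|\fe_1\cdot(p_i+p_j)|\Theta_\tau(p_i+p_j)$.

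The key point is the \emph{flat} cutoff: $\chi^{\flat,\tau}_{p_i,p_j}(p_i+p_j,p_{1:n+1\setminus\{i,j\}})=1$ forces $|\sqrt{R_\tau}p_i|\wedge|\sqrt{R_\tau}p_j|\le 2|\sqrt{R_\tau}(p_i+p_j,p_{1:n+1\setminus\{i,j\}})|$, so at least one of $p_i,p_j$ — say $p_j$, by another symmetrisation costing a factor $2$ — satisfies $|\sqrt{R_\tau}p_j|\lesssim|\sqrt{R_\tau}p_{1:n+1}^{\,(j)}|$ where $p_{1:n+1}^{\,(j)}$ denotes the remaining variables. Change variables to $w=p_i+p_j$ and integrate out $p_j$ over the region $\{|\sqrt{R_\tau}p_j|\lesssim R\}$ with $R:=|\sqrt{R_\tau}(w,p_{1:n+1\setminus\{i,j\}})|$; crucially $\Theta_\tau\le\|\hat\rho\|_\infty^2\cdot(2\pi)^2$ is bounded uniformly, and the remaining $\Theta_\tau$ factors on the other slots are $\le C$. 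The $p_j$-integral of $1$ over a ball of $\sqrt{R_\tau}$-radius $\lesssim R$ contributes a volume $\lesssim \nu_\tau^{-1}\cdot(\nu_\tau R^2)\cdot\text{(area factor)}$ — here the anisotropy of $R_\tau$ matters: the ball in the scaled metric has Lebesgue volume $\nu_\tau^{-1/2}\cdot\nu_\tau^{-1/2}$ times $(\sqrt{R_\tau}\text{-radius})^2$, but one must be careful to extract exactly $\nu_\tau^{3/2}\cdot\nu_\tau^{-1}=\nu_\tau^{1/2}$ so that, combined with the $(\lambda\nu_\tau^{3/4})^2=\lambda^2\nu_\tau^{3/2}$ from the two factors of $\genapfl$-type kernels appearing when one squares (or rather, from $\lambda\nu_\tau^{3/4}$ squared in the bilinear estimate), the net $\nu_\tau$-power in front of the anisotropic $H^1$-norm is $\nu_\tau$, matching $\|\cN(-\nu_\tau\gensyx)^{1/2}\Phi\|_\tau^2$. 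After the $p_j$-integration one is left with $\int |\fe_1\cdot w|^2(1+\tfrac12|\sqrt{R_\tau}(w,\dots)|^2)^{-1}\cdot(1+\tfrac12|\sqrt{R_\tau}(w,\dots)|^2)|\hat\Phi_n(w,p_{1:n+1\setminus\{i,j\}})|^2$ over the remaining variables — the denominator from the $\fock{}{-1}{\tau}$-norm cancels exactly the $R^2$ growth from the volume of the $p_j$-ball — which is precisely $\|\fe_1\cdot(\,\cdot\,)\,\Phi_n\|_{L^2_\tau}^2 = \|(-\gensyx)^{1/2}\Phi_n\|_\tau^2$ up to constants. Summing over $n$ with the weight $n!$, and absorbing the $O(n)$-factors into $\cN$, yields~\eqref{e:GSCgenafl}.

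\emph{Main obstacle.} The delicate part is the bookkeeping of the powers of $\nu_\tau$ through the anisotropic change of variables, and verifying that the $\fock{}{-1}{\tau}$ denominator exactly absorbs the growth coming from the volume of the integration region cut out by $\chi^{\flat,\tau}$ — i.e. that no residual logarithm or power of $|\sqrt{R_\tau}p_{1:n}|$ survives. This is where the flat cutoff is essential: had we used the full $\gena$ (no cutoff), the $p_j$-integral would range over all of $\R^2$ and produce the divergent $\nu_\tau\log\tau$ of~\eqref{e:explosion}; the constraint $|\sqrt{R_\tau}p_j|\wedge|\sqrt{R_\tau}p_k|\le 2|\sqrt{R_\tau}p_{1:n+1}|$ is exactly what makes the integral converge with the correct homogeneity. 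One should also take care that the symmetrisations (over $i<j$, and over which of $p_i,p_j$ is the ``small'' one) are done cleanly so that the combinatorial factor is genuinely $O(n)$ and not larger; a factor $O(n^2)$ would be fatal if one needed it, but here a single power of $\cN$ suffices because only one symmetrisation over slots (not two) is needed, the other constraints being handled by the cutoff rather than by counting.
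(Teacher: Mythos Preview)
Your direct approach for $\genapfl$ is correct in spirit and would work: Minkowski over the $\binom{n+1}{2}$ terms, change variables to $(w,q)=(p_i+p_j,p_j)$, then integrate $q$ over the flat region $\{|\sqrt{R_\tau}q|\lesssim|\sqrt{R_\tau}(w,p_{3:n+1})|\}$; the volume of this region (which is $\nu_\tau^{-1/2}R^2$, not $\nu_\tau^{-1}R^2$ as your sketch suggests --- the ellipse $\{\nu_\tau q_1^2+q_2^2\le R^2\}$ has area $\pi R^2\nu_\tau^{-1/2}$) exactly cancels the $\fock{}{-1}{\tau}$ denominator, and the $\nu_\tau$-count $(\lambda\nu_\tau^{3/4})^2\cdot\nu_\tau^{-1/2}=\lambda^2\nu_\tau$ matches $\|(-\nu_\tau\gensyx)^{1/2}\cdot\|_\tau^2$. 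This is a genuinely different packaging from the paper, which instead reduces both $\pm$ cases simultaneously to the single operator estimate $\|(\cD^{(i)})^{1/2}\genapfl(\cD^{(j)})^{1/2}\Phi\|_\tau\lesssim\|\cN\Phi\|_\tau$ for $\cD^{(1)}=(1-\cS^\tau)^{-1}$, $\cD^{(2)}=(\kappa-\nu_\tau\gensyx)^{-1}$, $\{i,j\}=\{1,2\}$, and then invokes the diagonal/off-diagonal machinery (Lemma~\ref{l:offDi}, in particular~\eqref{e:CSoffdiag}) to reduce to the diagonal part, where the same volume computation appears. Your route avoids that machinery and is more elementary; the paper's route has the advantage of treating $\genapfl$ and $\genamfl$ in one stroke via the sandwich duality.

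There is, however, a gap in your treatment of $\genamfl$. The one-line duality argument you sketch does \emph{not} deliver the claimed bound: from $|\langle\Phi,\genapfl\psi\rangle_\tau|\le\|\Phi\|_{\fock{}{1}{\tau}}\|\genapfl\psi\|_{\fock{}{-1}{\tau}}$ together with your $\genapfl$ estimate you would get $C\lambda\|\Phi\|_{\fock{}{1}{\tau}}\|\cN(-\nu_\tau\gensyx)^{1/2}\psi\|_\tau$, which has the norms on the wrong factors. To land $(-\nu_\tau\gensyx)^{1/2}$ on $\Phi$ you must either (i) redo a direct computation from the Fourier formula~\eqref{e:A-sharp} for $\genamfl$ --- Cauchy--Schwarz the inner $q$-integral against the volume of the flat region, then use $|\fe_1\cdot p_i|\le|\fe_1\cdot(q,p_i-q,p_{1:n\setminus\{i\}})|$ to recover the anisotropic norm of $\Phi$; this works and mirrors your $\genapfl$ argument --- or (ii) prove the bilinear estimate $|\langle\Phi,\genapfl\psi\rangle_\tau|\lesssim\lambda\|\cN(-\nu_\tau\gensyx)^{1/2}\Phi\|_\tau\|\psi\|_{\fock{}{1}{\tau}}$ directly by opening the pairing and applying Cauchy--Schwarz with the weight split as $(\kappa-\nu_\tau\gensyx)^{1/2}$ on one side and $(1-\cS^\tau)^{1/2}$ on the other, which is exactly the paper's sandwich. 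Either fix is routine once identified, but ``a Cauchy--Schwarz that keeps the roles symmetric'' is not enough as written. (Also, your remark that an $O(n^2)$ factor would be ``fatal'' is not accurate here: per Remark~\ref{rem:GSCgenafl} the precise power of $\cN$ plays no role in the subsequent analysis.)
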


\begin{remark}\label{rem:GSCgenafl}
  The bound~\eqref{e:GSCgenafl} on $\genafl$ is reminiscent of the
  so-called Graded Sector Condition (GSC) of~\cite[Section 2.7.4]{KLO}
  but with three differences. 
  First, the GSC refers to the \emph{full} operator $\gena_\pm$, while in \eqref{e:GSCgenafl} only $\genafl_\pm$ appears.
Second, the GSC
   would require the
  power of $\cN$ to be strictly less than $1$ or $\lambda$ to be
  sufficiently small (see~\cite{Balint}).  Arguing as in~\cite[Lemma
  2.4]{CGT}, it should be possible to improve~\eqref{e:GSCgenafl} and
  show that instead of $\cN$ one has $\sqrt{\cN}$ at its right hand
  side. On the other hand, the GSC will play no role in our analysis, because anyway it \emph{does not hold independently of $\tau$ for the full operator $\gena$, but at best only for $\genafl$} (see also Remark \ref{rem:WeakvsStrong}). For this reason, we refrained
  from optimising \eqref{e:GSCgenafl} and limited ourselves to use the trivial
  bound~\eqref{e:CSoffdiag}.  Third, and more importantly, the
  operator at the right hand side of \eqref{e:GSCgenafl}  is {\it not} the symmetric part of
  $\gen$, i.e. $\cS^\tau$, as is usual for GSC, {\it but only its
    component in the $\fe_1$-direction}, i.e. $(- \nu_\tau \gensyx)$,
  that is in a sense much smaller than $-\cS^\tau$.  As mentioned in
  Section~\ref{sec:HeuRM}, this fact will be crucial for us (see
  Section~\ref{sec:2ndReplLemma}).  Note that, if $\phi$ is smooth,
  say in $\cS(\R^{2n})$ for some $n\in\N$, then
  $\| (- \nu_\tau \gensyx)^{1/2}
  \phi\|_\tau\lesssim_{\phi,n}\sqrt{\nu_\tau}\sim(\log\tau)^{-1/3}$
  which vanishes as $\tau\to\infty$ (see~\eqref{e:explosion} for
  comparison).
\end{remark}

\begin{proof}
Let us consider the diagonal bounded operators $\cD^{(1)}=(1-\cS^\tau)^{-1}$ and, 
for $\kappa>0$, $\cD^{(2)}=(\kappa-\nu_\tau\gensyx)^{-1}$. 
A simple duality argument connected to the fact that $\genamfl=-(\genapfl)^\ast$ 
ensures that it suffices to prove that for all $\Phi\in\core$ and $(i,j) \in \{(1,2), (2,1)\}$, 
uniformly over $\kappa>0$ we have 
\begin{equ}[e:MainHere]
\|(\cD^{(i)})^{1/2}\genapfl(\cD^{(j)})^{1/2}\Phi\|_\tau\lesssim \|\cN\phi\|_\tau\,.
\end{equ}
Since both $\cD^{(1)}$ and $\cD^{(2)}$ are bounded operators, they are well-defined on $\core$, 
which implies that~\eqref{e:Duality1} holds. Hence, by~\eqref{e:diag+offdiag} and~\eqref{e:CSoffdiag} 
we get  
\begin{equs}[e:Intermezzo]
\,&\|(\cD^{(i)})^{1/2}\genapfl(\cD^{(j)})^{1/2}\Phi\|^2_\tau\lesssim  \langle-\genamfl\cD^{(i)}\genapfl\cN(\cD^{(j)})^{1/2}\Phi,\cN(\cD^{(j)})^{1/2}\Phi\rangle_{\tau,\di}\\
&= \frac{\lambda^2}{2 \pi^2}\sum_n n! \,n^2 \int \Xi_n^\tau(\dd p_{1:n})\, |\hat\Phi_n(p_{1:n})|^2\times \\
&\quad\times \bd^{(j)}_n(p_{1:n})\nu_\tau^{3/2} \sum_{i=1}^n(\fe_1\cdot p_i)^2 \int_{\R^2} \dd q \, \bd^{(i)}_{n+1}(q, p_i-q, p_{1:n\setminus\{i\}}) \chi^\flat_{q,p_i-q}(p_{1:n})\, \cJ_{q,p_1-q}^\tau
\end{equs}  
where we further used that $\cN$ and $-\genamfl\cD\genapfl$ commute and the definition of the 
diagonal part in~\eqref{e:diag}. 
To estimate the last line above, note that $\chi^\flat$ is bounded above by the 
sum of the indicator functions over $\{|\sqrt{R_\tau} q| < 2 |\sqrt{R_\tau} p_{1:n}|\}$ and 
$\{|\sqrt{R_\tau} (p_i-q)| < 2 |\sqrt{R_\tau} p_{1:n}|\}$, but the two resulting summands coincide as can be shown by 
a simple change of variables. Further, the triangle inequality easily gives 
\begin{equ}
\bd^{(i)}_{n+1}(q, p_i-q, p_{1:n\setminus\{i\}})\lesssim \bd^{(i)}_{n}(p_{1:n})\,.
\end{equ}
Putting these observations together, we obtain
 \begin{equs}
 \bd^{(j)}_n(p_{1:n})&  \nu_\tau^{3/2}\sum_{i=1}^n(\fe_1\cdot p_i)^2\int_{\R^2} \dd q \, \bd^{(i)}_{n+1}(q, p_i-q, p_{1:n\setminus\{i\}}) \chi^\flat_{q,p_i-q}(p_{1:n})\, \cJ_{q,p_1-q}^\tau\\
 &\lesssim   \bd^{(j)}_n(p_{1:n})\bd^{(i)}_{n}(p_{1:n})\nu_\tau^{3/2}\sum_{i=1}^n(\fe_1\cdot p_i)^2 \ \int_{\R^2} \dd q \, \1_{|\sqrt{R_\tau} q| < 2 |\sqrt{R_\tau} p_{1:n}|} \\
 &\lesssim \bd^{(j)}_n(p_{1:n})\bd^{(i)}_{n}(p_{1:n}) \nu_\tau|\fe_1\cdot p_{1:n}|^2 |\sqrt{R_\tau}p_{1:n}|^2\lesssim 1\,,
 \end{equs}
the second to last step being a consequence of the fact that, for any $r>0$, the volume of $\{|\sqrt{R_\tau} q|\leq r\}$ 
is $r^2\,\nu_\tau^{-1/2}$, while  the last follows from  the definition of $\cD^{(1)}$, $\cD^{(2)}$, recalling that $\{i,j\} = \{1,2\}$. 
Plugging the above back into~\eqref{e:Intermezzo},~\eqref{e:MainHere} and thus the statement follow. 
\end{proof}

\subsection{The approximate fixed point $\cG_M^\tau$ and the Replacement Lemmas}
\label{sec:apprG}

In this section, we focus on operators involving $\genash$ only and aim at refining 
our understanding of operators of the form $- \genamsh \cD \genapsh$ for carefully   
chosen diagonal $\cD$. More precisely, our goals are to first identify an approximate 
fixed point $\cG^\tau$ to~\eqref{e:OpFP} (with $\genash_\pm$ in place of
$\gena_\pm$) which is diagonal and bounded, and then to obtain suitable estimates 
on $\cG^\tau \genapsh$ as this will appear in our ansatz for 
the solution to the resolvent equation (see~\ref{i:pointII} and~\eqref{e:True}/\eqref{e:AnsatzFormula}). 

Let us begin by introducing the candidate approximate fixed point $\cG^\tau_M$, whose 
definition requires a few notations. 
Let $L^\tau$ be the function on $\R_+$ given by 
\begin{equ} \label{e:Leps}
L^\tau(x) \eqdef \frac{\lambda^2 \nu_\tau^{3/2}}{\pi} \log \left(1 + \frac{\tau}{1+x}\right) \, .
\end{equ}
and, for a measurable function $f \colon \R_+ \to \R_+$, set $-\cL_0^{\fe_1}[f]$ 
to be the diagonal operator  
\begin{equ} \label{e:Lwf}
-\cL_0^{\fe_1}[f] \eqdef -f(L^\tau(- \cS^\tau)) \, \gensyx \, ,
\end{equ}
whose Fourier multiplier is $f(L^\tau\big(\tfrac12|\sqrt{R_\tau} p_{1:n}|^2\big)) \, \tfrac{1}{2} |\fe_1 \cdot p_{1:n}|^2$ 
for any $p_{1:n}\in\R^{2n}$ and $n\in\N$.

For $M\geq 0$, let $g^\tau_M$ be the function on $\R_+$ defined as 
\begin{equ} \label{e:gtauM}
g^\tau_M(x) \eqdef (M \nu_\tau) \vee g^\tau(x)\,
\end{equ}
where 
\begin{equ}\label{e:gtau}
g^{\tau}(x) \eqdef\Big(\tfrac{3}{2} \, x + \nu_\tau^{3/2}\Big)^{2/3} - \nu_\tau\,,
\end{equ}
and $\cg_M^\tau$ and $\cG_M^\tau$ be the diagonal operators 
\begin{equ}[e:cgG]
\cg_M^\tau \eqdef -\gensyx[g_M^\tau] \, , \qquad \text{and} \qquad \cG_M^\tau \eqdef \big(1 - \gensy + \cg_M^\tau\big)^{-1}\,,
\end{equ}
which are the approximate fixed point candidates alluded to earlier. 

Before stating the lemma that will make the previous claim rigorous, let us  
comment on the structure (and the advantages) of the operators $\cg_M^\tau$ and $\cG_M^\tau$. 
What they provide is a {\it scale-by-scale} description of how the limiting operator arises. 
Indeed, the function $g^\tau$ appearing in their definition is what determines the size of the diffusivity 
and, as we will see in the proof of the next statement (see also~\cite[Remark 2.6]{CGT}), 
it must be chosen in such a way that  it solves the ODE
\begin{equ}[e:ReplODE]
\dot y (\ell) = \frac{1}{\sqrt{\nu_\tau + y(\ell)}}\,,\qquad y(0) = 0\,.
\end{equ}
If $p_{1:n}\in\R^{2n}$ and $M\geq 0$ are given, 
then the Fourier multiplier of $\cg^\tau_M$ satisfies 
\begin{equs}
\cF(\cg^\tau_M)(p_{1:n})&=g^\tau_M(L^\tau\big(\tfrac12|\sqrt{R_\tau} p_{1:n}|^2\big)) \, \tfrac{1}{2} |\fe_1 \cdot p_{1:n}|^2\\
&= \bigg\{\Big[\Big(\tfrac{3}{2} \, L^\tau\big(\tfrac12|\sqrt{R_\tau} p_{1:n}|^2\big) + \nu_\tau^{3/2}\Big)^{2/3} - \nu_\tau\Big]\vee (M\nu_\tau)\bigg\}\,\tfrac{1}{2} |\fe_1 \cdot p_{1:n}|^2\qquad\\
&\xrightarrow[]{\tau\to\infty} \quad \frac{C_\eff(\lambda)}{2}  |\fe_1 \cdot p_{1:n}|^2\label{e:PointwiseConv}
\end{equs} 
with $C_\eff(\lambda)$ as in~\eqref{e:EffConstant}, 
so that the right hand side coincides with the Fourier multiplier of the limiting operator in the direction $\fe_1$. 
As will be made more precise in the next section, 
this last observation addresses point~\ref{i:point2} in Section~\ref{sec:HeuRM}

On the other hand, if $-\cS^\tau\gtrsim\tau$ then $L^\tau(-\cS^\tau)$  (i.e. $\ell$ in~\eqref{e:ReplODE}) is vanishingly small and 
$g^\tau(L^\tau(-\cS^\tau))$ is smaller than $\nu_\tau$ so that it does not provide any 
relevant (or new) information.  The parameter $M$ provides an extra degree of freedom 
which controls the ratio between $\nu_\tau$ and $g^\tau_M$ for very small $\ell$, that is, for very small spatial scales. 
We will see in Lemma~\ref{p:2ndReplLemma} that its role is to artificially force the approximate fixed point
to enter some perturbative regime, which will ultimately make our analysis easier. 
In the spirit of Renormalization Group, the somewhat arbitrary parameter $M$, while technically useful, does not influence the large-scale  properties of the system, notably the diffusion matrix for momenta of order $1$, that is independent of $M$.


We are ready for the first of the two main results of this section, which precisely quantifies  
how close $\cg^\tau_M$ and $\cG_M^\tau$ are from solving~\eqref{e:OpFP}.

\begin{lemma}[The Replacement Lemma]\label{p:1stReplLemma}
For $M\geq 0$ and $\tau>0$, let $\cg^\tau_M$ and $\cG^\tau_M$ be the operators defined in~\eqref{e:cgG}. Then, 
there exists a constant $C=C(\lambda,M)>0$ such that for every $\Phi\in\core$ it holds
\begin{equ} \label{e:1stReplLemma}
\|-\genamsh \cG_M^\tau \genapsh \Phi - \cg_M^\tau \Phi\|_{\fock{}{-1}{\tau}} \leq C \, \|(- \nu_\tau \gensyx)^{1/2} \Phi\|_\tau \, .
\end{equ}
\end{lemma}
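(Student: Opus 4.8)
The strategy is to reduce \eqref{e:1stReplLemma} to a purely analytic estimate on a scalar integral, by exploiting the diagonal structure established in Lemma~\ref{l:SharpDiag}. First I would observe that, thanks to Lemma~\ref{l:SharpDiag}, the operator $-\genamsh \cG_M^\tau \genapsh$ is a non-negative \emph{diagonal} operator on $\core$, and $\cg_M^\tau$ is diagonal by construction; hence their difference is diagonal, and the $\fock{}{-1}{\tau}$-norm in \eqref{e:1stReplLemma} can be computed chaos-by-chaos and mode-by-mode. Concretely, for $\Phi=(\Phi_n)_n\in\core$ one has
\begin{equ}
\|-\genamsh \cG_M^\tau \genapsh \Phi - \cg_M^\tau \Phi\|_{\fock{}{-1}{\tau}}^2
= \sum_n n!\int \Xi_n^\tau(\dd p_{1:n})\,\frac{\big|D_n^\tau(p_{1:n})-\tfrac12 g_M^\tau(\cdot)\,|\fe_1\cdot p_{1:n}|^2\big|^2}{1+\tfrac12|\sqrt{R_\tau}p_{1:n}|^2}\,|\hat\Phi_n(p_{1:n})|^2\,,
\end{equ}
where $D_n^\tau(p_{1:n})=\sum_{i=1}^n(\fe_1\cdot p_i)^2\,\frac{\lambda^2\nu_\tau^{3/2}}{2\pi^2}\int_{\R^2}\dd q\,\cg^{\sharp,\tau}_{\cdot}\,(\cG_M^\tau)_{n+1}(q,p_i-q,p_{1:n\setminus\{i\}})\,\cJ^\tau_{q,p_i-q}$ is the diagonal kernel read off from \eqref{e:diag} (with the $\chi^\sharp$ inserted as in Lemma~\ref{l:Asharp}). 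Since $(- \nu_\tau \gensyx)^{1/2}$ has Fourier multiplier $(\tfrac12\nu_\tau|\fe_1\cdot p_{1:n}|^2)^{1/2}$, the claimed bound \eqref{e:1stReplLemma} follows if I can show the pointwise (in $p_{1:n}$, uniform in $n$ and $\tau$) estimate
\begin{equ}
\frac{\big|D_n^\tau(p_{1:n})-\tfrac12 g_M^\tau\big(L^\tau(\tfrac12|\sqrt{R_\tau}p_{1:n}|^2)\big)\,|\fe_1\cdot p_{1:n}|^2\big|^2}{1+\tfrac12|\sqrt{R_\tau}p_{1:n}|^2}
\;\lesssim_{\lambda,M}\; \tfrac12\nu_\tau\,|\fe_1\cdot p_{1:n}|^2\,.
\end{equ}

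The core of the argument is thus the analysis of the scalar integral $I_n^\tau(p_{1:n})\eqdef\nu_\tau^{3/2}\int_{\R^2}\dd q\,\chi^{\sharp,\tau}_{q,p_i-q}(p_{1:n})\,(\cG_M^\tau)_{n+1}(q,p_i-q,p_{1:n\setminus\{i\}})\,\cJ^\tau_{q,p_i-q}$. On the support of $\chi^\sharp$, the variables $q$ and $p_i-q$ are both much larger (in the $\sqrt{R_\tau}$-metric) than all of $p_{1:n}$, which lets me replace $|\sqrt{R_\tau}(q,p_i-q,p_{1:n\setminus\{i\}})|^2$ by $|\sqrt{R_\tau}q|^2+|\sqrt{R_\tau}(p_i-q)|^2\approx 2|\sqrt{R_\tau}q|^2$ up to controlled errors, and similarly $\fe_1\cdot(q+(p_i-q))=\fe_1\cdot p_i$ is small, so that the kernel $\cG_M^\tau$ on those arguments is essentially $\big(1+\tfrac12|\sqrt{R_\tau}q|^2+ g_M^\tau(L^\tau(\cdots))\tfrac12|\fe_1\cdot q|^2\big)^{-1}$ evaluated at the shifted argument. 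Passing to the variable $\ell = L^\tau$ of the running scale and changing variables $q\mapsto \sqrt{R_\tau}q$ (which produces the factor $\nu_\tau^{-1/2}$, killing one power of $\nu_\tau^{1/2}$ and leaving the $\nu_\tau$ that matches the right-hand side), the $q$-integral turns, after passing to polar coordinates and an angular average, into a one-dimensional radial integral of the form $\int \frac{r\,\dd r}{1+ c\,r^2(1+ g_M^\tau(\cdots))}$; with the substitution $s=\tfrac12 r^2$ and $\ell=L^\tau(s)$ this becomes exactly the integral whose derivative in the cutoff scale is $1/\sqrt{\nu_\tau+ g_M^\tau}$, i.e. the ODE \eqref{e:ReplODE} / \eqref{eq:ode}. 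Solving that ODE with $y(0)=0$ gives precisely $g^\tau$ as in \eqref{e:gtau} (the $2/3$ power and the $\nu_\tau^{3/2}$ shift are the explicit solution), and the $M\nu_\tau$-truncation in $g_M^\tau$ is harmless because it only changes $I_n^\tau$ in the deep-UV regime where the integrand is already $O(\nu_\tau)$. Collecting, $\tfrac{\lambda^2}{2\pi^2}I_n^\tau(p_{1:n}) = g_M^\tau\big(L^\tau(\tfrac12|\sqrt{R_\tau}p_{1:n}|^2)\big) + (\text{error})$, and the error is $O(\nu_\tau)$ \emph{relative to} $1+\tfrac12|\sqrt{R_\tau}p_{1:n}|^2$; multiplying by $\sum_i(\fe_1\cdot p_i)^2=|\fe_1\cdot p_{1:n}|^2$ and dividing by $\sqrt{1+\tfrac12|\sqrt{R_\tau}p_{1:n}|^2}$ yields the displayed pointwise bound.

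The main obstacle is bookkeeping the errors in the UV approximation $|\sqrt{R_\tau}(q,p_i-q,p_{1:n\setminus\{i\}})|\approx\sqrt2\,|\sqrt{R_\tau}q|$ and $\fe_1\cdot p_i\approx 0$ \emph{uniformly in $n$ and in $p_{1:n}$}, and checking that each such error, once integrated against $\cJ^\tau$ and the kernel $\cG_M^\tau$ and multiplied by $\nu_\tau^{3/2}$, contributes at most $O(\nu_\tau)$ to $I_n^\tau$ relative to the weight $1+\tfrac12|\sqrt{R_\tau}p_{1:n}|^2$ — in particular there must be no hidden $\log\tau$ (which would reappear since $\nu_\tau\log\tau\sim(\log\tau)^{1/3}$, cf.\ \eqref{e:explosion}), and no polynomial growth in $n$ (which is exactly what the $\sharp$-cutoff rules out, via Lemma~\ref{l:SharpDiag}). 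A secondary point requiring care is the matching of the logarithmic cutoff: one has to verify that the upper and lower limits of integration in $s=\tfrac12|\sqrt{R_\tau}q|^2$, namely roughly $s\in(2|\sqrt{R_\tau}p_{1:n}|^2,\ \tau)$ (the upper limit being where $\Theta_\tau$ decays), produce precisely $\log(1+\tau/(1+x))$ with $x=\tfrac12|\sqrt{R_\tau}p_{1:n}|^2$ as in \eqref{e:Leps}, up to an $O(1)$ additive error inside the logarithm, which again only perturbs $g_M^\tau$ by $O(\nu_\tau)$ after differentiating the ODE. Once these uniformities are in hand, the proof closes by summing the mode-by-mode bound over $n$ with the weights $n!\,\Xi_n^\tau(\dd p_{1:n})|\hat\Phi_n|^2$, recovering $\|(-\nu_\tau\gensyx)^{1/2}\Phi\|_\tau^2$ on the right.
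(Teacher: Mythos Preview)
Your plan is correct and follows essentially the same route as the paper: reduce to the diagonal kernel via Lemma~\ref{l:SharpDiag}, then show the scalar $q$-integral equals $g_M^\tau\big(L^\tau(\tfrac12|\sqrt{R_\tau}p_{1:n}|^2)\big)$ up to an $O_{\lambda,M}(\nu_\tau)$ error, and finish using $\nu_\tau|\fe_1\cdot p_{1:n}|^2\le |\sqrt{R_\tau}p_{1:n}|^2$. The paper packages the integral-approximation step in a separate technical lemma (Lemma~\ref{lem:ReplTech}) and uses the change of variables $(\sigma,\varsigma)=(\tilde\Gamma^\tau,\,|\fe_1\cdot q|^2/|\sqrt{R_\tau}q|^2)$ rather than your $\sqrt{R_\tau}$-rescaling plus polar coordinates, but these are equivalent (the angular average in your coordinates produces exactly the $1/\sqrt{\nu_\tau+g_M^\tau}$ that feeds the ODE~\eqref{e:ReplODE}); your phrase ``$O(\nu_\tau)$ relative to $1+\tfrac12|\sqrt{R_\tau}p_{1:n}|^2$'' is slightly misleading---the error in the scalar integral is simply $O(\nu_\tau)$, and the $H^{-1}$ weight is absorbed at the very end.
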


\begin{remark}\label{rem:1stReplLemma}
In the proof, the dependence of the constant on $\lambda$ and $M$ is explicitly tracked and it turns out to be  proportional to
$\lambda^2 \frac{1 \vee \log(M)}{1 \vee M^{1/2}} + M$.
\end{remark}


\begin{remark} \label{rem:BeyondWC}
Compared to previous works in which statements as the above have been exploited, 
Lemma~\ref{p:1stReplLemma} displays two main differences. The first lies in the 
fact that the norm of $\Phi$ at the right hand side is {\it not} the whole of the $\fock{}{1}{\tau}$-norm 
but only part of it, and more precisely the part in the $\fe_1$ direction (i.e. the anisotropic $\fock{}{1}{\tau}$-norm). 
Second,~\eqref{e:1stReplLemma} does not
display any growth in chaos, i.e. its right hand side is {\it independent}
of the number operator $\cN$. In all cases previously considered, the
error in the Replacement Lemma
contained an exploding polynomial factor of the form $\cN^\gamma$,
for some $\gamma>0$, which limited its applicability to the weak-coupling regime.
\end{remark}

\begin{proof}
Using orthogonality of chaoses and the fact that $- \genamsh \cG_M^\tau \genapsh$ and $\cg_M^\tau$ preserve the chaos, 
it is easy to see that it is enough to prove \eqref{e:1stReplLemma} for $\Phi \in \fock{n}{}{\tau} \cap \core$ 
and any $n \geq 1$. Moreover, the duality between $\fock{}{-1}{\tau}$ and $\fock{}{1}{\tau}$ 
ensures that we only need to establish the existence of $C=C(\lambda,M)>0$ such that 
for all $\Phi, \psi \in \fock{n}{}{\tau} \cap \core$ it holds
\begin{equ} \label{e:1stReplLemmaGoal}
\Big|\langle - \genamsh \cG_M^\tau \genapsh \Phi, \psi \rangle_\tau - \langle \cg_M^\tau \Phi, \psi \rangle_\tau\Big| \leq C \, \|(- \nu_\tau \gensyx)^{1/2} \Phi\|_\tau \|(- \nu_\tau \gensyx)^{1/2} \psi\|_\tau \, .
\end{equ}
Now, by Lemma~\ref{l:SharpDiag} the first summand reduces to its diagonal part (see \eqref{e:diag})
so that, recalling the definitions of $\cg_M^\tau$ and $\cG_M^\tau$ in~\eqref{e:cgG}, we have  
\begin{equs}\label{e:1stReplFormula}
\Big|\langle - &\genamsh \cG_M^\tau\genapsh \Phi, \psi \rangle_\tau -\langle \cg_M^\tau \Phi, \psi \rangle_\tau\Big| \leq n! \int \Xi_n^\tau(\dd p_{1:n}) \, |\hat\Phi(p_{1:n})| \, |\hat\psi(p_{1:n})| \times  \\
&\times \sum_{i=1}^n \frac12 |\fe_1 \cdot p_i|^2 \, \bigg|\frac{\lambda^2 \nu_\tau^{3/2}}{\pi^2} \int_{\R^2}  \frac{\cJ_{q,p_i-q}^\tau \, \chi_{q, p_i-q}^\sharp(p_{1:n})}{1 + \Gamma_i^\tau + g_M^\tau(L^\tau(\Gamma_i^\tau)) \Gamma_i^{\fe_1}}\dd q - g_M^\tau\big(L^\tau\big(\tfrac12|\sqrt{R_\tau} p_{1:n}|^2\big)\big)\bigg|\,,
\end{equs}
where we adopted the same notations as in~\ref{e:NotationsRepl}, i.e. for $i\in\{1,\dots,n\}$, 
\begin{equ}
\Gamma^\tau_i \eqdef \frac{1}{2} |\sqrt{R_\tau} (q,p_i-q,p_{1:n\setminus\{i\}})|^2 \, , \quad
\Gamma_i^{\fe_1} \eqdef \frac{1}{2} |\fe_1 \cdot (q,p_i-q,p_{1:n\setminus\{i\}})|^2 \, .
\end{equ}
By Remark~\ref{rmk:ReplTech}, we can apply Lemma~\ref{lem:ReplTech} with the function $\PHI$ in its statement 
given by $\PHI(\gamma, \gamma', \ell) \eqdef (\gamma + g_M^\tau(\ell) \gamma')^{-1}$, so that~\eqref{e:ReplTech} gives 
\begin{equs}[e:1stReplLemInt]
\bigg|&\frac{\lambda^2 \nu_\tau^{3/2}}{\pi^2} \int_{\R^2}  \frac{\cJ_{q,p_i-q}^\tau \, \chi_{q, p_i-q}^\sharp(p_{1:n})}{1 + \Gamma_i^\tau + g_M^\tau(L^\tau(\Gamma_i^\tau)) \Gamma_i^{\fe_1}}\dd q \\
&- \int_0^{L^\tau\big(\tfrac12|\sqrt{R_\tau} p_{1:n}|^2\big)}  \int_0^{\nu_\tau^{-1}} \frac{1}{\pi \sqrt{\varsigma (1 - \nu_\tau \varsigma)}} \, \frac{1}{1+g_M^\tau(\ell) \, \varsigma} \dd \varsigma\dd\ell\bigg| \lesssim \lambda^2 \frac{1 \vee \log(M)}{1 \vee M^{1/2}} \,  \nu_\tau \, .
\end{equs}
Let us now focus on the second term at the left hand side and, to abbreviate the notations, 
set $\ell^\tau\eqdef L^\tau\big(\tfrac12|\sqrt{R_\tau} p_{1:n}|^2\big)$. We first compute the integral in $\varsigma$ 
using the change of variables $\varsigma=(\nu_\tau(1+t^2))^{-1}$, so that 
\begin{equ}[e:1stReplIdentity]
\int_0^{\nu_\tau^{-1}} \frac{1}{\pi \sqrt{\varsigma (1 - \nu_\tau \varsigma)}} \, \frac{1}{1+g_M^\tau(\ell) \, \varsigma} \dd \varsigma =  \frac{1}{\sqrt{1 + g_M^\tau(\ell)}} \, ,
\end{equ}
and we are left to control the difference between the integral on $[0,\ell^\tau]$ of the latter, and $g_M^\tau(\ell^\tau)$. 
To do so, we will separately upper and lower bound~\eqref{e:1stReplIdentity} 
in terms of $g_M^\tau(\ell^\tau)$. 
Recall that $g^\tau$ in~\eqref{e:gtau} is clearly pointwise smaller than $g^\tau_M$ in~\eqref{e:gtauM} 
and was chosen so that it solves the ODE~\eqref{e:ReplODE}. Hence, 
\begin{equs}
\int_0^{\ell^\tau} \frac{\dd \ell}{\sqrt{1 + g_M^\tau(\ell)}} \leq \int_0^{\ell^\tau} \frac{\dd \ell}{\sqrt{1 + g^\tau(\ell)}} = g^\tau(\ell^\tau) \leq g_M^\tau(\ell^\tau) \, . 
\end{equs}
On the other hand, let $\bar\ell\geq0$ be such that $g^\tau(\bar\ell) = M \nu_\tau$. 
Then, $g_M^\tau \equiv g^\tau$ on $[\bar\ell, +\infty)$, and we can lower bound~\eqref{e:1stReplIdentity} as
\begin{equs}[e:LB]
\int_0^{\ell^\tau} \frac{\dd \ell}{\sqrt{1 + g_M^\tau(\ell)}} \geq \int_{\bar\ell\wedge \ell^\tau}^{\ell^\tau} \frac{\dd \ell}{\sqrt{1 + g^\tau(\ell)}} = g^\tau(\ell^\tau) - g^\tau(\bar\ell \wedge \ell^\tau) \geq g_M^\tau(\ell^\tau) - M \nu_\tau \, .
\end{equs}
As a consequence, we deduce 
\begin{equ}
\bigg|\int_0^{L^\tau\big(\tfrac12|\sqrt{R_\tau} p_{1:n}|^2\big)} \frac{\dd \ell}{\sqrt{1 + g_M^\tau(\ell)}} - g_M^\tau(L^\tau\big(\tfrac12|\sqrt{R_\tau} p_{1:n}|^2\big))\bigg| \leq M \nu_\tau \,. 
\end{equ}
Plugging the latter, together with~\eqref{e:1stReplIdentity} and~\eqref{e:1stReplLemInt}, into~\eqref{e:1stReplFormula}, 
~\eqref{e:1stReplLemmaGoal} follows by a simple application of Cauchy-Schwarz, and the proof of the statement 
is thus complete. 
\end{proof}

What the previous lemma gives is a quantitative bound on the distance between 
the operators $-\genamsh\cG^\tau_M\genapsh$ and $\cg^\tau_M$. 
For~\eqref{e:1stReplLemma} to be effective, we need its right hand side to go to $0$ 
and, as can be seen from Remark~\ref{rem:1stReplLemma}, such smallness 
cannot come from the constant $C$ (which instead grows linearly in $M$). 
Thus, we must hope that, for the functions of interest to us, namely the ansatz anticipated in point~\ref{i:pointII}, 
the anisotropic $\fock{}{1}{\tau}$-norm vanishes in $\tau$ for $M$ sufficiently large. 

In the next lemma, we state the second main result of this section 
which is the core of the proof of Proposition~\ref{p:AnisoH1Ansatz}, that in turn shows that this is indeed the case. 

\begin{lemma}[The Recursive Replacement Lemma] \label{p:2ndReplLemma}
Let $f \colon \R_+ \to \R_+$ be a locally Lipschitz function 
such that for every $x > 0$ where its derivative exists it holds
\begin{equ}[e:Assf]
f(x) \leq C_f \, \big(\nu_\tau + g_M^\tau(x)\big)\quad\text{and}\quad |f'(x)| \leq C_f \, \frac{\nu_\tau + g_M^\tau(x)}{x} \, ,
\end{equ}
for some $C_f > 0$. Then, there exists a constant $ C^{(1)}_{M}=C^{(1)}_{M}(\lambda)>0$ independent of $f$ 
and such that $\lim_{M\to\infty} C^{(1)}_{M}=0$, for which for every $\Phi \in \core$ we have 
\begin{equ} \label{e:2ndReplLemma}
\big\|(-\gensyx[f])^{1/2} \, \cG_M^\tau \genapsh \Phi\big\|_\tau^2 \leq \big\|(-\gensyx[\cI_\tau(f) +  \nu_\tau  C_f C^{(1)}_{M}])^{1/2} \Phi\big\|_\tau^2 \, ,
\end{equ}
where $\cG^\tau_M$ is as in~\eqref{e:cgG} and $\cI_\tau$ is the linear operator acting on $\cC(\R_+, \R_+)$ as
\begin{equ}[e:Itau]
\cI_\tau(f) \, (x) \eqdef \frac{1}{2} \int_0^x \frac{f(\ell)}{(\nu_\tau + g^\tau(\ell))^{3/2}} \, \dd \ell \, . 
\end{equ}
\end{lemma}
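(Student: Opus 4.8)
The plan is to reduce to a single chaos, rewrite the left-hand side of \eqref{e:2ndReplLemma} as a diagonal quadratic form, and then estimate it scale-by-scale with the technical replacement lemma, exactly as in the proof of Lemma~\ref{p:1stReplLemma} but now with an integrand carrying the extra factor $f$. First, since $-\genamsh\cD\genapsh$ commutes with the number operator (Lemma~\ref{l:SharpDiag}) and $\cG_M^\tau\genapsh$ maps $\fock{n}{}{\tau}$ into $\fock{n+1}{}{\tau}$, both sides of \eqref{e:2ndReplLemma} split as orthogonal sums over the chaos index, so it suffices to treat $\Phi\in\fock{n}{}{\tau}\cap\core$ with $n\ge1$ fixed. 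Using that diagonal operators commute and are self-adjoint, the duality $(\genapsh)^\ast=-\genamsh$, and \eqref{e:Duality1} (valid for $\genash_\pm$ by Lemma~\ref{l:Asharp}), I would write $\|(-\gensyx[f])^{1/2}\cG_M^\tau\genapsh\Phi\|_\tau^2=\langle-\genamsh\cD\genapsh\Phi,\Phi\rangle_\tau$ with $\cD\eqdef\cG_M^\tau(-\gensyx[f])\cG_M^\tau$, which is a bounded non-negative diagonal operator for $\tau$ fixed (as $L^\tau$ is bounded and $f$ locally Lipschitz). By Lemma~\ref{l:SharpDiag} this equals its diagonal part \eqref{e:diag} (with $\chi^\sharp$ inserted), whose kernel at the configuration $(q,p_i-q,p_{1:n\setminus\{i\}})$ is $f(L^\tau(\Gamma_i^\tau))\,\Gamma_i^{\fe_1}\big/\big(1+\Gamma_i^\tau+g_M^\tau(L^\tau(\Gamma_i^\tau))\,\Gamma_i^{\fe_1}\big)^2$, with $\Gamma_i^\tau,\Gamma_i^{\fe_1}$ as in the proof of Lemma~\ref{p:1stReplLemma}.

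Next, for fixed $p_{1:n}$ and each $i$, I would apply Lemma~\ref{lem:ReplTech} (justified by Remark~\ref{rmk:ReplTech}) to the inner $q$-integral, now with the profile $\PHI(\gamma,\gamma',\ell)\eqdef f(\ell)\gamma'\big/(\gamma+g_M^\tau(\ell)\gamma')^2$; the hypotheses on $\PHI$ required by that lemma follow from the two bounds in \eqref{e:Assf} (the first controlling the size of $\PHI$, the second its Lipschitz dependence on $\ell$) together with the known regularity of $g_M^\tau$, and crucially let one pull the factor $C_f$ out of the error term. Writing $\ell^\tau\eqdef L^\tau(\tfrac12|\sqrt{R_\tau}p_{1:n}|^2)$, this yields
\[
\frac{\lambda^2\nu_\tau^{3/2}}{\pi^2}\int_{\R^2}\frac{f(L^\tau(\Gamma_i^\tau))\,\Gamma_i^{\fe_1}\,\cJ_{q,p_i-q}^\tau\,\chi^\sharp_{q,p_i-q}(p_{1:n})}{\big(1+\Gamma_i^\tau+g_M^\tau(L^\tau(\Gamma_i^\tau))\,\Gamma_i^{\fe_1}\big)^2}\,\dd q\;\le\;\int_0^{\ell^\tau}\!\!\int_0^{\nu_\tau^{-1}}\frac{1}{\pi\sqrt{\varsigma(1-\nu_\tau\varsigma)}}\,\frac{f(\ell)\,\varsigma}{(1+g_M^\tau(\ell)\,\varsigma)^2}\,\dd\varsigma\,\dd\ell\;+\;\nu_\tau\,C_f\,C_M^{(1)}\,,
\]
where the error constant $C_M^{(1)}=C_M^{(1)}(\lambda)$ comes out of Lemma~\ref{lem:ReplTech} in the same way as the term $\lambda^2\tfrac{1\vee\log M}{1\vee M^{1/2}}\nu_\tau$ in \eqref{e:1stReplLemInt}, so it is independent of $f$ and tends to $0$ as $M\to\infty$.

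It then remains to compute the main term and assemble the pieces. The inner $\varsigma$-integral is evaluated in closed form via the substitution $\varsigma=(\nu_\tau(1+t^2))^{-1}$ already used for \eqref{e:1stReplIdentity}, giving $\int_0^{\nu_\tau^{-1}}\tfrac{1}{\pi\sqrt{\varsigma(1-\nu_\tau\varsigma)}}\tfrac{\varsigma}{(1+a\varsigma)^2}\dd\varsigma=\tfrac{1}{2(\nu_\tau+a)^{3/2}}$ for every $a\ge0$; hence the double integral equals $\tfrac12\int_0^{\ell^\tau}\tfrac{f(\ell)}{(\nu_\tau+g_M^\tau(\ell))^{3/2}}\dd\ell$, which, since $g_M^\tau\ge g^\tau\ge0$ and $f\ge0$, is at most $\tfrac12\int_0^{\ell^\tau}\tfrac{f(\ell)}{(\nu_\tau+g^\tau(\ell))^{3/2}}\dd\ell=\cI_\tau(f)(\ell^\tau)$ by \eqref{e:Itau}. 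Summing over $i$ the per-$i$ bounds multiplied by their prefactor $(\fe_1\cdot p_i)^2$ and the overall $\tfrac12$ from \eqref{e:diag}, using $\sum_{i=1}^n(\fe_1\cdot p_i)^2=|\fe_1\cdot p_{1:n}|^2$ and that $\ell^\tau$ is $i$-independent, gives
\[
\langle-\genamsh\cD\genapsh\Phi,\Phi\rangle_\tau\;\le\;\sum_n n!\int\Xi_n^\tau(\dd p_{1:n})\,|\hat\Phi_n(p_{1:n})|^2\;\tfrac12|\fe_1\cdot p_{1:n}|^2\,\big(\cI_\tau(f)(\ell^\tau)+\nu_\tau C_f C_M^{(1)}\big)\,,
\]
and the right-hand side is exactly $\|(-\gensyx[\cI_\tau(f)+\nu_\tau C_f C_M^{(1)}])^{1/2}\Phi\|_\tau^2$, because $\tfrac12|\fe_1\cdot p_{1:n}|^2$ is the Fourier multiplier of $-\gensyx$ and $\ell^\tau=L^\tau(\tfrac12|\sqrt{R_\tau}p_{1:n}|^2)$. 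This is \eqref{e:2ndReplLemma}.

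The main obstacle is the application of Lemma~\ref{lem:ReplTech} to the more singular profile $\PHI(\gamma,\gamma',\ell)=f(\ell)\gamma'\big/(\gamma+g_M^\tau(\ell)\gamma')^2$: one must verify its quantitative hypotheses using only the two inequalities in \eqref{e:Assf}, and extract an error bound that is \emph{uniform over all admissible $f$} (with the $f$-dependence collapsed into the single factor $C_f$) and that carries the explicit decay $C_M^{(1)}\to0$ as $M\to\infty$ — precisely the delicate point already underlying Lemma~\ref{p:1stReplLemma} and Remark~\ref{rem:1stReplLemma}. Everything downstream of the diagonal reduction (Lemma~\ref{l:SharpDiag}) and the closed-form $\varsigma$-integration is routine bookkeeping.
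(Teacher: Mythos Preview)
Your proposal is correct and follows essentially the same route as the paper's proof: reduce to a single chaos, use Lemma~\ref{l:SharpDiag} to identify the left-hand side with the diagonal quadratic form whose kernel is $f(L^\tau(\Gamma_i^\tau))\Gamma_i^{\fe_1}/(1+\Gamma_i^\tau+g_M^\tau(L^\tau(\Gamma_i^\tau))\Gamma_i^{\fe_1})^2$, invoke Lemma~\ref{lem:ReplTech} with $\PHI(\gamma,\gamma',\ell)=f(\ell)\gamma'/(\gamma+g_M^\tau(\ell)\gamma')^2$ (its hypotheses being covered by Remark~\ref{rmk:ReplTech} with $C_\PHI\lesssim C_f$), evaluate the $\varsigma$-integral to $\tfrac12(\nu_\tau+g_M^\tau(\ell))^{-3/2}$, and finally use $g_M^\tau\ge g^\tau$ to land on $\cI_\tau(f)$. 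The only cosmetic difference is that the paper obtains the $\varsigma$-integral by differentiating \eqref{e:1stReplIdentity} with respect to $g_M^\tau(\ell)$ rather than redoing the substitution, but the outcome and the overall argument are identical.
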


\begin{remark}\label{rem:2ndReplLemma}
In the proof, the dependence of the constant $C^{(1)}_{M}$ on $\lambda$ and $M$ is explicitly tracked and it turns out to be proportional to
$\lambda^2 \tfrac{1 \vee \log M}{1 \vee M^{1/2}}$.
\end{remark}

\begin{remark}\label{rem:1/2}
The $1/2$ in the expression of $\cI_\tau$ in~\eqref{e:Itau}, and more precisely the fact that it is strictly less than $1$, 
will play a crucial role in our analysis (see Remark~\ref{rem:1/22}). 
As we will see in the proof, it comes from differentiating~\eqref{e:1stReplIdentity} 
with respect to $g^\tau_M$, and the square-root therein is a direct consequence of the anisotropy of the SBE. 
Indeed, for ``isotropic''  models, as the AKPZ equation, the power would be one 
(as shown in~\cite[Eq. (3.29)]{AKPZweak}).  
%
%
\end{remark}

\begin{proof}
Arguing as in Lemma~\ref{p:1stReplLemma}, it suffices to prove the statement 
for $\Phi\in\fock{n}{}{\tau} \cap \core$ and any $n \geq 1$. Furthermore, by Lemma~\ref{l:SharpDiag} 
and the fact that both $\cG^\tau_M$ and $-\gensyx[f]$ are non-negative diagonal (see~\eqref{e:Lwf} for the latter), 
we have 
\begin{equs}
\big\|(-\gensyx[f])^{1/2} \, &\cG_M^\tau \genapsh \Phi\big\|_\tau^2=\langle - \gensyx[f] \, (\cG_M^\tau)^2 \, \genapsh \Phi, \genapsh \Phi \rangle_{\tau, \di} \\
=& \, n! \, \int \Xi_n^\tau(\dd p_{1:n}) \, |\hat\Phi_n(p_{1:n})|^2
\, \times \\
&\times \sum_{i=1}^n \frac{1}{2} |\fe_1\cdot p_i|^2 \, \frac{\lambda^2 \nu_\tau^{3/2}}{\pi^2} \int_{\R^2} \dd q \, \frac{f(L^\tau(\Gamma_i^\tau)) \Gamma_i^{\fe_1}}{1 + \Gamma_i^\tau + g_M^\tau(L^\tau(\Gamma_i^\tau)) \Gamma_i^{\fe_1}} \, \cJ_{q,p_i-q}^\tau \chi^\sharp_{q,p_i-q}(p_{1:n})
\end{equs}
where we adopted the same notations as in~\eqref{e:NotationsRepl}. 

Once again, we apply Lemma~\ref{lem:ReplTech}, this time with the function $\PHI$ given by 
$\PHI(\gamma, \gamma', \ell) = \frac{f(\ell) \gamma'}{(1 + g_M^\tau(\ell) \gamma')^2}$, 
which, by Remark~\ref{rmk:ReplTech}, satisfies~\eqref{e:ReplTechHyp1} and~\eqref{e:ReplTechHyp2}. Hence, we get 
\begin{equs}
\bigg|&\frac{\lambda^2 \nu_\tau^{3/2}}{\pi^2} \int_{\R^2} \dd q \, \frac{f(L^\tau(\Gamma_i^\tau)) \Gamma_i^{\fe_1}}{1 + \Gamma_i^\tau + \Gamma_i^{\fe_1} g_M^\tau(L^\tau(\Gamma_i^\tau))} \cJ_{q,p_i-q}^\tau \chi^\sharp_{q,p_i-q}(p_{1:n}) \\
&- \int_0^{L^\tau\big(\tfrac12|\sqrt{R_\tau} p_{1:n}|^2\big)} \int_0^{\nu_\tau^{-1}} \frac{1}{\pi \sqrt{\varsigma (1 - \nu_\tau \varsigma)}} \, \frac{f(\ell) \varsigma}{(1+g_M^\tau(\ell) \, \varsigma)^2}\dd \varsigma \dd \ell \bigg| \leq \nu_\tau \, C_f \lambda^2 \frac{1 \vee \log(M)}{1 \vee M^{1/2}} \, .
\end{equs}
We are left to compute the second integral at the left hand side. Set $\ell^\tau\eqdef L^\tau\big(\tfrac12|\sqrt{R_\tau} p_{1:n}|^2\big)$. 
The integral in $\varsigma$ can be immediately deduced by~\eqref{e:1stReplIdentity} by differentiating with respect to $g^\tau_M(\ell)$ 
(which is a fixed number for given $\ell$), so that we obtain 
\begin{equs}
\int_0^{\ell^\tau} \int_0^{\nu_\tau^{-1}} \frac{1}{\pi \sqrt{\varsigma (1 - \nu_\tau \varsigma)}} \, \frac{f(\ell) \varsigma}{(1+g_M^\tau(\ell) \, \varsigma)^2}\dd \varsigma \dd \ell &= \frac{1}{2}\int_0^{\ell^\tau} \, \frac{f(\ell)}{(1 + g_M^\tau(\ell))^{3/2}}\dd \ell \\
&\leq \frac{1}{2}\int_0^{\ell^\tau}  \frac{f(\ell)}{ (1 + g^\tau(\ell))^{3/2}}\dd \ell  = \cI(f) \, (\ell^\tau) \, ,
\end{equs}
and the proof is concluded. 
\end{proof}

The reason why we dubbed the previous Lemma {\it Recursive} is because 
we will use~\eqref{e:2ndReplLemma} {\it recursively} on the chaos components 
of our ansatz (see the proof of Proposition~\ref{p:AnisoH1Ansatz}), while {\it Replacement} is motivated by the fact that 
to leading order we expect the quantity at the left hand side of~\eqref{e:2ndReplLemma} to behave as that at the right hand side 
{\it without} the correction $\nu_\tau C_f C^{(1)}_{M}$, and thus we would like to replace the former by the latter. 
That said, a matching lower bound, which would fully justify the name ``replacement'', 
is only true at the cost of a larger (negative) additive error (in the spirit of~\eqref{e:LB}) and, in our setting, an upper bound 
suffices. 

An important observation is that while $C^{(1)}_{M}$ vanishes as $M\to\infty$, 
the constant $C$ in~\eqref{e:1stReplLemma} explodes (see Remark~\ref{rem:1stReplLemma}). 
For our purposes though, it will be enough to choose $M$ sufficiently large so that 
$C^{(1)}_{M}<1/2$ (as this will help stabilize the iteration $f\mapsto \cI_\tau(f)+\nu_\tau  C_f C^{(1)}_{M}$)
but {\it fixed}, thus not creating problems when applying Lemma~\ref{p:1stReplLemma}. 
\medskip

We conclude this section with a lemma whose goal is to control 
the norm of operators of the form 
$\cG^\tau_M\genapsh$ and $\genamsh\cG^\tau_M$ as these play a crucial 
role in addressing point~\ref{i:point1} in Section~\ref{sec:HeuRM}. 

\begin{proposition} \label{p:GA+Norm}
There exist a constant $ C^{(2)}_M= C^{(2)}_M(\lambda)  > 0$ 
which converges to $0$ as $M\to\infty$,
such that for every $\Phi \in \core$ it holds
\begin{equ} \label{e:GA+Norm}
\|\cG_M^\tau \genapsh \Phi\|_\tau \leq C^{(2)}_M \|\Phi\|_\tau \, .
\end{equ}
Furthermore, for any $\alpha\in[0,1)$ there exists a constant $C = C(\lambda,\alpha)>0$, for which 
\begin{equ} \label{e:GA+SmoothNorm}
\|\cG_M^\tau \genapsh \1_{-\cS^\tau \leq \tau^{\alpha}} \, \Phi\|_\tau \vee  \|\genamsh \cG_M^\tau \1_{-\cS^\tau \leq \tau^{\alpha}} \, \Phi\|_\tau \leq C \, \nu_\tau^{3/4}\,  \|\Phi\|_\tau \,,
\end{equ}
where $\1_{- \cS^\tau \leq \tau^{\alpha}}$ stands for the diagonal operator whose kernel is $\1_{|\sqrt{R_\tau} p_{1:n}|^2/2 \leq \tau^{\alpha}}$.
\end{proposition}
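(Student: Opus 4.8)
The plan is to reduce both bounds to the diagonal representation \eqref{e:diag} (valid for $\genash_\pm$ by Lemma \ref{l:SharpDiag}), after which everything becomes a pointwise estimate on Fourier multipliers. For \eqref{e:GA+Norm}, since $\cG_M^\tau\genapsh$ preserves the chaos and $\cG^\tau_M$ is diagonal and self-adjoint, I would write $\|\cG_M^\tau\genapsh\Phi\|_\tau^2=\langle (\cG_M^\tau)^2\genapsh\Phi,\genapsh\Phi\rangle_\tau$ and invoke Lemma \ref{l:SharpDiag}: because $-\genamsh(\cG_M^\tau)^2\genapsh$ has no off-diagonal part, this equals $\langle-\genamsh(\cG_M^\tau)^2\genapsh\Phi,\Phi\rangle_{\tau,\di}$. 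Plugging the kernel $\bd_{n+1}=(1+\Gamma^\tau_i+g_M^\tau(L^\tau(\Gamma^\tau_i))\Gamma_i^{\fe_1})^{-2}$ into \eqref{e:diag}, the inner $q$-integral is exactly of the type handled by Lemma \ref{lem:ReplTech} with $\PHI(\gamma,\gamma',\ell)=(1+g_M^\tau(\ell)\gamma')^{-2}$, up to the leading $1+\Gamma_i^\tau$ in the denominator which only helps; the point is that, crucially, $g^\tau_M\ge M\nu_\tau$, so $1+g^\tau_M(\ell)\Gamma_i^{\fe_1}$ together with the measure $\cJ^\tau$ (which forces $|\sqrt{R_\tau}q|\wedge|\sqrt{R_\tau}(p_i-q)|\lesssim\tau^{1/2}$) yields an integral bounded by $C\lambda^{-2}\nu_\tau^{-3/2}|\fe_1\cdot p_i|^{-2}\,M^{-1/2}$ (heuristically: substituting $\varsigma$ as in the proof of Lemma \ref{p:1stReplLemma}, the $\varsigma$-integral of $(1+g^\tau_M(\ell)\varsigma)^{-2}$ over $[0,\nu_\tau^{-1}]$ is $O((1+g^\tau_M(\ell))^{-3/2})\le O((M\nu_\tau)^{-3/2}\vee 1)$ and the $\ell$-integral is over a range of length $O(L^\tau(\cdot))=O(\nu_\tau^{3/2}\log\tau)$). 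The prefactors $\frac12|\fe_1\cdot p_i|^2\cdot\frac{\lambda^2\nu_\tau^{3/2}}{\pi^2}$ cancel against this, leaving a constant $C_M^{(2)}(\lambda)\to 0$ as $M\to\infty$ times $n!\int\Xi_n^\tau\,|\hat\Phi|^2=\|\Phi_n\|_\tau^2$; summing over $n$ gives \eqref{e:GA+Norm}.

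For \eqref{e:GA+SmoothNorm}, the extra cutoff $\1_{-\cS^\tau\le\tau^\alpha}$ means we only integrate over $p_{1:n}$ with $|\sqrt{R_\tau}p_{1:n}|^2\le 2\tau^\alpha$. Proceeding as above, the $\ell$-integration range becomes $[0,L^\tau(\tfrac12|\sqrt{R_\tau}p_{1:n}|^2)]$ with argument $\le\tau^\alpha$, on which $L^\tau(x)=\frac{\lambda^2\nu_\tau^{3/2}}{\pi}\log(1+\tau/(1+x))\ge c\,\nu_\tau^{3/2}\log(\tau^{1-\alpha})\asymp(1-\alpha)\nu_\tau^{3/2}\log\tau$ is comparable to its unrestricted size, so this does not by itself gain a power of $\tau$. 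The gain of $\nu_\tau^{3/4}$ instead comes from the structure of $\genash_\pm$ itself: the coefficient $\lambda\nu_\tau^{3/4}$ in \eqref{e:A+sharp}--\eqref{e:A-sharp} contributes $\nu_\tau^{3/2}$ to $-\genamsh\cD\genapsh$, and — this is the key point — when $\cD=(\cG_M^\tau)^2$ is \emph{bounded} (rather than of size $1/g^\tau_M$), the bound $\cG^\tau_M\le 1$ combined with the volume estimate $|\{|\sqrt{R_\tau}q|\le r\}|=r^2\nu_\tau^{-1/2}$ and the constraint $\chi^\sharp$ (forcing $r\gtrsim|\sqrt{R_\tau}p_{1:n}|$, hence $r\lesssim\tau^{1/2}$ via $\cJ^\tau$) gives $\int_{\R^2}dq\,\cJ^\tau_{q,p_i-q}\chi^\sharp\lesssim\tau\,\nu_\tau^{-1/2}$, so altogether $\nu_\tau^{3/2}\cdot\tau\nu_\tau^{-1/2}\cdot|\fe_1\cdot p_i|^2\lesssim\nu_\tau\,|\fe_1\cdot p_i|^2\,\tau$, and one must extract the anisotropic smallness $|\fe_1\cdot p_{1:n}|^2\le|\sqrt{R_\tau}p_{1:n}|^2/\nu_\tau\le 2\tau^\alpha/\nu_\tau$ carefully. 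The cleanest route: bound $\cG_M^\tau\le(1-\gensy)^{-1}$, and note $(1-\gensy)^{-1}\le\nu_\tau^{1/2}(-\nu_\tau\gensyx)^{-1/2}(\cdot)^{?}$ — more honestly, bound $\|\cG_M^\tau\genapsh\1_{-\cS^\tau\le\tau^\alpha}\Phi\|_\tau\le\|\genapsh\1_{-\cS^\tau\le\tau^\alpha}\Phi\|_{\fock{}{-1}{\tau}}$, and estimate the right side directly from \eqref{e:diag} with $\bd_{n+1}=(1+\Gamma_i^\tau)^{-1}$, on the restricted set, using that the $q$-integral $\int dq\,(1+\Gamma_i^\tau)^{-1}\cJ^\tau\chi^\sharp$ converges to $O(\nu_\tau^{-1/2})$ times $\log$ — the honest computation being that $\nu_\tau^{3/2}$ times this is $O(\nu_\tau\log\tau)=O(\nu_\tau^{1/4})$, and one more factor of $\nu_\tau^{1/2}$ comes from the restricted momentum support through $|\fe_1\cdot p_i|^2\le\tau^\alpha\nu_\tau^{-1}$ versus $|\sqrt{R_\tau}p_i|^2$ — giving $\nu_\tau^{3/4}$ after optimizing the interpolation. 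The $\genamsh$ half is dual and identical.

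The main obstacle is the bookkeeping in \eqref{e:GA+SmoothNorm}: one must squeeze exactly $\nu_\tau^{3/4}$ out of the interplay between (i) the explicit $\nu_\tau^{3/4}$ in the definition of $\genash_\pm$, (ii) the anisotropic momentum restriction $|\fe_1\cdot p_{1:n}|^2\lesssim\tau^\alpha\nu_\tau^{-1}$ on the support of $\1_{-\cS^\tau\le\tau^\alpha}$, and (iii) the volume of the region where $\chi^\sharp$ and $\cJ^\tau$ are simultaneously nonzero, all while keeping the constant uniform in $\tau$ and tracking its dependence on $\alpha$ (which blows up as $\alpha\to 1$ through $L^\tau$). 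I expect this to require a case split according to whether $g^\tau(\ell)$ is above or below $\nu_\tau$ on the relevant $\ell$-range, exactly as in Lemmas \ref{p:1stReplLemma} and \ref{p:2ndReplLemma}, together with repeated use of Lemma \ref{lem:ReplTech} and its Remark \ref{rmk:ReplTech} to replace the $q$-integral by the explicit double integral in $(\varsigma,\ell)$ whose value is elementary.
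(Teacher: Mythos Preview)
Your approach to \eqref{e:GA+Norm} is essentially correct in spirit: both you and the paper reduce to the diagonal part via Lemma~\ref{l:SharpDiag}, and both exploit $g_M^\tau\ge M\nu_\tau$. However, your proposed route through Lemma~\ref{lem:ReplTech} with $\PHI(\gamma,\gamma',\ell)=(1+g_M^\tau(\ell)\gamma')^{-2}$ does not quite work as stated, since this $\PHI$ fails the homogeneity condition \eqref{e:ReplTechHyp1}. The paper instead bounds the integrand directly: using $g_M^\tau\ge M\nu_\tau$ to minorise the denominator and then applying the elementary integral bound \eqref{e:Fact1}, one gets the inner integral $\lesssim\nu_\tau^{-3/2}(1\vee M)^{-3/2}|\fe_1\cdot p_{1:n}|^{-2}$ in one line, with no change of variables.

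For \eqref{e:GA+SmoothNorm} there is a genuine gap. The mechanism you propose cannot produce $\nu_\tau^{3/4}$: your ``honest computation'' that $\nu_\tau\log\tau=O(\nu_\tau^{1/4})$ is simply false, since $\nu_\tau\log\tau=(\log\tau)^{1/3}$ diverges. The idea of bounding $\cG_M^\tau\le(1-\gensy)^{-1}$ and using $\|\genapsh\,\cdot\,\|_{\fock{}{-1}{\tau}}$ runs straight into \eqref{e:explosion}, which shows that quantity explodes like $(\log\tau)^{1/3}$ even for smooth $\phi$. The key point you miss is this: on the region where both $|\sqrt{R_\tau}p_{1:n}|^2\le 2\tau^\alpha$ \emph{and} $|q|^2\le\tau^\beta$ (for some fixed $\alpha<\beta<1$), one has $\Gamma_i^\tau\lesssim\tau^\beta$ and hence $g_M^\tau(L^\tau(\Gamma_i^\tau))\gtrsim\lambda^{4/3}(1-\beta)^{2/3}$, i.e.\ \emph{of order~$1$ rather than $M\nu_\tau$}. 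Plugging this into the denominator of the $(\cG_M^\tau)^2$ kernel and applying \eqref{e:Fact1} (now with ``$M$'' there of order $\nu_\tau^{-1}$) cancels the $\nu_\tau^{-1/2}$ and yields an inner integral of size $O(|\fe_1\cdot p_{1:n}|^{-2})$ with no $\nu_\tau$ factors; the remaining prefactor $\lambda^2\nu_\tau^{3/2}$ then gives exactly $\nu_\tau^{3/4}$ after the square root. The complementary region $|q|^2>\tau^\beta$ is handled by the crude bound $(1+\Gamma_i^\tau)^{-2}\lesssim\nu_\tau^{-2}|q|^{-4}$, which decays polynomially in $\tau$. The $\genamsh$ half follows by duality once you observe (from the definition of $\chi^\sharp$) that $\1_{-\cS^\tau\le\tau^\alpha}\cG_M^\tau\genapsh=\1_{-\cS^\tau\le\tau^\alpha}\cG_M^\tau\genapsh\1_{-\cS^\tau\le\tau^\alpha}$.
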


\begin{remark}\label{rem:GA+norm}
As for the replacement lemmas, 
the dependence of $C^{(2)}_M$ and $ C$ on $M,\lambda$ and $\tau$ 
can be explicitly tracked and they can respectively be taken to be proportional to 
$\lambda/(1\vee M^{3/4})$ and $(1-\beta)^{-1/2} + \lambda\nu_\tau^{-3/2}\tau^{-(\beta-\alpha)/2}$, 
for any $0\leq\alpha<\beta<1$.  
\end{remark}

\begin{proof}
As in the proof of Replacement Lemmas~\ref{p:1stReplLemma} and~\ref{p:2ndReplLemma}, 
it suffices to prove the statement for $\Phi\in\fock{n}{}{\tau} \cap \core$ and any $n \geq 1$. 
Furthermore, by Lemma~\ref{l:SharpDiag}, the
fact that both $\cG^\tau_M$ and $\gensyx[f]$ are diagonal (see~\eqref{e:Lwf} for the latter), 
and formula~\eqref{e:diag}, we have 
\begin{equs}\label{e:GA+NormProof}
\|\cG_M^\tau \genapsh \Phi\|_\tau^2 &= \langle - \genamsh (\cG_M^\tau)^2 \genapsh \Phi, \Phi \rangle_{\tau, \di}\\
&\lesssim n! \int \Xi_n^\tau(\dd p_{1:n}) \, |\hat{\Phi}(p_{1:n})|^2 \lambda^2 \sum_{i=1}^n |\fe_1 \cdot p_i|^2 \nu_\tau^{3/2}\int_{\R^2} \frac{\dd q}{\big(1 + \Gamma_i^\tau + g_M^\tau(L^\tau(\Gamma_i^\tau)) \, \Gamma_i^{\fe_1}\big)^2}
\end{equs}
where we adopted the same notations as in~\eqref{e:NotationsRepl}. 
Let us now focus on the inner integral which will be bounded in two different ways 
to obtain~\eqref{e:GA+Norm} and~\eqref{e:GA+SmoothNorm}, respectively. 

For the former, since $g^\tau_M\geq M\nu_\tau$ (see~\eqref{e:gtau}) we have 
\begin{equs}
\int_{\R^2} \frac{\dd q}{\big(1 + \Gamma_i^\tau + g_M^\tau(L^\tau(\Gamma_i^\tau)) \, \Gamma_i^{\fe_1}\big)^2}&\lesssim \int_{\R^2}\frac{\dd q}{\big(\nu_\tau(M\vee 1)|\fe_1\cdot p_{1:n}|^2+|\sqrt{R_\tau}q|^2+M\nu_\tau|\fe_1\cdot q|^2\big)^2}\\
&\lesssim \frac{1}{\nu_\tau^{3/2}(1\vee M^{3/2})|\fe_1\cdot p_{1:n}|^2}
\end{equs}
where the last estimate follows by \eqref{e:Fact1} in Lemma~\ref{l:technical-integral}. Then, plugging the previous 
into~\eqref{e:GA+NormProof}, we obtain~\eqref{e:GA+Norm}. 

Let us now turn to~\eqref{e:GA+SmoothNorm}. The bound on the term containing $\genamsh$ follows 
from that on $\genapsh$ and a duality argument. Indeed, duality ensures that it suffices to 
control the $\fock{}{}{\tau}$-norm of $\1_{- \cS^\tau \leq \tau^{1/2}} \cG^\tau_M \genapsh \Phi$ 
via the right hand side of~\eqref{e:GA+SmoothNorm}. But, thanks to the definition of $\chi^\sharp$ in~\eqref{e:ChiSharp}, 
we have 
\begin{equ}
\1_{- \cS^\tau \leq \tau^{\alpha}} \cG^\tau_M \genapsh = \1_{- \cS^\tau \leq \tau^{\alpha}} \cG^\tau_M \genapsh \1_{- \cS^\tau \leq \tau^{\alpha}}
\end{equ}
so that we can apply~\eqref{e:GA+SmoothNorm} for $\genapsh$ to conclude. 

To prove the estimate on $\genapsh$, we split the integral in~\eqref{e:GA+NormProof} in two 
regions, corresponding to $|q|^2\leq \tau^{\beta}$ and $|q|^2>\tau^\beta$, for some given $\beta$ such that $\alpha<\beta<1$. 
In the first, we have 
$\Gamma_i^\tau=\frac12|\sqrt{R_\tau}(q,p_i-q, p_{2:n})|^2\lesssim \tau^{\beta}$, 
which implies that $g^\tau_M(L^\tau(\tfrac12|\sqrt{R_\tau}(q,p_i-q, p_{2:n})|^2))\gtrsim \lambda^{4/3}(1-\beta)^{2/3}$. 
Hence, 
\begin{equs}
\int \frac{\1_{\{|q|^2\leq \tau^{\beta}\}}\dd q}{\big(1 + \Gamma_i^\tau + g_M^\tau(L^\tau(\Gamma_i^\tau)) \, \Gamma_i^{\fe_1}\big)^2}&\lesssim \int\frac{\dd q}{\big(|\sqrt{R_\tau}q|^2+\lambda^{4/3}(1-\beta)^{2/3}\big(|\fe_1\cdot q|^2+|\fe_1\cdot p_{1:n}|^2\big)\big)^2}\\
&\lesssim \frac{1}{\lambda^{2}(1-\beta)|\fe_1\cdot p_{1:n}|^2}
\end{equs}
where in the last step we used \eqref{e:Fact1} from Lemma~\ref{l:technical-integral} 
with $A$ and $M$ therein respectively given by $A^2=\lambda^{4/3}(1-\beta)^{2/3}|\fe_1\cdot p_{1:n}|^2$ 
and $M=\nu_\tau^{-1}\lambda^{4/3}(1-\beta)^{2/3}$. 
For the other instead, we keep only the term $|\sqrt{R_\tau}q|^4 \gtrsim \nu_\tau^2 |q|^4$ at the denominator and obtain
\begin{equs}
\int \frac{\1_{\{|q|^2> \tau^{\beta}\}}\dd q}{\big(1 + \Gamma_i^\tau + g_M^\tau(L^\tau(\Gamma_i^\tau)) \, \Gamma_i^{\fe_1}\big)^2}\lesssim \int \frac{\1_{\{|q|^2> \tau^{\beta}\}}\dd q}{\nu_\tau^2|q|^4}\lesssim \frac{1}{\nu_\tau^2\tau^{\beta}}\leq \frac{1}{\nu_\tau^3\tau^{\beta-\alpha} |\fe_1\cdot p_{1:n}|^2} 
\end{equs}
the last bound being a consequence of $\tau^\alpha\geq |\sqrt{R_\tau}p_{1:n}|^2\gtrsim \nu_\tau|\fe_1\cdot p_{1:n}|^2$. 
Therefore,~\eqref{e:GA+SmoothNorm} follows by estimating~\eqref{e:GA+NormProof} 
via the two equations above and thus, the proof of the proposition is complete. 
%
%
%
%
%
%
%
\end{proof}

\section{The Resolvent and its ansatz}\label{sec:CVRes}

Now that we collected all the necessary bounds, we turn to the analysis of the resolvent and aim at 
obtaining the estimates needed to prove Theorems~\ref{th:Diffusivity} and~\ref{th:cor}. 
To properly state them, let $\phi$ be an element of $\fock{n_0}{}{\tau}$ for some fixed $n_0 \in \N$  
and $r^\tau$ be the resolvent applied to $\phi$, i.e.
\begin{equ}[e:res]
r^\tau \eqdef (1 - \gen)^{-1} \phi \, .
\end{equ}
Note that, if $n_0=0$, then $r^\tau$ coincides with $\phi$ (and there is thus nothing to prove) so that in what follows we will 
focus on the case of $n_0>0$. 
The main result of the present section is summarised in the following theorem. 

\begin{theorem}\label{thm:MainSec4}
Let $n_0\in\N$, $n_0\geq 1$, and $M$ be sufficiently large so that 
the constants $C^{(1)}_{M}$ and $C^{(2)}_M$ in~\eqref{e:2ndReplLemma} and~\eqref{e:GA+Norm} 
satisfy
\begin{equ} \label{e:AssumptionM}
C^{(1)}_{M}\vee C^{(2)}_M \leq \frac{1}{2}\, .
\end{equ}
Then, there exists a constant $C=C(\lambda,M,n_0)>0$ such that 
for all $\phi\in\fock{n_0}{}{\tau}$, we have 
\begin{equs}
\|r^\tau\|_\tau&\leq C\Big( \vertiii{\phi}_{\tau,M}+\|\cG^\tau_M\phi\|_\tau\Big)\,, \label{e:L2}\\
\Big|\|r^\tau\|_{\fock{}{1}{\tau}}^2-\|(\cG^\tau_M)^{1/2}\phi\|^2_\tau\Big|&\leq C\, \vertiii{\phi}_{\tau,M}\Big(\vertiii{\phi}_{\tau,M}+\|(\cG^\tau_M)^{1/2}\phi\|_\tau\Big)\label{e:H1}
\end{equs}
where $r^\tau$ is defined according to~\eqref{e:res} and the (semi-)norm $\vertiii{\cdot}$ is given by
\begin{equ}[e:semi]
\vertiii{\phi}_{\tau,M}\eqdef \nu_\tau^{1/4}\sqrt{|\log\nu_\tau|}\|(-\gensyx)^{1/2}\cG^\tau_M\phi\|_\tau+\|\genamsh\cG^\tau_M\phi\|_\tau\,.
\end{equ}
Furthermore, if there exists $\alpha$ such that $\mathrm{Supp}(\cF \phi) \subset \{- \cS^\tau \leq \tau^{\alpha}\}$, 
then there exists a constant $ C= C(\lambda,\alpha)>0$ for which 
\begin{equ}[e:L2Compact]
\|r^\tau-\cG^\tau_M\phi\|\leq C\,\nu_\tau^{1/4}\sqrt{|\log\nu_\tau|}\;\|\phi\|_\tau\,.
\end{equ}
\end{theorem}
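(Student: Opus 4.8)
The plan is to realize $r^\tau=(1-\gen)^{-1}\phi$ as a quantitatively controlled perturbation of an explicit \emph{ansatz} built from the approximate fixed point $\cG^\tau_M$ of~\eqref{e:cgG}, and to estimate the error through the Replacement Lemmas~\ref{p:1stReplLemma}--\ref{p:2ndReplLemma}, Proposition~\ref{p:GSCgenafl} and the a priori identity of Lemma~\ref{lem:Apriori}. Following the heuristic leading to~\eqref{e:True}, I would set
\begin{equ}
s^\tau\eqdef\sum_{k\geq0}(\cG^\tau_M\genapsh)^k\,\cG^\tau_M\phi\,.
\end{equ}
By Proposition~\ref{p:GA+Norm} and~\eqref{e:AssumptionM} the series converges in $\fock{}{}{\tau}$ with $\|s^\tau\|_\tau\leq2\|\cG^\tau_M\phi\|_\tau$, it lives in chaoses $\geq n_0$, its lowest component is $s^\tau_{n_0}=\cG^\tau_M\phi$, and it solves $(1-\gensy+\cg^\tau_M)s^\tau=\genapsh s^\tau+\phi$. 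Applying $1-\gen=1-\gensy-\genash-\genafl$ to $s^\tau$, adding and subtracting $\cg^\tau_M s^\tau$, and inserting the ansatz equation into $\genamsh s^\tau=\genamsh\cG^\tau_M\genapsh s^\tau+\genamsh\cG^\tau_M\phi$ exactly as in Section~\ref{sec:HeuRM}, one gets $(1-\gen)s^\tau=\phi+\Rem^\tau$ with
\begin{equ}
\Rem^\tau\eqdef-\genafl s^\tau-\genamsh\cG^\tau_M\phi+\cE^\tau\,,\qquad \cE^\tau\eqdef-\genamsh\cG^\tau_M\genapsh s^\tau-\cg^\tau_M s^\tau\,,
\end{equ}
so that $\cE^\tau$ is precisely the defect controlled by Lemma~\ref{p:1stReplLemma}. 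Since $r^\tau-s^\tau=-(1-\gen)^{-1}\Rem^\tau$ and, by Lemma~\ref{lem:Apriori}/\eqref{e:BoundRes}, $(1-\gen)^{-1}$ maps $\fock{}{-1}{\tau}$ into $\fock{}{1}{\tau}$ with norm $\leq1$, the whole theorem reduces to bounding $\|\Rem^\tau\|_{\fock{}{-1}{\tau}}$ (for~\eqref{e:L2} and~\eqref{e:L2Compact}) and a few bilinear pairings (for~\eqref{e:H1}).

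The core estimate, and the point where the novelties of Section~\ref{sec:Est} enter, is $\|\Rem^\tau\|_{\fock{}{-1}{\tau}}\lesssim\vertiii{\phi}_{\tau,M}$. Two summands are immediate: $\|\genamsh\cG^\tau_M\phi\|_{\fock{}{-1}{\tau}}\leq\|\genamsh\cG^\tau_M\phi\|_\tau$ is the second term of $\vertiii{\phi}_{\tau,M}$, and $\|\cE^\tau\|_{\fock{}{-1}{\tau}}\lesssim\|(-\nu_\tau\gensyx)^{1/2}s^\tau\|_\tau$ by applying Lemma~\ref{p:1stReplLemma} chaos-by-chaos and summing (crucially with \emph{no} factor of $\cN$). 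For the third, Proposition~\ref{p:GSCgenafl} gives $\|\genafl s^\tau\|_{\fock{}{-1}{\tau}}\lesssim\lambda\|\cN(-\nu_\tau\gensyx)^{1/2}s^\tau\|_\tau$. Hence everything hinges on the \emph{anisotropic, $\cN$-weighted $H^1$ bound on the ansatz}, $\|\cN(-\nu_\tau\gensyx)^{1/2}s^\tau\|_\tau\lesssim\vertiii{\phi}_{\tau,M}$, which is the substance of Proposition~\ref{p:AnisoH1Ansatz}. To prove it I would write the square as $\sum_{k\geq0}(n_0+k)^2\|(-\nu_\tau\gensyx)^{1/2}s^\tau_{n_0+k}\|_\tau^2$ and, using $s^\tau_{n_0+k}=\cG^\tau_M\genapsh s^\tau_{n_0+k-1}$, apply the Recursive Replacement Lemma~\ref{p:2ndReplLemma} $k$ times starting from the constant weight $f\equiv\nu_\tau$, obtaining $\|(-\nu_\tau\gensyx)^{1/2}s^\tau_{n_0+k}\|_\tau^2\leq\|(-\gensyx[f_k])^{1/2}\cG^\tau_M\phi\|_\tau^2$ with $f_{j+1}=\cI_\tau(f_j)+\nu_\tau C_{f_j}C^{(1)}_M$. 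Because $(\nu_\tau+g^\tau)^{3/2}=\tfrac32 x+\nu_\tau^{3/2}$ (cf.~\eqref{e:gtau}), $\cI_\tau$ is essentially explicit: $\cI_\tau\big(c(\nu_\tau+g^\tau)\big)=\tfrac c2 g^\tau$ — the factor $\tfrac12$ (Remark~\ref{rem:1/2}, a consequence of the anisotropy) being a genuine contraction — and $\cI_\tau(\nu_\tau)(x)=\tfrac{\nu_\tau}{3}\log(1+\tfrac{3x}{2\nu_\tau^{3/2}})$; iterating gives $f_k(x)\asymp\tfrac{\nu_\tau}{3^k k!}\log^k(1+\tfrac{3x}{2\nu_\tau^{3/2}})$ up to $C^{(1)}_M$-corrections that stay subdominant once $C^{(1)}_M<\tfrac12$ by~\eqref{e:AssumptionM}. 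Since $L^\tau$ takes values in $[0,L^\tau(0)]$ with $\log(1+\tfrac{3L^\tau(0)}{2\nu_\tau^{3/2}})\asymp\log\log\tau\asymp|\log\nu_\tau|$, the series is dominated by $\|(-\gensyx)^{1/2}\cG^\tau_M\phi\|_\tau^2\,\nu_\tau\sum_k\tfrac{(n_0+k)^2}{k!}\big(\tfrac12|\log\nu_\tau|\big)^k\asymp\nu_\tau\cdot\nu_\tau^{-1/2}\cdot|\log\nu_\tau|\cdot\|(-\gensyx)^{1/2}\cG^\tau_M\phi\|_\tau^2$, whence the $\nu_\tau^{1/4}\sqrt{|\log\nu_\tau|}$ prefactor of the first term of $\vertiii{\phi}_{\tau,M}$. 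The exponential series beating the polynomial weight $(n_0+k)^2$ is exactly what a graded sector condition fails to provide for the full operator $\gena$, whose $\fock{}{-1}{\tau}$-norm actually diverges (cf.~\eqref{e:explosion}); the $\genash/\genafl$ splitting, which confines the Fourier singularity to the chaos-preserving, off-diagonal-free part $\genash$ while leaving $\genafl$ small in the \emph{anisotropic} $\fe_1$-direction, together with the recursive structure, is precisely what makes it summable — this is the main obstacle of the whole argument.

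Granting $\|\Rem^\tau\|_{\fock{}{-1}{\tau}}\lesssim\vertiii{\phi}_{\tau,M}$, the remaining steps are bookkeeping. For~\eqref{e:L2}: $\|r^\tau\|_\tau\leq\|s^\tau\|_\tau+\|(1-\gen)^{-1}\Rem^\tau\|_\tau\leq2\|\cG^\tau_M\phi\|_\tau+\|\Rem^\tau\|_{\fock{}{-1}{\tau}}$. For~\eqref{e:H1}: Lemma~\ref{lem:Apriori} gives $\|r^\tau\|_{\fock{}{1}{\tau}}^2=\langle(1-\gen)r^\tau,r^\tau\rangle_\tau=\langle\phi,r^\tau\rangle_\tau$, while diagonality of $\cG^\tau_M$ gives $\|(\cG^\tau_M)^{1/2}\phi\|_\tau^2=\langle\phi,\cG^\tau_M\phi\rangle_\tau=\langle\phi,s^\tau\rangle_\tau$ (chaos orthogonality, $s^\tau-\cG^\tau_M\phi$ lying in chaoses $>n_0$); hence the left side of~\eqref{e:H1} equals $|\langle\phi,r^\tau-s^\tau\rangle_\tau|$. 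Substituting $\phi=(1-\gen)s^\tau-\Rem^\tau$ and $r^\tau-s^\tau=-(1-\gen)^{-1}\Rem^\tau$ and ``integrating by parts'' via the skew-adjointness identity $(1-(\gen)^{*})(1-\gen)^{-1}=\Id+2\gena(1-\gen)^{-1}$ (recall $(\gen)^{*}=\gensy-\gena$), one reduces $\langle\phi,r^\tau-s^\tau\rangle_\tau$ to terms of the form $\langle\Rem^\tau,(1-\gen)^{-1}\Rem^\tau\rangle_\tau$, $\langle s^\tau,\Rem^\tau\rangle_\tau$ and $\langle\gena s^\tau,r^\tau-s^\tau\rangle_\tau$, each bounded by $\|\Rem^\tau\|_{\fock{}{-1}{\tau}}\big(\|\Rem^\tau\|_{\fock{}{-1}{\tau}}+\|s^\tau\|_{\fock{}{1}{\tau}}+\|\gena s^\tau\|_{\fock{}{-1}{\tau}}\big)$; here $\|s^\tau\|_{\fock{}{1}{\tau}}$ is controlled via the ansatz identity $\|s^\tau\|_{\fock{}{1}{\tau}}^2+\|(\cg^\tau_M)^{1/2}s^\tau\|_\tau^2=\|(\cG^\tau_M)^{1/2}\phi\|_\tau^2-\langle\genamsh s^\tau,s^\tau\rangle_\tau$ together with the bound on $\Rem^\tau$, and $\|\gena s^\tau\|_{\fock{}{-1}{\tau}}$ similarly, so everything collapses to $\vertiii{\phi}_{\tau,M}\big(\vertiii{\phi}_{\tau,M}+\|(\cG^\tau_M)^{1/2}\phi\|_\tau\big)$. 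Finally,~\eqref{e:L2Compact} follows along the same lines but, when $\mathrm{Supp}(\cF\phi)\subset\{-\cS^\tau\leq\tau^\alpha\}$, using the sharper~\eqref{e:GA+SmoothNorm} in place of~\eqref{e:GA+Norm}: this yields $\|s^\tau-\cG^\tau_M\phi\|_\tau=\|\cG^\tau_M\genapsh s^\tau\|_\tau\lesssim\nu_\tau^{3/4}\|\phi\|_\tau$ and $\|\genamsh\cG^\tau_M\phi\|_\tau\lesssim\nu_\tau^{3/4}\|\phi\|_\tau$, so that $\|r^\tau-\cG^\tau_M\phi\|_\tau\leq\|r^\tau-s^\tau\|_\tau+\|s^\tau-\cG^\tau_M\phi\|_\tau\lesssim\nu_\tau^{1/4}\sqrt{|\log\nu_\tau|}\,\|\phi\|_\tau$, the dominant contribution now coming from the ansatz $H^1$ bound of the previous paragraph.
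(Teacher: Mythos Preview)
Your approach is essentially the paper's: the same ansatz $s^\tau$, the same error identity $(1-\gen)s^\tau=\phi+\Rem^\tau$, the same reduction of $\|r^\tau-s^\tau\|_{\fock{}{1}{\tau}}$ to $\|\Rem^\tau\|_{\fock{}{-1}{\tau}}$ via Lemma~\ref{lem:Apriori}, and the same iteration of the Recursive Replacement Lemma~\ref{p:2ndReplLemma} through the functions $p_k^\tau$ of Lemma~\ref{lem:Pk} to control the $\cN$-weighted anisotropic norm of $s^\tau$. The computation of $\cI_\tau(\nu_\tau)$ and the identification of the $\nu_\tau^{1/4}\sqrt{|\log\nu_\tau|}$ factor are correct. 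For~\eqref{e:L2} and~\eqref{e:L2Compact} your argument matches the paper (modulo one slip: in~\eqref{e:L2Compact} you cannot apply~\eqref{e:GA+SmoothNorm} to $\cG_M^\tau\genapsh s^\tau$ directly since $s^\tau$ has no Fourier support restriction; one first uses the geometric series and~\eqref{e:AssumptionM} to reduce to $\cG_M^\tau\genapsh\cG_M^\tau\phi$, whose argument does have the right support).

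There is, however, a genuine gap in your route to~\eqref{e:H1}. Your ``integration by parts'' produces the term $\langle\gena s^\tau, r^\tau-s^\tau\rangle_\tau$, which you bound by $\|\gena s^\tau\|_{\fock{}{-1}{\tau}}\|r^\tau-s^\tau\|_{\fock{}{1}{\tau}}$ and then claim $\|\gena s^\tau\|_{\fock{}{-1}{\tau}}$ is controlled ``similarly''. It is not: the $\genapsh$ part is exactly the quantity that blows up like $(\nu_\tau\log\tau)^{1/2}$, cf.~\eqref{e:explosion}, and no replacement lemma tames it in the $\fock{}{-1}{\tau}$ norm. The paper sidesteps this by never writing $\|\gena s^\tau\|_{\fock{}{-1}{\tau}}$: it first computes $\|s^\tau\|_{\fock{}{1}{\tau}}^2$ directly from~\eqref{e:Apriori} and~\eqref{e:GenAnsatz} (Proposition~\ref{p:L2Ansatz}), where $\langle\genafl s^\tau,s^\tau\rangle_\tau$ and $\langle\genamsh\cG_M^\tau\phi,s^\tau\rangle_\tau$ vanish \emph{exactly} by skew-symmetry and chaos orthogonality, leaving only the Replacement-Lemma defect $\langle\cE^\tau,s^\tau\rangle_\tau$. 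Then it compares $\|r^\tau\|_{\fock{}{1}{\tau}}^2$ to $\|s^\tau\|_{\fock{}{1}{\tau}}^2$ via the elementary factorisation $|a^2-b^2|\le|a-b|(|a-b|+2b)$ together with $b\le\sqrt{|b^2-c^2|}+c$, which only needs $\|r^\tau-s^\tau\|_{\fock{}{1}{\tau}}$ (already bounded) and $\|(\cG_M^\tau)^{1/2}\phi\|_\tau$. Replacing your integration-by-parts paragraph with this two-step comparison fixes the argument.
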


As outlined in Section~\ref{sec:HeuRM}, the proof of the above statement is based on a 
approximation of the resolvent with a suitably chosen ansatz $s^\tau$ (point~\ref{i:pointII} of the roadmap). The latter is introduced 
in Section \ref{sec:Ansatz} where its main properties together with a control 
over the $\fock{}{1}{\tau}$-norm of the difference between $r^\tau$ and $s^\tau$ are derived. Most of the estimates 
we obtain therein are in terms of $\|(- \nu_\tau \gensyx)^{1/2} s^\tau\|_\tau$ whose control 
is only possible via the Recursive Replacement Lemma~\ref{p:2ndReplLemma}. 
This is detailed in Section \ref{sec:2ndReplLemma} (point~\ref{i:pointIII}), at the end of which 
all the results derived are put together and the proof of Theorem~\ref{thm:MainSec4} is completed.

\subsection{The ansatz and its properties}

\label{sec:Ansatz}

As explained in Section~\ref{sec:HeuRM}, the resolvent is a complicated object whose structure
intimately depends on the fixed point operator to~\eqref{e:OpFP}. The ansatz that we will now introduce
shares the same structure, i.e. that in~\eqref{e:True}, but with two twists: we replace $\cH^\tau$
with $\cG^\tau_M$ given in~\eqref{e:cgG}, and the whole of $\genap$ with $\genapsh$ in~\eqref{e:A+sharp}. 

Let $\phi$ be an element of $\fock{n_0}{}{\tau}$ for some fixed $n_0\ge 1$. 
We define the ansatz $s^\tau$ for the resolvent equation in~\eqref{e:res} according to 
\begin{equ}[e:AnsatzFormula]
s^\tau \eqdef \sum_{k=0}^{+\infty} (\cG_M^\tau \genapsh)^k \, \cG_M^\tau \phi \, .
\end{equ}
Equation~\eqref{e:AnsatzFormula} provides the full chaos decomposition of $s^\tau$ as, for any $k$, 
$(\cG_M^\tau \genapsh)^k \, \cG_M^\tau\in\fock{n_0+k}{}{\tau}$. 
Let us first show that, for fixed $\tau > 0$, the series at the right hand side is absolutely convergent in $\core$ so that 
$s^\tau$ is indeed a well-defined element of the core, and derive an expression for the action of the generator on it. 

\begin{lemma}\label{l:AnsatzBasic}
Let $\phi\in\fock{n_0}{}{\tau}$ for some $n_0\geq 1$, and assume that~\eqref{e:AssumptionM} holds. Then, 
for $\tau > 0$ fixed, the sum at the right hand side of~\eqref{e:AnsatzFormula}
is absolutely convergent in $\core$, so that $s^\tau$ is indeed well defined.
Moreover, it satisfies
\begin{equ} \label{e:AnsatzEquation}
s^\tau = \cG_M^\tau \genapsh s^\tau + \cG_M^\tau \phi \, ,
\end{equ}
and
\begin{equ} \label{e:GenAnsatz}
(1 - \gen) \, s^\tau = \phi + (-\genamsh \cG_M^\tau\genapsh -\cg_M^\tau) s^\eps - \genafl s^\eps - \genamsh \cG_M^\tau \phi \, . 
\end{equ}
\end{lemma}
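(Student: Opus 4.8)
The plan is to establish the three assertions in the stated order. The only one requiring estimates is the first, namely that the series in \eqref{e:AnsatzFormula} converges absolutely in $\core$ for $\tau$ fixed. Since $\phi\in\fock{n_0}{}{\tau}$ and $\cG_M^\tau=(1-\gensy+\cg_M^\tau)^{-1}$ is a bounded diagonal operator whose Fourier multiplier decays like $|p_{1:n_0}|^{-2}$ (for fixed $\tau$), the element $\cG_M^\tau\phi$ lies in $\fock{n_0}{2}{\tau}$, hence in $\core$, being supported in a single chaos. Iterating \eqref{e:GA+Norm}, which under \eqref{e:AssumptionM} gives $\|\cG_M^\tau\genapsh\Psi\|_\tau\le\tfrac12\|\Psi\|_\tau$ for all $\Psi\in\core$, one obtains $\|(\cG_M^\tau\genapsh)^k\cG_M^\tau\phi\|_\tau\le 2^{-k}\|\cG_M^\tau\phi\|_\tau$; since the summands live in pairwise orthogonal chaoses $\fock{n_0+k}{}{\tau}$, the series converges in $\fock{}{}{\tau}$ with $\|s^\tau\|_\tau\le 2\|\cG_M^\tau\phi\|_\tau$. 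To upgrade convergence to $\core$ I would exploit that, for fixed $\tau$, the weight $\Theta_\tau$ decays faster than any polynomial: arguing as in Lemmas~\ref{l:Asharp} and~\ref{l:GSC} one checks that $\cG_M^\tau\genapsh$ maps $\core$ continuously into itself and that the $H^2_\tau$–Fock norm of the $k$-th summand decays faster than any inverse power of $k$, so that the partial sums are Cauchy in every seminorm of $\core$. I expect this last verification to be the most laborious point, but it is entirely routine — no uniformity in $\tau$ is required — and I would present only its skeleton.

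Granting $s^\tau\in\core$, the identity \eqref{e:AnsatzEquation} is immediate: by continuity of $\cG_M^\tau\genapsh$ on $\core$ one may pull it inside the sum, so that $\cG_M^\tau\genapsh s^\tau+\cG_M^\tau\phi=\sum_{k\ge1}(\cG_M^\tau\genapsh)^k\cG_M^\tau\phi+\cG_M^\tau\phi=\sum_{k\ge0}(\cG_M^\tau\genapsh)^k\cG_M^\tau\phi=s^\tau$.

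For \eqref{e:GenAnsatz} I would proceed purely algebraically, using that $s^\tau\in\core\subset\mathrm{Dom}(\gen)$, that $\gen=\gensy+\genash+\genafl$ on $\core$ with $\genash=\genapsh+\genamsh$ (Lemmas~\ref{l:ActionGen} and~\ref{l:Asharp}), and that $(\cG_M^\tau)^{-1}=1-\gensy+\cg_M^\tau$, so that $1-\gen=(\cG_M^\tau)^{-1}-\cg_M^\tau-\genapsh-\genamsh-\genafl$ on $\core$. Applying this to $s^\tau$ and substituting $(\cG_M^\tau)^{-1}s^\tau=\genapsh s^\tau+\phi$, which is \eqref{e:AnsatzEquation} rewritten as $s^\tau=\cG_M^\tau(\genapsh s^\tau+\phi)$, one gets $(1-\gen)s^\tau=\phi-\cg_M^\tau s^\tau-\genamsh s^\tau-\genafl s^\tau$. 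A second use of \eqref{e:AnsatzEquation}, now in the form $s^\tau=\cG_M^\tau\genapsh s^\tau+\cG_M^\tau\phi$, to rewrite $-\genamsh s^\tau=-\genamsh\cG_M^\tau\genapsh s^\tau-\genamsh\cG_M^\tau\phi$, then yields precisely \eqref{e:GenAnsatz}. All the operators appearing are applied to $s^\tau$, $\cG_M^\tau\genapsh s^\tau$ or $\cG_M^\tau\phi$, which belong to $\core$, on which $\cg_M^\tau$, $\genamsh$, $\genapsh$ and $\genafl$ act continuously, so every term is well defined and the manipulations are legitimate.
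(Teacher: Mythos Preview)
Your proposal is correct and follows essentially the same approach as the paper: the algebraic derivations of \eqref{e:AnsatzEquation} and \eqref{e:GenAnsatz} are identical in substance, and the convergence argument uses the same ingredients (the contraction bound \eqref{e:GA+Norm} under \eqref{e:AssumptionM}, plus the $\tau$-dependent continuity from Lemma~\ref{l:GSC}). The paper makes the ``routine'' $H^2_\tau$-estimate explicit via $\|s^\tau_{n_0+k+1}\|_{\fock{}{2}{\tau}}\le\|\genapsh s^\tau_{n_0+k}\|_\tau\lesssim_{\lambda,\tau}(n_0+k)^{3/2}\|s^\tau_{n_0+k}\|_\tau$ from \eqref{e:ContA}, which combined with the $2^{-k}$ decay gives the claimed rapid decay; this is exactly what your sketch would unfold into.
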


\begin{proof}
Assume first that the sum in~\eqref{e:AnsatzFormula} indeed converges absolutely in $\core$, and let us prove \eqref{e:AnsatzEquation} and \eqref{e:GenAnsatz}. For \eqref{e:AnsatzEquation}, notice that
\begin{equ}
s^\tau = \cG_M^\tau \phi + \sum_{k \geq 0} (\cG_M^\tau \genapsh)^{k+1} \, \cG_M^\tau \phi = \cG_M^\tau \phi + \cG_M^\tau \genapsh s^\tau \, ,
\end{equ}
where in the last step we are able to collect $\cG_M^\tau \genapsh$ from the infinite sum, 
which is allowed as this operator is continuous on $\core$ and the sum is absolutely convergent therein. 

For \eqref{e:GenAnsatz}, by adding and subtracting $\cg_M^\tau s^\tau$ from $(1-\gen)s^\tau$, we obtain
\begin{equs}
(1-\gen) \, s^\tau &= (1 - \gensy + \cg_M^\tau) s^\tau -\genapsh s^\tau - \genamsh s^\tau - \cg_M^\tau s^\tau -\genafl s^\eps \\
&= (\cG_M^\tau)^{-1} s^\tau - \genapsh s^\tau - \genamsh s^\tau - \cg_M^\tau s^\tau - \genafl s^\tau \\
&= \phi + \big(-\genamsh \cG_M^\tau\genapsh-\cg_M^\tau\big) s^\tau - \genamsh \cG_M^\tau \phi - \genafl s^\tau \, ,
\end{equs}
the second step being a consequence of the definition of $\cG^\tau_M$ in~\eqref{e:cgG} and 
the last of two applications of~\eqref{e:AnsatzEquation}, one for the first two terms and the other for the third and fourth. 

Let us now argue that the series at right hand side of~\eqref{e:AnsatzFormula} converges absolutely in $\core$.
First, by Proposition~\ref{p:GA+Norm} and~\eqref{e:AssumptionM}, 
$\|\cG_M^\tau \genapsh\|_{\fock{}{}{\tau} \to \fock{}{}{\tau}} \leq 1/2$. Hence, a $k$-times iteration of this bound gives
\begin{equ} \label{e:AnsatzWellPosed1}
\|s^\tau_{n_0+k}\|_\tau = \|(\cG_M^\tau \genapsh)^k \cG_M^\tau \phi\|_\tau \leq 2^{-k} \|\cG_M^\tau \phi\|_\tau \leq 2^{-k} \|\phi\|_\tau \, .
\end{equ}
Now, since, by definition, it can be directly checked that $s^\tau_{n_0+k+1}=\cG_\eps \genapsh s^\tau_{n_0+k}$, 
we have 
\begin{equ} \label{e:AnsatzWellPosed2}
\|s^\tau_{n_0+k+1}\|_{\fock{}{2}{\tau}} = \|\cG_\eps \genapsh s^\tau_{n_0+k}\|_{\fock{}{2}{\tau}} \leq \|\genapsh s^\tau_{n_0+k}\|_\tau \lesssim_{\lambda, \tau} (n_0+k)^{3/2} \, \|s^\tau_{n_0+k}\|_\tau \, ,
\end{equ}
where in the last step we exploited~\eqref{e:ContA} which, even though stated (and proved) for $\gena_\pm$,
also holds for $\genash_\pm$ (with the same proof). Combining \eqref{e:AnsatzWellPosed1} and \eqref{e:AnsatzWellPosed2} concludes the proof.
\end{proof}

The (rather) explicit structure of $s^\tau$ allows to efficiently estimate both its $\fock{}{}{\tau}$ and  $\fock{}{1}{\tau}$-norms, 
as the next proposition shows. 

\begin{proposition} \label{p:L2Ansatz}
Assumte that~\eqref{e:AssumptionM} holds. 
Then, there exists a constant $C=C(\lambda,M)>0$ such that 
for all $\phi\in\fock{n_0}{}{\tau}$ for some $n_0\in\N$, we have 
\begin{equs} \label{e:L2AnsatzDiff}
\|s^\tau\|_\tau &\leq 2 \, \|\cG_M^\tau \phi\|_\tau \, ,\\
\Big|\|s^\tau\|_{\fock{}{1}{\tau}}^2 - \|(\cG_M^\tau)^{1/2} \phi\|_\tau^2\Big| &\leq C \|(- \nu_\tau \gensyx)^{1/2} s^\tau\|_\tau^2 \,  \label{e:H1Ansatz}
\end{equs}
where $s^\tau$ is defined according to~\eqref{e:AnsatzFormula}. 
Furthermore,  if $\phi$ is such that $\mathrm{Supp}(\cF \phi) \subset \{- \cS^\tau \leq \tau^{\alpha}\}$ for some $\alpha\in[0,1)$, 
then there exists a constant $C=C(\lambda,\alpha)>0$ independent of $\phi$ for which 
\begin{equ} \label{e:L2Ansatz}
\|s^\tau - \cG_M^\tau \phi\|_\tau \leq \nu_\tau^{3/4}\, C \|\phi\|_\tau \, ,
\end{equ}
\end{proposition}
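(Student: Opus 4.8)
The starting point is the series representation \eqref{e:AnsatzFormula} together with Lemma~\ref{l:AnsatzBasic}, which guarantees that $s^\tau$ is a well-defined element of the core and satisfies the fixed-point-type identity \eqref{e:AnsatzEquation}. The three estimates then follow by combining the chaos structure of $s^\tau$ with Proposition~\ref{p:GA+Norm}, the Replacement Lemma~\ref{p:1stReplLemma} and Lemma~\ref{l:SharpDiag}. For the first bound in \eqref{e:L2AnsatzDiff}, note that since $\cG_M^\tau$ preserves the chaos and $\genapsh$ raises it by one, the summands $(\cG_M^\tau\genapsh)^k\cG_M^\tau\phi$ in \eqref{e:AnsatzFormula} lie in the pairwise distinct spaces $\fock{n_0+k}{}{\tau}$ and are thus orthogonal in $\fock{}{}{\tau}$; by \eqref{e:GA+Norm} and the standing hypothesis \eqref{e:AssumptionM} one has $\|\cG_M^\tau\genapsh\|_{\fock{}{}{\tau}\to\fock{}{}{\tau}}\le\tfrac12$, so $\|(\cG_M^\tau\genapsh)^k\cG_M^\tau\phi\|_\tau\le 2^{-k}\|\cG_M^\tau\phi\|_\tau$, and summing the squares gives $\|s^\tau\|_\tau^2\le\tfrac43\|\cG_M^\tau\phi\|_\tau^2$.

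For the $\fock{}{1}{\tau}$-estimate \eqref{e:H1Ansatz} I would start from $\|s^\tau\|_{\fock{}{1}{\tau}}^2=\langle(1-\gensy)s^\tau,s^\tau\rangle_\tau$, use $1-\gensy=(\cG_M^\tau)^{-1}-\cg_M^\tau$ from \eqref{e:cgG}, and rewrite \eqref{e:AnsatzEquation} as $(\cG_M^\tau)^{-1}s^\tau=\genapsh s^\tau+\phi$, to get
\[
\|s^\tau\|_{\fock{}{1}{\tau}}^2=\langle\genapsh s^\tau,s^\tau\rangle_\tau+\langle\phi,s^\tau\rangle_\tau-\langle\cg_M^\tau s^\tau,s^\tau\rangle_\tau .
\]
Since $\phi\in\fock{n_0}{}{\tau}$ and $s^\tau_{n_0}=\cG_M^\tau\phi$, the middle term equals $\|(\cG_M^\tau)^{1/2}\phi\|_\tau^2$; and the relation $s^\tau_{n+1}=\cG_M^\tau\genapsh s^\tau_n$ together with $(\genapsh)^\ast=-\genamsh$ (Lemma~\ref{l:Asharp}) turns the first term into $\langle-\genamsh\cG_M^\tau\genapsh\,s^\tau,s^\tau\rangle_\tau$, a sum of chaos-preserving contributions by Lemma~\ref{l:SharpDiag}. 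Hence
\[
\|s^\tau\|_{\fock{}{1}{\tau}}^2-\|(\cG_M^\tau)^{1/2}\phi\|_\tau^2=\big\langle\big(-\genamsh\cG_M^\tau\genapsh-\cg_M^\tau\big)s^\tau,s^\tau\big\rangle_\tau .
\]
The crucial point is that $E:=-\genamsh\cG_M^\tau\genapsh-\cg_M^\tau$ is a \emph{diagonal} operator (by Lemma~\ref{l:SharpDiag} and \eqref{e:Lwf}) whose Fourier multiplier is of the form $\sum_{i=1}^n(\fe_1\cdot p_i)^2\,b_i(p_{1:n})$, and that the pointwise estimates carried out in the proof of Lemma~\ref{p:1stReplLemma} (those leading to \eqref{e:1stReplLemInt} and \eqref{e:LB}) yield $|b_i(p_{1:n})|\le C(\lambda,M)\,\nu_\tau$ pointwise. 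Consequently $|E|\le C(\lambda,M)(-\nu_\tau\gensyx)$ in the sense of diagonal operators, and the quadratic form is bounded by $C(\lambda,M)\,\|(-\nu_\tau\gensyx)^{1/2}s^\tau\|_\tau^2$, which is \eqref{e:H1Ansatz}.

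For the compactly supported refinement \eqref{e:L2Ansatz}, I would use \eqref{e:AnsatzEquation} in the form $s^\tau-\cG_M^\tau\phi=\cG_M^\tau\genapsh s^\tau$ and split $\cG_M^\tau\genapsh s^\tau=\cG_M^\tau\genapsh\cG_M^\tau\phi+\cG_M^\tau\genapsh(s^\tau-\cG_M^\tau\phi)$. Since $\phi$, and therefore $\cG_M^\tau\phi$, is Fourier-supported in $\{-\cS^\tau\le\tau^\alpha\}$ ($\cG_M^\tau$ being a Fourier multiplier), the first summand is bounded by $C\nu_\tau^{3/4}\|\cG_M^\tau\phi\|_\tau\le C\nu_\tau^{3/4}\|\phi\|_\tau$ via \eqref{e:GA+SmoothNorm} (using $\|\cG_M^\tau\|\le1$), while the second is bounded by $\tfrac12\|s^\tau-\cG_M^\tau\phi\|_\tau$ by $\|\cG_M^\tau\genapsh\|_{\fock{}{}{\tau}\to\fock{}{}{\tau}}\le\tfrac12$; since $\|s^\tau-\cG_M^\tau\phi\|_\tau$ is finite by the first bound, it can be absorbed into the left-hand side.

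The only genuinely delicate point is the $\fock{}{1}{\tau}$-bound: one cannot merely invoke the $\fock{}{-1}{\tau}\!\to\!\fock{}{1}{\tau}$ duality form of Lemma~\ref{p:1stReplLemma}, since pairing the resulting $\fock{}{-1}{\tau}$ bound against $s^\tau$ would leave an uncontrolled factor $\|s^\tau\|_{\fock{}{1}{\tau}}$ — in particular its large $\fe_2$-component, which is \emph{not} governed by $-\nu_\tau\gensyx$. What rescues the argument is that $E$ is diagonal and its multiplier, being a difference of two quantities both proportional to $|\fe_1\cdot p_{1:n}|^2$ with coefficients $O(\nu_\tau)$, is pointwise dominated by that of $-\nu_\tau\gensyx$; this is precisely the quantitative content already present (pointwise, not just as a norm bound) inside the proof of the Replacement Lemma. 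Everything else is routine bookkeeping with chaos orthogonality and the contraction $\|\cG_M^\tau\genapsh\|_{\fock{}{}{\tau}\to\fock{}{}{\tau}}\le\tfrac12$.
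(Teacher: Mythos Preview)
Your proof is correct and the overall strategy matches the paper's: contraction of $\cG_M^\tau\genapsh$ for the $\fock{}{}{\tau}$-bounds, and the Replacement Lemma for \eqref{e:H1Ansatz}. Two minor differences are worth noting. For \eqref{e:H1Ansatz} the paper starts from $\|s^\tau\|_{\fock{}{1}{\tau}}^2=\langle(1-\gen)s^\tau,s^\tau\rangle_\tau$ via \eqref{e:Apriori} and plugs in \eqref{e:GenAnsatz}, which produces extra terms $\langle\genafl s^\tau,s^\tau\rangle_\tau$ and $\langle\genamsh\cG_M^\tau\phi,s^\tau\rangle_\tau$ that then have to be argued away (by skew-symmetry and chaos orthogonality); your route via $1-\gensy=(\cG_M^\tau)^{-1}-\cg_M^\tau$ and \eqref{e:AnsatzEquation} reaches the same identity $\langle(-\genamsh\cG_M^\tau\genapsh-\cg_M^\tau)s^\tau,s^\tau\rangle_\tau$ more directly. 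More importantly, you correctly flag that the stated $\fock{}{-1}{\tau}$-bound \eqref{e:1stReplLemma} alone is not enough here (it would leave a factor $\|s^\tau\|_{\fock{}{1}{\tau}}$), and that one needs the pointwise diagonal estimate underlying its proof, i.e.\ the bilinear form \eqref{e:1stReplLemmaGoal}; the paper invokes ``\eqref{e:1stReplLemma}'' but is really using this stronger statement. For \eqref{e:L2AnsatzDiff} you exploit chaos orthogonality to sum squares (yielding the sharper constant $2/\sqrt3$), whereas the paper just applies the triangle inequality, and for \eqref{e:L2Ansatz} your bootstrap via \eqref{e:AnsatzEquation} is equivalent to the paper's geometric-series argument.
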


\begin{remark}\label{rem:L2Ansatz}
As the proof shows, the constant $C$ in~\eqref{e:H1Ansatz} is the same at that of the Replacement Lemma, i.e. 
the one for which~\eqref{e:1stReplLemma} holds, and that in~\eqref{e:L2Ansatz} coincides with that of~\eqref{e:GA+SmoothNorm}. 
\end{remark}

\begin{proof}
As noted in the proof of Lemma~\ref{l:AnsatzBasic}, Proposition~\ref{p:GA+Norm} and \eqref{e:AssumptionM} 
imply that $\|\cG_M^\tau \genapsh\|_{\fock{}{}{\tau} \to \fock{}{}{\tau}} \leq 1/2$. Hence, 
\begin{equ}
\|s^\tau\|_\tau\leq \sum_k\|(\cG_M^\tau \genapsh)^k \cG_M^\tau \phi\|_\tau \leq \sum_k2^{-k} \|\cG_M^\tau \phi\|_\tau\leq 2\|\cG_M^\tau \phi\|_\tau \, ,
\end{equ}
from which~\eqref{e:L2AnsatzDiff} follows. 

For~\eqref{e:H1Ansatz}, Lemma~\ref{l:AnsatzBasic} ensures that $s^\tau\in\core\subset{\rm Dom}(\gen)$, 
so that~\eqref{e:Apriori} gives 
\begin{equ}[e:AprioriAnsatz]
\|s^\tau\|_{\fock{}{1}{\tau}}^2 = \langle (1-\gen) s^\tau, s^\tau \rangle_\tau \, .
\end{equ}
Plugging~\eqref{e:GenAnsatz} in the above formula, we obtain 
\begin{equ}
\|s^\tau\|_{\fock{}{1}{\tau}}^2 = \langle \phi, s^\tau\rangle_\tau + \langle (-\genamsh \cG_M^\tau \genapsh - \cg_M^\tau) s^\tau, s^\tau\rangle_\tau - \langle \genafl s^\tau, s^\tau\rangle_\tau - \langle \genamsh \cG_M^\tau \phi, s^\tau \rangle_\tau \, .
\end{equ}
Now, the third and fourth terms are $0$: the
third since $\genafl$ is skew-symmetric on $\core$ by Lemma~\ref{l:Asharp}; the fourth since $s^\tau \in \overline{\bigoplus_{n \geq n_0} \fock{n}{}{\tau}}$
while $\genamsh \cG^\tau_M \phi \in \fock{n_0-1}{}{\tau}$ and these spaces are orthogonal. 
For the first, we use again the orthogonality of Fock spaces with different indices and~\eqref{e:AnsatzFormula}, and deduce 
\begin{equ}
\langle \phi, s^\tau \rangle_\tau = \langle \phi, s_{n_0}^\tau \rangle_\tau = \langle \phi, \cG_M^\tau \phi\rangle_\tau = \|(\cG_M^\tau)^{1/2} \phi\|_\tau^2 \, .
\end{equ}
Thus we are left to control the second term, on which we apply~\eqref{e:1stReplLemma} 
in the Replacement Lemma~\ref{p:1stReplLemma}, and get \eqref{e:H1Ansatz}. 
\medskip

At last, we turn to~\eqref{e:L2Ansatz} whose proof is similar to that of~\eqref{e:L2AnsatzDiff}. 
By definition
\begin{equ}
\|s^\tau-\cG^\tau_M\phi\|_\tau\leq \sum_{k\geq1}\|(\cG_M^\tau \genapsh)^k \cG_M^\tau \phi\|_\tau\leq \|(\cG_M^\tau \genapsh) \cG_M^\tau \phi\|_\tau
\end{equ}
where we used that $\|\cG_M^\tau \genapsh\|_{\fock{}{}{\tau} \to \fock{}{}{\tau}} \leq 1/2$. 
Now, since $\cG^\tau_M$ is diagonal, the Fourier transforms of 
$\phi$ and $\cG^\tau_M\phi$ have the same support, so that~\eqref{e:GA+SmoothNorm} holds. 
Further exploiting that $\cG^\tau_M\leq 1$, we obtain 
\begin{equ}
\|(\cG_M^\tau \genapsh) \cG_M^\tau \phi\|_\tau\lesssim \nu_\tau^{3/4}\|\cG_M^\tau \phi\|_\tau\leq \nu_\tau^{3/4}\|\phi\|_\tau
\end{equ}
which concludes the proof. 
\end{proof}

\begin{remark}
  The bound~\eqref{e:L2Ansatz} shows in particular that, for $\phi$
  smooth, $s^\tau$ is asymptotically equal to $\cG_M^\tau \phi$ in
  $\fock{}{}{\tau}$, which means that $s^\tau$ is well-approximated by
  the $k=0$ summand at the right hand side
  of~\eqref{e:AnsatzFormula}. In $\fock{}{1}{\tau}$, the
  situation is drastically different: while the $\fock{}{1}{\tau}$ of the whole
    $s^\tau$ \emph{does} converge to a positive constant for $\tau\to\infty$, and this  
    (as can be seen from \eqref{e:H1Ansatz}; and this will play a crucial role in obtaining
    the limiting diffusion matrix), it can be shown that 
    the $\fock{}{1}{\tau}$ norm of each individual summand in~\eqref{e:AnsatzFormula}  tends to zero with $\tau\to\infty$. 
    In fact, one can prove that
    the summands that indeed contribute are those whose $k$ diverges with $\tau$ as $\log
    \log\tau$. 
\end{remark}

To conclude this section, we derive an apriori estimate on the difference of $s^\tau$ and $r^\tau$ in
 $\fock{}{1}{\tau}$ (and thus also in $\fock{}{}{\tau}$).

\begin{proposition}\label{p:H1Ansatz}
There exists a constant $C=C(\lambda,M)>0$ such that for any 
$\phi\in\fock{n_0}{}{\tau}$ for some $n_0 \geq 1$, we have 
\begin{equ}
\|r^\tau - s^\tau\|_{\fock{}{1}{\tau}} \leq C  \, \|\cN \, (- \nu_\tau \gensyx)^{1/2} s^\tau\|_\tau + \|\genamsh \cG^\tau_M \phi\|_\tau  \label{e:H1AnsatzRes}
\end{equ}
where $r^\tau$ and $s^\tau$ are respectively given as in~\eqref{e:res} and~\eqref{e:AnsatzFormula}, 
and $\cN$ is the number operator in Definition~\ref{def:NoOp}. 
\end{proposition}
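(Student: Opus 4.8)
The plan is to deduce~\eqref{e:H1AnsatzRes} from the a priori identity of Lemma~\ref{lem:Apriori} together with the two main estimates of Section~\ref{sec:Est}: the Replacement Lemma~\ref{p:1stReplLemma} and Proposition~\ref{p:GSCgenafl}. Set $e^\tau\eqdef r^\tau-s^\tau$. First I would observe that $e^\tau\in\mathrm{Dom}(\gen)$: indeed $r^\tau=(1-\gen)^{-1}\phi$ lies in the range of the resolvent, which coincides with $\mathrm{Dom}(\gen)$ by Lemma~\ref{lem:Contractivity}, while $s^\tau\in\core\subset\mathrm{Dom}(\gen)$ by Lemma~\ref{l:AnsatzBasic}. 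Consequently~\eqref{e:BoundRes}---which follows from~\eqref{e:Apriori} for every $\psi\in\mathrm{Dom}(\gen)$, by Cauchy--Schwarz through the $\fock{}{-1}{\tau}$--$\fock{}{1}{\tau}$ duality---applies to $e^\tau$ and gives
\[
\|e^\tau\|_{\fock{}{1}{\tau}}\;\le\;\|(1-\gen)\,e^\tau\|_{\fock{}{-1}{\tau}}\,.
\]

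The next step is to compute $(1-\gen)e^\tau$ explicitly. Since $(1-\gen)r^\tau=\phi$ by the definition~\eqref{e:res} of $r^\tau$, and $(1-\gen)s^\tau$ is given by~\eqref{e:GenAnsatz}, subtracting yields
\[
(1-\gen)\,e^\tau \;=\; -\big(-\genamsh \cG_M^\tau\genapsh -\cg_M^\tau\big)\,s^\tau \;+\; \genafl s^\tau \;+\; \genamsh \cG_M^\tau \phi\,,
\]
and I would estimate the $\fock{}{-1}{\tau}$-norm of the three summands separately. For the first, since $s^\tau\in\core$ the Replacement Lemma~\ref{p:1stReplLemma} applies directly with $\Phi=s^\tau$ and bounds it by $C(\lambda,M)\,\|(-\nu_\tau\gensyx)^{1/2}s^\tau\|_\tau$. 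For the second, applying Proposition~\ref{p:GSCgenafl} to $\genafl_+$ and to $\genafl_-$ with $\Phi=s^\tau$ and using the triangle inequality gives $\|\genafl s^\tau\|_{\fock{}{-1}{\tau}}\le 2C\lambda\,\|\cN(-\nu_\tau\gensyx)^{1/2}s^\tau\|_\tau$. For the third, it suffices to use that the $\fock{}{-1}{\tau}$-norm is dominated by the $\fock{}{}{\tau}$-norm (the Fourier weight $(1+\tfrac12|\sqrt{R_\tau}p_{1:n}|^2)^{-1}$ being $\le 1$), which leaves exactly $\|\genamsh\cG_M^\tau\phi\|_\tau$ with no extra constant; if this quantity is infinite then~\eqref{e:H1AnsatzRes} is trivially true, so there is nothing to check.

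Collecting the three bounds, and noting that $s^\tau$ is supported on chaoses of index $\ge n_0\ge1$ so that $\cN\ge1$ there and hence $\|(-\nu_\tau\gensyx)^{1/2}s^\tau\|_\tau\le\|\cN(-\nu_\tau\gensyx)^{1/2}s^\tau\|_\tau$, one arrives at~\eqref{e:H1AnsatzRes} with a constant $C=C(\lambda,M)$ multiplying only the first term, as claimed. I do not expect a genuine obstacle here: this is essentially a bookkeeping step, the substantive work having already been done in the Replacement Lemma (whose error is \emph{independent of} $\cN$ and controlled purely by the anisotropic seminorm $\|(-\nu_\tau\gensyx)^{1/2}\cdot\|_\tau$) and in Proposition~\ref{p:GSCgenafl} (which shows that $\genafl$---unlike the full operator $\gena$, cf.~\eqref{e:explosion}---satisfies a sector-condition-type bound against $-\nu_\tau\gensyx$ uniformly in $\tau$). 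The only points that deserve a line of care are the membership $e^\tau\in\mathrm{Dom}(\gen)$ and the validity of~\eqref{e:BoundRes} on the whole domain, both of which are routine.
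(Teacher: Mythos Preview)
Your proposal is correct and follows essentially the same route as the paper: both arguments apply Lemma~\ref{lem:Apriori} to $e^\tau=r^\tau-s^\tau$, insert the expression for $(1-\gen)e^\tau$ obtained from~\eqref{e:GenAnsatz}, and bound the three resulting terms via the Replacement Lemma~\ref{p:1stReplLemma}, Proposition~\ref{p:GSCgenafl}, and a trivial estimate, respectively. The only cosmetic difference is that the paper works with the identity~\eqref{e:Apriori} (pairing against $e^\tau$ and dividing by $\|e^\tau\|_{\fock{}{1}{\tau}}$ at the end), whereas you go straight through the inequality~\eqref{e:BoundRes} and bound $\|(1-\gen)e^\tau\|_{\fock{}{-1}{\tau}}$; since the two key lemmas are already stated as $\fock{}{-1}{\tau}$-norm bounds, your packaging is if anything slightly more direct.
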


\begin{remark}
The constant $C$ in~\eqref{e:H1AnsatzRes} can be taken to be proportional to $\lambda + \lambda^2 + M$. 
\end{remark}

\begin{proof}
Note that $r^\tau - s^\tau \in \mathrm{Dom}(\gen)$, so that Lemma \ref{lem:Apriori} gives
\begin{equ}[e:AprioriAnsatzRes]
\|r^\tau - s^\tau\|_{\fock{}{1}{\tau}}^2 = \langle (1 - \gen) (r^\tau - s^\tau), r^\tau - s^\tau \rangle_\tau \, .
\end{equ}
Thanks to the definition of $r^\tau$ and~\eqref{e:GenAnsatz}, we can compute $(1-\gen)(r^\tau-s^\tau)$ 
and plug it into the above expression. What we obtain is the sum of three terms 
\begin{equ}
-\langle [(- \genamsh) \cG_M^\tau \genapsh - \cg_M^\tau] s^\tau, r^\tau - s^\tau\rangle_\tau+ \langle \genafl s^\tau, r^\tau-s^\tau\rangle_\tau+\langle \genamsh \cG_M^\tau \phi, r^\tau - s^\tau \rangle_\tau
\end{equ}
that we denote by $\one$, $\two$ and $\three$, and separately estimate. For the first, we apply the Replacement Lemma~\ref{p:1stReplLemma} which gives
\begin{equ}[e:boundI]
|\one| \leq C \|(- \nu_\tau \gensyx)^{1/2} s^\tau\|_\tau \, \|r^\tau-s^\tau\|_{\fock{}{1}{\tau}} \, .
\end{equ}
Proposition~\ref{p:GSCgenafl} allows to control $\two$ as
\begin{equ}[e:boundII]
|\two| \lesssim \lambda \, \|\cN (- \nu_\tau \gensyx)^{1/2} s^\tau\|_\tau \, \|r^\tau-s^\tau\|_{\fock{}{1}{\tau}} \, .
\end{equ}
At last, we simply use Cauchy-Schwarz on $\three$, thus obtaining 
\begin{equ}[e:boundIII]
|\three| \leq \|\genamsh \cG_M^\tau \phi\|_\tau \, \|r^\tau-s^\tau\|_\tau \leq \|\genamsh \cG_M^\tau \phi\|_\tau \, \|r^\tau-s^\tau\|_{\fock{}{1}{\tau}} \, .
\end{equ}
As a consequence,~\eqref{e:H1AnsatzRes} follows upon 
plugging~\eqref{e:boundI}, ~\eqref{e:boundII} and \eqref{e:boundIII} at the right hand side of~\eqref{e:AprioriAnsatzRes}, 
and dividing both sides by $\|r^\tau-s^\tau\|_{\fock{}{1}{\tau}}$. 
\end{proof}

\subsection{Estimating the anisotropic norm of the ansatz and proof of Theorem~\ref{thm:MainSec4}}
\label{sec:2ndReplLemma}

For the estimates in the previous section to be effective and allow us to deduce Theorem~\ref{thm:MainSec4}, 
we need to control the $\|w(\cN) \, (- \nu_\tau \gensyx)^{1/2} \cdot\|_\tau$-norm of the ansatz $s^\tau$ 
in~\eqref{e:AnsatzFormula}, for suitable weights $w\colon\N\to\R_+$, 
and prove that it is small as $\tau \to +\infty$. 
This is the content of the next proposition. 

\begin{proposition} \label{p:AnisoH1Ansatz}
Assume that~\eqref{e:AssumptionM} holds. 
Let $w \colon \N \to \R_+$ be such that, for every $k \geq 0$, $w(k+1) \leq 2 \, w(k)$. 
Then, for every $\phi \in \fock{n_0}{}{\tau}$ for some $n_0 \geq 1$, 
\begin{equs} \label{e:AnisoH1Ansatz}
\|[w(\cN - n_0)]^{1/2} \, (- \nu_\tau \gensyx)^{1/2} s^\tau\|_\tau \leq 2 \|(- \gensyx[f^\tau_w])^{1/2} \, \cG_M^\tau \phi\|_\tau \, , 
\end{equs}
where 
\begin{equs}
f_w^\tau(x) \eqdef \nu_\tau \sum_{k\geq 0}w(k)\frac{1}{k!}\Big(\log\sqrt{\frac{\nu_\tau + g^\tau(x)}{\nu_\tau}}\Big)^k
\label{e:WeightFunction}
\end{equs}
and $g^\tau$ is the function in~\eqref{e:gtau}. 
%
%
\end{proposition}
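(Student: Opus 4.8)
\medskip

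The plan is to exploit the self-similar structure of the ansatz $s^\tau = \sum_{k\ge0}(\cG_M^\tau\genapsh)^k\cG_M^\tau\phi$ together with the Recursive Replacement Lemma~\ref{p:2ndReplLemma}, applied once per chaos level. Write $s^\tau_{n_0+k}=(\cG_M^\tau\genapsh)^k\cG_M^\tau\phi$ for the chaos-$(n_0+k)$ component, so that by orthogonality of chaoses
\begin{equ}
\|[w(\cN-n_0)]^{1/2}(-\nu_\tau\gensyx)^{1/2}s^\tau\|_\tau^2=\nu_\tau\sum_{k\ge0}w(k)\,\|(-\gensyx)^{1/2}s^\tau_{n_0+k}\|_\tau^2 .
\end{equ}
Using $s^\tau_{n_0+k}=\cG_M^\tau\genapsh s^\tau_{n_0+k-1}$, the first step is to apply~\eqref{e:2ndReplLemma} with $f=\mathrm{Id}$ (or more precisely with the function realising $-\gensyx$, which is $f\equiv 1$ in the notation of Lemma~\ref{p:2ndReplLemma}, noting that $f\equiv1$ satisfies~\eqref{e:Assf} with $C_f$ of order $1$ since $\nu_\tau+g_M^\tau(x)\ge \nu_\tau$) to get
\begin{equ}
\|(-\gensyx)^{1/2}s^\tau_{n_0+k}\|_\tau^2=\|(-\gensyx)^{1/2}\cG_M^\tau\genapsh s^\tau_{n_0+k-1}\|_\tau^2\le \|(-\gensyx[\cI_\tau(1)+\nu_\tau C^{(1)}_M])^{1/2}s^\tau_{n_0+k-1}\|_\tau^2 .
\end{equ}
The point is that the map $f\mapsto \cI_\tau(f)+\nu_\tau C_f C^{(1)}_M$ sends the class of functions satisfying~\eqref{e:Assf} into itself (this requires checking that $\cI_\tau(f)$ and its derivative obey the same bounds, which follows from the explicit form~\eqref{e:Itau} of $\cI_\tau$, the estimate $\cI_\tau(\nu_\tau+g_M^\tau)(x)\lesssim \nu_\tau+g^\tau(x)$, and $\tfrac{d}{dx}\cI_\tau(f)(x)=\tfrac12 f(x)/(\nu_\tau+g^\tau(x))^{3/2}$), so that Lemma~\ref{p:2ndReplLemma} can be iterated $k$ times. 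Thus one obtains, with $\Psi$ denoting one step of the operator $f\mapsto \cI_\tau(f)+\nu_\tau C_f C^{(1)}_M$,
\begin{equ}
\|(-\gensyx)^{1/2}s^\tau_{n_0+k}\|_\tau^2\le \|(-\gensyx[\Psi^{k}(1)])^{1/2}\cG_M^\tau\phi\|_\tau^2 .
\end{equ}

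\medskip

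The second step is to solve, or rather to bound from above, the iteration $\Psi^k(1)$ explicitly. Since $C^{(1)}_M\le 1/2$ by~\eqref{e:AssumptionM}, the additive error terms $\nu_\tau C_f C^{(1)}_M$ remain controlled and the dominant contribution comes from iterating $\cI_\tau$ alone. For the pure-$\cI_\tau$ iteration, differentiating $h_k(x)\eqdef (\cI_\tau)^k(1)(x)$ gives $h_k'(x)=\tfrac12 h_{k-1}(x)(\nu_\tau+g^\tau(x))^{-3/2}$; changing variables to $\ell=\log\sqrt{(\nu_\tau+g^\tau(x))/\nu_\tau}$, for which $d\ell=\tfrac12 \dot g^\tau(x)(\nu_\tau+g^\tau(x))^{-1}dx=\tfrac12(\nu_\tau+g^\tau(x))^{-3/2}dx$ by the ODE~\eqref{e:ReplODE}, one sees that in the $\ell$-variable $\cI_\tau$ is simply integration from $0$, so $(\cI_\tau)^k(1)$ corresponds to $\ell^k/k!$. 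Re-expressing, $(\cI_\tau)^k(1)(x)=\tfrac1{k!}\big(\log\sqrt{(\nu_\tau+g^\tau(x))/\nu_\tau}\big)^k$. Summing against the weights $w(k)$ (and absorbing, via a geometric-series / Grönwall-type argument, the bounded perturbations $\nu_\tau C^{(1)}_M$ at each step, which is where the hypothesis $w(k+1)\le 2w(k)$ and the smallness $C^{(1)}_M\le 1/2$ are used to keep the resulting series comparable, up to the global factor $2$, to the unperturbed one) produces exactly $\nu_\tau^{-1}f_w^\tau(x)$ as in~\eqref{e:WeightFunction}. Therefore
\begin{equ}
\nu_\tau\sum_{k\ge0}w(k)\|(-\gensyx)^{1/2}s^\tau_{n_0+k}\|_\tau^2\le \nu_\tau\sum_{k\ge0}w(k)\|(-\gensyx[\Psi^k(1)])^{1/2}\cG_M^\tau\phi\|_\tau^2\le 4\,\|(-\gensyx[f_w^\tau])^{1/2}\cG_M^\tau\phi\|_\tau^2 ,
\end{equ}
which after taking square roots gives~\eqref{e:AnisoH1Ansatz} with the constant $2$.

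\medskip

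\textbf{Main obstacle.} The delicate point is not a single estimate but the bookkeeping of the iteration: one must verify that the function class defined by~\eqref{e:Assf} is stable under $f\mapsto\cI_\tau(f)+\nu_\tau C_f C^{(1)}_M$ with a \emph{uniform} constant $C_f$ (otherwise $C_f$ could grow with $k$ and the sum over $k$ would diverge), and simultaneously that the additive perturbations $\nu_\tau C^{(1)}_M$ accumulated over the $k$ iterations stay summable against $w(k)$. The condition $w(k+1)\le2w(k)$, i.e. at most geometric growth of the weights, together with $C^{(1)}_M\le1/2$, is precisely what makes the perturbed iteration dominated (up to the harmless factor $4=2^2$ in the squared norm) by the clean one whose solution is the Fock-space analytic function $f_w^\tau$. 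Identifying the clean iterate via the change of variables dictated by the ODE~\eqref{e:ReplODE} — which is the same ODE that produced $g^\tau$ in the first place — is the conceptual heart; the rest is careful but routine Grönwall-type estimation.
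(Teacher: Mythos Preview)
Your overall strategy—iterate the Recursive Replacement Lemma down the chaos levels and identify the clean iterates $(\cI_\tau)^k$ as $p_k^\tau/\nu_\tau$ via the change of variables induced by the ODE~\eqref{e:ReplODE}—is indeed the right one, and matches the paper's approach in spirit. But there is a concrete error in your first step that propagates through the whole argument.

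You claim that $f\equiv 1$ satisfies~\eqref{e:Assf} with $C_f$ of order $1$ ``since $\nu_\tau+g_M^\tau(x)\ge\nu_\tau$''. This inequality gives $1\le \nu_\tau^{-1}(\nu_\tau+g_M^\tau(x))$, i.e.\ $C_f=\nu_\tau^{-1}$, which blows up as $\tau\to\infty$. Indeed at $x=0$ one has $\nu_\tau+g_M^\tau(0)=(1+M)\nu_\tau$, so no $\tau$-uniform $C_f$ exists for $f\equiv1$. Consequently the additive error $\nu_\tau C_f C^{(1)}_M$ at each step is of order $1$, not small, and your claim that ``the dominant contribution comes from iterating $\cI_\tau$ alone'' is unjustified: the perturbation is the same size as the main term near $x=0$, and the ``Gr\"onwall-type'' absorption you allude to does not obviously work with a constant that is $O(1)$ rather than $o(1)$.

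The paper circumvents this by two organizational choices. First, it keeps the factor $\nu_\tau$ inside the function, i.e.\ starts the iteration from $p_0^\tau=\nu_\tau$ rather than from $1$; then $C_{p_0^\tau}=1$ genuinely. Second—and this is the key structural difference—it runs a \emph{backward} induction on the partial sums $\sum_{k=m}^N w(k)\|(-\nu_\tau\gensyx)^{1/2}s^\tau_{n_0+k}\|_\tau^2$, showing these are bounded by $\|(-\gensyx[f_{m,N}])^{1/2}s^\tau_{n_0+m}\|_\tau^2$ with $f_{m,N}=2\sum_{k=0}^{N-m}w(k+m)p_k^\tau$. At each inductive step the Recursive Replacement Lemma is applied once to $f_{m+1,N}$, whose constant is exactly $C_{f_{m+1,N}}=2w(m)$ thanks to the hypothesis $w(\cdot+1)\le 2w(\cdot)$ and the bound $\sum_k 2^k p_k^\tau\le\nu_\tau+g^\tau$ from Lemma~\ref{lem:Pk}. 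The error term then contributes precisely $2w(m)\nu_\tau C^{(1)}_M\le w(m)\nu_\tau=w(m)p_0^\tau$, and together with the new $k=m$ summand one gets the exact identity $\cI_\tau(f_{m+1,N})+2\nu_\tau w(m)=f_{m,N}$. So the errors are not ``absorbed'' a posteriori; they are built into the recursion from the start, and the factor $2$ in the statement comes from the $2$ in the definition of $f_{m,N}$, not from a separate estimation.

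Your forward-iteration-then-sum scheme can in principle be repaired (start from $\nu_\tau$, and track $C_f$ through the iteration using~\eqref{e:DecayPk}), but the bookkeeping is genuinely more delicate than you suggest, since $\Psi$ is not linear in $f$ and the evolution of $C_f$ must be controlled at every step. The paper's backward induction is both conceptually cleaner and technically tighter.
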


\begin{remark} \label{rmk:FunctionWeight}
The reason why we imposed $w(\cdot+1) \leq 2 w(\cdot)$ is to ensure that $f_w$ in~\eqref{e:WeightFunction} satisfies 
the assumptions of the Recursive Replacement Lemma~\ref{p:2ndReplLemma} with constant $C_{f_w} = w(0)$ 
as can be immediately checked via a direct computation.
Notice also that, if $w$ is non-decreasing then $\cI_\tau(f_w) + w(0) \, \nu_\tau \leq f_w$ where $\cI_\tau$ 
is the map given in~\eqref{e:Itau}. 
\end{remark}

\begin{remark}\label{rem:H1}
Even though for the sake of the present paper, it would have been sufficient to consider $w$ given by the identity, 
we preferred to state Proposition~\ref{p:AnisoH1Ansatz} in broader generality 
as it reveals deeper properties of the ansatz and partially,  by~\eqref{e:H1AnsatzRes}, 
of the resolvent~\eqref{e:res}. 
Indeed, the bound~\eqref{e:AnisoH1Ansatz} shows that the anisotropic $\fock{}{1}{\tau}$-norm of the former 
{\it decays in chaos exponentially fast} as can be checked by taking $w$ to be $w(k)=c^k$ for $c\leq 2$,
in which case
  \begin{equation}
    \label{eq:fexp}
    f_w^\tau(x) = \nu_\tau^{1-\frac c2}\Big(\nu_\tau+g^\tau(x)\Big)^{\frac c2}
  \end{equation}
and that, upon choosing $c<2$, the weighted norm {\it vanishes as $\tau\to\infty$}, as one sees from \eqref{eq:fexp}. 
Even though~\eqref{e:H1AnsatzRes} per se does not allow us to deduce the same at the level of the resolvent 
(and we will not need it), we suspect that this is indeed the case. 
Note that weighted (in $\cN$) apriori bounds on the resolvent are highly non-trivial and, the only 
available ones, strongly rely on the graded sector condition (see~\cite{KLO}), 
which, as pointed out in Remark~\ref{rem:GSCgenafl}, 
does not hold in our context. 
\end{remark}

The proof of the above proposition is based on two main ingredients:  
the explicit iterative structure of the Ansatz $s^\tau$ and 
the Recursive Replacement Lemma~\ref{p:2ndReplLemma}. 
Before delving into the details, let us provide a brief heuristic that on the one hand shows how they come into play 
and on the other sheds some light on the role of the map $\cI_\tau$ in~\eqref{e:Itau}. 

Thanks to~\eqref{e:AnsatzFormula}, for every $N\geq 0$, the $(N+n_0)$-chaos 
component of $s^\tau$ is given by 
\begin{equ}
s^\tau_{N+n_0}=(\cG^\tau_M\genapsh)^N\cG^\tau_M\phi\,, 
\end{equ}
which implies
\begin{equs}
\| (- \nu_\tau \gensyx)^{1/2} s^\tau_{N+n_0}\|^2_\tau&=\| (- \nu_\tau \gensyx)^{1/2} (\cG^\tau_M\genapsh)^N\cG^\tau_M\phi\|_\tau^2\\
&=\Big\| \Big[(-\gensyx[\nu_\tau])^{1/2} \cG^\tau_M\genapsh\Big](\cG^\tau_M\genapsh)^{N-1}\cG^\tau_M\phi\Big\|_\tau^2\qquad\;\;\label{e:Itera1}
\end{equs}
where we have taken $w\equiv 1$ and used that, by~\eqref{e:Lwf}, $-\gensyx[\nu_\tau]=-\nu_\tau\gensyx$. 
To estimate the norm at the right hand side we apply~\eqref{e:2ndReplLemma} with $f\equiv\nu_\tau$ 
and, neglecting the error for the sake of this heuristic, we obtain 
\begin{equs}
\|(- \nu_\tau \gensyx)^{1/2} s^\tau_{N+n_0}\|^2_\tau\leq \| (-\gensyx[\cI_\tau(\nu_\tau)])^{1/2}(\cG^\tau_M\genapsh)^{N-1}\cG^\tau_M\phi\|_\tau^2\,,
\end{equs}
where $\cI_\tau$ is as in~\eqref{e:Itau} and the right hand side has precisely the same shape of~\eqref{e:Itera1} 
but with $\cI_\tau(\nu_\tau)$ and $N-1$ 
in place of $\nu_\tau$ and $N$. In other words, we can iterate the above procedure $N-1$ times, 
ultimately getting 
\begin{equ}[e:N+n0comp]
\| (- \nu_\tau \gensyx)^{1/2} s^\tau_{N+n_0}\|^2_\tau\leq \| (-\gensyx[\cI^{\circ N}_\tau(\nu_\tau)])^{1/2}\cG^\tau_M\phi\|_\tau^2
\end{equ}
for $\cI^{\circ N}_\tau$ the $N$-fold composition of the map $\cI_\tau$ with itself. 
On the other hand, $\cI^{\circ N}_\tau(\nu_\tau)$ is nothing but the $N$-step Picard iteration 
for the ODE
\begin{equ}[e:ODE2]
\dot{y}(\ell)=\frac12\frac{y(\ell)}{(\nu_\tau+g^\tau(\ell))^{3/2}}\;,\qquad y(0)=\nu_\tau\,,
\end{equ}
(which is the linearisation of~\eqref{e:ReplODE}) whose unique solution is 
\begin{equ}[e:final]
y(x)=\nu_\tau\sqrt{\frac{g^\tau(x)+\nu_\tau}{\nu_\tau}}=\sqrt{\nu_\tau}\sqrt{g^\tau(x)+\nu_\tau}\,.
\end{equ}

\begin{remark}\label{rem:1/22}
Here we see how crucial is the fact that the coefficient of the integral at the right hand side of~\eqref{e:Itau} 
is $1/2<1$. Indeed, the $1/2$ therein corresponds to the square-root inside the logarithm in~\eqref{e:WeightFunction} 
which in turn results in the square-root in the second term of~\eqref{e:final}. 
Since $g^\tau(x)+\nu_\tau\lesssim 1$, we get $y(x)\lesssim \nu_\tau/\sqrt{\nu_\tau}=\sqrt{\nu_\tau}\to0$ 
thus suggesting that the $\|(- \nu_\tau \gensyx)^{1/2} \cdot\|_\tau$-norm of the $(N+n_0)$-chaos component 
of the ansatz $s^\tau$ vanishes at rate $\nu_\tau^{1/4}$. 
\end{remark}

To make the previous argument rigorous, we need to be able to sum over $N$ the bound~\eqref{e:N+n0comp}, 
introduce the dependence on the number operator via the weight $w$ and 
take into account the error in~\eqref{e:2ndReplLemma}. 
This boils down to a stability type result for the Picard iteration of~\eqref{e:ODE2} 
which reduces to understand how $\cI_\tau$ iterates. This is precisely what the 
following lemma explains. 

\begin{lemma} \label{lem:Pk}
For $k\in\N$, let $p^\tau_k \colon \R_+ \to \R_+$ be the function given by
\begin{equ} \label{e:DefPk}
p_k^\tau(x) \eqdef \nu_\tau \, \frac{1}{k!} \Big(\log\sqrt{\frac{\nu_\tau + g^\tau(x)}{\nu_\tau}}\Big)^k \, ,
\end{equ}
where $g^\tau$ is defined according to~\eqref{e:gtau}. Then, 
$p^\tau_k$ is smooth, non-negative and non-decreasing and, for all $x>0$, it holds 
\begin{equ} \label{e:DecayPk}
\sum_{k \geq 0} 2^k \, p^\tau_k(x) \leq \nu_\tau + g^\tau(x) \, , \quad \sum_{k \geq 0} 2^k \, (p^\tau_k)'(x) \leq \frac{2}{3} \frac{\nu_\tau + g^\tau(x)}{x}\,.
\end{equ}
Furthermore, for every $k\geq 0$, we have 
\begin{equ} \label{e:RelationPk}
\cI_\tau(p_k^\tau) = p_{k+1}^\tau \, .
\end{equ}
\end{lemma}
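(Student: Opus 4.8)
The plan is to reduce everything to one convenient change of variable. Set $v(x)\eqdef\nu_\tau+g^\tau(x)$; the definition \eqref{e:gtau} of $g^\tau$ gives the clean identity $v(x)=\bigl(\tfrac32 x+\nu_\tau^{3/2}\bigr)^{2/3}$, equivalently $v(x)^{3/2}=\tfrac32 x+\nu_\tau^{3/2}$, and differentiating the latter yields $v'(x)=v(x)^{-1/2}$. Next I would introduce $u(x)\eqdef\log\sqrt{v(x)/\nu_\tau}=\tfrac13\log\bigl(1+\tfrac{3x}{2\nu_\tau^{3/2}}\bigr)$, so that by \eqref{e:DefPk} one simply has $p_k^\tau=\nu_\tau\,u^k/k!$, and $u'(x)=\dfrac{v'(x)}{2v(x)}=\tfrac12\,v(x)^{-3/2}$. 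Since $u$ is a smooth, non-negative, non-decreasing function on $\R_+$ with $u(0)=0$, each $p_k^\tau$ inherits these properties, which settles the first assertion of the lemma.

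For the recursion \eqref{e:RelationPk} the key point is that the kernel appearing in the definition \eqref{e:Itau} of $\cI_\tau$ is exactly $u'$: indeed $\tfrac12(\nu_\tau+g^\tau(\ell))^{-3/2}=\tfrac12 v(\ell)^{-3/2}=u'(\ell)$, so $\cI_\tau(f)(x)=\int_0^x f(\ell)\,u'(\ell)\,\dd\ell$. Substituting $f=p_k^\tau$ and using the fundamental theorem of calculus together with $u(0)=0$ gives $\cI_\tau(p_k^\tau)(x)=\nu_\tau\int_0^x\frac{u(\ell)^k}{k!}\,u'(\ell)\,\dd\ell=\nu_\tau\frac{u(x)^{k+1}}{(k+1)!}=p_{k+1}^\tau(x)$, as desired.

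For the two bounds in \eqref{e:DecayPk} I would sum the exponential series: $\sum_{k\ge0}2^k p_k^\tau(x)=\nu_\tau\sum_{k\ge0}\frac{(2u(x))^k}{k!}=\nu_\tau e^{2u(x)}=\nu_\tau\cdot\frac{v(x)}{\nu_\tau}=\nu_\tau+g^\tau(x)$, which is in fact an equality. Since all terms are non-negative and the series converges locally uniformly (with $u$ bounded on compacts and $u'$ continuous on $\R_+$), term-by-term differentiation is legitimate and gives $\sum_{k\ge0}2^k(p_k^\tau)'(x)=2\nu_\tau u'(x)e^{2u(x)}=2u'(x)v(x)$; using $v(x)^{3/2}=\tfrac32 x+\nu_\tau^{3/2}\ge\tfrac32 x$ one gets $u'(x)=\tfrac12 v(x)^{-3/2}\le\tfrac1{3x}$, whence $2u'(x)v(x)\le\tfrac{2}{3}\frac{v(x)}{x}=\tfrac23\frac{\nu_\tau+g^\tau(x)}{x}$, which is the second bound.

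There is no genuine obstacle here: once the substitution $v=\nu_\tau+g^\tau$ is made and the algebraic identity $v^{3/2}=\tfrac32 x+\nu_\tau^{3/2}$ (hence $v'=v^{-1/2}$ and $u'=\tfrac12 v^{-3/2}$) is exploited, the statement is elementary calculus. The only point deserving a word of care is the justification of the term-by-term summation and differentiation of the series, which follows from absolute and locally uniform convergence; everything else is a direct computation.
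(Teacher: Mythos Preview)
Your proof is correct and complete. The paper omits the proof entirely, stating only that ``the statement follows by basic computations,'' and your argument supplies exactly these computations via the clean substitution $v=\nu_\tau+g^\tau$, $u=\tfrac12\log(v/\nu_\tau)$, which is the natural way to carry them out.
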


The statement follows by basic computations, and thus  its proof is omitted.
We now have all the elements we need in order to prove Proposition~\ref{p:AnisoH1Ansatz}. 

\begin{proof}[of Proposition~\ref{p:AnisoH1Ansatz}]
We claim that for every $0 \leq m \leq N$ it holds
\begin{equ} \label{e:GoalAnisoH1Ansatz}
\sum_{k = m}^{N} w(k) \, \|(- \nu_\tau \gensyx)^{1/2} s^\tau_{n_0+k}\|^2
\leq \, \|(- \gensyx[f_{m,N}])^{1/2} \, s_{n_0+m}^\tau\|^2 \, ,
\end{equ}
where  $f_{m,N} \eqdef 2 \sum_{k=0}^{N-m} w(k+m) \, p^\tau_k$ and $p^\tau_k$ 
is as in~\eqref{e:DefPk}. Given the claim, then~\eqref{e:AnisoH1Ansatz} follows 
by taking $m=0$ and $N\to\infty$. 

To prove the claim, we fix $N \geq 0$ and proceed by induction from $m = N$ to $m = 0$. 
The case $m=N$ is obvious, 
so assume~\eqref{e:GoalAnisoH1Ansatz} holds for $m+1$, with $m\in\{0,\dots, N-1\}$. 
Then, by the induction hypothesis, we have 
\begin{equs}[e:Induction2ndReplLemma]
\sum_{k = m}^{N} &w(k) \, \|(- \nu_\tau \gensyx)^{1/2} s^\tau_{n_0+k}\|^2 \\
&\leq w(m) \|(- \nu_\tau \gensyx)^{1/2} s^\tau_{n_0+m}\|^2 + \|(-\gensyx[f_{m+1,N}])^{1/2} s^\tau_{n_0+m+1}\|^2 \\
&= w(m) \|(- \nu_\tau \gensyx)^{1/2} s^\tau_{n_0+m}\|^2 + \Big\|\Big[(-\gensyx[f_{m+1,N}])^{1/2} \, \cG_M^\tau \genapsh\Big] s^\tau_{n_0+m}\Big\|^2 \, .
\end{equs}
where in the last step we used~\eqref{e:AnsatzFormula}. 
In order to use the Recursive Replacement Lemma \ref{p:2ndReplLemma} to bound the second term, 
we need to check that $f_{m+1,N}$ satisfies its assumptions. 
Since $w(\cdot+1) \leq 2 \, w(\cdot)$, we have 
\begin{equs}
f_{m+1,N}(x) \leq \sum_{k \geq 0} 2^{k+1} w(m) \, p^\tau_k(x) \leq 2 w(m) \, (\nu_\tau + g^\tau(x)) \, ,\qquad \text{for all $x > 0$, }
\end{equs}
the last bound being a consequence of~\eqref{e:DecayPk}, and similarly for $f_{m+1,N}'$. Hence,~\eqref{e:Assf} 
holds for $f_{m+1,N}$ with $C_{f_{m+1,N}}=2 w(m)$. 
Thus, plugging~\eqref{e:2ndReplLemma} into~\eqref{e:Induction2ndReplLemma}, we deduce 
\begin{equs}
\sum_{k = m}^{N} &w(k) \, \|(- \nu_\tau \gensyx)^{1/2} s^\tau_{n_0+k}\|^2\\
&\leq  w(m) \|(- \nu_\tau \gensyx)^{1/2} s^\tau_{n_0+m}\|^2+\|(-\gensyx[\cI_\tau(f_{m+1,N}) +\nu_\tau w(m) ])^{1/2} \, s^\tau_{n_0+m}\|^2\\
&= \|(-\gensyx[\cI_\tau(f_{m+1,N}) + 2\nu_\tau w(m)])^{1/2} \, s^\tau_{n_0+m}\|^2\,,
\end{equs}
where we also used~\eqref{e:AssumptionM} to bound $\nu_\tau  C_{f_{m+1,N}} C^{(2)}_{M,\tau}\leq \nu_\tau w(m)$. 
But now, thanks to~\eqref{e:RelationPk},  $\cI_\tau(f_{m+1,N}) + 2\nu_\tau w(m) 
=f_{m,N}$, which concludes the proof of the induction and therefore of \eqref{e:AnisoH1Ansatz}.
%
\end{proof}

Thanks to the previous proposition and the results in the previous subsection, we 
can now prove Theorem~\ref{thm:MainSec4}. 

\begin{proof}[of Theorem~\ref{thm:MainSec4}]
We will first prove~\eqref{e:L2} and~\eqref{e:H1}, for which we take $\phi\in\fock{n_0}{}{\tau}$, for some $n_0\geq 1$. 
Notice first that
\begin{equ}[e:L2Proof1]
\|r^\tau\|_\tau\leq 
\|r^\tau-s^\tau\|_{\fock{}{1}{\tau}}+\|s^\tau\|_\tau\lesssim \|\cN \, (- \nu_\tau \gensyx)^{1/2} s^\tau\|_\tau + \|\genamsh \cG^\tau_M \phi\|_\tau +\|\cG_M^\tau \phi\|_\tau
\end{equ}
where in the last step we used~\eqref{e:H1AnsatzRes} and~\eqref{e:L2AnsatzDiff}. 
The triangle inequality together with the basic estimate $a\leq\sqrt{|a^2-b^2|}+b$, which holds 
for any $a,b>0$, implies 
\begin{equs}
\Big|\|r^\tau&\|_{\fock{}{1}{\tau}}^2-\|(\cG^\tau_M)^{1/2}\phi\|^2_\tau\Big|\leq \Big|\|r^\tau\|_{\fock{}{1}{\tau}}^2-\|s^\tau\|_{\fock{}{1}{\tau}}^2\Big|+\Big|\|s^\tau\|_{\fock{}{1}{\tau}}^2-\|(\cG^\tau_M)^{1/2}\phi\|^2_\tau\Big|\\
&\leq \|r^\tau-s^\tau\|_{\fock{}{1}{\tau}}\big(\|r^\tau-s^\tau\|_{\fock{}{1}{\tau}}+2\|s^\tau\|_{\fock{}{1}{\tau}}\big)+\Big|\|s^\tau\|_{\fock{}{1}{\tau}}^2-\|(\cG^\tau_M)^{1/2}\phi\|^2_\tau\Big|\\
&\lesssim \Big(\|r^\tau-s^\tau\|_{\fock{}{1}{\tau}}+\sqrt{|\|s^\tau\|_{\fock{}{1}{\tau}}^2-\|(\cG^\tau_M)^{1/2}\phi\|^2_\tau|}\Big)^2+\|r^\tau-s^\tau\|_{\fock{}{1}{\tau}}\|(\cG^\tau_M)^{1/2}\phi\|_\tau
\end{equs}
and thus, by~\eqref{e:H1AnsatzRes} and~\eqref{e:H1Ansatz}, we deduce 
\begin{equs}\label{e:H1Proof1}
\Big|\|r^\tau\|_{\fock{}{1}{\tau}}^2-\|(\cG^\tau_M)^{1/2}\phi\|^2_\tau\Big|\lesssim \Big(&\|\cN \, (- \nu_\tau \gensyx)^{1/2} s^\tau\|_\tau + \|\genamsh \cG^\tau_M \phi\|_\tau\Big)\times\\
&\times \Big(\|\cN \, (- \nu_\tau \gensyx)^{1/2} s^\tau\|_\tau + \|\genamsh \cG^\tau_M \phi\|_\tau+\|(\cG^\tau_M)^{1/2}\phi\|_\tau\Big)\,.
\end{equs}
 As a consequence, we are left to estimate the weighted norm of $s^\tau$. 
Upon taking $w\colon\N\to\R_+$ to be defined as $w(k)=n_0+k$, Proposition~\ref{p:AnisoH1Ansatz} 
gives 
\begin{equs}[e:BoundFinal]
\|[w(\cN - n_0)]^{1/2} \, (- \nu_\tau \gensyx)^{1/2} s^\tau\|_\tau &\leq 2 \|(- \gensyx[f^\tau_w])^{1/2} \, \cG_M^\tau \phi\|_\tau\\
&\lesssim \nu_\tau^{1/4}  \sqrt{|\log\nu_\tau|} \|(- \gensyx)^{1/2} \, \cG_M^\tau \phi\|_\tau
\end{equs}
which holds since the function $f^\tau_w(L^\tau)$, for $f^\tau_w$ as in~\eqref{e:WeightFunction} 
can be bounded uniformly in $x>0$ as 
\begin{equs}
f^\tau_w(L^\tau(x))&\lesssim \nu_\tau^{1/2} \sqrt{\nu_\tau + g^\tau(L^\tau(x))} \max\Big\{n_0\vee \log \sqrt{\frac{\nu_\tau+ g^\tau(L^\tau(x))}{\nu_\tau}}\Big\}\\
&\lesssim \nu_\tau^{1/2} \max\Big\{n_0\vee \log \nu_\tau^{-1/2}\Big\}\lesssim_{n_0} \nu_\tau^{1/2}  |\log \nu_\tau|
\end{equs}
where we used that $L^\tau$ is bounded uniformly in $x$
, and consequently so is $g^\tau(L^\tau)$, and, in the last step,
that $n_0$ is fixed. 
By plugging~\eqref{e:BoundFinal} into both~\eqref{e:L2Proof1} and~\eqref{e:H1Proof1} and 
recalling the definition of the seminorm in~\eqref{e:semi},~\eqref{e:L2} and~\eqref{e:H1} follow. 
\medskip

We now turn to~\eqref{e:L2Compact}, for which we proceed similarly to what was done in~\eqref{e:L2Proof1}. 
Assume $\phi$ is such that ${\rm Supp}(\hat\phi)\subset \{-\cS^\tau\leq\tau^\alpha\}$ for 
some $\alpha\in(0,1)$. Then, Proposition~\ref{p:H1Ansatz} and equations~\eqref{e:BoundFinal} and~\eqref{e:L2Ansatz} give
\begin{equs}
\|r^\tau-\cG^\tau_M\phi\|_\tau&\leq 
\|r^\tau-s^\tau\|_{\fock{}{1}{\tau}}+\|s^\tau-\cG^\tau_M\phi\|_\tau\\
&\lesssim \nu_\tau^{1/4}\sqrt{|\log\nu_\tau|}\|(-\gensyx)^{1/2}\cG^\tau_M\phi\|_\tau+\|\genamsh\cG^\tau_M\phi\|_\tau + \nu_\tau^{3/4} \|\phi\|_\tau\\
&\lesssim \nu_\tau^{1/4}\sqrt{|\log\nu_\tau|}\|(-\gensyx)^{1/2}\cG^\tau_M\phi\|_\tau+\nu_\tau^{3/4} \|\phi\|_\tau
\end{equs}
where the last step follows by~\eqref{e:GA+SmoothNorm}. 
To conclude, notice that, since $\cG^\tau_M$ is diagonal, then 
${\rm Supp}(\cF(\cG^\tau_M\phi))={\rm Supp}(\hat\phi)\subset \{-\cS^\tau\leq\tau^\alpha\}$ 
and on the latter set we have 
\begin{equ}
g^\tau_M(L^\tau(-\cS^\tau))\geq g^\tau_M(L^\tau(\tau^\alpha))\gtrsim_\alpha 1\,.
\end{equ}
Therefore, $(-\gensyx)^{1/2}\cG^\tau_M\lesssim_\alpha (\cG^\tau_M)^{1/2}\leq 1$, from which we get~\eqref{e:L2Compact} 
and thus conclude the proof of the statement. 
\end{proof}

\section{The superdiffusive Central Limit Theorem} \label{sec:CLT}

The goal of this and the next section is to show the main results of this paper, which, as we will soon see, 
are direct consequences of the analysis carried out in the previous. 

Here, we focus on Theorems~\ref{MT} and~\ref{th:cor}. 
As mentioned before, for the former we will exploit (a version of) the Trotter-Kato theorem which requires to show convergence of the
resolvent $(1-\gen)^{-1}$ to $(1-\geneff)^{-1}$. This is the content of the next Theorem which 
addresses point~\ref{i:pointIV} of Section~\ref{sec:HeuRM}. 
Before stating it, recall that, as mentioned in Section~\ref{sec:preliminaries},  
the two resolvents are defined on different spaces, namely $\L^\theta(\P^\tau)$ and $\L^\theta(\P)$, $\theta\in[1,\infty)$, 
which can be mapped to one another thanks to the surjective isometries $\iota^\tau$, $j^\tau$ introduced in Lemma~\ref{l:Emb}. 
%
%

\begin{theorem}\label{thm:StrongCvResL2}
For every $F\in\L^2(\P)$, we have
\begin{equ}[e:StrongCvResL2]
\iota^\tau (1 - \gen)^{-1} j^\tau F \xrightarrow[\tau \to \infty]{\L^2(\P)} (1 - \cL^{\mathrm{eff}})^{-1} F\,,
\end{equ}
where $\iota^\tau$ and $j^\tau$ are the maps defined in Lemma~\ref{l:Emb} and 
$\geneff$ is the operator introduced in Lemma~\ref{l:LimitGen}.
\end{theorem}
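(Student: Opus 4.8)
The strategy is to reduce the statement to the resolvent estimates of Theorem~\ref{thm:MainSec4}, using density and the contractivity of the resolvents (Lemmas~\ref{lem:Contractivity} and~\ref{l:LimitGen}). First I would observe that, because both $(1-\gen)^{-1}$ and $(1-\geneff)^{-1}$ are bounded by $1$ uniformly in $\tau$ and $\iota^\tau$, $j^\tau$ are isometries (Lemma~\ref{l:Emb}), a standard $3\varepsilon$-argument shows it suffices to prove \eqref{e:StrongCvResL2} for $F$ in a dense subset of $\L^2(\P)$. A convenient such subset is the set of finite linear combinations of elements $\phi\in\fock{n_0}{}{}$ with $\widehat\phi$ smooth and compactly supported, for arbitrary $n_0\in\N$; for such $\phi$, $j^\tau\phi$ has Fourier transform supported in a fixed compact set, hence (for $\tau$ large) in $\{-\cS^\tau\le\tau^\alpha\}$ for any fixed $\alpha\in(0,1)$, so the sharp estimate \eqref{e:L2Compact} of Theorem~\ref{thm:MainSec4} applies. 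Here one must be slightly careful that $j^\tau\phi$ is not literally in a single chaos of $\fock{}{}{\tau}$; but since $j^\tau$ acts diagonally in Fourier and preserves the chaos grading (see \eqref{e:iotaeps}), $j^\tau\phi\in\fock{n_0}{}{\tau}$ with $\|j^\tau\phi\|_\tau=\|\phi\|$, and its Fourier support is that of $\widehat\phi$ up to the (harmless) zero set of $\Theta_\tau$, so \eqref{e:L2Compact} still applies with the same $\alpha$.

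Next, for such $\phi$ set $r^\tau\eqdef(1-\gen)^{-1}j^\tau\phi$. By \eqref{e:L2Compact}, $\|r^\tau-\cG^\tau_M j^\tau\phi\|_\tau\lesssim \nu_\tau^{1/4}\sqrt{|\log\nu_\tau|}\,\|\phi\|\to0$ as $\tau\to\infty$, since $\nu_\tau\sim(\log\tau)^{-2/3}\to0$. Applying the isometry $\iota^\tau$, it remains to identify the limit of $\iota^\tau\cG^\tau_M j^\tau\phi$ in $\L^2(\P)$ and show it equals $(1-\geneff)^{-1}\phi$. By Lemma~\ref{l:LimitGen}, $\geneff=C_\eff(\lambda)\cL_0^{\fe_1}+\cL_0^{\fe_2}$, so $(1-\geneff)^{-1}\phi$ is the diagonal operator with Fourier multiplier $(1+\tfrac12 C_\eff(\lambda)|\fe_1\cdot p_{1:n_0}|^2+\tfrac12|\fe_2\cdot p_{1:n_0}|^2)^{-1}$ applied to $\phi$. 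On the other hand, $\iota^\tau\cG^\tau_M j^\tau$ is, by \eqref{e:iotaeps} and the definition \eqref{e:cgG} of $\cG^\tau_M=(1-\gensy+\cg^\tau_M)^{-1}$, the diagonal operator on $\fock{n_0}{}{}$ with multiplier
\begin{equ}
\Big(1+\tfrac12|\sqrt{R_\tau}p_{1:n_0}|^2+g^\tau_M\big(L^\tau(\tfrac12|\sqrt{R_\tau}p_{1:n_0}|^2)\big)\,\tfrac12|\fe_1\cdot p_{1:n_0}|^2\Big)^{-1}.
\end{equ}
For fixed $p_{1:n_0}$ one has $|\sqrt{R_\tau}p_{1:n_0}|^2=\nu_\tau|\fe_1\cdot p_{1:n_0}|^2+|\fe_2\cdot p_{1:n_0}|^2\to|\fe_2\cdot p_{1:n_0}|^2$, while by the pointwise convergence \eqref{e:PointwiseConv}, $g^\tau_M(L^\tau(\tfrac12|\sqrt{R_\tau}p_{1:n_0}|^2))\to C_\eff(\lambda)$; hence the multiplier converges pointwise to exactly the multiplier of $(1-\geneff)^{-1}$. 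Since all these multipliers are bounded by $1$ and $\widehat\phi$ is in $L^2$ of Lebesgue measure with compact support, dominated convergence in Fourier (against $n_0!\,|\widehat\phi(p_{1:n_0})|^2\,dp_{1:n_0}$, using that $\Theta_\tau\to1$ pointwise on the support of $\widehat\phi$ and is bounded there) gives $\iota^\tau\cG^\tau_M j^\tau\phi\to(1-\geneff)^{-1}\phi$ in $\L^2(\P)$. Combining with the previous paragraph and the density reduction yields \eqref{e:StrongCvResL2} for general $F\in\L^2(\P)$.

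**Main obstacle.** The genuinely hard content is entirely contained in Theorem~\ref{thm:MainSec4}, and in particular in the estimate \eqref{e:L2Compact} with its rate $\nu_\tau^{1/4}\sqrt{|\log\nu_\tau|}$ for Fourier-localized data — this is where the ansatz, the $\genash/\genafl$ splitting and the Recursive Replacement Lemma do their work. Given that input, the present theorem is essentially soft: the only points requiring a little care are (i) the density argument, which must be set up so that the approximating class is simultaneously dense in $\L^2(\P)$ and consists of functionals whose Fourier transform has compact support (so that \eqref{e:L2Compact}, rather than the weaker \eqref{e:L2}, applies, and so that the rate is genuinely vanishing), and (ii) the bookkeeping relating $\iota^\tau$, $j^\tau$ and the diagonal operators $\cG^\tau_M$ and $(1-\geneff)^{-1}$ in Fourier variables, where one must track the $\Theta_\tau$ weights from \eqref{e:iotaeps} and verify they wash out on the fixed compact support of $\widehat\phi$. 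Neither of these presents a real difficulty, but (i) is the step most easily botched, since using \eqref{e:L2} instead of \eqref{e:L2Compact} would only give boundedness of $\|r^\tau\|$, not convergence.
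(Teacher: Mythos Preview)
Your proposal is correct and follows essentially the same route as the paper: reduce by density and uniform resolvent bounds to $\phi$ in a fixed chaos with compactly supported Fourier transform, use the resolvent estimate \eqref{e:L2Compact} of Theorem~\ref{thm:MainSec4} to replace $(1-\gen)^{-1}j^\tau\phi$ by $\cG^\tau_M j^\tau\phi$, and then identify the limit of the diagonal operator $\iota^\tau\cG^\tau_M j^\tau$ with $(1-\geneff)^{-1}$ via pointwise convergence of the Fourier multiplier and dominated convergence. The paper streamlines your step~(ii) by observing once and for all that $\iota^\tau\cD j^\tau$ has the same kernel as $\cD$ for any diagonal $\cD$, so the $\Theta_\tau$ factors cancel exactly and no tracking of weights is needed; otherwise the arguments coincide (and your identification of \eqref{e:L2Compact} as the relevant input is exactly right).
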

\begin{proof}
By Lemmas~\ref{l:Emb} and~\ref{lem:Contractivity}, and Lemma~\ref{l:LimitGen},
the operators $\iota^\tau (1 - \gen)^{-1} j^\tau$ and $(1 - \cL^{\mathrm{eff}})^{-1}$ are bounded, uniformly
in $\tau$, from $\L^2(\P)$ to itself, so that it suffices to verify~\eqref{e:StrongCvResL2} for $F$ belonging to a subset spanning a dense subspace
of $\mathbb L^2(\mathbb P)$, e.g. the set of $F$ of the form  $I^\tau_n(\phi)$ with $n\in\N$ and $\phi \in \fock{n}{}{}$ such that $\cF \phi$ has compact support  (recall the notation $I^\tau$ from Section \ref{sec:notation} for the isometry between Fock space and $L^2(\mathbb P)$).
Hence, we are left to verify that for such $n$ and $\phi$, we have
\begin{equ}[e:StrongCvResL2Bis]
\iota^\tau (1 - \gen)^{-1} j^\tau \phi \xrightarrow[\tau\to\infty]{\L^2(\P)} (1 - \cL^{\mathrm{eff}})^{-1} \phi\,.
\end{equ}
Notice that, by~\eqref{e:iotaeps}, for any (bounded) diagonal operator $\cD$ acting on $\fock{}{}{\tau}$ (see Definition~\ref{def:DiagOp}), the diagonal operator
$\iota^\tau \cD j^\tau$ acting on $\fock{}{}{\infty}$ has the same kernel as $\cD$, 
so that, with a slight abuse of notations, we denote it by $\iota^\tau \cD j^\tau = \cD$. Then we have
\begin{equs}
\|\iota^\tau &(1 - \gen)^{-1} j^\tau \phi - (1 - \cL^{\mathrm{eff}})^{-1} \phi\| \\
&\leq \|\iota^\tau (1 - \gen)^{-1} j^\tau \phi-\iota^\tau \cG^\tau_M j^\tau\phi\|+\|\cG^\tau_M\phi-(1 - \cL^{\mathrm{eff}})^{-1} \phi\| \, .
\end{equs}
By~\eqref{e:L2Ansatz}, the first term goes to $0$ as $\tau \to +\infty$. 
From the explicit expression of the kernels of $\cG_M^\tau$ and $\cL^{\mathrm{eff}}$, 
it is easy to see that also the second term vanishes as $\tau \to +\infty$, so that the proof is complete. 
\end{proof}

Thanks to the previous result, we are now ready to prove Theorems~\ref{MT} and~\ref{th:cor}. 

\begin{proof}[of Theorems~\ref{MT} and~\ref{th:cor}]
Let us immediately say that, given Theorem~\ref{MT}, Theorem~\ref{th:cor} follows immediately by taking 
$F^\tau_i(\eta^\tau) \eqdef f_i(\eta^\tau(\phi_i))$, for $i=1,\dots,k$, $\phi_1,\dots,\phi_k\in\cS(\R^2)$ 
and $f_1,\dots,f_n$ bounded continuous functions on $\R$.

Turning to Theorem~\ref{MT}, we split its proof in two parts. First, we will show strong convergence of the semigroup, and
more precisely that~\eqref{e:StrongCv} holds for any $\theta\in[1,\infty)$.
Then, we will use the latter to deduce the convergence in~\eqref{e:FinDimCv}.
\medskip

\noindent{\sl Part I: Convergence of the semigroup.}
Thanks to Lemma~\ref{l:Emb} and Theorem~\ref{thm:StrongCvResL2},
we are precisely in the setting of the
Trotter-Kato theorem~\cite[Theorem 4.2]{IK}, which implies that
for every $F\in\L^2(\P)$ and $T > 0$, it holds
\begin{equ} 
\sup_{t \in [0,T]} \|\iota^\tau P_t^\tau j^\tau F - P_t^{\mathrm{eff}} F\|_{\mathbb L^2(\mathbb P)} \xrightarrow[\tau \to +\infty]{} 0 \, .
\end{equ}
This can be rewritten in the following form. Let $(F_\tau)_\tau$ be sequence
such that $F_\tau \in\ L^2(\P^\tau)$ and $\|\iota^\tau F_\tau-F\|_{\L^2(\P)}\to 0$ as $\tau\to+\infty$. 
Then noticing that $\|\iota^\tau F_\tau-F\|_{\L^2(\P)} = \|F^\tau - j^\tau F\|_{\L^2(\P^\tau)}$ 
and using the contractivity of the semi-groups stated in 
Lemmas~\ref{lem:Contractivity} and~\ref{l:LimitGen}),
we also deduce that
\begin{equ} \label{e:StrongCvSemiGroupL2}
\sup_{t \in [0,T]} \|\iota^\tau P_t^\tau F^\tau - P_t^{\mathrm{eff}} F\|_{\mathbb L^2(\mathbb P)} \xrightarrow[\tau \to +\infty]{} 0 \, .
\end{equ}
Hence, we are left to extend the above to $T=\infty$ and any value of $\theta\in[1,\infty)$. 
Let us begin with the former. 

Without loss of generality, we can take $F^\tau$ and $F$ with mean zero. Let $T>0$ and note that,
for any $t\geq T$, the contractivity of $P^\tau$ and $P^\eff$ implies
\begin{equs}
\|\iota^\tau P_t^\tau F^\tau - P_t^\eff F\|_{\L^2(\P)} &\leq \|P_t^\tau F^\tau\|_{\L^2(\mathbb P^\tau)} + \|P_t^{\mathrm{eff}} F\|_{\mathbb L^2(\mathbb P)} \\
&\leq \|P_T^\tau F^\tau\|_{\L^2(\P^\tau)} + \|P_T^{\mathrm{eff}} F\|_{\mathbb L^2(\mathbb P)} \\
&\leq \|\iota^\tau P_T^\tau F^\tau - P_T^{\mathrm{eff}} F\|_{\mathbb L^2(\mathbb P)} + 2 \, \|P_T^{\mathrm{eff}} F\|_{\mathbb L^2(\mathbb P)} \, ,
\end{equs}
where we also used that $\iota^\tau$ is an isometry. Therefore,
\begin{equ}
\sup_{t \geq 0} \|\iota^\tau P_t^\tau F^\tau - P_t^{\mathrm{eff}} F\|_{\mathbb L^2(\mathbb P)} \leq \sup_{t \in [0,T]} \|\iota^\tau P_t^\tau F^\tau - P_t^{\mathrm{eff}} F\|_{\mathbb L^2(\mathbb P)} + 2 \, \|P_T^{\mathrm{eff}} F\|_{\mathbb L^2(\mathbb P)} \, .
\end{equ}
We can now take the $\limsup$ as $\tau\to+\infty$ at both sides so that, at the right hand side, the first term vanishes
and what is left goes to $0$ in the $T\to\infty$ limit by~\eqref{e:EffErgodicity}. 
Hence,~\eqref{e:StrongCv} holds for $\theta=2$.

To prove that it also holds for $\theta\in[1,\infty)$, let $(F^\tau)_\tau$ be
such that $F^\tau\in\L^\theta(\P^\tau)$ and $\|\iota^\tau F^\tau-F\|_{\L^\theta(\P)}\to 0$ as $\tau\to+\infty$.
Applying~\eqref{e:Contractivity} with $\theta$ therein equal to $+\infty$, it is immediate to see that
~\eqref{e:StrongCv} for $\theta=2$ extends to $\theta\in[1,\infty)$ if $F^\tau$ (and $F$)
is bounded by a constant, uniformly in $\tau$.
For general $F^\tau$ and $F$, let $M>0$ and $\chi_M(x)\eqdef (-M)\vee (M\wedge x)$.
Clearly, $\chi^M(F^\tau)$ and $\chi^M(F)$ are bounded and satisfy
$\|\iota^\tau \chi^M(F^\tau)-\chi^M(F)\|_{\L^{\theta}(\P)}\to 0$ as $\tau\to+\infty$.
Using once again the contractivity of the semigroups, we obtain the bound
\begin{equs}
\|\iota^\tau P_t^\tau F^\tau - P_t^{\mathrm{eff}} F\|_{\mathbb L^\theta(\mathbb P)}
&\leq \|\iota^\tau P_t^\tau \chi^M(F^\tau) - P_t^{\mathrm{eff}} \chi^M(F)\|_{\mathbb L^\theta(\mathbb P)} \\
&\quad + 2 \, \|F - \chi^M(F)\|_{\mathbb L^\theta(\mathbb P)} + \|\iota^\tau F^\tau - F\|_{\mathbb L^\theta(\mathbb P)}\, ,
\end{equs}
and we take the supremum in $t\geq 0$. Now, we can pass to the limit first in $\tau\to+\infty$,
so that the first and last terms vanish (the former since $\chi^M(F^\tau)$ and $\chi^M(F)$ are bounded
and the latter by construction) and then in $M\to\infty$ which ensures that also the second term
converges to $0$.
Therefore,~\eqref{e:StrongCv} holds for any $\theta\in[1,\infty)$.
\medskip

\noindent{\sl Part II: Convergence of the finite-dimensional laws.}
We first rewrite the conditional expectations in \eqref{e:FinDimCv}.
To do so, let $0=t_0\leq t_1\leq\dots\leq t_k$ and set
$\Delta t_i \eqdef t_{i} - t_{i-1}$ for $i \in \{1,...,k\}$.
Define recursively the functionals $G_i^\tau$ and $G_i^{\eff}$, $i \in \{1, ..., k+1\}$, according to
$G_{k+1}^\tau = G_{k+1}^{\mathrm{eff}} \eqdef 1$ and for every $i \in \{1,...,k\}$
\begin{equ} \label{e:DefMarkovDomino}
G^\tau_{i} = P^\tau_{\Delta t_i} \big(F_i^\tau  G^\tau_{i+1}\big) \qquad\mbox{and}\qquad G^{\mathrm{eff}}_{i} = P^{\mathrm{eff}}_{\Delta t_i} \big(F_i  G^{\mathrm{eff}}_{i+1}\big) \, .
\end{equ}
Note that both $\bigcap_{\theta \geq 1} \mathbb L^\theta(\mathbb P^{\tau})$ and
$\bigcap_{\theta \geq 1} \mathbb L^\theta(\mathbb P)$ are stable under multiplication
and the action of the respective semigroups.
Thus, the above recursion is well-defined and, for every $i\in\{1,\dots,k\}$,
$G_i^\tau \in \bigcap_{\theta \geq 1} \mathbb L^\theta(\mathbb P^{\tau})$ and
$G_i^{\mathrm{eff}}\in \bigcap_{\theta \geq 1} \mathbb L^\theta(\mathbb P)$.
The Markov property implies that the two terms in~\eqref{e:FinDimCv} can be written as
\begin{equ}\label{e:MarkovDomino}
\Exp^\tau_{u_0}\big[F_1^\tau\big(u^{\tau}_{t_1}\big) \, ... \, F_k^\tau \big(u^{\tau}_{t_k}\big)\big]
= \iota^\tau G_1^\tau (u_0)  \quad\text{and}\quad
\Exp^{\mathrm{eff}}_{u_0}\big[F_1\big(u^{\mathrm{eff}}_{t_1} \big) \, ... \, F_k\big(u^{\mathrm{eff}}_{t_k}\big)\big] = G_1^{\mathrm{eff}}(u_0) \, .
\end{equ}
Therefore, if we show that for every $i\in\{1,\dots, k+1\}$ it holds
$\|\iota^\tau G_i^\tau-G_i^\eff\|_{\L^\theta(\P)}\to0$ as $\tau\to+\infty$, for every $\theta\in[1,\infty)$,
then~\eqref{e:FinDimCv} follows at once. To prove it, we proceed by (backward) induction.
The result is obvious for $i = k+1$, since
$\iota^\tau G_{k+1}^\tau = 1 = G_{k+1}^{\eff}$.
Assume now it holds for $i+1$, and let us verify the same is true for $i$.
Since by assumption
$\|\iota^\tau F_i^\tau-F_i\|_{\L^\theta(\P)}\to 0$ for all $\theta\in[1,\infty)$,
and, by induction, the same holds with $G^\tau_{i+1}$ and $G^\eff_{i+1}$ in place of
$ F_i^\tau$ and $F_i$, the same must be true for their product.
By applying~\eqref{e:StrongCv}, we obtain 
\begin{equ}
G_i^\tau = P_{\Delta t_i}^\tau \big(F_i^\tau  G_{i+1}^\tau\big) \xrightarrow[\tau \to \infty]{\L^\theta(\P)} P_{\Delta t_i}^{\eff} \big(F_i G_{i+1}^{\eff}\big) = G_i^{\eff}
\end{equ}
which completes the proof.
\end{proof}

\section{The large-scale behaviour of the Diffusion Matrix}

\label{sec:D}
In this final section, we show Theorem~\ref{th:Diffusivity}, whose proof amounts to determine 
the asymptotic behaviour of the diffusion matrix $t\mapsto D(t)$ in~\eqref{e:FormalDiffusivity} as $t\to\infty$. 

To do so, let us first make 
a couple of remarks. By Proposition~\ref{p:GK} (for $\tau=1$), the only non-trivial entry of $D$ is $D_{1,1}$ 
which can be written in terms of the rescaled process $u^\tau$ as 
\begin{equ} \label{e:DiffRescaling}
D_{1,1}^\tau(s) = \nu_\tau D_{1,1}(\tau s) \, . \qquad s\geq 0\,.
\end{equ}
In particular, this means that Theorem~\ref{th:Diffusivity} follows provided we show that, 
for $s=1$, the left hand side converges as $\tau\to\infty$ to $C_{\mathrm{eff}}(\lambda)$ in~\eqref{e:EffConstant}. 
Now, before that, we need to manipulate the Green-Kubo formula in~\eqref{e:Dnew} 
so to unveil the connection with the results in the previous section.

%
%

\subsection{The diffusivity as the energy associated to a parabolic problem}

In order to simplify our analysis and ensure that all the quantities under study are well-defined, 
we approximate the spatial integral at the right hand side of~\eqref{e:Dnew} 
in such a way that 
the functional $\cM^\tau$  in~\eqref{e:cM} is written in terms of the operator $\genap$. 
Loosely speaking, $\cM^\tau[\eta^\tau]$ is the antiderivative in the 
$\fe_1$ direction of the nonlinearity $\cN^\tau[\eta^\tau]$, which, when tested against a test function $\psi\in\cS(\R^2)\cap\fock{1}{}{\tau}$, 
satisfies  $\cN^\tau[\eta^\tau](\psi)=\genap\eta^\tau(\psi)$. 
Thus, we would like to say that $\cM^\tau[\eta^\tau](\psi)$ is $\genap\eta^\tau(\partial_{\fe_1}^{-1}\psi)$, 
but this only makes sense provided ${\rm Supp}(\hat\psi)\cap (\fe_2\R)=\emptyset$.  

To do so, let us fix an even symmetric function $\psi \in \cS(\R^2)$  such that $\|\psi\|_{L^2(\R^2)} = 1$ 
and whose Fourier transform $\hat \psi$ has compact support in a ball of radius $1$ around the origin 
contained in $(\fe_2 \R)^c$, with $\fe_2 \R$ denoting the vertical axis.
For $\delta>0$, set $\psi^\delta(\cdot)\eqdef\delta\psi(\delta\,\cdot)$, so that the convolution of $\psi^\delta$ with itself 
converges to $1$ as $\delta\to 0$. 
Then, we hope that 
\begin{equs}[e:Hope]
\int_{\R^2}\Exp\big[\cM^{\tau}[u^{\tau}_r](x) \cM^{\tau}[u^{\tau}_0](0)\big] \, (\psi^\delta\ast\psi^\delta)(x)\dd x\stackrel{\delta\to0} \sim\int_{\R^2}\Exp\big[\cM^{\tau}[u^{\tau}_r](x) \cM^{\tau}[u^{\tau}_0](0)\big] \, \dd x \,.
\end{equs}
The advantage of the left hand side is that, by 
translation invariance and the symmetry of $\psi^\delta$, it equals  
\begin{equs}
\int_{\R^4}\Exp&\big[\cM^{\tau}[u^{\tau}_r](x-y) \cM^{\tau}[u^{\tau}_0](-y)\big] \, \psi^\delta(x-y)\psi^\delta(y)\dd x\dd y\\
&=\Exp\big[\cM^{\tau}[u^{\tau}_r](\psi^\delta) \cM^{\tau}[u^{\tau}_0](\psi^\delta)\big] =\frac{1}{\lambda^2\nu_\tau^{3/2}} \langle P^\tau_r \fm,\fm\rangle_\tau\label{e:IntGK}
\end{equs}
where in the last step we denoted by $\fm\in\fock{2}{}{\tau}\cap\core$ the kernel of $\lambda\nu_\tau^{3/4}\cM^{\tau,\delta}$, 
and $P^\tau$ is the semigroup of the rescaled SBE. 
Now, since by construction ${\rm Supp}(\hat\psi^\delta)\subset (\fe_2\R)^c$, $\partial_{\fe_1}^{-1} \psi^\delta$ is 
well-defined and a direct computation shows that 
\begin{equ}[e:mtaudelta]
\fm=\genap \partial_{\fe_1}^{-1} \psi^\delta\,.
\end{equ}

In the next lemma we show that the Green-Kubo formula~\eqref{e:Dnew} can be expressed 
in the form of the energy of the solution of the parabolic equation given by  
\begin{equ} \label{e:ODEPhi}
(\partial_t - \gen) \, \Phi^{\tau, \delta} = \fm \, ,\qquad \restriction{\Phi^{\tau, \delta}}{t=0} = 0
\end{equ}
which is explicit and given by 
\begin{equ}[e:ODEPhiExp]
\Phi_t^{\tau, \delta} = \int_0^t P_s^\tau \fm \, \dd s \, , \qquad t\geq 0. 
\end{equ}

\begin{proposition}\label{p:GKenergy}
For any $t,\tau>0$ fixed, the following equality holds 
\begin{equ} \label{e:GKenergy}
D^\tau_{1,1}(t) = \nu_\tau + \lim_{\delta \to 0} \, \frac{\cE^\tau_t(\Phi^{\tau, \delta})}{t} \, .
\end{equ}
where, for any $\delta>0$, $\Phi^{\delta,\tau}$ is the unique solution to~\eqref{e:ODEPhi} and, 
for any $(f_s)_{s\geq 0}\in  C^1([0,t], \fock{}{}{\tau}) \cap C([0,t], \mathrm{Dom}(\gen))$, 
the positive quadratic form $\cE^\tau_t$ is defined as
\begin{equ} \label{e:AprioriEnergy}
\cE^\tau_t(f) \eqdef  \|f_t\|_\tau^2 + 2\int_0^t \|(- \gensy)^{1/2} f_s\|_\tau^2 \, \dd s =
\|f_0\|_\tau^2 + 2\int_0^t \langle (\partial_s - \gen) f_s, f_s\rangle_\tau \, \dd s \, .
\end{equ}
\end{proposition}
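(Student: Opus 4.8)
The plan is to establish \eqref{e:GKenergy} by a two-step argument: first derive the identity at the level of the $\delta$-regularised quantities, relating $D^{\tau}_{1,1}(t)$ minus its linear part to a triple integral involving $\langle P^\tau_r \fm, \fm\rangle_\tau$, and then recognise this triple integral as the energy $\cE^\tau_t(\Phi^{\tau,\delta})$ of the solution to \eqref{e:ODEPhi}, all before passing to the limit $\delta\to0$.

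\textbf{Step 1: reduction of the Green--Kubo formula.} Starting from Proposition~\ref{p:GK}, \eqref{e:Dnew} reads
\begin{equ}
D^\tau_{1,1}(t) - \nu_\tau = \frac{2\lambda^2\nu_\tau^{3/2}}t\int_0^t \int_0^s\int_{\R^2}\Exp\big[\cM^\tau[u^\tau_r](x) \cM^\tau[u^\tau_0](0)\big] \, \dd x \, \dd r \, \dd s\,.
\end{equ}
I would justify the approximation \eqref{e:Hope}: using the decay of the correlation function from \eqref{e:thesame} in Proposition~\ref{p:corrdecay} (which gives absolute integrability in $x$, uniformly in $r\in[0,t]$), dominated convergence lets me replace the flat integral in $x$ by the integral against $\psi^\delta\ast\psi^\delta$ and pass to the limit $\delta\to0$ afterwards. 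Then, via \eqref{e:IntGK}, the spatial integral against $\psi^\delta\ast\psi^\delta$ equals $\lambda^{-2}\nu_\tau^{-3/2}\langle P^\tau_r\fm,\fm\rangle_\tau$. Hence
\begin{equ}
D^\tau_{1,1}(t) - \nu_\tau = \lim_{\delta\to0}\frac1t\int_0^t\int_0^s 2\langle P^\tau_r\fm^{\tau,\delta},\fm^{\tau,\delta}\rangle_\tau\,\dd r\,\dd s\,,
\end{equ}
where I write $\fm^{\tau,\delta}$ for the $\delta$-dependent element of $\fock{2}{}{\tau}\cap\core$ from \eqref{e:mtaudelta}. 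The fact that $\fm^{\tau,\delta}\in\core$ (in particular that $\partial_{\fe_1}^{-1}\psi^\delta$ is well-defined and Schwartz, thanks to $\mathrm{Supp}(\hat\psi^\delta)\subset(\fe_2\R)^c$) is what makes the semigroup expectation into a clean Fock-space pairing.

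\textbf{Step 2: identification with the energy.} With $\Phi^{\tau,\delta}_t = \int_0^t P^\tau_s\fm^{\tau,\delta}\,\dd s$ from \eqref{e:ODEPhiExp}, I compute
\begin{equ}
\|\Phi^{\tau,\delta}_t\|_\tau^2 = \int_0^t\int_0^t \langle P^\tau_s\fm^{\tau,\delta}, P^\tau_{s'}\fm^{\tau,\delta}\rangle_\tau\,\dd s\,\dd s' = 2\int_0^t\int_0^s \langle P^\tau_{s-s'}\fm^{\tau,\delta},\fm^{\tau,\delta}\rangle_\tau\,\dd s'\,\dd s\,,
\end{equ}
using the semigroup property $\langle P^\tau_s f, P^\tau_{s'}f\rangle_\tau = \langle P^\tau_{s-s'}f, f\rangle_\tau$ for $s\ge s'$ (valid on $\core$ since the skew-symmetric part of $\gen$ is skew on $\core$, so $(P^\tau_u)^* = $ the semigroup with $\lambda\to-\lambda$, and the pairing is with the symmetric part acting), together with the change of variables $r=s-s'$. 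This matches exactly the double integral from Step 1, giving $D^\tau_{1,1}(t)-\nu_\tau = \lim_{\delta\to0}\frac1t\|\Phi^{\tau,\delta}_t\|_\tau^2$. It remains to see that $\frac1t\|\Phi^{\tau,\delta}_t\|_\tau^2 = \frac1t\cE^\tau_t(\Phi^{\tau,\delta})$. For this I use the second expression in \eqref{e:AprioriEnergy}: since $\Phi^{\tau,\delta}$ solves \eqref{e:ODEPhi} with $\Phi^{\tau,\delta}_0=0$, one has $\langle(\partial_s-\gen)\Phi^{\tau,\delta}_s,\Phi^{\tau,\delta}_s\rangle_\tau = \langle\fm^{\tau,\delta},\Phi^{\tau,\delta}_s\rangle_\tau$, while differentiating $\|\Phi^{\tau,\delta}_s\|_\tau^2$ gives $2\langle\partial_s\Phi^{\tau,\delta}_s,\Phi^{\tau,\delta}_s\rangle_\tau = 2\langle\gen\Phi^{\tau,\delta}_s,\Phi^{\tau,\delta}_s\rangle_\tau + 2\langle\fm^{\tau,\delta},\Phi^{\tau,\delta}_s\rangle_\tau$, so integrating in $s$ and using skew-symmetry of $\gena$ (so $\langle\gen\Phi,\Phi\rangle_\tau = \langle\gensy\Phi,\Phi\rangle_\tau = -\|(-\gensy)^{1/2}\Phi\|_\tau^2$) yields precisely the first equality in \eqref{e:AprioriEnergy}, i.e. $\|\Phi^{\tau,\delta}_t\|_\tau^2 = \cE^\tau_t(\Phi^{\tau,\delta})$. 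Here I need to check that $\Phi^{\tau,\delta}\in C^1([0,t],\fock{}{}{\tau})\cap C([0,t],\mathrm{Dom}(\gen))$, which follows from $\fm^{\tau,\delta}\in\core\subset\mathrm{Dom}(\gen)$ and standard regularity of the semigroup orbit of a domain element on the (smooth, $\tau$-fixed) problem.

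\textbf{Main obstacle.} The delicate point is the interchange of limits and integrals: justifying \eqref{e:Hope} uniformly enough in $r\in[0,t]$ that I may pass $\delta\to0$ under the $\dd r\,\dd s$ integrals, and making sure the Fock-space manipulations of Step 2 (especially the semigroup identity $\langle P^\tau_sf,P^\tau_{s'}f\rangle = \langle P^\tau_{s-s'}f,f\rangle$ on $\core$, which implicitly uses skew-reversibility from Proposition~\ref{p:Global}) are legitimate for the non-reversible process. The correlation decay in Proposition~\ref{p:corrdecay}, being uniform over $r\le T$ for fixed $\tau$, is exactly what supplies the domination needed; and since throughout this section $\tau$ is fixed and all objects are smooth, no uniformity in $\tau$ is required here, which keeps the argument purely a matter of bookkeeping rather than hard analysis.
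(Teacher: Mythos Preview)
Your Step 1 is correct and is exactly what the paper does. The problem is in Step 2, where you make two errors that happen to cancel numerically but leave the argument broken.

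First, the identity $\langle P^\tau_s f, P^\tau_{s'}f\rangle_\tau = \langle P^\tau_{s-s'}f, f\rangle_\tau$ is false. For any semigroup, $\langle P_s f, P_{s'} f\rangle = \langle (P_{s'})^* P_s f, f\rangle$; by skew-reversibility $(P^\tau_{s'})^*$ is the semigroup with $\lambda\mapsto -\lambda$, and composing this with $P^\tau_s$ does not give $P^\tau_{s-s'}$ (the two semigroups do not even commute). In fact $\langle P_s f, P_{s'} f\rangle = \E\big[\E[f(u_s)\mid u_0]\,\E[f(u_{s'})\mid u_0]\big]$, which is \emph{not} the two-time correlation $\E[f(u_s)f(u_{s'})]$. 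Consequently your computation of $\|\Phi^{\tau,\delta}_t\|_\tau^2$ is wrong: it does \emph{not} equal the double integral $2\int_0^t\int_0^s\langle P^\tau_r\fm,\fm\rangle_\tau\,\dd r\,\dd s$.

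Second, your conclusion $\|\Phi^{\tau,\delta}_t\|_\tau^2 = \cE^\tau_t(\Phi^{\tau,\delta})$ is also false: the very definition \eqref{e:AprioriEnergy} gives $\cE^\tau_t(\Phi^{\tau,\delta}) = \|\Phi^{\tau,\delta}_t\|_\tau^2 + 2\int_0^t\|(-\gensy)^{1/2}\Phi^{\tau,\delta}_s\|_\tau^2\,\dd s$, and the second term is strictly positive. Your own differentiation of $\|\Phi_s\|^2$ actually shows this: integrating $\tfrac{d}{ds}\|\Phi_s\|^2 = -2\|(-\gensy)^{1/2}\Phi_s\|^2 + 2\langle\fm,\Phi_s\rangle$ gives $\cE^\tau_t(\Phi) = 2\int_0^t\langle\fm,\Phi_s\rangle\,\dd s$, not $\cE^\tau_t(\Phi)=\|\Phi_t\|^2$.

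The fix is to drop the detour through $\|\Phi_t\|^2$ entirely. From Step 1 you have the double integral; by \eqref{e:ODEPhiExp}, $\int_0^s\langle P^\tau_r\fm,\fm\rangle_\tau\,\dd r = \langle\Phi^{\tau,\delta}_s,\fm\rangle_\tau$, so the double integral equals $2\int_0^t\langle\fm,\Phi^{\tau,\delta}_s\rangle_\tau\,\dd s$. Since $\fm = (\partial_s-\gen)\Phi^{\tau,\delta}_s$ and $\Phi^{\tau,\delta}_0=0$, this is exactly the second expression in \eqref{e:AprioriEnergy}, hence $\cE^\tau_t(\Phi^{\tau,\delta})$. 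This is what the paper does.
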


\begin{proof}
Let us first briefly argue the second equality in~\eqref{e:AprioriEnergy}. 
A  proof similar to that of Lemma~\ref{lem:Apriori}, gives 
that for every $\phi\in\mathrm{Dom}(\gen)$, $\|(-\gensy)^{1/2} \phi\|_\tau^2 = \langle (-\gen) \, \phi, \phi\rangle_\tau$. 
Therefore,
\begin{equs}
  \int_0^t \langle (\partial_s - \gen) f_s, f_s\rangle_\tau \, \dd s =
  \int_0^t \langle \partial_s f_s, f_s\rangle_\tau \, \dd s
  + \int_0^t \langle (-\gen) f_s, f_s\rangle_\tau \, \dd s 
\end{equs}
from which the conclusion follows by integration by parts. 

Next, we address~\eqref{e:GKenergy}, which essentially boils down to show that~\eqref{e:Hope} holds. 
For this, note that $|\psi^\delta * \psi^\delta(x) - 1| = |\psi*\psi (\delta x) - \psi*\psi(0)| \lesssim \delta \, |x|$ 
and, thus, the polynomial decay of correlations in~\eqref{e:thesame} implies 
\begin{equ}
\Big|\mathbf E[\langle \cM^{\tau}[u_r^\tau], \psi^\delta\rangle \langle \cM^{\tau}[u_0^\tau], \psi^\delta \rangle] - \int_{\R^2} \mathbf E\big[\cM^{\tau}[u^{\tau}_r](x) \cM^{\tau}[u^{\tau}_0](0)\big] \, \dd x \Big| \lesssim_\tau \delta \, .
\end{equ}
Together with~\eqref{e:Dnew} and~\eqref{e:IntGK}, the previous gives 
\begin{equs}
D_{1,1}^\tau(t) &= \nu_\tau + \lim_{\delta \to 0} \frac{2 \lambda^2 \nu_\tau^{3/2}}{t} \int_0^t \dd s \int_0^s \dd r \, \mathbf E[\langle \cM^{\tau}[u_r^\tau], \psi^\delta\rangle \langle \cM^{\tau}[u_0^\tau], \psi^\delta\rangle] \\
&=\nu_\tau + \lim_{\delta \to 0} \frac{2 }{t} \int_0^t \dd s \int_0^s \dd r \, \langle P^\tau_r \fm,\fm\rangle_\tau = \nu_\tau + \lim_{\delta \to 0} \frac{2 }{t} \int_0^t \dd s \, \langle \phi^{\tau,\delta},\fm\rangle_\tau
\end{equs}
where in the last step we used~\eqref{e:ODEPhiExp}. 
Since by~\eqref{e:ODEPhi}, $\fm=(\partial_t - \gen) \, \Phi^{\tau, \delta}$, the second equality of~\eqref{e:AprioriEnergy} 
implies~\eqref{e:GKenergy} and the proof of the statement is complete. 
\end{proof}

To combine the previous proposition with the results from Section~\ref{sec:CVRes}, 
we need to understand the relation between the parabolic equation in~\eqref{e:ODEPhi} 
and the elliptic one given by 
\begin{equ}\label{e:elliptic}
(1 - \gen) \, r^{\tau, \delta} = \fm \, . 
\end{equ}
This is the content of the next (rather straightforward) lemma\footnote{See also~\cite[Section 9]{Morfe}, for a similar a similar analysis in a different context.}. 

\begin{lemma} \label{l:ParabolicElliptic}
Let $\Phi^{\tau,\delta}$ and $r^{\tau,\delta}$ be respectively the solutions of~\eqref{e:ODEPhi} and~\eqref{e:elliptic}. 
Then, for every $T > 0$, there exists a constant $C=C(T)>0$ such that 
\begin{equ} \label{e:ParabolicElliptic}
\sup_{t \in [0,T]} \cE^\tau_t(\Phi^{\tau, \delta} - {r}^{\tau, \delta}) \leq C \|r^{\tau, \delta}\|_\tau^2 \, .
\end{equ}
\end{lemma}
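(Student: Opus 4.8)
The plan is to reduce everything to a one–line identification of the equation solved by the difference $f_t\eqdef\Phi^{\tau,\delta}_t-r^{\tau,\delta}$, followed by an elementary Gr\"onwall estimate and the energy identity~\eqref{e:AprioriEnergy}. First I would record the regularity needed for $\cE^\tau_t$ to make sense along $f$: since $\fm\in\fock{2}{}{\tau}\cap\core\subset\mathrm{Dom}(\gen)$ and $r^{\tau,\delta}=(1-\gen)^{-1}\fm\in\mathrm{Dom}(\gen)$, the representation~\eqref{e:ODEPhiExp} together with the strong continuity of $P^\tau$ and the closedness of $\gen$ show that $t\mapsto\Phi^{\tau,\delta}_t$ belongs to $C^1([0,T],\fock{}{}{\tau})\cap C([0,T],\mathrm{Dom}(\gen))$ with $\partial_t\Phi^{\tau,\delta}_t=P^\tau_t\fm$; the constant curve $t\mapsto r^{\tau,\delta}$ trivially does too, hence so does $f$, and both $\cE^\tau_t(\Phi^{\tau,\delta}-r^{\tau,\delta})$ and the computations below are legitimate.

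Next, a direct computation: from $(\partial_t-\gen)\Phi^{\tau,\delta}_t=\fm$ and $\gen r^{\tau,\delta}=r^{\tau,\delta}-\fm$ (i.e.~\eqref{e:elliptic} rearranged), one gets $(\partial_t-\gen)f_t=\fm+\gen r^{\tau,\delta}=r^{\tau,\delta}$, with initial condition $f_0=-r^{\tau,\delta}$ since $\Phi^{\tau,\delta}_0=0$. Now set $E(t)\eqdef\|f_t\|_\tau^2$. Using $\partial_t f_t=\gen f_t+r^{\tau,\delta}$ together with the identity $\langle\gen\phi,\phi\rangle_\tau=-\|(-\gensy)^{1/2}\phi\|_\tau^2$ valid on $\mathrm{Dom}(\gen)$ (established in the proof of Proposition~\ref{p:GKenergy}), I obtain $E'(t)=-2\|(-\gensy)^{1/2}f_t\|_\tau^2+2\langle r^{\tau,\delta},f_t\rangle_\tau\le 2\|r^{\tau,\delta}\|_\tau\sqrt{E(t)}$ by Cauchy--Schwarz. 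A standard comparison argument (apply it to $\sqrt{E(t)+\varepsilon}$ and let $\varepsilon\to0$) then yields $\|f_t\|_\tau\le\|f_0\|_\tau+t\,\|r^{\tau,\delta}\|_\tau=(1+t)\|r^{\tau,\delta}\|_\tau$ for all $t\in[0,T]$.

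Finally, I would use the second expression for the energy in~\eqref{e:AprioriEnergy} applied to $f$: since $(\partial_s-\gen)f_s=r^{\tau,\delta}$ and $\|f_0\|_\tau^2=\|r^{\tau,\delta}\|_\tau^2$,
\begin{equs}
\cE^\tau_t(f)&=\|r^{\tau,\delta}\|_\tau^2+2\int_0^t\langle r^{\tau,\delta},f_s\rangle_\tau\,\dd s\le\|r^{\tau,\delta}\|_\tau^2+2\int_0^t(1+s)\,\|r^{\tau,\delta}\|_\tau^2\,\dd s\\
&=(1+t)^2\,\|r^{\tau,\delta}\|_\tau^2\le(1+T)^2\,\|r^{\tau,\delta}\|_\tau^2\,,
\end{equs}
where in the first inequality I used Cauchy--Schwarz and then the bound $\|f_s\|_\tau\le(1+s)\|r^{\tau,\delta}\|_\tau$ from the previous paragraph. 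Taking the supremum over $t\in[0,T]$ gives~\eqref{e:ParabolicElliptic} with $C(T)=(1+T)^2$.

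I do not expect a genuine obstacle here; the argument is a soft energy/Gr\"onwall estimate. The only points requiring (routine) care are the justification that $\Phi^{\tau,\delta}$ is a $\mathrm{Dom}(\gen)$-valued $C^1$ curve — so that $\cE^\tau_t(f)$ is well-defined and $t\mapsto\|f_t\|_\tau^2$ is differentiable with the bracket computed above — and the reuse of the identity $\langle\gen\phi,\phi\rangle_\tau=-\|(-\gensy)^{1/2}\phi\|_\tau^2$ on the full domain of $\gen$; both are already available from earlier in the paper.
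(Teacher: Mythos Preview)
Your proof is correct and follows essentially the same route as the paper's: identify $(\partial_t-\gen)(\Phi^{\tau,\delta}-r^{\tau,\delta})=r^{\tau,\delta}$, plug into the energy identity~\eqref{e:AprioriEnergy}, and close with a Gr\"onwall-type estimate. The only difference is that the paper applies Gr\"onwall directly to $t\mapsto\cE^\tau_t(f)$ (obtaining $C(T)=(1+T)e^T$), whereas you first bound $\|f_t\|_\tau$ via the differential inequality $E'(t)\le 2\|r^{\tau,\delta}\|_\tau\sqrt{E(t)}$ and then feed this linear bound back into the energy identity, which gives the sharper $C(T)=(1+T)^2$.
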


\begin{proof}
First, recall that $(\partial_t - \gen) \, \Phi^{\tau, \delta} = \fm$ while $(\partial_t - \gen) \, {r}^{\tau, \delta} = - \gen r^{\tau, \delta} = \fm - r^{\tau, \delta}$, since $r^{\tau, \delta}$ does not depend on $t$. Thus, by~\eqref{e:AprioriEnergy} and Cauchy-Schwarz, we deduce 
\begin{equs}
\cE^\tau_t(\Phi^{\tau, \delta} - {r}^{\tau, \delta}) &= \|r^{\tau, \delta}\|_\tau^2 + 2\int_0^t \langle r^{\tau, \delta}, \Phi^{\tau,\delta}_s - r^{\tau, \delta} \rangle_\tau \, \dd s \\
&\leq (1 + t) \, \|r^{\tau, \delta}\|_\tau^2 + \int_0^t  \|\Phi^{\tau,\delta}_s - r^{\tau, \delta}\|_\tau^2 \, \dd s \\
&\leq  (1 + t) \, \|r^{\tau, \delta}\|^2_\tau + \int_0^t \cE_{s, \tau}(\Phi^{\tau,\delta} - {r}^{\tau, \delta}) \, \dd s \, ,
\end{equs}
where the last step follows by the first equality in~\eqref{e:AprioriEnergy}. 
Then, Grönwall's lemma gives 
\begin{equ}
\cE^\tau_t(\Phi^{\tau,\delta} - {r}^{\tau, \delta}) \leq  (1 + t) \, e^{t} \, \|r^{\tau, \delta}\|_\tau^2 \, ,
\end{equ}
which concludes the proof.
\end{proof}

\subsection{Proof of Theorem~\ref{th:Diffusivity}}

In view of Proposition~\ref{p:GKenergy} and Lemma~\ref{l:ParabolicElliptic}, we are left to identify 
the limit as first $\delta\to0$ and then $\tau\to\infty$ of   
$\cE^\tau_t(r^{\tau,\delta})$ for $r^{\tau,\delta}$ the solution to the elliptic equation~\eqref{e:elliptic}. 
This is the point at which Theorem~\ref{thm:MainSec4} comes into play. To apply it, 
we need to estimate the right hand sides of~\eqref{e:L2} and~\eqref{e:H1} for $\phi$ given by $\fm$ in~\eqref{e:mtaudelta}. 
We want to control the (semi-)norms appearing therein using the results of Section~\ref{sec:apprG}, 
which are though stated in terms of the sharp operators $\genash_\pm$. Hence, let 
us consider the following slight modification of $\fm$ 
\begin{equ}[e:tfm]
\tfm \eqdef \genapsh \partial_{\fe_1}^{-1} \psi^\delta
\end{equ}
and show that its resolvent is close to $r^{\tau,\delta}$ in $\fock{}{1}{\tau}$. 

\begin{lemma}\label{l:DiffBoringLemma}
Let $r^{\tau,\delta}$ and $\tilde r^{\tau,\delta}$ be respectively the solutions to~\eqref{e:elliptic} and 
\begin{equ}[e:elliptictilde]
(1 - \gen) \, \tilde r^{\tau, \delta} = \tfm\,.
\end{equ}
Then, there exists a constant $C=C(\lambda)>0$, independent of $\delta$ such that 
\begin{equ}[e:rtilder]
\|r^{\tau, \delta}-\tilde r^{\tau, \delta}\|_{\fock{}{1}{\tau}}\leq C \nu_\tau^{1/2}\delta\,. 
\end{equ}
\end{lemma}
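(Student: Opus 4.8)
The plan is to subtract the two resolvent equations \eqref{e:elliptic} and \eqref{e:elliptictilde}. Writing $w^{\tau,\delta}\eqdef r^{\tau,\delta}-\tilde r^{\tau,\delta}$, linearity gives $(1-\gen)\,w^{\tau,\delta}=\fm-\tfm$. By the decomposition $\genap=\genapsh+\genapfl$ of Lemma~\ref{l:Asharp} together with \eqref{e:mtaudelta} and \eqref{e:tfm}, one has $\fm-\tfm=\genapfl\,\partial_{\fe_1}^{-1}\psi^\delta$. Since $\hat\psi^\delta$ is compactly supported inside $(\fe_2\R)^c$, the first-chaos element $\partial_{\fe_1}^{-1}\psi^\delta$ lies in $\core$, hence so do $\fm,\tfm$ and therefore $w^{\tau,\delta}\in\mathrm{Dom}(\gen)$. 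The a priori identity \eqref{e:Apriori} of Lemma~\ref{lem:Apriori}, which holds on all of $\mathrm{Dom}(\gen)$, combined with Cauchy--Schwarz, then yields $\|w^{\tau,\delta}\|_{\fock{}{1}{\tau}}\le\|(1-\gen)\,w^{\tau,\delta}\|_{\fock{}{-1}{\tau}}=\|\genapfl\,\partial_{\fe_1}^{-1}\psi^\delta\|_{\fock{}{-1}{\tau}}$, so the claim reduces to proving $\|\genapfl\,\partial_{\fe_1}^{-1}\psi^\delta\|_{\fock{}{-1}{\tau}}\lesssim_\lambda\nu_\tau^{1/2}\delta$.

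It would be tempting to invoke Proposition~\ref{p:GSCgenafl} here, but that bound only gives $\lesssim\lambda\,\nu_\tau^{1/2}$, missing the decisive factor $\delta$; so I would compute the norm directly. Evaluating \eqref{e:A+sharp} with $\chi^\flat$ in place of $\chi^\sharp$ on the first-chaos element $\partial_{\fe_1}^{-1}\psi^\delta$, and using $\cF(\partial_{\fe_1}^{-1}\psi^\delta)(p)=\hat\psi^\delta(p)/(\iota\,\fe_1\cdot p)$ together with the form of $\nonlinp$ in \eqref{e:K+K-}, the factor $\fe_1\cdot(p_1+p_2)$ cancels and one obtains
\[
\cF\big(\genapfl\,\partial_{\fe_1}^{-1}\psi^\delta\big)(p_1,p_2)=-\frac{\lambda\,\nu_\tau^{3/4}}{2\pi}\,\Theta_\tau(p_1+p_2)\,\hat\psi^\delta(p_1+p_2)\,\chi^{\flat,\tau}_{p_1,p_2}(p_1+p_2)\,.
\]
(This cancellation is exactly why it is convenient to work through the antiderivative $\partial_{\fe_1}^{-1}\psi^\delta$ rather than $\psi^\delta$.) Plugging this kernel into the $\fock{}{-1}{\tau}$-norm \eqref{e:normFock} and changing variables $k=p_1+p_2$, $q=p_1$, the integrand carries $|\hat\psi^\delta(k)|^2$, which confines $k$ to $\{|k|\le\delta\}$, where $\Theta_\tau\le 1$; and by \eqref{e:ChiSharp} the cutoff $\chi^{\flat,\tau}_{q,k-q}(k)$ forces $|\sqrt{R_\tau}q|\wedge|\sqrt{R_\tau}(k-q)|\le 2|\sqrt{R_\tau}k|$. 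Using $\Theta_\tau\le 1$ once more, dropping the denominator $\ge1$, and using $|\sqrt{R_\tau}k|^2\le|k|^2$ (valid since $\nu_\tau\le1$), the inner $q$-integral is bounded by the volume of an ellipsoid $\{|\sqrt{R_\tau}q|\le 2|\sqrt{R_\tau}k|\}$, which is $\lesssim\nu_\tau^{-1/2}|k|^2$.

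Collecting these estimates gives $\|\genapfl\,\partial_{\fe_1}^{-1}\psi^\delta\|_{\fock{}{-1}{\tau}}^2\lesssim\lambda^2\nu_\tau^{3/2}\cdot\nu_\tau^{-1/2}\int|\hat\psi^\delta(k)|^2|k|^2\,\dd k=\lambda^2\nu_\tau\int|\hat\psi^\delta(k)|^2|k|^2\,\dd k$, and the scaling $\hat\psi^\delta(k)=\delta^{-1}\hat\psi(k/\delta)$ gives $\int|\hat\psi^\delta(k)|^2|k|^2\,\dd k=\delta^2\int|\hat\psi(q)|^2|q|^2\,\dd q$, a finite constant depending only on the fixed $\psi$. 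Hence $\|w^{\tau,\delta}\|_{\fock{}{1}{\tau}}\lesssim_\lambda\nu_\tau^{1/2}\delta$, which is \eqref{e:rtilder}. There is no genuine obstacle in this argument; the only point that needs care is recognising that the extra power of $\delta$ (beyond what Proposition~\ref{p:GSCgenafl} yields) comes precisely from the interplay of the small Fourier support of $\hat\psi^\delta$ with the $\chi^\flat$-cutoff, which together shrink the $q$-integration domain to an ellipsoid of volume $O(\delta^2\nu_\tau^{-1/2})$ instead of the trivial $O(1)$.
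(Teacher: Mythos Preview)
Your proof is correct and follows essentially the same route as the paper's: reduce via the a~priori identity \eqref{e:Apriori} to bounding $\|\genapfl\,\partial_{\fe_1}^{-1}\psi^\delta\|_{\fock{}{-1}{\tau}}$, then exploit the $\chi^\flat$-cutoff together with the $\delta$-small Fourier support of $\hat\psi^\delta$ to confine the $q$-integral to an ellipsoid of volume $O(\nu_\tau^{-1/2}\delta^2)$. The only cosmetic differences are that the paper bounds the $\fock{}{-1}{\tau}$-norm by the $\fock{}{}{\tau}$-norm at the outset and replaces $|\sqrt{R_\tau}k|\le\delta$ directly on the support rather than going through the scaling $\hat\psi^\delta(k)=\delta^{-1}\hat\psi(k/\delta)$.
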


\begin{proof}
By~\eqref{e:Apriori} and the equations~\eqref{e:elliptic} and~\eqref{e:elliptictilde}, we have 
\begin{equs}
\|r^{\tau, \delta}-\tilde r^{\tau, \delta}\|_{\fock{}{1}{\tau}}^2&=\langle (1 - \gen)(r^{\tau, \delta}-\tilde r^{\tau, \delta}), r^{\tau, \delta}-\tilde r^{\tau, \delta}\rangle_\tau\\
&=\langle \fm-\tfm, r^{\tau, \delta}-\tilde r^{\tau, \delta}\rangle_\tau\leq \|\fm-\tfm\|_{\fock{}{-1}{\tau}}\|r^{\tau, \delta}-\tilde r^{\tau, \delta}\|_{\fock{}{1}{\tau}}
\end{equs}
so that we are left to control the $\fock{}{-1}{\tau}$-norm of the difference between $\fm$ and $\tfm$. 
Their definition in~\eqref{e:mtaudelta} and~\eqref{e:tfm} together 
with the fact that $\genap=\genapsh+\genafl_+$, implies that 
$\fm-\tfm = \genafl \partial_{\fe_1}^{-1} \psi^\delta$. Thus, 
\begin{equ}
 \|\genafl \partial_{\fe_1}^{-1} \psi^\delta\|^2_{\fock{}{-1}{\tau}} \leq \|\genafl \partial_{\fe_1}^{-1} \psi^\delta\|^2_{\tau}\lesssim \lambda^2 \nu_\tau^{3/2} \int_{\R^2} \dd p \,  |\hat \psi^\delta(p)|^2 \int_{\R^2} \dd q \, \1_{|\sqrt{R^\tau} q| \leq 2 \delta} \lesssim \lambda^2 \nu_\tau \delta^2
\end{equ}
where we used that $\|\psi^\delta\|_{L^2(\R^2)} = 1$ and 
that the volume of $\{q\colon|\sqrt{R^\tau} q| \leq 2 \delta\}$ is of order $\nu_\tau^{-1/2} \delta^2$. 
Hence,~\eqref{e:rtilder} follows at once. 
\end{proof}

Let us now analyse $\tfm$ and estimate all the (semi-)norms needed to conclude our proof. 

\begin{lemma}\label{l:DiffReplLemma}
Let $\tfm$ be defined according to~\eqref{e:tfm}. Then, there exists a constant $C=C(\lambda,M)>0$ such that 
\begin{equs}
\limsup_{\delta\to 0} \vertiii{\tfm}_{\tau,M}\vee\|\cG^\tau_M\tfm\|_\tau&\leq C \nu_\tau^{1/4}|\log\nu_\tau|\label{e:negl}\\
\limsup_{\delta \to 0} \Big|\|(\cG_M^\tau)^{1/2} \tfm\|_\tau^2 - \tfrac12C_\eff(\lambda)\Big| &\leq C \nu_\tau\label{e:DiffAnsGhalf}
\end{equs}
where $\vertiii{\cdot}_{\tau,M}$ is the (semi-)norm in~\eqref{e:semi}\,.
\end{lemma}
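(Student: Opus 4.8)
## Proof proposal for Lemma~\ref{l:DiffReplLemma}

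\textbf{Overall strategy.} The plan is to compute the diagonal Fourier-multiplier of the operators $\cG^\tau_M$ and $\gensyx$ acting on $\tfm = \genapsh \partial_{\fe_1}^{-1}\psi^\delta$, which lives in $\fock{2}{}{\tau}$, and then pass to the limit $\delta\to 0$. Since $\psi^\delta(\cdot)=\delta\psi(\delta\,\cdot)$, its Fourier transform concentrates more and more at the origin as $\delta\to 0$; concretely, $\widehat{\psi^\delta}(p)=\widehat\psi(p/\delta)$, so $|\widehat{\psi^\delta}(p)|^2$ is an approximate identity (up to the normalisation $\|\psi^\delta\|_{L^2}=1$). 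Thus, in all integrals over the momentum $p$ carried by the single ``incoming'' particle, we may replace $p$ by $0$ in the smooth multipliers and pick up the weight $|\widehat{\psi^\delta}(p)|^2\,\dd p$, which integrates to $1$. The key point is that $\partial_{\fe_1}^{-1}$ contributes a factor $(\fe_1\cdot p)^{-1}$, so $\widehat{\tfm}$ carries a factor $(\fe_1\cdot p)^{-1}$ times $\nonlinp_{q,p-q}\propto (\fe_1\cdot p)$, and these cancel: the net effect is that $\tfm$ behaves, after the $\delta\to 0$ limit, like an object whose second-chaos kernel is $\chi^{\sharp,\tau}_{q,-q}(0,\cdot)$-cut-off version of $-\tfrac{\iota}{2\pi}\lambda\nu_\tau^{3/4}\Theta_\tau(q)\Theta_\tau(-q)$, with the outgoing momentum forced to be near $0$.

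\textbf{Step 1: reduction to explicit momentum integrals.} First I would write out, using Lemma~\ref{l:ActionGen} and \eqref{e:A+sharp}, the Fock-space kernel of $\tfm\in\fock{2}{}{\tau}$ explicitly. Then, using that $\cG^\tau_M$ is diagonal with multiplier $(1+\Gamma+g^\tau_M(L^\tau(\Gamma))\Gamma^{\fe_1})^{-1}$ (notation as in \eqref{e:cgG}, \eqref{e:gtauM}), and that $-\gensyx$ has multiplier $\tfrac12|\fe_1\cdot p_{1:2}|^2$, I would express $\|(\cG^\tau_M)^{1/2}\tfm\|_\tau^2$, $\|(-\gensyx)^{1/2}\cG^\tau_M\tfm\|_\tau^2$ and $\|\genamsh\cG^\tau_M\tfm\|_\tau^2$ as two-fold momentum integrals against $|\widehat{\psi^\delta}|^2$. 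Here $\|\genamsh\cG^\tau_M\tfm\|_\tau$ requires applying $\genamsh$ to an element of $\fock{2}{}{\tau}$, landing in $\fock{1}{}{\tau}$, and then using \eqref{e:Duality1}-type identities — this is routine but must be done carefully.

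\textbf{Step 2: the $\delta\to 0$ limit and matching $C_\eff(\lambda)$.} With the incoming momentum $p\to 0$, the integral defining $\|(\cG^\tau_M)^{1/2}\tfm\|_\tau^2$ collapses (by dominated convergence, using the $\Theta_\tau$ factors for integrability at infinity and the $\chi^{\sharp}$ cutoff to avoid the $q\to 0$ pole) to $\tfrac12\int_{\R^2}\dd q\,\cJ^\tau_{q,-q}\,\chi^{\sharp,\tau}_{q,-q}(0)\,\lambda^2\nu_\tau^{3/2}/(2\pi^2)\cdot(\text{multiplier of }\cG^\tau_M\text{ at }(q,-q))$; this is \emph{exactly} one instance of the integral that appears in the diagonal part \eqref{e:diag} evaluated at $n=1$, $p_1\to 0$. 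By the Replacement Lemma~\ref{p:1stReplLemma} applied at $n=1$ (or rather by directly invoking \eqref{e:1stReplLemInt}-\eqref{e:1stReplIdentity} from its proof with $p_{1:1}\to 0$, so that $\ell^\tau=L^\tau(0)$) this integral equals $\tfrac12 g^\tau_M(L^\tau(0)) + O(\nu_\tau)$, and by \eqref{e:PointwiseConv} (with $p_{1:n}\to 0$, i.e. $L^\tau(0)$) $g^\tau_M(L^\tau(0))\to C_\eff(\lambda)$ as $\tau\to\infty$, in fact $|g^\tau_M(L^\tau(0)) - C_\eff(\lambda)|\lesssim\nu_\tau$. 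Combining gives \eqref{e:DiffAnsGhalf}. For \eqref{e:negl}, the anisotropic $\fock{}{1}{\tau}$-piece $\nu_\tau^{1/4}\sqrt{|\log\nu_\tau|}\|(-\gensyx)^{1/2}\cG^\tau_M\tfm\|_\tau$ is bounded once we note $\|(-\gensyx)^{1/2}\cG^\tau_M\tfm\|_\tau\lesssim\|(\cG^\tau_M)^{1/2}\tfm\|_\tau\lesssim 1$ (using $(-\gensyx)^{1/2}\cG^\tau_M\lesssim(\cG^\tau_M)^{1/2}$ on the support, since $g^\tau_M(L^\tau(\cdot))\gtrsim\nu_\tau M\gtrsim\nu_\tau$ so that $g^\tau_M\Gamma^{\fe_1}/(1+\Gamma+g^\tau_M\Gamma^{\fe_1})\lesssim 1$, hence $(-\gensyx)^{1/2}\cG^\tau_M = [(-\gensyx[\cdot])^{1/2}(\cG^\tau_M)^{1/2}](\cG^\tau_M)^{1/2}$ with the bracket bounded). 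For $\|\genamsh\cG^\tau_M\tfm\|_\tau$ and $\|\cG^\tau_M\tfm\|_\tau$ one shows directly that the corresponding $q$-integrals are $O(\nu_\tau^{3/4})$ by the same computations as in Proposition~\ref{p:GA+Norm}, eq.~\eqref{e:GA+SmoothNorm} (note $\tfm$ has $\cF$-support near the origin, well inside $\{-\cS^\tau\leq\tau^\alpha\}$ for any $\alpha$, once $\delta$ is small); this is $o(\nu_\tau^{1/4}|\log\nu_\tau|)$, giving \eqref{e:negl}.

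\textbf{Main obstacle.} The delicate point is controlling the $\delta\to 0$ limit \emph{uniformly enough} near the pole of $\partial_{\fe_1}^{-1}$: the factor $(\fe_1\cdot p)^{-1}$ in $\widehat{\tfm}$ is singular on $\fe_2\R$, but this is precisely why $\widehat\psi$ was chosen with support in $(\fe_2\R)^c$ away from the origin by a fixed amount — so that $(\fe_1\cdot p)^{-1}$ is bounded on $\mathrm{Supp}(\widehat{\psi^\delta})$ for all $\delta\le 1$, with $\fe_1\cdot p$ of order $\delta$ there. One must check the algebraic cancellation $\nonlinp_{q,p-q}\cdot(\fe_1\cdot p)^{-1}$ is \emph{exact} (the $(\fe_1\cdot p)$ from $\nonlinp$ kills the $(\fe_1\cdot p)^{-1}$), leaving a kernel with no singularity as $\delta\to0$, so that dominated convergence applies. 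The second subtlety is that the outgoing momentum in the $\genamsh$ and $\cG^\tau_M$ multipliers is $p$ (not $q$), so in $\|\genamsh\cG^\tau_M\tfm\|_\tau$ one must track that forcing $p\to0$ makes the relevant $\genamsh$-multiplier $\nonlinm_{q,-q}\propto(\fe_1\cdot p)\to 0$, which is what produces the $\nu_\tau^{3/4}$-type smallness in \eqref{e:negl}; this requires keeping $\delta$ fixed while doing the $q$-integral and only then sending $\delta\to 0$, i.e. the order of limits matters and I would be explicit about it.
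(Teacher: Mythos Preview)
Your treatment of $\|(\cG^\tau_M)^{1/2}\tfm\|_\tau^2$ via the Replacement Lemma, of $\|\cG^\tau_M\tfm\|_\tau$ via \eqref{e:GA+SmoothNorm}, and of $\|\genamsh\cG^\tau_M\tfm\|_\tau$ via the vanishing of $\nonlinm_{q,p-q}\propto(\fe_1\cdot p)$ as $\delta\to0$, all match the paper's proof and are correct. The gap is in your bound on the anisotropic piece $\|(-\gensyx)^{1/2}\cG^\tau_M\tfm\|_\tau$.

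Your claimed operator inequality $(-\gensyx)^{1/2}\cG^\tau_M\lesssim(\cG^\tau_M)^{1/2}$ does \emph{not} hold with a $\tau$-uniform constant on the support of $\cF\tfm$. The multiplier of the ``bracket'' $(-\gensyx)^{1/2}(\cG^\tau_M)^{1/2}$ is $\big(\Gamma^{\fe_1}/(1+\Gamma+g^\tau_M\Gamma^{\fe_1})\big)^{1/2}$, and bounding this by a constant requires $g^\tau_M\gtrsim 1$, not merely $g^\tau_M\gtrsim M\nu_\tau$. The point is that $\tfm=\genapsh\partial_{\fe_1}^{-1}\psi^\delta$ lives in $\fock{2}{}{\tau}$ with Fourier variables $(q,p-q)$ where $|p|\le\delta$ but $|q|$ is \emph{unbounded}; for large $|q|$ one has $L^\tau(\Gamma^\tau)\to 0$ and hence $g^\tau_M(L^\tau(\Gamma^\tau))=M\nu_\tau$. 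On that region your inequality only gives $(-\gensyx)^{1/2}\cG^\tau_M\lesssim (M\nu_\tau)^{-1/2}(\cG^\tau_M)^{1/2}$, producing $\|(-\gensyx)^{1/2}\cG^\tau_M\tfm\|_\tau\lesssim\nu_\tau^{-1/2}$ and hence a \emph{divergent} first summand $\nu_\tau^{-1/4}\sqrt{|\log\nu_\tau|}$ in $\vertiii{\tfm}_{\tau,M}$.

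The paper circumvents this by applying the Recursive Replacement Lemma~\ref{p:2ndReplLemma} directly with $f=\nu_\tau$ (i.e.\ $f=p_0^\tau$), which transfers the estimate from the two-particle level (where $g^\tau_M$ can be small) back to the one-particle input $\partial_{\fe_1}^{-1}\psi^\delta$, whose Fourier support \emph{is} compact. This yields
\[
\|(-\nu_\tau\gensyx)^{1/2}\cG^\tau_M\tfm\|_\tau \le \big\|(-\gensyx[p_1^\tau+\nu_\tau C^{(1)}_M])^{1/2}\partial_{\fe_1}^{-1}\psi^\delta\big\|_\tau \lesssim \nu_\tau^{1/2}\sqrt{|\log\nu_\tau|}\,\|\psi^\delta\|_\tau,
\]
using that $p_1^\tau(L^\tau(\cdot))+\nu_\tau C^{(1)}_M\lesssim\nu_\tau|\log\nu_\tau|$ uniformly. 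After dividing out $\nu_\tau^{1/2}$ this gives $\|(-\gensyx)^{1/2}\cG^\tau_M\tfm\|_\tau\lesssim\sqrt{|\log\nu_\tau|}$, and the first summand of $\vertiii{\tfm}_{\tau,M}$ is then $\nu_\tau^{1/4}|\log\nu_\tau|$, as required.
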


\begin{proof}
Let us begin by controlling $\|\cG^\tau_M\tfm\|_\tau$. Since the support of $\hat\psi$ is contained in a ball of radius $1$ away from $\fe_2\R$, 
that of $\cF(\partial_{\fe_1}^{-1} \psi^\delta)$ is contained in a ball of radius $\delta$ which is independent of $\tau$. 
Hence, we can apply~\eqref{e:GA+SmoothNorm}, which gives
\begin{equs}
\|\cG^\tau_M\tfm\|_\tau=\|\cG^\tau_M\genapsh \partial_{\fe_1}^{-1} \psi^\delta\|_\tau\lesssim \nu_\tau^{3/4} \|\partial_{\fe_1}^{-1} \psi^\delta\|_\tau\lesssim\nu_\tau^{3/4} \|\partial_{\fe_1}^{-1} \psi\|_{L^2(\R^2)}\lesssim \nu_\tau^{3/4}
\end{equs}
and in the third step we used that $\Theta^\tau\lesssim 1$ and in the last last step that $\hat \psi$ 
is supported away from $\fe_2\R$ and that $\psi$ is smooth, 
so that $\|\partial_{\fe_1}^{-1} \psi\|_{L^2(\R^2)}<\infty$. Note that, for $\tau$ large enough, the right hand side is controlled 
by the right hand side of~\eqref{e:negl}. 

Concerning $\vertiii{\tfm}_{\tau,M}$, we will separately bound the two summands in its definition. 
For the second, the definition of $\genapsh$ in~\eqref{e:A+sharp} ensures that 
$\cF \tfm$ is supported on modes $(q,q')$ satisfying $|q+q'| \leq \delta$, 
for which the kernel $\cK_{q,q'}^{-, \tau}$ in~\eqref{e:K+K-} of $\genamsh$ can be bounded by $\delta$. 
Hence, at a price which might blow up as $\tau\to\infty$, we have 
\begin{equs}
\|\genamsh \cG_M^\tau \tfm\|_\tau^2 &\lesssim_{\lambda, M, \tau} \int_{\R^2} \dd p \, \delta^2 \, |\hat \psi^\delta(p)|^2 \int_{\R^2} \dd q \, \Theta_\tau(q) \Theta_\tau(p-q) \lesssim_{\lambda, M, \tau} \delta^2 \,, 
\end{equs}
which means that, since we are taking $\delta\to 0$ first, the $\limsup_{\delta\to0}$ of the previous vanishes.  
For the first, we first apply the Recursive Replacement Lemma~\ref{p:2ndReplLemma} 
with $f=p^\tau_0\equiv\nu_\tau$ and $p^\tau_0$ as in~\eqref{e:DefPk}
(meaning $C_f=1$), and get 
\begin{equs}
\|(-\nu_\tau\gensyx)^{1/2}\cG^\tau_M\tfm\|_\tau&=\|(-\nu_\tau\gensyx)^{1/2}\cG^\tau_M\genapsh \partial_{\fe_1}^{-1} \psi^\delta\|_\tau\\
&\leq  \big\|(-\gensyx[p_1^\tau +  \nu_\tau  C^{(1)}_{M}])^{1/2} \partial_{\fe_1}^{-1} \psi^\delta\big\|_\tau\\
&\lesssim \nu_\tau^{1/2}\sqrt{|\log\nu_\tau|}\big\|(-\gensyx)^{1/2} \partial_{\fe_1}^{-1} \psi^\delta\big\|_\tau\\
&\lesssim \nu_\tau^{1/2}\sqrt{|\log\nu_\tau|}\|\psi^\delta\|_\tau\leq\nu_\tau^{1/2}\sqrt{|\log\nu_\tau|}\label{e:Boundnegl}
\end{equs}
where we used~\eqref{e:RelationPk} together with the definition of $\gensyx[\cdot]$ in~\eqref{e:Lwf} and the fact that 
\begin{equs}
p_1^\tau(L^\tau(-\cS^\tau))+\nu_\tau  C^{(1)}_{M} \leq \nu_\tau\Big(\log\sqrt{\frac{\nu_\tau+g^\tau(L^\tau(-\cS^\tau))}{\nu_\tau}}+1\Big)\lesssim \nu_\tau|\log\nu_\tau|\,.
\end{equs} 
Simplifying the $\nu_\tau^{1/2}$ at the right and left hand sides of~\eqref{e:Boundnegl}, 
we deduce that the 
first summand in~\eqref{e:semi} is bounded by the right hand side of~\eqref{e:negl} uniformly in $\delta>0$, 
and thus, the analysis of $\vertiii{\tfm}_{\tau,M}$ is concluded.  
\medskip

At last, we turn to~\eqref{e:DiffAnsGhalf}. Let us first note that, by~\eqref{e:Duality1} and the definition of $\tfm$ in~\eqref{e:tfm}, 
we have  
\begin{equ}
\|(\cG_M^\tau)^{1/2} \tfm\|_\tau^2=
\langle -\genamsh \cG_M^\tau \genapsh \partial_{\fe_1}^{-1} \psi^\delta, \partial_{\fe_1}^{-1} \psi^\delta \rangle_\tau\,.
\end{equ}
and the Replacement Lemma~\ref{p:1stReplLemma} gives 
\begin{equs}
\Big|\langle -\genamsh \cG_M^\tau \genapsh \partial_{\fe_1}^{-1} &\psi^\delta, \partial_{\fe_1}^{-1} \psi^\delta \rangle_\tau - \langle \cg^\tau_M\partial_{\fe_1}^{-1} \psi^\delta, \partial_{\fe_1}^{-1} \psi^\delta \rangle_\tau\Big|\\
&\lesssim \|(-\nu_\tau\gensyx)^{1/2} \partial_{\fe_1}^{-1} \psi^\delta\|_\tau^2 \leq \nu_\tau \|\psi^\delta\|_{L^2(\R^2)}=\nu_\tau\,. 
\end{equs}
As a consequence, we are left to analyse 
\begin{equ}
\| (\cg^\tau_M)^{1/2}\partial_{\fe_1}^{-1} \psi^\delta\|^2_\tau=\| g^\tau_M(L^\tau(-\cS^\tau))^{1/2}(-\gensyx)^{1/2}\partial_{\fe_1}^{-1} \psi^\delta\|_\tau^2=\tfrac{1}{2}\| g^\tau_M(L^\tau(-\cS^\tau))^{1/2} \psi^\delta\|_\tau^2\,,
\end{equ}
where the prefactor $1/2$ comes from the definition of $\gensyx$ in~\eqref{e:gensy}. 
Now, by construction, $|\hat{\psi^\delta}|^2$ converges to a Dirac mass at $0$ as $\delta \to 0$, 
which means that, as $\delta\to 0$ the right hand side converges to 
\begin{equ}
g^\tau_M(L^\tau(0))=\Big(\frac{3\lambda^2}{2\pi}\nu_\tau^{3/2}\log(1+\tau)+\nu_\tau^{3/2}\Big)^{2/3}-\nu_\tau=C_\eff(\lambda)+O(\nu_\tau)
\end{equ}
from which~\eqref{e:DiffAnsGhalf} follows at once. 
\end{proof}

Thanks to the previous lemmas, we can now complete the proof of Theorem~\ref{th:Diffusivity}. 

\begin{proof}[of Theorem~\ref{th:Diffusivity}]
The bulk of the proof consists of establishing that, as $\tau\to\infty$,  $D^\tau_{1,1}(1)$ converges to $C_\eff(\lambda)$, 
which, by~\eqref{e:GKenergy}, follows provided 
\begin{equ}[e:LimEnergy]
\lim_{\tau\to\infty}\lim_{\delta\to0}\cE^\tau_1(\Phi^{\tau,\delta})=C_\eff(\lambda)\,, 
\end{equ}
where $\Phi^{\tau,\delta}$ is the solution of the parabolic equation~\eqref{e:ODEPhi}.  

Let us first show that, in~\eqref{e:LimEnergy},  we can replace $\Phi^{\tau,\delta}$  
with the solution $\tilde r^{\tau,\delta}$ of~\eqref{e:elliptictilde} at a cost which is negligible with respect to $\delta$ and $\tau$. 
Recall the definition of $r^{\tau,\delta}$  in~\eqref{e:elliptic}, and note that 
\begin{equs}[e:comparison]
\cE^\tau_1(\Phi^{\tau,\delta}-\tilde r^{\tau\delta})&\lesssim \cE^\tau_1(\Phi^{\tau,\delta}- r^{\tau,\delta})+\cE^\tau_1(r^{\tau,\delta}-\tilde r^{\tau,\delta})\\
&\lesssim \|r^{\tau,\delta}\|_\tau^2+\|r^{\tau,\delta}-\tilde r^{\tau,\delta}\|^2_{\fock{}{1}{\tau}}\lesssim \|\tilde r^{\tau,\delta}\|_\tau^2+\|r^{\tau,\delta}-\tilde r^{\tau,\delta}\|_{\fock{}{1}{\tau}}^2
\end{equs}
where in the last step we simply applied the triangle inequality to bound $\|r^{\tau,\delta}\|_\tau$ by the right hand side, 
and in the second, we used~\eqref{e:ParabolicElliptic} and fact that, for a time independent 
$\phi\in {\rm Dom}(\cL^\tau)$,~\eqref{e:AprioriEnergy} implies that $\cE^\tau_1(\phi)\lesssim \|\phi\|_{\fock{}{1}{\tau}}^2$. 
Now, the second term at the right hand side is of order $\nu_\tau\delta^2$ in view of~\eqref{e:rtilder}, 
while for the first we invoke~\eqref{e:L2} (with $r^{\tau,\delta}$ replaced by  $\tilde r^{\tau,\delta}$), which gives 
\begin{equ}[e:rtildeL2bound]
\limsup_{\delta\to0}\|\tilde r^{\tau,\delta}\|_\tau\lesssim\limsup_{\delta\to0}\Big( \vertiii{\tfm}_{\tau,M}+\|\cG^\tau_M\tfm\|_\tau\Big)\lesssim \nu_\tau^{1/4}|\log\nu_\tau|
\end{equ} 
thanks to~\eqref{e:negl}. 

As a consequence,~\eqref{e:comparison} vanishes in the double limit $\lim_{\tau\to\infty}\lim_{\delta\to0}$ and we are left to study
\begin{equ}
\cE^\tau_1(\tilde r^{\tau,\delta})=\|\tilde r^{\tau,\delta}\|_\tau^2 + 2\int_0^1 \|(- \gensy)^{1/2} \tilde r^{\tau,\delta}\|_\tau^2 \, \dd s=\|\tilde r^{\tau,\delta}\|_\tau^2+2\|\tilde r^{\tau,\delta}\|_{\fock{}{1}{\tau}}^2\,.
\end{equ}
The $\limsup_{\delta\to0}\|\tilde r^{\tau,\delta}\|_\tau^2$ can be controlled as in~\eqref{e:rtildeL2bound}, 
while for the $\fock{}{1}{\tau}$-norm, by~\eqref{e:H1}  (with $r^{\tau,\delta}$ replaced by  $\tilde r^{\tau,\delta}$), we get 
\begin{equs}
\limsup_{\delta\to 0}&\Big|\|\tilde r^{\tau,\delta}\|_{\fock{}{1}{\tau}}^2- \tfrac12C_\eff(\lambda)\Big|\\
&\leq \limsup_{\delta\to 0}\Big|\|\tilde r^{\tau,\delta}\|_{\fock{}{1}{\tau}}^2-\|(\cG^\tau_M)^{1/2}\tfm\|^2_\tau\Big|+\Big|\|(\cG^\tau_M)^{1/2}\tfm\|^2_\tau- \tfrac12C_\eff(\lambda)\Big|\\
&\lesssim\limsup_{\delta\to0} \vertiii{\tfm}_{\tau,M}\Big(\vertiii{\tfm}_{\tau,M}+\|(\cG^\tau_M)^{1/2}\tfm\|_\tau\Big)+\nu_\tau\lesssim \nu_\tau^{1/4}|\log\nu_\tau|
\end{equs}
where we used both~\eqref{e:negl} and~\eqref{e:DiffAnsGhalf}. By putting the previous estimates together,~\eqref{e:LimEnergy} 
follows at once. 

In order to complete the proof of~\eqref{e:Diffusivity}, it suffices to exploit the scaling relation in~\eqref{e:DiffRescaling} 
and the definition of $\nu_\tau$ in~\eqref{e:nutau}, which give 
\begin{equ}\label{e:DiffFinalEstimate}
D_{1,1}(\tau) = C_{\mathrm{eff}}(\lambda) \, \nu_\tau^{-1} + O(\nu_\tau^{-3/4}\,|\log\nu_\tau|) = C_{\mathrm{eff}}(\lambda) (\log\tau)^{2/3} \Big(1+ O\Big(\frac{\log\log\tau}{(\log\tau)^{1/6}}\Big)\Big) \, . 
\end{equ}
\end{proof}

\begin{appendix}

\section{The regularised SBE: proofs}\label{a:BasicsSBE}

In this appendix, we collect some of the proofs of the statements concerning the solution to the
regularised SBE in~\eqref{e:SBEsystem}.
Since $\tau>0$ is fixed, we take it to be equal to $1$ and omit the corresponding index
so to lighten the notation. In particular, $\nu_\tau = 1$, $R_\tau$ is the identity matrix
and~\eqref{e:ScaledSPDE} reduces to~\eqref{e:SPDE1Strong}.
The case of general $\tau$ can be recovered by scaling.

We begin by recalling the definition and main properties of weighted Besov spaces
(for a thorough exposition we refer to~\cite[Ch. 4]{ET}).

\subsection{Weighted Besov spaces}\label{a:Besov}

Since we want to construct a solution to~\eqref{e:SBEsystem} in infinite volume,
we will need to consider weighted norms.
A function $\we\colon \R^2\to \R_+$ is said to be a {\it weight} if there exists a positive constant
$C>0$ such that
\begin{equ}[e:PropWeight]
C^{-1}\leq \sup_{|x-y|\leq 1}\frac{\we(x)}{\we(y)}\leq C\,.
\end{equ}
The weights we will be concerned with are of polynomial or exponential type, namely
\begin{equ}[e:weight]
\p_a(x)\eqdef (1+|x|)^{-a}\,,\qquad \e^\sigma_b(x)\eqdef \exp\big(-b(1+|x|)^\sigma\big)
\end{equ}
for $a,b\in\R_+$, $\sigma\in(0,1)$ and $x\in\R^2$.

We now turn to Besov spaces, whose definition is based on a {\it dyadic partition of unity}, i.e.
a family $(\phi_j)_{j\geq -1}\subset \CC^\infty_c(\R^2)$ of non-negative radial functions
such that
\begin{itemize}[noitemsep]
\item $\supp \phi_{-1}$ and $\supp \phi_0$ are respectively contained in a ball and in an annulus around the origin,
\item for $j\geq 0$, $\phi_j(\cdot)=\phi_0(2^{-j}\cdot)$
\item for all $x\in\R^2$, $\sum_{j\geq -1}\phi_j(x)=1$,
\item if $|j-j'|>1$, then $\supp \phi_j\cap\supp\phi_{j'}=\emptyset$.
\end{itemize}
We denote by $\PHI_j$ the inverse Fourier transform of $\phi_j$ and set $\Delta_j f\eqdef \PHI_j\ast f$
to be the $j$-th Littlewood-Paley block.

\begin{definition}\label{def:Besov}
Let $\we$ be a weight as in~\eqref{e:PropWeight},
$\alpha\in\R$ and $p,q\in[1,\infty]$. We define the weighted Besov space $\cB^\alpha_{p,q}(\we)$
as the space of distributions (if the weight is polynomial, and of ultra-distributions\footnote{For the definition of ultra-distributions and their need in the case of exponential weights, see e.g.~\cite[Sec. 2.2]{MP}} if exponential)
$f\in\CS'(\R^2)$ (or $\CS_\omega'(\R^2)$) such that
\begin{equ}[e:Besov]
\|f\|_{\cB^\alpha_{p,q}(\we)}\eqdef \Big\|\Big(2^{\alpha j}\|\we \Delta_j f\|_{L^p(\R^2)}\Big)_{j\geq -1}\Big\|_{\ell^q}=\Big(\sum_{j\geq -1}2^{\alpha j q}\|\we \Delta_j f\|_{L^p(\R^2)}^q\Big)^{\frac1q}<\infty\,.
\end{equ}
For $p=q=\infty$, we adopt the notation $\CC^\alpha(\we)\eqdef \cB^\alpha_{\infty,\infty}(\we)$.
\end{definition}

As given above, weighted Besov spaces are Banach and, for suitable choices of $\alpha,p,q$ and $\we$
they coincide with spaces the reader might be more familiar with.
In particular, if
$p=q=\infty$, $\alpha\in\R\setminus\N$ and $\we\equiv 1$,
$\CC^\alpha(1)$ is the space of $\alpha$-H\"older continuous functions.
If $p=q=2$, $\cB^{\alpha}_{2,2}(\we)=H^\alpha(\we)$ is the usual (weighted) Sobolev space and,
if $\alpha=0$ and $\we\equiv 1$, $\cB^{0}_{2,2}(1)=L^2(\R^2)$.

Besov spaces satisfy a number of properties that we will need in what follows. Let us summarise them
without proof.
\begin{itemize}[noitemsep]
\item {\it (Besov embedding)} (see e.g.~\cite[Thm. 4.2.3]{ET}) for any weights $\we_2 \lesssim \we_1$ such that $\lim_{|x|\to\infty}\we_2(x)/\we_1(x)=0$,
$1\leq p_1\leq p_2\leq\infty$, $1\leq q_1\leq q_2\leq\infty$, $\alpha_1\in\R$ and
$\alpha_2\leq \alpha_1-2(1/p_1+1/p_2)$, we have the continuous embedding
\begin{equ}[e:BesovEmb]
\cB^{\alpha_1}_{p_1,q_1}(\we_1)\subseteq \cB^{\alpha_2}_{p_2,q_2}(\we_2)\,.
\end{equ}
\item {\it (Semigroup estimates)} (see e.g.~\cite[Prop. 3.6]{MP}) Let $K_t\eqdef e^{\frac{t}{2}\Delta}$
be the semigroup associated to $\tfrac12\Delta$.
Then, for any
$\alpha\in\R$, $\beta\geq 0$ and weight $\we$, we have
\begin{equ}[e:Semigroup]
\|K_t f\|_{\CC^{\alpha+\beta}(\we)}\lesssim t^{-\frac{\beta}{2}}\|f\|_{\CC^{\alpha}(\we)}\,,
\end{equ}
where $K_t f \eqdef K_t \ast f$.

\item {\it (Product rule)} (see e.g.~\cite[Lem. 4.2]{MP}) For any $\alpha_1,\alpha_2\in\R$ and weights $\we_1,\we_2$, if $\alpha_1+\alpha_2>0$, then
\begin{equ}[e:Prod]
\| fg\|_{\CC^{\alpha_1\wedge \alpha_2}(\we_1\we_2)}\lesssim \| f\|_{\CC^{\alpha_1}(\we_1)}\| g\|_{\CC^{\alpha_2}(\we_2)}\,.
\end{equ}
\end{itemize}

\subsection{Approximation and convergence for the linear equation}\label{a:SHE}

Before turning to Burgers, we consider the stationary
solution to the linear stochastic heat equation (SHE) obtained
by dropping the non-linearity in~\eqref{e:SBEsystem}.

Let $(\eta,\vec{\xi})$ be a couple of independent
space and space-time Gaussian white noises on $\R^2$ and $\R\times\R^2$ respectively.
In order to ensure measurability of the solution (both for SHE and SBE),
we want to construct our approximating sequence
in such a way that all of its elements are coupled together using the same noises.
For this, starting from $(\eta,\vec{\xi})$ and
given $M\in\N$, we set
\begin{equ}[e:Periodised]
\eta_M\eqdef \sum_{y \in M\Z^2} \tau_y (\eta\1_{Q(M)})\,\qquad\text{and}\qquad \vec{\xi}_{M}(\dd s) \eqdef \sum_{y \in M \Z^2} \tau_y \,(\vec{\xi}\1_{Q(M)})
\end{equ}
where $Q(M)\eqdef[-M/2,M/2]^2$ and, for $y\in\R^2$, we denoted by $\tau_y$ the translation operator
on $\cS'(\R^2)$ (and defined by duality from $\tau_y f(\cdot)=f(\cdot-y)$ for $f\in\cS(\R^2)$).
Note that, thanks to spatial independence, $(\eta_M, \vec{\xi}_{M})$ coincide in law to
{\it periodic} independent space and space-time white noises with period $M\in\N$.

According to Definition~\ref{def:SolRegBurgers},
the unique mild solutions $X$ and $X^M$ of~\eqref{e:SBEsystem} with $\lambda=0$
driven by $(\eta^1,\vec{\xi}^{\,1})$ and by $(\eta^1_M,\vec{\xi}^1_M)$ are respectively given by
\begin{equ}[e:SHEFull&Per]
X_t = K_t \eta^1 + \int_0^t K_{t-s} \,  \mathrm{div} \, \vec{\xi}^{\,1} \, (\dd s)\quad\text{and}\quad X^M_t = K_t \eta^1_M + \int_0^t K_{t-s} \,  \mathrm{div} \, \vec{\xi}^{\,1}_{M} (\dd s)\,,
\end{equ}
for $K$ the usual heat kernel. Then, we have the following lemma.

\begin{lemma} \label{lem:CVSHE}
In the setting outlined above, let $X$ and $X^M$ be as in~\eqref{e:SHEFull&Per}.
Then, for any $\alpha, a, p, T > 0$ and $\beta \in [0,1/2)$, $X$ and $X^M$ almost surely
belong to $C_{\mathrm{loc}}^\beta(\R_+, \cC^\infty(\R^2))$ and there exists a constant $C=C(\rho)>0$
such that
\begin{equ} \label{e:BoundSHE}
\Exp\Big[\|X\|_{C^\beta([0,T], \cC^\alpha(p_a))}^p\Big] \vee \sup_{M}\Exp\Big[\|X^M\|_{C^\beta([0,T], \cC^\alpha(p_a))}^p\Big] \leq C \, .
\end{equ}
Furthermore,
\begin{equ} \label{e:CVSHE}
\lim_{M\to\infty}\Exp\Big[\|X - X^M\|_{C^\beta([0,T], \cC^\alpha(p_a))}^p\Big] =0\,.
\end{equ}
\end{lemma}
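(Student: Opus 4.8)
The statement to prove is Lemma~\ref{lem:CVSHE}: moment bounds for the stationary SHE solution $X$ and its periodized approximations $X^M$ in weighted H\"older--Besov spaces, uniform in $M$, together with $L^p$-convergence $X^M\to X$.

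\textbf{Overall approach.} The plan is to decompose each of $X$ and $X^M$ into its ``initial condition'' part $K_t\eta^1$ (resp.\ $K_t\eta^1_M$) and its ``stochastic convolution'' part $Z_t \eqdef \int_0^t K_{t-s}\,\mathrm{div}\,\vec\xi^{\,1}(\dd s)$ (resp.\ $Z^M_t$), and to treat each piece separately by the standard Kolmogorov/Besov-embedding machinery. Since $\tau=1$ here, the operators involved are the plain heat semigroup and plain white noises, and the whole argument is Gaussian: all quantities live in a fixed Wiener chaos, so $L^p$ bounds follow from $L^2$ bounds by Gaussian hypercontractivity, and it suffices to control second moments of Littlewood--Paley blocks.

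\textbf{Step 1: the stochastic convolution.} First I would show that $Z$ (and $Z^M$) is a well-defined space-time smooth field with the claimed moment bounds. For a fixed dyadic block $\Delta_j$ and a fixed point $x$, $\Delta_j Z_t(x)$ is a centred Gaussian variable; using the mollified noise $\vec\xi^{\,1}=\rho\ast\vec\xi$ and It\^o isometry (Walsh), one computes
\begin{equ}
\Exp\big[|\Delta_j Z_t(x)|^2\big] = \int_0^t \int_{\R^2} \big|\mathcal F\big(\Delta_j K_{t-s}\,\mathrm{div}\,\rho\big)(\zeta)\big|^2\,\dd\zeta\,\dd s \lesssim \int_{2^{j}\lesssim|\zeta|\lesssim 2^{j+1}} \frac{|\zeta|^2\,|\hat\rho(\zeta)|^2}{|\zeta|^2}\,\dd\zeta \lesssim 2^{2j}\,|\hat\rho(2^{j}\cdot)|^2_{\sup},
\end{equ}
and the crucial point is that $\hat\rho$ is Schwartz, so $|\hat\rho(\zeta)|$ decays faster than any polynomial; hence for every $\alpha$ the quantity $2^{\alpha j}\|\Delta_j Z_t\|_{L^\infty}$ has finite moments of all orders summable in $j$, giving $Z_t\in\mathcal C^\alpha(p_a)$ for all $\alpha$ (indeed $\mathcal C^\infty$). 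Time regularity: a similar computation on $\Delta_j(Z_t - Z_{t'})$, splitting the integral into $[0,t'\wedge t]$ and the incremental part, yields $\Exp[|\Delta_j(Z_t-Z_{t'})(x)|^2]\lesssim |t-t'|^{2\beta}2^{(2\beta'+\cdots)j}$ for suitable exponents, and another application of Kolmogorov's continuity theorem in Besov spaces (using the Besov embedding \eqref{e:BesovEmb} to pass from $L^p$-in-space and $L^p$-in-time moment bounds to $C^\beta$-in-time / $\mathcal C^\alpha$-in-space H\"older continuity with the polynomial weight) gives the claimed bound. The argument for $Z^M$ is identical: periodizing only restricts the set of Fourier modes to $\frac{2\pi}{M}\Z^2$, so the discrete sums are dominated by the same integrals, uniformly in $M$.

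\textbf{Step 2: the initial-condition part and convergence.} For $K_t\eta^1$ one argues the same way, now using that $\eta^1=\rho\ast\eta$ has covariance with Fourier density $|\hat\rho(\zeta)|^2$, so $\Exp[|\Delta_j K_t\eta^1(x)|^2]\lesssim \int_{|\zeta|\sim 2^j} e^{-t|\zeta|^2}|\hat\rho(\zeta)|^2\,\dd\zeta$, again with rapid decay from $\hat\rho$; the semigroup smoothing \eqref{e:Semigroup} is not even needed for the spatial bound but is convenient for the short-time blow-up near $t=0$ and for the time-increment estimate. For the convergence \eqref{e:CVSHE}, by linearity $X-X^M = K_t(\eta^1-\eta^1_M) + (Z_t - Z^M_t)$, and each difference is again Gaussian; I would estimate $\Exp[|\Delta_j(X_t-X^M_t)(x)|^2]$ and show it tends to $0$ as $M\to\infty$ with a rate, by using that $\eta-\eta_M$ and $\vec\xi-\vec\xi_M$ vanish on the box $Q(M)$ and the heat kernel $K_{t-s}(x-\cdot)$ restricted to times $s\le t\le T$ has Gaussian tails, so the contribution of the region outside $Q(M)$ (where the noises differ) is super-polynomially small in $M$, uniformly over $x$ in any compact and integrably against the polynomial weight $p_a$. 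Combining with the uniform-in-$M$ bound from Steps~1--2 and dominated convergence upgrades the pointwise-in-$(j,x)$ decay to convergence of the full $C^\beta([0,T],\mathcal C^\alpha(p_a))$-norm in $L^p$.

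\textbf{Main obstacle.} The genuinely delicate point is the interplay between the \emph{spatial weight} $p_a$ and the Besov/Kolmogorov criterion over the whole of $\R^2$: one must verify that the second-moment bounds on $\Delta_j(\cdot)(x)$ hold with the right dependence on $x$ (they are in fact essentially uniform in $x$ because the noises are stationary and the heat kernel is translation invariant) so that after multiplying by $p_a(x)$ one still gets a finite $L^p_x$ norm, and that the Besov-embedding step trading integrability for regularity is applied with compatible exponents $(\alpha,\beta,a,p)$ for \emph{every} choice allowed in the statement. The other point requiring care is making the It\^o/Walsh stochastic-convolution manipulations rigorous (Fubini between the stochastic integral and the Fourier transform, finiteness of the relevant kernels), but thanks to the mollifier $\rho$ everything is smooth and this is routine. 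I do not expect any essential difficulty beyond bookkeeping; the proof is a textbook Gaussian computation and I would relegate the details to the appendix as the authors indicate.
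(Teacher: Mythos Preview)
Your approach is correct and follows essentially the same route as the paper's: both rely on Gaussian hypercontractivity to reduce to second moments, It\^o isometry to compute the variance of Littlewood--Paley blocks, the Besov embedding \eqref{e:BesovEmb} to pass from $B^{\alpha'}_{p,p}(p_{a'})$ to $\mathcal C^\alpha(p_a)$, and Kolmogorov's criterion for time regularity. The only noteworthy difference is in the convergence step \eqref{e:CVSHE}. The paper exploits the explicit coupling \eqref{e:Periodised} to write $\Delta_j(X-X^M)_t(x)$ as $\eta$ and $\vec\xi$ tested against a single kernel $\tilde K^{j,M}_{t,x}$ built as a sum over lattice translates $y\in M\Z^2\setminus\{0\}$, and then bounds its $L^2$-norm by $C\,M^{-\kappa}2^{-\gamma j}$ via the weighted estimate of Lemma~\ref{l:Paley}, yielding a quantitative rate directly. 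Your ``heat-kernel tails plus weight'' argument reaches the same conclusion but is less explicit: to make it rigorous you must split the $x$-integration into $\{|x|\le cM\}$, where the Schwartz decay of $\Delta_j K_t\rho$ together with the vanishing of $\eta-\eta_M$ on $Q(M)$ gives rapid decay in $M$, and $\{|x|>cM\}$, where the second moments are merely $O(1)$ and only the integrability of the weight, $\int_{|x|>cM}p_{a'}(x)^p\,\dd x\lesssim M^{2-a'p}$ with $a'p>2$, provides the decay. The paper's kernel representation packages both regimes at once and gives a cleaner explicit rate; your version is equally valid but requires this extra bookkeeping.
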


In the proof of the lemma (and in Lemma~\ref{l:Tightness}) we will need the following technical result.

\begin{lemma}\label{l:Paley}
For any $\gamma,\kappa>0$, $\bk\in\N^2$ and $m\in\{1,2\}$,
there exists a constant $C>0$ such that for every $j\geq -1$, every $r\geq 0$ and all $x\in\R^2$,
we have
\begin{equs}[e:Paley]
 \|\p_{-2\kappa} (\Delta_j K_{r} \partial^{|\bk|}_\bk\rho^{\ast m})(x-\cdot)\|_{L^2(\R^2)} \leq C 2^{-\gamma j}
\end{equs}
where $\Delta_j$ denotes the $j$-th Littlewood-Paley block, $\p_{-\kappa}$ is as in~\eqref{e:weight}
and $\partial_\bk^{|\bk|}=\partial_1^{k_1}\partial_2^{k_2}$. Moreover, for any $0\leq s\leq t$, 
we have
\begin{equ}[e:Paley2]
 \|\p_{-\kappa} (\Delta_j (K_{t}-K_s) \partial^{|\bk|}_\bk\rho)(x-\cdot)\|_{L^2(\R^2)}\leq C (t-s)2^{-\gamma j}\,.
\end{equ}
\end{lemma}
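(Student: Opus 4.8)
The plan is to reduce both estimates to a single Fourier-space computation and then exploit two elementary facts: the Fourier transform of $\partial_\bk^{|\bk|}\rho^{\ast m}$ decays faster than any polynomial, because $\rho$, hence $\rho^{\ast m}$ and all of its derivatives, lies in $\cS(\R^2)$; and the heat multiplier $e^{-r|\xi|^2/2}$ together with all of its $\xi$-derivatives is bounded by a constant \emph{independent of $r\ge 0$}. The arbitrarily fast decay of the former produces the factor $2^{-\gamma j}$ (indeed $2^{-Nj}$ for every $N$), and the $r$-uniform bound on the latter is what makes the estimates uniform in $r$.

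First I would observe that, since in \eqref{e:Paley} the weight $\p_{-2\kappa}$ and the function $\Delta_j K_r\partial_\bk^{|\bk|}\rho^{\ast m}$ are both evaluated at $x-\cdot$, translation invariance of Lebesgue measure reduces the left-hand side to $\|\p_{-2\kappa}\,\Delta_j K_r\partial_\bk^{|\bk|}\rho^{\ast m}\|_{L^2(\R^2)}$, which no longer depends on $x$; the same applies to \eqref{e:Paley2}. Writing $h\eqdef\partial_\bk^{|\bk|}\rho^{\ast m}\in\cS(\R^2)$ and $g\eqdef\Delta_j K_r h$, the Fourier transform of $g$ is $\xi\mapsto\phi_j(\xi)\,e^{-r|\xi|^2/2}\,\hat h(\xi)$, supported in a ball $\{|\xi|\lesssim 2^j\}$ when $j\ge 0$ and in a fixed ball when $j=-1$. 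Enlarging $\kappa$ to an integer if necessary (which only makes the weight larger, so suffices for an upper bound), the weighted $L^2$ norm is controlled by a Sobolev norm on the Fourier side, $\|\p_{-2\kappa}\,g\|_{L^2}\lesssim\sum_{|\alpha|\le 2\kappa}\|\partial_\xi^\alpha\hat g\|_{L^2}$, and I am reduced to estimating $\|\partial_\xi^\alpha\big(\phi_j\,e^{-r|\cdot|^2/2}\,\hat h\big)\|_{L^2}$ for $|\alpha|\le 2\kappa$.

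Next I would expand this derivative with the Leibniz rule and bound each factor on the frequency shell $\{|\xi|\sim 2^j\}$: derivatives of $\phi_j$ satisfy $|\partial_\xi^\beta\phi_j|\lesssim 2^{-|\beta|j}\le 1$; derivatives of the heat multiplier satisfy $|\partial_\xi^\beta e^{-r|\xi|^2/2}|\lesssim 1$ uniformly in $r\ge 0$, since $\partial_\xi^\beta e^{-r|\xi|^2/2}$ is $e^{-r|\xi|^2/2}$ times a polynomial in $\sqrt r\,\xi$ of degree $|\beta|$ and $y\mapsto(1+|y|)^{|\beta|}e^{-|y|^2/2}$ is bounded on $\R^2$; and, since $\hat h\in\cS(\R^2)$, for every $N$ one has $|\partial_\xi^\beta\hat h(\xi)|\lesssim_N(1+|\xi|)^{-N}\lesssim_N 2^{-Nj}$ on the shell. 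Hence $|\partial_\xi^\alpha(\phi_j e^{-r|\cdot|^2/2}\hat h)|\lesssim_N 2^{-Nj}$ there, and multiplying by the volume $\lesssim 2^{2j}$ of the shell gives $\|\partial_\xi^\alpha(\phi_j e^{-r|\cdot|^2/2}\hat h)\|_{L^2}\lesssim_N 2^{-Nj}$; the case $j=-1$ is identical on a fixed ball. Choosing $N$ large enough yields \eqref{e:Paley}. For \eqref{e:Paley2} the only modification is to replace $e^{-t|\xi|^2/2}$ by the increment $e^{-t|\xi|^2/2}-e^{-s|\xi|^2/2}=-\int_s^t\tfrac12|\xi|^2 e^{-u|\xi|^2/2}\,\mathrm{d}u$; this quantity and all of its $\xi$-derivatives are bounded on the shell by $C(t-s)\,2^{2j}$, again because polynomial times Gaussian is bounded uniformly in $u\in[s,t]$, and this extra factor $(t-s)\,2^{2j}$ is absorbed by the arbitrarily fast decay coming from $\hat h$, producing $C(t-s)\,2^{-\gamma j}$.

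The hard part — or rather the only part that is not completely routine — is the uniformity of the constants in $r$ (respectively in $s\le t$): it rests entirely on the elementary estimate $\sup_{y\in\R^2}(1+|y|)^{n}e^{-|y|^2/2}<\infty$, applied with $y=\sqrt r\,\xi$ (respectively $y=\sqrt u\,\xi$), to control every $\xi$-derivative of the heat multiplier independently of $r$. Everything else — the Leibniz bookkeeping, the comparison of the shell volume $2^{2j}$ against the fast decay of $\hat h$, the separate but trivial treatment of $j=-1$, and the reduction of the weighted $L^2$ norm to a Fourier-side Sobolev norm — is standard.
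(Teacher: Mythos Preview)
Your proof is correct and follows essentially the same route as the paper: both reduce to a Fourier-side Sobolev estimate, expand via Leibniz, and exploit the fast decay of $\hat\rho$ on the Littlewood--Paley shell together with the $r$-uniform boundedness of derivatives of the heat multiplier. The only cosmetic difference is that you remove the $x$-dependence at the outset by translation invariance, whereas the paper carries it as the phase $e^{-2\pi i p\cdot x}$ in the Fourier representation; either way the estimate is the same.
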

\begin{proof}
Notice that, upon taking $\kappa'>\kappa$ so that $\kappa'\in\N$,
we can interpret the weight $\p_{-\kappa'}$ as the inverse
Fourier transform of a derivative, thus the left hand side of~\eqref{e:Paley} is bounded above by
\begin{equ}
\|\p_{-\kappa'} (\Delta_j K_{r} \partial^{|\bk|}_\bk\rho^{\ast m})(x-\cdot)\|_{L^2(\R^2)}\lesssim \|\cF^{-1}((\Delta_j K_{r} \partial^{|\bk|}_\bk\rho^{\ast m})(x-\cdot))\|_{H^{\kappa'}(\R^2)}
\end{equ}
where, for any $p=(p_1,p_2)\in\R^2$, 
\begin{equ}
\cF^{-1}((\Delta_j K_{r} \partial^{|\bk|}_\bk\rho^{\ast m})(x-\cdot))(p)=\phi_j(p) \, e^{-r |p|^2} \, p_1^{k_1}p_2^{k_2} \, \hat{\rho}^m(p)\,e^{-2\pi i p\cdot x}\,.
\end{equ}
A similar argument holds for the left hand side of~\eqref{e:Paley2}, with the inverse Fourier transform
of the argument given by
\begin{equ}
\cF^{-1}(\Delta_j (K_{t}-K_s) \partial^{|\bk|}_\bk\rho(x-\cdot))(p)=\phi_j(p) \, (e^{-t |p|^2}-e^{-s|p|^2})\, p_1^{k_1}p_2^{k_2} \,\hat{\rho}(p)\,e^{-2\pi i p\cdot x}\,.
\end{equ}
Hence,~\eqref{e:Paley} and~\eqref{e:Paley2} follow by the fact that every factor in the above formulas is
(uniformly) smooth, that $\hat \rho$ and all its derivatives
decay faster than any polynomial, and that $\phi_j$ is supported on modes $p$ of order $2^j$.
\end{proof}

\begin{proof}[of Lemma~\ref{lem:CVSHE}]
Since the proof of~\eqref{e:BoundSHE} and~\eqref{e:CVSHE} follows the same steps with the second being
slightly more involved, we only focus on the latter.

We proceed as usual, namely, setting $\tilde X^M\eqdef X-X^M$, we control
the time increments of $\tilde X^M$ (and $\tilde X^M_0$, but the proof is similar and thus omitted),
apply first Besov's embedding~\eqref{e:BesovEmb}
and then Kolmogorov's criterion. 

As a first step, we need to consider the Littlewood-Paley blocks of these increments
and, using the coupling of $(\eta,\vec{\xi})$ and $(\eta^M,\vec{\xi}^M)$ provided by the
construction in~\eqref{e:Periodised}, we write them purely in terms of $(\eta,\vec{\xi})$.
To do so, let
$K^{j}_t\eqdef \Delta_jK_{t}\rho$, $j\geq-1$, and
notice that
\begin{equ}[e:CVSHEIdentity]
\Delta_j\tilde X^M_t(x)=\eta(\tilde K^{j,M}_{t,x})+\int_0^t\vec{\xi}(\dd s, \nabla\tilde K^{j,M}_{t-s,x})
\end{equ}
where
\begin{equ}
\tilde K^{j,M}_{t,x}(z)\eqdef \sum_{y\in M\Z^2_0} K^{j}_t(x-z)(\tau_y\1_{Q(M)})(z)-\sum_{y\in M\Z^2_0}\tau_{-y}\Big(K^{j}_t(x-z)(\tau_y\1_{Q(M)})(z)\Big)
\end{equ}
for $\Z^2_0\eqdef\Z^2\setminus\{0\}$. The
absence of the summand corresponding to $y=0$ is due to the difference.
The right hand sides of~\eqref{e:CVSHEIdentity} are well-defined as, even though
$\tilde K^{j,M}_{t,x}$ is not smooth (because of
the behaviour on $M\Z^2$), it belongs to $L^2(\R^2)$ for all $x\in\R^2$.
Indeed, for any $\bk\in\N^2$, we have
\begin{equs}[e:BoundL2Paley]
\|\partial^{|\bk|}_\bk\tilde K^{j,M}_{t,x}\|_{L^2(\R^2)}&\leq 2\sum_{y\in M\Z^2_0}\|\partial^{|\bk|}_\bk K^{j}_{t}(x-\cdot)(\tau_y\1_{Q(M)})\|_{L^2(\R^2)}\\
&\lesssim \sum_{y\in M\Z^2_0}\p_{\kappa}(y)\|\p_{-\kappa} \partial^{|\bk|}_\bk K^{j}_{t}(x-\cdot)\|_{L^2(\R^2)}\lesssim M^{-\kappa}2^{-j\gamma}
\end{equs}
where we have chosen $\kappa>0$ large enough so that $\p_{\kappa}$ is
summable and used~\eqref{e:Paley} in the last bound.

In view of~\eqref{e:BoundL2Paley}, we can now estimate to the time increments of $\Delta_j\tilde X^M$.
Let $0<a'<a$, $p>\max\{2/a', (1/2-\beta)^{-1}\}$ and $\alpha'\geq \alpha+2/p$. 
Then, for any $0\leq s\leq t$, we have 
\begin{equs}
\,&\Exp\Big[\|\p_{a'}\Delta_j(\tilde X_t^M-\tilde X_s^M)\|^p_{L^p(\R^2)}\Big]\\
&= \Exp\Big[\Big\|\p_{a'}\Big(\eta(\tilde K^{j,M}_{t,\cdot}-\tilde K^{j,M}_{s,\cdot})+\int_s^t\vec{\xi}(\dd r, \nabla\tilde K^{j,M}_{t-r,\cdot})+\int_0^s\vec{\xi}(\dd s, \nabla(\tilde K^{j,M}_{t-r,\cdot}-\tilde K^{j,M}_{s-r,\cdot}))\Big)\Big\|^p_{L^p(\R^2)}\\
&=\int_{\R^2}\p_{a'}(x)^p\Exp\Big[\Big(\eta(\tilde K^{j,M}_{t,x}-\tilde K^{j,M}_{s,x})+\int_s^t\vec{\xi}(\dd r, \nabla\tilde K^{j,M}_{t-r,x})\\
&\qquad\qquad\qquad\qquad\qquad+\int_0^s\vec{\xi}(\dd s, \nabla(\tilde K^{j,M}_{t-r,x}-\tilde K^{j,M}_{s-r,x}))\Big)^p\Big]\dd x\\
&\lesssim \int_{\R^2}\p_{a'}(x)^p\Big(\E[\eta(\tilde K^{j,M}_{t,x}-\tilde K^{j,M}_{0,x})^2]+\Exp\Big[\Big(\int_s^t\vec{\xi}(\dd s, \nabla\tilde K^{j,M}_{t-s,x})\Big)^2\Big]\\
&\qquad\qquad\qquad\qquad\qquad+\Exp\Big[\Big(\int_0^s\vec{\xi}(\dd s, \nabla(\tilde K^{j,M}_{t-r,x}-\tilde K^{j,M}_{s-r,x}))\Big)^2\Big]\Big)^{\frac{p}{2}}\dd x\\
&= \int_{\R^2}\p_{a'}(x)^p\Big(\|\tilde K^{j,M}_{t,x}-\tilde K^{j,M}_{s,x}\|_{L^2(\R^2)}^2+\int_s^t\|\nabla\tilde K^{j,M}_{t-s,x}\|_{L^2(\R^2)}^2\dd s\Big)^{\frac{p}{2}}\dd x\\
&\qquad\qquad\qquad\qquad\qquad+\int_0^s\|\nabla(\tilde K^{j,M}_{t-r,x}-\tilde K^{j,M}_{s-r,x}))\|_{L^2(\R^2)}^2\dd s\Big)^{\frac{p}{2}}\dd x\lesssim (t-s)^{\frac{p}{2}}M^{-\kappa p}2^{-j\gamma p}
\end{equs}
where in the third step we used Gaussian hypercontractivity and the independence of $\eta$ and $\vec{\xi}$,
in the fourth It\^o's isometry and in the last~\eqref{e:BoundL2Paley} for the second summand, and
the analogous bound for the first and third (which can be obtained arguing as in~\eqref{e:BoundL2Paley}
using~\eqref{e:Paley2} in place of~\eqref{e:Paley}).

Now, using Besov's embeddings~\eqref{e:BesovEmb} together with our assumption on $a',\alpha',p$
and $\gamma>\alpha'$, we get
\begin{equs}[e:LPboundSHEincrement]
\Exp\Big[&\|(\tilde X_t^M-\tilde X_s^M\big)\|^p_{\CC^{\alpha}(\p_a)}\Big]\lesssim \Exp\Big[\|(\tilde X_t^M-\tilde X_s^M)\|^p_{\cB^{\alpha'}_{p,p}(\p_{a'})}\Big] \\
&= \sum_{j\geq -1} 2^{j \alpha' p} \, \Exp\Big[\|\p_{a'} \Delta_j \, \big[ (\tilde X_t^M-\tilde X_s^M)\big]\|_{L^p(\R^2)}^p\Big]\lesssim_\kappa (t-s)^{p/2} M^{-\kappa} \, ,
\end{equs}
from which, an application of Kolmogorov's criterion gives~\eqref{e:CVSHE}.
\end{proof}

\subsection{The regularised SBE: Proposition~\ref{p:Global}}\label{a:SBE}

This section is the core of the present appendix as it contains the proof of Proposition~\ref{p:Global}.
The latter is based on two main ingredients: the derivation of suitable
weighted estimates for a stationary periodic approximation to the solution of~\eqref{e:SBEsystem}
(Lemma~\ref{l:Tightness}) and a deterministic apriori bound which allows to control
the difference of two solutions driven by possibly different noises (Lemma~\ref{l:AprioriLinear}).
We begin with the former.

\begin{lemma}\label{l:Tightness}
For $M\in\N$, let $\eta_M$ and $\vec{\xi}_M$ be as in~\eqref{e:Periodised}.
Then, 
the equation~\eqref{e:SBEsystem} driven by $(\eta^1_M, \vec{\xi}^1_M)$ admits a unique
mild solution $u^M$ according to Definition~\ref{def:SolRegBurgers} which 
is strong Markov, stationary, i.e. for every $t \geq 0$, $u^M_t \eqlaw \eta^1_M$, and skew-reversible.
Furthermore, for every $\alpha,a, p, T > 0$ and $\beta \in [0,1/2)$,
there exists a positive constant $C=C(\rho,\alpha,\beta,a, p, T)<\infty$ such that
\begin{equ}[e:aprioriM]
\sup_{M}\Exp\Big[\|u^M\|^p_{C^\beta([0,T], \cC^\alpha(\p_{a}))}\Big] \leq C \, .
\end{equ}
\end{lemma}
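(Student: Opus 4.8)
\textbf{Proof plan for Lemma~\ref{l:Tightness}.} Since the driving noises $(\eta^1_M,\vec\xi^1_M)$ are $M$-periodic, I would regard $u^M$ as a process on the torus $\T^2_M$ of side $M$ and proceed in three steps: (i) solve~\eqref{e:SBEsystem} pathwise on $\T^2_M$ by a contraction argument; (ii) promote this to a global solution and establish stationarity, the (strong) Markov property and skew-reversibility via a spectral Galerkin scheme adapted to the FQ-type structure of the regularisation; (iii) deduce the uniform-in-$M$ bound~\eqref{e:aprioriM} from stationarity together with increment estimates parallel to those in Lemma~\ref{lem:CVSHE}. For step (i), note that on $\T^2_M$ the noise $\vec\xi^1_M=\rho_M\ast\vec\xi_M$ is smooth, so the stochastic convolution $\int_0^t K_{t-s}\nabla\cdot\vec\xi^1_M(\dd s)$ and $K_t\eta^1_M$ a.s.\ lie in $C([0,T],\cC^\infty(\T^2_M))$ (with H\"older continuity in time), exactly as in the proof of Lemma~\ref{lem:CVSHE}; and the nonlinearity $\cN^1(f)=\partial_1(\rho^{\ast2}_M\ast f^2)$ is \emph{smoothing and quadratic}, so that $\|\cN^1(f)\|_{\cC^N(\T^2_M)}\lesssim_N\|f\|_{L^2(\T^2_M)}^2$ uniformly in $M$ and $\cN^1$ is locally Lipschitz from $\cC^\alpha(\T^2_M)$ into $\cC^\infty(\T^2_M)$ for every $\alpha>0$. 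A Banach fixed point in $C([0,T_\ast],\cC^\alpha(\T^2_M))$ then produces a unique mild solution (in the sense of Definition~\ref{def:SolRegBurgers}) up to a possibly random blow-up time $T_\ast$.

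For step (ii), let $\Pi_N$ be the projection onto the Fourier modes $\{k\in\tfrac{2\pi}{M}\Z^2:|k|\le N\}$ and consider the finite-dimensional SDE $\dd u^{M,N}=\bigl(\tfrac12\Delta u^{M,N}+\lambda\,\Pi_N\cN^1(u^{M,N})\bigr)\dd t+\Pi_N\nabla\cdot\vec\xi^1_M\,\dd t$, $u^{M,N}_0=\Pi_N\eta^1_M$. The key observation is that, $\cN^1$ being in divergence form, Itô's formula applied to the energy $\langle(\rho^{\ast2}_M\ast)^{-1}\Pi_N u^{M,N},u^{M,N}\rangle$ kills the nonlinear contribution identically, via the torus identity $\langle(\rho^{\ast2})^{-1}v,\partial_1(\rho^{\ast2}\ast v^2)\rangle=-\langle\partial_1 v,v^2\rangle=-\tfrac13\int_{\T^2_M}\partial_1(v^3)=0$; here $(\rho^{\ast2}_M\ast)^{-1}$ restricted to finitely many modes is a bounded positive operator, up to an innocuous modification of $\rho$ ensuring $\hat\rho$ does not vanish on $\tfrac{2\pi}{M}\Z^2$, which is admissible since all statements are independent of $\rho$. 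Hence this energy is a Lyapunov function excluding explosion of $u^{M,N}$ uniformly in $N$, while the same cancellation shows the truncated drift is antisymmetric with respect to the Gaussian measure $\mathcal N\bigl(0,\Pi_N(\rho^{\ast2}_M\ast)\bigr)$, which is therefore invariant for $u^{M,N}$ and reversible up to $\lambda\mapsto-\lambda$. Passing to the limit $N\to\infty$ using these uniform bounds and local Lipschitzness, one gets a global solution which by step (i) coincides with the unique mild solution $u^M$; it inherits $u^M_t\eqlaw\eta^1_M$, the strong Markov property (measurable, continuous dependence on the data), and skew-reversibility from symmetry of the Gaussian measure. (Alternatively, global existence and invariance of $\P^\rho$ may be obtained directly on $\T^2_M$ by computing the generator $\cL^1$ on cylinder functions and using $\langle\cL^1 F,1\rangle_{\P^\rho}=0$, since $\cA^1$ is antisymmetric and $\cS^1 F$, $\cA^1 F$ annihilate constants, just as in Lemma~\ref{l:ActionGen}.)

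For step (iii), stationarity gives $\sup_M\sup_{t\le T}\Exp\|u^M_t\|_{\cC^\alpha(\p_a)}^q=\sup_M\Exp\|\eta^1_M\|_{\cC^\alpha(\p_a)}^q<\infty$ for all $q\ge1$ and $\alpha,a>0$: the weight $\p_a$ localises the norm and over unit cells $\eta^1_M$ has $M$-independent (sub-Gaussian) statistics, so summing the tails of $\sup_z \p_a(z)\,\|\eta^1_M\|_{\cC^\alpha(Q_z)}$ over $z\in\Z^2$ yields the bound uniformly in $M$. To upgrade this to the $C^\beta([0,T],\cC^\alpha(\p_a))$-norm I would control increment moments $\Exp\|u^M_t-u^M_s\|_{\cC^{\alpha'}(\p_{a'})}^p$ from the mild formulation: the linear and stochastic-convolution contributions, together with the $0$-to-$s$ terms $(K_{t-r}-K_{s-r})\nabla\cdot\vec\xi^1_M$ and $(K_t-K_s)\eta^1_M$, are treated exactly as in the proof of Lemma~\ref{lem:CVSHE} (using Lemma~\ref{l:Paley}), hence uniformly in $M$; the term $(K_{t-s}-1)u^M_s$ via the weighted semigroup estimate~\eqref{e:Semigroup} and the fixed-time bound; and the Duhamel term via $\Exp\bigl\|\int_s^t K_{t-r}\cN^1(u^M_r)\dd r\bigr\|_{\cC^{\alpha'}(\p_{a'})}^p\lesssim|t-s|^p\sup_{r\le T}\Exp\|u^M_r\|_{L^2(\p)}^{2p}$, again bounded uniformly in $M$ by the fixed-time estimate. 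A Littlewood--Paley summation as in Lemma~\ref{lem:CVSHE} gives $\Exp\|u^M_t-u^M_s\|_{\cC^{\alpha'}(\p_{a'})}^p\lesssim|t-s|^{\gamma p}$ uniformly in $M$, and Kolmogorov's criterion together with the Besov embedding~\eqref{e:BesovEmb} yields~\eqref{e:aprioriM}.

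The main obstacle is step (ii): one must set up the Galerkin truncation so that the \emph{single} FQ-type cancellation simultaneously provides a Lyapunov function excluding blow-up uniformly in $N$ and an invariant Gaussian measure converging, as $N\to\infty$, to the law of $\eta^1_M$, and then control the $N\to\infty$ limit (with the minor nuisance of inverting $\rho^{\ast2}_M\ast$ on the torus). Once stationarity is secured, step (iii) is essentially careful bookkeeping with weighted Besov norms, closely following Lemma~\ref{lem:CVSHE}.
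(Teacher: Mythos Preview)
Your strategy is correct and matches the paper's in overall structure: local solution theory on $\T^2_M$, global existence plus stationarity/skew-reversibility via a Galerkin-type argument exploiting the FQ cancellation, and then uniform-in-$M$ moment bounds from stationarity plus increment estimates and Kolmogorov. Two implementation differences are worth noting. For step~(ii), the paper avoids your $(\rho^{\ast2}\ast)^{-1}$-weighted energy (and the associated nuisance of ensuring $\hat\rho\neq0$ on the lattice) by working instead with the \emph{unsmoothed} process $\tilde u^M$ solving $\partial_t\tilde u^M=\tfrac12\Delta\tilde u^M+\lambda\,\rho\ast\partial_1(\rho\ast\tilde u^M)^2+(-\Delta)^{1/2}\xi_M$, for which the cancellation $\langle\tilde u,\rho\ast\partial_1(\rho\ast\tilde u)^2\rangle=\tfrac13\int\partial_1(\rho\ast\tilde u)^3=0$ is clean and the invariant measure is plain white noise; one then sets $u^M\eqdef\rho\ast\tilde u^M$. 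For step~(iii), rather than splitting $u^M_t-u^M_s$ into several pieces (your exposition mixes two different mild-formula decompositions, which is a little confusing), the paper observes that a stationary Markov process has stationary increments, so it suffices to bound $\Exp\|u^M_t-u^M_0\|^p_{\cC^\alpha(\p_a)}$; writing $u^M_t-u^M_0=(X^M_t-X^M_0)+N^M_t$, the noise part is handled by Lemma~\ref{lem:CVSHE} and the nonlinear part by Jensen in time followed by the substitution $u^M_r\mapsto\eta^1_M$ (valid for each fixed $r$ by stationarity) and translation invariance, which reduces everything to a single deterministic Littlewood--Paley estimate via Lemma~\ref{l:Paley}. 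Your route works too, but these two shortcuts streamline the argument considerably.
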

\begin{proof}
Existence, uniqueness (of weak solutions in
$C_\loc(\R_+,\cS'(\T^2_M))$, $\T^2_M$ being the two-dimensional torus of side $M$),
strong Markovianity, stationarity and skew-reversibility for fixed $M$ (essentially)
follow from~\cite[Lemma 2.1]{CGT}.
In the afore-mentioned reference, it is shown that such properties hold for $\tilde u^M$ which is the solution to
\begin{equ}[e:SBEcgt]
\partial_t \tilde u^M = \frac{1}{2} \Delta \tilde u^M + \lambda \, \tilde\cN^{1}[\tilde u^M] + (- \Delta)^{1/2} \, \xi_M\,,\qquad \tilde u^M_0\eqdef \eta_M
\end{equ}
on $\T^2_M$, with $\eta_M$ and $\xi_M$ space and space-time white noises on $\T^2_M$, and
\begin{equ}[e:Oldnonlin]
\tilde\cN^{1}[f](x)\eqdef [\rho\ast(\fe_1\cdot\nabla)(\rho\ast f)^2](x)=[\rho\ast\partial_1(\rho\ast f)^2](x)\,.
\end{equ}
Compared to our setting, the $\rho$ chosen therein is a cut-off function in Fourier space (so that
in particular it is not compactly supported in real space) and
the noise is formally different. That said, $(- \Delta)^{1/2} \, \xi_M\eqlaw \div\vec{\xi}_M$ and,
standard references (e.g.~\cite{DPZ}) guarantee existence and uniqueness of local in time weak solutions
and that these solutions are adapted and
strong Markov. On the other hand, the proof of global in time existence, stationarity
and skew-reversibility given in~\cite{CGT}
(see also~\cite[Lemma 3.3]{CT}) applies verbatim.

Therefore, setting $u^M\eqdef\rho\ast \tilde u^M$, $u^M$ is clearly smooth in space
(and thus belongs to the unweighted space of spatially periodic functions with period $M$,
$C_\loc(\R_+,\CC_{{\rm per}_M}^\alpha)$, for any $\alpha>0$),
and is the (necessarily unique) weak solution to~\eqref{e:SPDE1Strong}
(which is also mild, see Remark~\ref{rem:MildvsWeak})
driven by $(\eta^1_M, \vec{\xi}^1_M)$.
Further, it is a strong Markov process,
has the law of $\eta^1_M$ as invariant measure and is skew-reversible.
\medskip

Thus, identifying $u^M$ with its periodic extension to the whole of $\R^2$,
we are left to show that, for every $T>0$, it belongs to the weighted space $C^\beta([0,T],\CC^\alpha(\p_a))$
uniformly over $M$,
i.e. that~\eqref{e:aprioriM} holds. First,
let us write the mild formulation as
\begin{equs}
u^M_t &= K_t \eta^1_M + \lambda \int_0^t K_{t-s} \, \cN^1[u^M_s] \, \dd s+\int_0^t K_{t-s} \, \mathrm{div} \, \vec{\xi}^1_M(\dd s)=X_t^M+N_t^M
\end{equs}
in which $X^M$ collects the first and the last summand, i.e. the terms containing the noises,
and $N^M$ the second, i.e. that containing the nonlinearity.

Let $a',p$ and $\alpha'$ be such that $0<a'<a$, $p>\max\{2/a', (1/2-\beta)^{-1}$ and
$\alpha'\geq \alpha +2/p$. Let us begin by controlling the norm of the Littlewood-Paley
block of $N^M$ at a fixed time $t\geq 0$. Then,
\begin{equs}
\Exp\Big[\big\|\p_{a'} \, \Delta_j \, N^M_t \big\|_{L^p(\R^2)}^p\big]&=\lambda^p\int_{\R^2}\p_{a'}(x)^p \, \Exp\Big[\Big(\int_0^t \Delta_j K_{t-r} \, \cN^1[u^M_r](x) \, \dd r \Big)^p \Big] \, \dd x\\
&\lesssim t^p \, \E\Big[\sup_{r\in[0,t]} \Big|\Delta_j K_{r} \, \cN^1[\eta^1_M](0)\Big|^p\Big]
\end{equs}
where we first applied Jensen's inequality to the time integral, then we used stationarity of $u^M$ to replace it
with $\eta^1_M$, translation invariance to remove the spatial dependence from the quantity in the expectation
and the fact that $a'p>2$ to bound the spatial integral. Now, for $\kappa>0$ we have
\begin{equs}
\Big|\Delta_j K_{r} \cN^1[\eta^1_M](0)\Big|&=\Big|\langle \Delta_j K_{r} \partial_1\rho^{\ast2}, (\eta^1_M)^2\rangle\Big|\leq \|\p_{-\kappa} \Delta_j K_{r} \partial_1\rho^{\ast2}\|_{L^2(\R^2)}\|(\eta^1_M)^2\p_{\kappa}\|_{L^2(\R^2)}\\
&\lesssim \|\p_{-\kappa} \Delta_j K_{r} \partial_1\rho^{\ast2}\|_{L^2(\R^2)}\|\eta^1_M\|^2_{\CC^\alpha(\p_\kappa)}\lesssim 2^{-j\gamma}\|\eta^1_M\|^2_{\CC^\alpha(\p_\kappa)}
\end{equs}
where the third step follows by~\eqref{e:BesovEmb} and~\eqref{e:Prod} (up to take $\alpha$ sufficiently large)
while the last step holds for any $\gamma>0$ and $r\in[0,t]$,
and is a consequence of~\eqref{e:Paley}.
Therefore, applying Besov's embedding once again, combining the last two estimates and
choosing $\gamma>\alpha'$, we obtain
\begin{equs}
\Exp\Big[\|N^M_t \|_{\cC^\alpha(\p_a)}^p\Big] &\lesssim \Exp\Big[\|N^M_t \|_{\cB_{p,p}^{\alpha'}(\p_{a'})}^p\Big] =\sum_{j \geq -1} 2^{j \alpha' p} \, \Exp\Big[\Big\|\p_{a'} \, \Delta_j \, N^M_t \Big\|_{L^p(\R^2)}^p\Big] \\
&\lesssim t^p \, \E\Big[\|\eta^1_M\|^{2p}_{\CC^\alpha(\p_\kappa)}\Big] \sum_{j\geq -1} 2^{-j(\gamma-\alpha')p}\lesssim t^p
\end{equs}
in which we also used~\eqref{e:BoundSHE} with $t=0$.

Since $u^M$ is Markov, stationary and has stationary increments,
a bound on any time increment from $s$ to $t$, with $0\leq s\leq t$, follows
provided we have one for $s=0$. By the previous bound and~\eqref{e:BoundSHE}, we get
\begin{equs}
\Exp\Big[\|u^M_t-u^M_0 \|_{\cC^\alpha(\p_a)}^p\Big]^{\frac{1}{p}}\leq \Exp\Big[\|X^M_t-X^M_0 \|_{\cC^\alpha(\p_a)}^p\Big]^{\frac{1}{p}} +\Exp\Big[\|N^M_t \|_{\cC^\alpha(\p_a)}^p\Big]^{\frac{1}{p}}\lesssim_\beta t^{\beta}\,,
\end{equs}
for any $\beta\in[0,1/2)$.
By Kolmogorov's criterion, this implies~\eqref{e:aprioriM}
so that the proof of the lemma is complete.
\end{proof}

At this point we can move to the second step, namely show that~\eqref{e:SBEsystem} has
a unique solution and that the sequence $\{u^M\}_M$ converges to that solution.
As we will soon see, both the results follow from the next lemma.

\begin{lemma} \label{l:AprioriLinear}
Let $a, b, \alpha > 0$, $\sigma\in(0,1)$ such that $a < \sigma$.
Let $v \in C([0,T], \mathcal C^\alpha(e_b^\sigma))$ be such that there exist
$X, w \in C([0,T], \mathcal C^\alpha(\p_a))$ 
for which
\begin{equ}[e:EquationDifference]
v_t = X_t + \lambda \int_0^t K_{t-s} \, \cN^1[v_s,w_s] \, \dd s \, ,\qquad  \forall\, t \in [0,T]\,,
\end{equ}
where $\cN^1[f,g]$ is defined according to
\begin{equ}[e:BilinearForm]
\cN^1[f,g] \eqdef \big(\partial_1 \varrho^{\ast2}\big) \ast (fg) \, .
\end{equ}
Then, there exists a positive constant $C = C(\rho, \lambda, a,b, \sigma,T)<\infty$  such that $v$ satisfies
\begin{equ}[e:AprioriEstimateDifference]
\sup_{t \in [0,T]} \|v_t\|_{\cC^\alpha(\e_{b+t}^\sigma)} \leq C \|X\|_{C([0,T], \cC^\alpha(\p_a))}  \exp\Big(C \, \|w\|_{C([0,T],\cC^\alpha(\p_a))}^{\frac{1}{1 - a/\sigma}}\Big)
\end{equ}
\end{lemma}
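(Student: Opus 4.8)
The plan is to treat \eqref{e:EquationDifference} as a \emph{linear} equation in $v$ in which $w$ enters only as a given coefficient, and to run a singular Grönwall argument, using the time–dependent exponential weight $\e^\sigma_{b+t}$ to absorb the spreading of the heat semigroup. I would set
\begin{equ}
g(t) \eqdef \|v_t\|_{\cC^\alpha(\e^\sigma_{b+t})}\,,\qquad C_X\eqdef\|X\|_{C([0,T],\cC^\alpha(\p_a))}\,,\qquad C_w\eqdef\|w\|_{C([0,T],\cC^\alpha(\p_a))}\,,
\end{equ}
and first note that, since $\e^\sigma_{b+t}\le\e^\sigma_b$ pointwise and the weighted Besov norm is monotone in the weight, $g$ is bounded (and measurable) on $[0,T]$ by the standing hypothesis on $v$; this is what will make the Grönwall step legitimate.

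The two estimates doing all the work are the following. First, a \emph{weight–shift bound for the heat semigroup}: for $0<r\le T$ and any $b'>0$ one has the elementary pointwise inequality $\e^\sigma_{b'+r}(x)=\e^\sigma_{b'}(x)\exp(-r(1+|x|)^\sigma)\le C_T\,r^{-a/\sigma}\,\e^\sigma_{b'}(x)\,\p_a(x)$, obtained by optimizing $\sup_x(1+|x|)^a\exp(-r(1+|x|)^\sigma)\lesssim_T r^{-a/\sigma}$ over $|x|$. This is precisely where the hypothesis $a<\sigma$ enters, since it makes the exponent $a/\sigma$ strictly less than $1$, hence time–integrable. Combined (with $b'=b+s$, $r=t-s$) with the semigroup estimate \eqref{e:Semigroup} at regularity index $\beta=0$ applied on the fixed weight $\e^\sigma_{b+s}\p_a$ — which is a weight in the sense of \eqref{e:PropWeight}, being the product of the weights $\e^\sigma_{b+s}$ (a weight because $\sigma<1$) and $\p_a$ — this yields, for $0\le s<t\le T$,
\begin{equ}
\|K_{t-s}h\|_{\cC^\alpha(\e^\sigma_{b+t})}\le C_T\,(t-s)^{-a/\sigma}\,\|h\|_{\cC^\alpha(\e^\sigma_{b+s}\p_a)}\,.
\end{equ}
Second, a \emph{bilinear bound} for the nonlinearity in \eqref{e:BilinearForm}: convolution with the smooth, compactly supported kernel appearing there preserves $\cC^\alpha(\we)$ for any weight $\we$ (up to a constant controlled by \eqref{e:PropWeight}), so the product rule \eqref{e:Prod} — applicable since $\alpha+\alpha>0$ — gives $\|\cN^1[v_s,w_s]\|_{\cC^\alpha(\e^\sigma_{b+s}\p_a)}\lesssim\|v_s\|_{\cC^\alpha(\e^\sigma_{b+s})}\|w_s\|_{\cC^\alpha(\p_a)}\le C_w\,g(s)$.

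With these in hand I would insert both bounds into \eqref{e:EquationDifference} and estimate the free term by $\|X_t\|_{\cC^\alpha(\e^\sigma_{b+t})}\le C_{a,b}\|X_t\|_{\cC^\alpha(\p_a)}\le C_{a,b}\,C_X$, arriving at the closed singular integral inequality
\begin{equ}
g(t)\le C_{a,b}\,C_X+C_T\,\lambda\,C_w\int_0^t(t-s)^{-a/\sigma}\,g(s)\,\dd s\,,\qquad t\in[0,T]\,.
\end{equ}
Since $a/\sigma\in(0,1)$ and $g$ is bounded, a standard generalized (singular, Henry–type) Grönwall inequality then upgrades this to $g(t)\le C_{a,b,\sigma,T}\,C_X\exp\big(C_{\sigma,T}(\lambda C_w)^{1/(1-a/\sigma)}\big)$ uniformly for $t\le T$, which is exactly \eqref{e:AprioriEstimateDifference} after renaming constants. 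I expect the only genuinely delicate point to be the weighted semigroup step: one must quantify precisely how much of the stretched–exponential weight has to be "spent" to compensate the Gaussian spreading of $K_r$ over a time interval of length $r$, and check that the cost is an \emph{integrable} power $(t-s)^{-a/\sigma}$, which is what forces $a<\sigma$. Once that is secured, the bilinear bound and the passage from the integral inequality to the final exponential estimate are routine.
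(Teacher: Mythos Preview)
Your proposal is correct and follows essentially the same approach as the paper: the identical time-dependent exponential weight trick $\e^\sigma_{b+t}$, the same pointwise inequality $\e^\sigma_{b+t}/(\p_a\,\e^\sigma_{b+s})\lesssim(t-s)^{-a/\sigma}$ exploiting $a<\sigma$, the product rule \eqref{e:Prod}, and the singular (Henry-type) Gr\"onwall lemma to close. The only cosmetic difference is that the paper applies the semigroup bound \eqref{e:Semigroup} first and then shifts the weight on the product $v_sw_s$, whereas you package the weight shift into a single bound on $K_{t-s}$; the two orderings are equivalent.
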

\begin{proof}
The proof follows the strategy implemented in~\cite{HL} in the context of the parabolic Anderson equation
(which, contrary to our setting, is rough) and consists of taking advantage of the different weights imposed on
$v$ and $w$. The crucial ingredient is the (trivial) estimate
\begin{equ}[e:WeightsB]
\frac{\e_{b+t}(x)}{\p_a(x) \, \e_{b+s}(x)}=\frac{[(t-s)(1+|x|)^\sigma]^{\frac{a}{\sigma}}}{(t-s)^{\frac{a}{\sigma}}} e^{-(t-s)(1+|x|)^\sigma}\leq e^{-\frac{a}{\sigma}}\Big(\frac{a/\sigma}{t-s}\Big)^{\frac{a}{\sigma}} \, ,
\end{equ}
which holds uniformly over $x\in\R^2$, and where in the last step we used that $y\mapsto y^{a/\sigma} e^{-y}$ is bounded by $e^{-a/\sigma} (a/\sigma)^{a/\sigma}$.

From \eqref{e:EquationDifference}, we deduce
\begin{equs}[e:AprioriLinear]
\|v_t\|_{\cC^\alpha(\e_{b+t}^\sigma)} &\leq \|X_t\|_{\cC^\alpha(\e_{b+t}^\sigma)} + \lambda \int_0^t \|K_{t-s} \, \cN^1[v_s,w_s]\|_{\cC^\alpha(e_{b+t}^\sigma)} \, \dd s \\
&\lesssim \|X_t\|_{\cC^\alpha(\p_a)} + \lambda \int_0^t \|v_s w_s\|_{\cC^\alpha(e_{b+t}^\sigma)} \, \dd s \\
&\lesssim \|X_t\|_{\cC^\alpha(\p_a)} + \lambda \int_0^t (t-s)^{-a/\sigma} \|v_s\|_{\cC^\alpha(e_{b+s}^\sigma)} \|w_s\|_{\cC^\alpha(p_a)} \, \dd s \\
&\lesssim \|X_t\|_{\cC^\alpha(\p_a)} + \lambda \, \|w\|_{\cC([0,T],\cC^\alpha(p_a))} \int_0^t (t-s)^{-a/\sigma} \|v_s\|_{\cC^\alpha(e_{b+s}^\sigma)} \, \dd s \, ,
\end{equs}
where in the second step we used the convolution bound \eqref{e:Semigroup} while
in the third the product rule \eqref{e:Prod} and~\eqref{e:WeightsB}.
Now, recalling that $a/\sigma < 1$, Grönwall's lemma~\cite[Lemma 7.1.1]{D} implies the existence of
$C = C(\rho, \lambda, a, b, \sigma) > 0$ for which~\eqref{e:AprioriEstimateDifference} holds, thus the proof is concluded.
\end{proof}

We now have all the elements to complete the proof of Proposition~\ref{p:Global}.

\begin{proof}[of Proposition~\ref{p:Global}]

The proof of the statement is divided into three steps which amount to show that
1) the sequence $\{u^M\}_M$ converges to a unique limit $u$ which is a mild solution to~\eqref{e:SBEsystem}
and satisfies~\eqref{e:apriori},
2)~\eqref{e:SBEsystem} admits a unique solution in $C_\loc(\R_+,\CC^\alpha(\p_a))$, for $a,\alpha>0$
(so that it is necessarily given by $u$) and
3) that $u$ is a Markov process and is stationary.
Let us fix a number of parameters which will be used throughout, namely $T > 0$,
$\alpha, a, b, \sigma>0$, $\beta \in [0,1/2)$ such that $a < \sigma < 1$, and
$\alpha' > \alpha$, $\beta' \in (\beta, 1/2)$ and $a' \in (0,a)$.
\medskip

\noindent {\it 1) Convergence of $\{u^M\}_M$.} The result follows upon proving that the sequence is Cauchy
in the right (Banach) space.
Adopting the same notations as in the proof of Lemma~\ref{l:Tightness}, for $M,M'\in\N$ we have
\begin{equ}
u^M_t-u^{M'}_t= X^M_t-X^{M'}_t+\lambda\int_0^t K_{t-s} \, \cN^1[u^M_s - u^{M'}_s, u^M_s + u^{M'}_s] \, \dd s \, \dd s
\end{equ}
where $\cN^1[\cdot,\cdot]$ is defined according to~\eqref{e:BilinearForm}.
Hence, applying Lemma \ref{l:AprioriLinear} with $v=v^{M,M'} \eqdef u^M - u^{M'}$, $X =X^{M,M'} = X^M - X^{M'}$ and
$w=w^{M,M'} \eqdef u^M + u^{M'}$ leads to
\begin{equ}
\sup_{t\in[0,T]}\|v^{M,M'}\|_{\CC^\alpha(\e_{b+t}^\sigma)} \leq C\|X^{M,M'}\|_{C([0,T], \cC^\alpha(\p_a))}  \exp\Big(C \, \|w^{M, M'}\|_{C([0,T],\cC^\alpha(\p_a))}^{\frac{1}{1 - a/\sigma}}\Big)
\end{equ}
for some constant $C = C(\rho, \lambda, a, b, \sigma, T)$, which, together with~\eqref{e:aprioriM}
and~\eqref{e:CVSHE}, easily implies
\begin{equ} \label{e:CauchyProba1}
\lim_{M,M' \to +\infty} \mathbf E\Big[1 \wedge \sup_{t\in[0,T]}\|u^M - u^{M'}\|_{\cC^\alpha(e_{b+t}^\sigma)}\Big] = 0 \, .
\end{equ}
At this point, it remains to show that the convergence takes place
in a space with polynomial (instead of exponential) weight. To do so, note that
our choices of $\alpha,\alpha',\beta,\beta', a$ and $b$ ensure that the following embeddings hold
\begin{equ}
C^{\beta'}([0,T],\cC^{\alpha'}(\p_{a'})) \overset{c}{\hookrightarrow} C^\beta([0,T],\cC^\alpha(\p_a)) \hookrightarrow C([0,T],\cC^\alpha(e_{b+T}^\sigma)) \, ,
\end{equ}
the first even being compact and where the last space contains those functions such that the norm at the
left hand side of~\eqref{e:AprioriEstimateDifference} is finite.
Note that, for generic Banach spaces $X_1\subset X_2\subset X_3$ such that the first embedding is compact,
it is easy to see that for every $\delta>0$, there exists a constant $A_\delta>0$ such that
\begin{equ}
\|x\|_{X_2} \leq \delta \, \|x\|_{X_1} + A_\delta \, \|x\|_{X_3}\,.
\end{equ}
In particular, the above applies to $X_1=C^{\beta'}([0,T],\cC^{\alpha'}(\p_{a'}))$, $X_2= C^\beta([0,T],\cC^\alpha(\p_a))$ and
$X_3=C([0,T],\cC^\alpha(e_{b+T}^\sigma))$.
Therefore, invoking Lemma \ref{l:Tightness}, we deduce
\begin{equs}
\mathbf E\Big[1 &\wedge \|u^M - u^{M'}\|_{C^\beta([0,T],\cC^\alpha(\p_a))}\Big] \\
&\leq \delta \, \mathbf E\big[\|u^M - u^{M'}\|_{C^{\beta'}([0,T],\cC^{\alpha'}(\p_a))}\big] + A_\delta \, \mathbf E \big[1 \wedge \|u^M - u^{M'}\|_{C([0,T],\cC^\alpha(e_{b+T}))}\big] \\
&\lesssim \delta + A_\delta \, \mathbf E \big[1 \wedge \|u^M - u^{M'}\|_{C([0,T],\cC^\alpha(e_{b+T}))}\big] \, ,
\end{equs}
which, using \eqref{e:CauchyProba1} and that $\delta$ can be taken arbitrarily small, leads to
\begin{equ}[e:CauchyProba2]
\lim_{M, M' \to +\infty} \mathbf E\Big[1 \wedge \|u^M - u^{M'}\|_{C^\beta([0,T],\cC^\alpha(\p_a))}\Big] = 0 \, .
\end{equ}
The latter implies that the sequence $(u^M)_{M}$ converges in probability in $C^\beta([0,T],\cC^\alpha(\p_a))$
to a unique limit $u$ which, by~\eqref{e:aprioriM} and Fatou's lemma, satisfies~\eqref{e:apriori}, and,
up to passing to a subsequence, the convergence is almost sure.
A diagonal argument then guarantees that we can find an event of full $\Prob$-measure
in which the convergence takes place in $C^\beta([0,T],\cC^\alpha(\p_a))$ for every $T > 0$,
i.e. in $C_{\mathrm{loc}}^\beta(\R_+,\cC^\alpha(\p_a))$,
so that $\Prob$-a.s. $u\in C_{\mathrm{loc}}^\beta(\R_+,\cC^\alpha(\p_a))$.
Further, $u$ is $(\cF_t)_{t \geq 0}$-adapted because each $u^M$ is, and it
can be easily seen to satisfy~\eqref{e:SBEregMild} thanks to~\eqref{e:CVSHE}
and the continuity of the map
$C_{\mathrm{loc}}^\beta(\R_+,\cC^\alpha(\p_a))\ni f\mapsto \int_0^t K_{t-s}\cN^1[f_s]\dd s$.
In other words, $u$ is a mild solution to~\eqref{e:SBEsystem} driven by $(\eta,\vec{\xi})$.

%
\medskip

\noindent {\it 2) Path by path uniqueness. } It is an immediate consequence of the fact that
$\Prob$-a.s. $X, u\in C(\R_+, \cC^\alpha(\p_a))$ and Lemma~\ref{l:AprioriLinear} applied with
$v = u_1 - u_2$, $X \equiv 0$ and $w = u_1 + u_2$.
\medskip
%
%

\noindent {\it 3) Skew-reversibility, stationarity and Markovianity.} Since for every $t \geq 0$,
$u^M_t$ converges in probability to $u_t$ in $\cC^\alpha(\p_a)$ and skew-reversibility
holds for $u^M$ it also holds for the limit. Further, convergence in
probability implies convergence in law, so that, since by Lemma~\ref{l:Tightness},
$u_t^M \eqlaw \eta^1_M$, and $\eta^1_M$ converges in law to $\eta^1$
(because of Lemma \ref{e:CVSHE}, noticing that $X^M_0 = \eta^1_M$ and $X_0 = \eta^1$),
we deduce that $u_t$ and $\eta^1$ have the same law, which implies stationarity.

For the Markov property, note that $u$ is a measurable function of $(\eta,\vec{\xi})$, so that it can be written as
$u=\PHI(\eta,\vec{\xi})$. Now, defining the translation $\theta_\tau f\eqdef(f_{t+\tau})_{t\geq 0}$ for $\tau\geq 0$,
it is immediate to see that $\theta_\tau \PHI(\eta, \vec{\xi})$ and $\PHI(u(\tau), \theta_\tau \vec{\xi})$ are both
mild solutions to~\eqref{e:SBEsystem} driven by $(u_\tau,\theta_\tau\vec{\xi}^{\,1})$ so that, by uniqueness,
they coincide. In particular, since $u_\tau$ is $\cF_\tau$-measurable while $\theta_\tau \vec{\xi}^{\,1}$ is independent
of $\cF_\tau$ and has the same law as $\vec{\xi}^{\,1}$, for bounded functionals $F$ it holds
\begin{equ}
\mathbf E\big[F\big(\theta_\tau \PHI(\eta, \vec{\xi})\big)| \cF_\tau\big] = \mathbf E[F\big(\PHI(u_\tau, \vec{\xi})\big)] \, .
\end{equ}
which means that the process is Markov, hence the proof is completed.
\end{proof}

\subsection{Decay properties of a non-local PDE}\label{a:FK}

In this appendix, we determine decay properties of the non-local PDE~\eqref{e:Z}
derived in the proof of Proposition~\ref{p:corrdecay}.
Let us take a slightly more general approach,
namely consider the PDE
\begin{equ}[e:Zapp]
\partial_t Z=\Big(\tfrac12\Delta +L_{\PHI,f}\Big)Z+g Z\,,\qquad Z_0=h\,,
\end{equ}
where $Z=Z_t(x)$ for $(t,x)\in\R_+\times\R^2$, $f\ge0,g\ge0,h\ge0$ are locally bounded functions on $\R^2$,
and $L$ is the operator acting on functions $u\colon\R^2\to\R$ as
\begin{equ}[e:Lapp]
L_{\PHI,f}u(x)\eqdef f(x)\int\PHI(x-y) [u(y)-u(x)]\dd y
\end{equ}
for $\PHI$ a non-negative, compactly supported and even function.
Under suitable assumptions on $f,g,h$, the solution $Z$ decays exponentially fast, as the next lemma shows.

\begin{lemma}\label{l:decayZ}
Let $\PHI,f,g,h$ be functions on $\R^2$ such that $\PHI$ is non-negative, compactly supported and even,
$f,g$ are non-negative and such that there exists $\delta\in[1,1/2)$ for which
$\sup_x|f(x)\p_{\delta}(x)|\vee\sup_x|g(x)\p_{\delta}(x)|<\infty$, and $h$ is non-negative,
bounded and compactly supported in a ball
of radius $r_h>0$. Then, for every $T\geq 0$ and $\nu>0$
there exists a positive constant $C=C(\PHI,\delta,T, \nu)<\infty$
such that for all $|x|\geq 2 r_h$ and $t\leq T$, we have
\begin{equ}[e:decayZ]
Z_t(x)\leq C\|h\|_\infty e^{-C|x|}\,.
\end{equ}
\end{lemma}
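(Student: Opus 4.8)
The plan is to give $Z$ a probabilistic representation through a Feynman--Kac formula for the jump-diffusion generated by $\tfrac12\Delta+L_{\PHI,f}$, and then to estimate the resulting expectation by quantifying how unlikely it is for this process, started far from the origin, to come back near it within a time of order $T$. Since $\PHI$ is non-negative, even and, say, supported in the ball $B_0(\rho_0)$, the operator in \eqref{e:Lapp} can be written as $L_{\PHI,f}u(x)=m_\PHI f(x)\int \tfrac{\PHI(\zeta)}{m_\PHI}\big(u(x+\zeta)-u(x)\big)\,\dd\zeta$, with $m_\PHI=\int\PHI$, so that $\tfrac12\Delta+L_{\PHI,f}$ is the generator of the Markov process $W=(W_s)_{s\ge0}$ on $\R^2$ which evolves as a Brownian motion between jumps, jumps at rate $m_\PHI f(W_s)$, and at each jump makes a displacement of law $\PHI/m_\PHI$, hence of modulus at most $\rho_0$. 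Writing $\Prob_x,\Exp_x$ for $W$ started at $x$, the first step is the Feynman--Kac identity
\begin{equ}
Z_t(x)=\Exp_x\Big[h(W_t)\exp\Big(\int_0^t g(W_s)\,\dd s\Big)\Big]\,,
\end{equ}
which is standard given that $f,g\ge0$ are locally bounded; the non-explosion of $W$ on $[0,T]$, needed here, will follow a posteriori from the tail bounds below.

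Next I would reduce the estimate to a tail bound. Let $M_t\eqdef\sup_{s\le t}|W_s|$, and let $\nu$ denote a bound for $\|f\,\p_\delta\|_\infty\vee\|g\,\p_\delta\|_\infty$ (the constant $C$ in the conclusion is allowed to depend on this bound, together with $\PHI,\delta,T$). Then $\int_0^t g(W_s)\,\dd s\le\nu T(1+M_t)^\delta$, and since $h$ is supported in $B_0(r_h)$ the factor $h(W_t)$ vanishes unless $|W_t|\le r_h$; because $|x|\ge2r_h$, on that event $W$ has moved a distance at least $|x|/2$. Decomposing $W_s=x+B_s+J_s$ with $B$ the driving Brownian motion and $J_s=\sum_{i\le N_s}\zeta_i$ the jump part ($|\zeta_i|\le\rho_0$, $N_s$ the number of jumps up to time $s$), one has $M_t\le|x|+S+\rho_0 N_t$ and, on $\{|W_t|\le r_h\}$, $S+\rho_0 N_t\ge|x|/2$, where $S\eqdef\sup_{s\le t}|B_s|$. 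Using $(a+b)^\delta\le a^\delta+b^\delta$ for $\delta\in(0,1]$, the claim reduces to showing
\begin{equ}
\Exp_x\Big[\1_{\{S+\rho_0 N_t\ge|x|/2\}}\,e^{C_0(1+|x|^\delta+S^\delta+N_t^\delta)}\Big]\le C_1 e^{-c_0|x|}
\end{equ}
for suitable $C_0,C_1,c_0>0$ depending only on $\PHI,\delta,T,\nu$.

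For this, split the event as $\{S\ge|x|/4\}\cup\{N_t\ge|x|/(4\rho_0)\}$. On the first piece, $S$ obeys a Gaussian tail bound $\Prob_x(S\ge r)\lesssim e^{-c r^2/T}$, which via Cauchy--Schwarz dominates every exponential factor present (the expectations $\Exp_x[e^{2C_0 N_t^\delta}]$ and $\Exp[e^{2C_0 S^\delta}]$ being finite: the latter since $\delta<2$, the former by the argument that follows). On the second piece, the crucial point is a super-exponential, uniform-in-$x$ tail bound for $N_t$: conditionally on $B$, the counting process $N$ run up to its $n$-th jump time is stochastically dominated by a Poisson process of intensity $m_\PHI\nu\,(1+|x|+S+\rho_0 n)^\delta$, since up to that time $|W_\cdot|\le|x|+S+\rho_0 n$, whence
\begin{equ}
\Prob_x\big(N_t\ge n\mid B\big)\le\Big(\frac{e\,m_\PHI\nu T\,(1+|x|+S+\rho_0 n)^\delta}{n}\Big)^n\,.
\end{equ}
Because $\delta<1$, once $n$ exceeds a fixed multiple of $1+|x|+S$ this is at most $n^{-(1-\delta)n/2}$, which decays faster than any exponential and therefore beats the Feynman--Kac weight $e^{C_0 N_t^\delta}$; summing over $n\ge|x|/(4\rho_0)$ and integrating over $B$ (whose contribution $e^{C_0 S^\delta}$ has finite expectation) yields a bound of size at most $e^{-c_0|x|}$ on this piece. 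Combining the two pieces gives the displayed inequality, hence $Z_t(x)\le C\|h\|_\infty e^{-c|x|}$ uniformly over $t\le T$ and $|x|\ge2r_h$, i.e.\ \eqref{e:decayZ} after renaming constants.

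The main obstacle is exactly this last tail bound: the jump rate of $W$ depends on its own, a priori unbounded, position, so $N$ is a self-exciting counting process, and one must exploit that the self-excitation is only sublinear in the jump count (since $\delta<1$) to keep $N_t$ with super-exponential tails. The hypothesis $\delta<1/2$, stronger than what the abstract estimate needs, is what makes the lemma usable in Proposition~\ref{p:corrdecay}: there $f$ and $g$ are themselves of order $|x|^\delta$, so $\nu\sim|x|^\delta$ and the weight $e^{\nu T(1+M_t)^\delta}$ is of size $e^{O(|x|^{2\delta})}$, which is dominated by $e^{-c|x|}$ precisely when $2\delta<1$.
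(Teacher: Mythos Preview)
Your proof is correct and follows essentially the same strategy as the paper: Feynman--Kac representation for the jump-diffusion, control of the exponential weight via the running maximum, and a split of the displacement into Brownian and jump contributions handled respectively by a Gaussian tail bound and a Poisson-type tail for the jump count exploiting $\delta<1$. The paper organises the bookkeeping slightly differently---it first applies Cauchy--Schwarz to separate $\Prob_x(|Y_t|\le r_h)$ from $\Exp_x[e^{2\int_0^t g(Y_s)\,\dd s}]$ and bounds each factor on its own, and it dominates the jump count by counting jumps from a fixed ball rather than conditioning on the Brownian path---but the core estimates are the same, and your final remark on why the application requires $\delta<1/2$ rather than merely $\delta<1$ is on point.
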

\begin{proof}
Throughout the proof we will denote by $C$ a generic constant
depending only on the parameters in the statement and whose value might change
from line to line.

As mentioned in the proof of Proposition~\ref{p:corrdecay},
$L_{\PHI,f}$ is the generator of a pure jump Markov process on $\mathbb{R}^2$
that has rate $f(x) \PHI (y-x) \dd y$  of jumping from $x$ to
$\dd y$. Therefore, $(1 / 2) \Delta + L_{\PHI,f}$ is the generator of a Markov
process $Y$ on $\mathbb{R}^2$ that has a Brownian and a jump part, the
Brownian noise being independent from the Poisson jump noise. We call
$\P^Y_x$ the law of $Y$ with initial condition $x$.
The reader can then easily check that the following Feynman-Kac formula
holds
\begin{equ}
Z_t(x)=\E^Y_x\Big[h(Y_t) e^{\int_0^tg(Y_s)\dd s}\Big]
\end{equ}
which, by Cauchy-Schwarz, implies
\begin{equ}[e:CSpde]
Z_t(x)^2\leq \|h\|_\infty^2\,\P^Y_x(|Y_t|\leq r_h)\,\E^Y_x\Big[ e^{2\int_0^tg(Y_s)\dd s}\Big]
\end{equ}
and we will separately bound the last two factors at the right hand side starting from the first.

Let $x\in\R^2$ be such that $|x|\geq 2r_h$. As $Y$ starts at $x$, on the event $| Y_t | \leq r_h$,
$Y$ has to move at least at distance $|x|/2$ in the time-interval $[0,t]$.
Let $N$ be the number of jumps that the process does in $[0,t]$ starting
from positions smaller than $|x|+r_\PHI$ and $E$ be the event that $N>|x|/(4r_{\PHI})$, $r_{\PHI}$ being
the radius of the support of $\PHI$, which is finite by assumption.
We claim that
\begin{equ}[poissonsmall]
\mathbb{P}^Y_{x} (E) \lesssim e^{- C | x |}\,,
\end{equ}
for $C$ some constant depending on all the parameters in the statement, but independent of $x$.
To see why~\eqref{poissonsmall} holds, note that $N$ is dominated by a Poisson random variable $X$
of parameter $\lambda \lesssim t \sup \{ f (y), | y | \leq | x | + r_\PHI \} \lesssim t |x|^{2 \delta}$.
Now, an elementary estimate then gives that the probability of $X>k$ is upper bounded by $\lambda^k / k!$,
which, via an application of Stirling's formula, implies~\eqref{poissonsmall} since $2\delta<1$.
On the other hand, $Y$'s jumps are at most of size $r_\PHI$, hence, on $E^c$,
the contribution to $|Y_t-x|$ coming from the jumps counted by $N$ is at most $Nr_\PHI\leq |x|/4$ and
the Brownian noise must do the rest. Thus,
\begin{equ}[browniansmall]
\P^Y_x(|Y_t|\leq r_h\;;\;E^c)\leq \P_0^B\Big(\sup_{0 < s_1 < s_2 < t} |B_{s_2} - B_{s_1}|\geq |x|/4\Big) \lesssim e^{-C|x|^2}\,.
\end{equ}
We now turn to the second factor in~\eqref{e:CSpde} for which,
in view of~\eqref{poissonsmall} and~\eqref{browniansmall}, it suffices to show that
\begin{equ}[e:finalPDE]
\E^Y_x\Big[ e^{2\int_0^tg(Y_s)\dd s}\Big]\lesssim e^{C | x |^{\gamma}}
\end{equ}
for some $\gamma<1$.
Denote by $M$ the maximum of $|Y|$ in the interval $[0,t]$. Then, using
that $g$ grows at most as $|x|^{2\delta}$, we have
\begin{equs}
\E^Y_x\Big[ e^{2\int_0^t g(Y_s)\dd s}\Big]\lesssim \E^Y_x\Big[ e^{C t M^{2\delta}}\Big]\leq e^{C t |x|^{2\delta}}+
\sum_{n>2|x|}\P^Y_x(M\in[n-1,n]) \; e^{C t n^{2\delta}}\,.
\end{equs}
To control the probability of the event $\{M\in[n-1,n]\}$, we argue as above.
If $M\in[n-1,n]$, then either $Y$ has $n / (4 r_{\PHI})$ jumps from positions
with norm smaller than $n + r_\Phi$, or the Brownian noise induces a displacement of at least  $n / 2$ in time $t$.
The latter event has probability $\lesssim \exp (- n^2 / 2 t)$.
The probability of the  former event is upper bounded by the probability
that a Poisson random variable of parameter $\lambda \lesssim t n^{2\delta}$
is larger than $n / (4 r_\PHI)$, which decays super-exponentially in $n$, as mentioned above.
Therefore, $\mathbb{P}^Y_{x_0} (M \in [n - 1, n])$
decays at least exponentially in $n$ and the sum is small since $2\delta<1$.
It follows that~\eqref{e:finalPDE} holds for $\gamma=2\delta<1$, and the proof of the statement is complete.
\end{proof}

\section{The action of $\gen$: proofs} \label{a:Basics}

In this appendix we provide the proof of Lemma~\ref{l:ActionGen}, which will be split in two steps.
Let us begin with the following.

\begin{lemma}
For any $n\in\N$ and $\psi\in\cS(\R^{2n})\cap\fock{n}{}{\tau}$, it holds that
$\gen\psi=(\gensy+\genap+\genam)\psi$, where the operators at the
right hand side are defined according to formulas~\eqref{e:gensy}-\eqref{e:genam:fock}.
\end{lemma}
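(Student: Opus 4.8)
The plan is to read off the action of $\gen$ from the semimartingale decomposition of $u^\tau$. Applying It\^o's formula to $t\mapsto F(u^\tau_t)$ for a smooth cylinder functional $F$, using~\eqref{e:SBEWeakMild}, shows that $F(u^\tau_t)-F(u^\tau_0)-\int_0^t(\gensy F+\mathcal{K}F)(u^\tau_s)\,\dd s$ is an $(\cF_t)$-martingale, where $\gensy$ is the Ornstein--Uhlenbeck generator associated with the linear part $\tfrac12\nabla\cdot R_\tau\nabla + \nabla\cdot\sqrt{R_\tau}\vec{\xi}^\tau$ of~\eqref{e:SBEsystem} (which has $\P^\tau$ as invariant law) and $\mathcal{K}F\eqdef\lambda\nu_\tau^{3/4}\langle\cN^\tau[\eta^\tau],\mathrm D F\rangle$ is the drift contribution of the nonlinearity, $\mathrm D$ denoting the appropriate Malliavin/Fr\'echet gradient. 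A standard argument (after checking $\gensy F+\mathcal{K}F\in\L^2(\P^\tau)$, which for $F=I^\tau_n(\psi)$ with $\psi\in\cS(\R^{2n})$ follows from the fast decay of $\Theta_\tau$, so that $\genap\psi$ and $\genam\psi$ are genuine elements of $\fock{n+1}{}{\tau}$ and $\fock{n-1}{}{\tau}$) then identifies $\gen F$ with $\gensy F+\mathcal{K}F$ and yields $F\in\mathrm{Dom}(\gen)$. It thus remains to compute $\gensy F$ and $\mathcal{K}F$ on $F=I^\tau_n(\psi)$ and match them with the claimed formulas; I would work throughout in the Fock-space picture via the isometry $I^\tau$.

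The Ornstein--Uhlenbeck piece is classical: it maps $\SH_n^\tau$ to itself, with the $\fock{n-2}{}{\tau}$-contributions of the first- and second-order parts cancelling, and its Fourier multiplier on $\fock{n}{}{\tau}$ equal to $-\tfrac12|\sqrt{R_\tau}\,p_{1:n}|^2$, which is~\eqref{e:gensy}. For the drift $\mathcal{K}F$, observe first that $\cN^\tau[\eta^\tau]=(\fw\cdot\nabla)\big(\rho_\tau^{\ast 2}\ast\Wick{(\eta^\tau)^2}\big)$ lies entirely in the second Wiener chaos — the would-be zeroth-chaos term $\E^\tau[(\eta^\tau)^2]$ is constant in space, hence annihilated by the gradient — with Fourier kernel given, up to explicit constants, by $\nonlinp$ in~\eqref{e:K+K-}; meanwhile $\mathrm D_x F=n\, I^\tau_{n-1}(\psi(x,\cdot))$ lies in the $(n-1)$-st chaos. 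By the Wiener-chaos product formula (equivalently, Fourier convolution), $\mathcal{K}F$ decomposes into components in the chaoses $n+1$, $n-1$ and, a priori, $n-3$. A change of variables together with the permutation-symmetry of $\psi$ identifies the $\fock{n+1}{}{\tau}$- and $\fock{n-1}{}{\tau}$-components with $\genap\psi$ and $\genam\psi$ in~\eqref{e:genap:fock}--\eqref{e:genam:fock}; the prefactors $\tfrac{2}{n+1}\sum_{i<j}$ and $n\sum_i$ arise from the symmetrisation.

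The one genuinely delicate point is that the a priori $\fock{n-3}{}{\tau}$-component of $\mathcal{K}F$ vanishes. I would argue this by duality, avoiding a direct evaluation of the corresponding momentum integral. Stationarity of $\P^\tau$ (Proposition~\ref{p:Global}) gives $\E^\tau[\langle\cN^\tau[\eta^\tau],\mathrm D H\rangle]=0$ for every cylinder $H$, since $\E^\tau[\gen H]=0$ and the Ornstein--Uhlenbeck part already has vanishing expectation; combining this with the Leibniz rule $\mathrm D(FG)=F\,\mathrm D G+G\,\mathrm D F$ shows that $\mathcal{K}$ is skew-symmetric on Schwartz--Fock (equivalently, smooth cylinder) elements, i.e. $\langle\mathcal{K}F,G\rangle_\tau=-\langle F,\mathcal{K}G\rangle_\tau$. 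Because $\mathcal{K}$ shifts the chaos index only by $+1$, $-1$ or $-3$, for any $G=I^\tau_{n-3}(g)$ with $g\in\cS(\R^{2(n-3)})$ the element $\mathcal{K}G$ lives in chaoses of order at most $n-2<n$, whence $\langle\mathcal{K}F,G\rangle_\tau=-\langle F,\mathcal{K}G\rangle_\tau=0$ by orthogonality of distinct chaoses; since such $G$ are dense in $\fock{n-3}{}{\tau}$, the $\fock{n-3}{}{\tau}$-component of $\mathcal{K}F$ is zero and hence $\mathcal{K}F=\genap\psi+\genam\psi$. Putting the pieces together gives $\gen\psi=(\gensy+\genap+\genam)\psi$ on $\cS(\R^{2n})\cap\fock{n}{}{\tau}$. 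I expect the main obstacle to be the bookkeeping of the second step — tracking the $\Theta_\tau$-weights produced by the mollification and the combinatorial symmetrisation factors so that everything matches $\nonlinp$ and $\nonlinm$ in~\eqref{e:K+K-} exactly — rather than the soft duality argument just described.
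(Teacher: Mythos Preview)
Your proposal is correct and follows the same It\^o-plus-product-formula scheme the paper sketches (the paper works first on tensor powers $h^{\otimes n}$ and then polarises, but this is cosmetic). The one place you genuinely diverge is in eliminating the putative $\fock{n-3}{}{\tau}$-component of $\mathcal{K}F$. Your duality argument is valid: $\mathcal{K}$ is first-order (so Leibniz holds) and has vanishing mean by stationarity, hence it is skew-symmetric; and a skew-symmetric operator whose chaos shifts are confined to $\{+1,-1,-3\}$ cannot have a nonzero $-3$ piece, since the adjoint of that piece would shift by $+3$. The standard route in the references the paper cites instead exploits the divergence form of $\cN^\tau$ directly: the $r=2$ contraction in the product formula is proportional to $\E^\tau\big[\cN^\tau[\eta^\tau](h)\,\Wick{(\eta^\tau(h))^2}\big]$, and unwinding this with $H\eqdef\rho_\tau^{\ast 2}\ast h$ gives a multiple of $\int_{\R^2}\partial_1 H\cdot H^2 = \tfrac13\int_{\R^2}\partial_1(H^3)=0$. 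Your argument trades this single integration by parts for the structural input of stationarity (already available from Proposition~\ref{p:Global}); it is marginally less self-contained but has the pleasant side effect that the skew-symmetry of $\gena$ stated in Lemma~\ref{l:ActionGen} falls out without further work.
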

\begin{proof}
The proof follows a (by now) standard procedure which has been implemented in a number of references
for slightly different equations (see e.g.~\cite{GPGen, CES, CGT}),
so we refrain from carrying out the (long but) straightforward computations.
The main difference with respect to the literature listed above
is that our equation is defined on the full space and the Gaussian analysis is done
with respect to a regularised measure
but the bulk of the argument is exactly the same. At first, one considers functionals of the
form $I_n^\tau(h^{\otimes n})=n! H_n(\eta^\tau(h))$ (with $H_n$ the $n-th$ Hermite polynomial) for $h\in\cS(\R^2)$ and applies
It\^o's formula to $ H_n(u^\tau_t(h))$. Then, one reads off the drift of the latter from which
one deduces the action of $\gen$ on $I_n^\tau(h^{\otimes n})$. By basic calculus
on the $n$-th Wiener-It\^o integrals $I^\tau_n$ (and in particular the product formula~\cite[Proposition 1.1.3]{Nualart})
it is then possible to single out the contributions which respectively give rise to $\gensy$, $\genap$
and $\genam$. Hence, the equality $\gen=\gensy+\genap+\genam$ holds
on functionals of the form $h^{\otimes n}$ which then
can be extended to $\cS(\R^{2n})\cap\fock{n}{}{\tau}$ by
polarisation.
\end{proof}

In order to extend the definition of the operators $\gensy$, $\genap$
and $\genam$ to the whole of $\core$, show that they satisfy the stated symmetry
properties and verify that $\core$ is indeed a core, we will need the following lemma
which holds for $\tau>0$ {\it fixed}.

\begin{lemma}\label{l:GSC}
The operators $\gensy$, $\genap$ and
$\genam$ are well-defined and continuous on $\core$ and
there exists a constant $C=C(\tau)>0$ 
for all $\psi\in\core\cap\fock{n}{}{\tau}$
\begin{equs}
\|\gena_{\pm}\psi\|_\tau&\leq C n^{3/2}\|\psi\|_\tau\,.\label{e:ContA}
\end{equs}
Furthermore, the graded sector condition~\cite[Section 2.7.4]{KLO} holds for fixed $\tau$, i.e.
there exists a constant $C=C(\tau)>0$ such that, for all $\psi\in\fock{n}{1}{\tau}$, 
we have
\begin{equ}[e:GSC]
\|\gena_{\pm}\psi\|_{\fock{}{-1}{\tau}}\leq C \sqrt{n}\|\psi\|_{\fock{}{1}{\tau}}\,.
\end{equ}
\end{lemma}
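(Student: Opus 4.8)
The plan is to establish the bounds in Lemma~\ref{l:GSC} by direct estimation of the Fourier-side expressions for $\genap$ and $\genam$ given in \eqref{e:genap:fock}--\eqref{e:genam:fock}, relying on the fact that $\tau$ is fixed so that the mollifiers $\Theta_\tau$ and the measures $\Xi_n^\tau$ provide strong (non-uniform in $\tau$) decay. First I would treat $\gena_-$ and $\gena_+$ separately but note they are adjoint on $\core$, so after one of the two bounds is proven the other follows by duality; for \eqref{e:ContA} it suffices to bound, say, $\genam$ (lowering the chaos) directly, and for \eqref{e:GSC} the bound on $\gena_+$ will follow from that on $\gena_-$ upon recalling $(\gena_+)^* = -\gena_-$ and the duality between $\fock{}{1}{\tau}$ and $\fock{}{-1}{\tau}$.

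For \eqref{e:ContA}, starting from \eqref{e:genam:fock}, I would write $\|\genam\psi\|_\tau^2$ as an integral over $\R^{2(n-1)}$ against $\Xi_{n-1}^\tau$ of the squared modulus of a sum of $n-1$ terms, each of which is $\lambda\nu_\tau^{3/4}\, n \int \nonlinm_{q,p_i-q}\hat\psi(q,p_i-q,\dots)\,\dd q$. Using Cauchy--Schwarz in the sum over $i$ (costing a factor $n-1\le n$) and then Cauchy--Schwarz in the $q$-integral against the finite measure $\Theta_\tau(q)\Theta_\tau(p_i-q)\,\dd q$ (here the key point is that $\Theta_\tau$ is bounded with rapid decay and $|\nonlinm_{q,q'}|\lesssim |q+q'|\,\Theta_\tau(q)\Theta_\tau(q')$, so the relevant $q$-integral $\int |\fe_1\cdot p_i|^2\Theta_\tau(q)\Theta_\tau(p_i-q)\,\dd q$ is finite and, crucially, controlled by a constant depending on $\tau$ times a polynomial weight in $p_i$ that is absorbed by the Schwartz decay of $\hat\psi$), one arrives at a bound of the form $\lambda^2\nu_\tau^{3/2} n^2 \cdot (n-1)\cdot C(\tau)\,\|\psi\|_\tau^2$, i.e. $\lesssim_\tau n^3\|\psi\|_\tau^2$, giving \eqref{e:ContA} after taking square roots. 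The reshuffling of the $\Xi^\tau$ factors between the $n$-variable and $(n-1)$-variable measures, together with $n!$ versus $(n-1)!$ bookkeeping, produces the combinatorial powers of $n$; one must be careful that the change of variables $q\mapsto p_i-q$ and the symmetry of $\psi$ are used to symmetrise before applying Cauchy--Schwarz.

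For \eqref{e:GSC}, the same computation is repeated but now with the extra weights $(1+\tfrac12|\sqrt{R_\tau}(\cdot)|^2)^{\mp1}$ inserted. The gain is that on the domain of integration $|\sqrt{R_\tau}(q,p_i-q,\dots)|^2 \ge |\sqrt{R_\tau}q|^2 + |\sqrt{R_\tau}(p_i-q)|^2$, so the $\fock{}{-1}{\tau}$ weight in the output together with the rapidly decaying $\Theta_\tau(q)\Theta_\tau(p_i-q)$ makes the $q$-integral converge with a bound that is \emph{linear} rather than cubic in $n$: one copy of the weight is used to make the $q$-integral finite uniformly, and what remains is comparable to the $\fock{}{1}{\tau}$-norm of $\psi$, costing only a single factor $n$ from the sum over $i$ (after one Cauchy--Schwarz). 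This matches the statement of the graded sector condition as in~\cite[Section 2.7.4]{KLO}. Finally I would record that continuity of $\gensy$, $\genap$, $\genam$ on $\core$ is immediate: $\gensy$ is multiplication by $-\tfrac12|\sqrt{R_\tau}p_{1:n}|^2$ which on $\fock{}{2}{\tau}$ is bounded, and for $\gena_\pm$ the bounds \eqref{e:ContA} plus the super-polynomial decay of $(\|\psi_n\|)_n$ defining $\core$ show the images have the same decay, hence lie in $\core$ and depend continuously on the input.

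The main obstacle I anticipate is not conceptual but bookkeeping: correctly tracking how the measure $\Xi^\tau$ and the factorials $n!$ distribute when one variable is split into two (for $\gena_+$) or two are merged into one (for $\gena_-$), and verifying that the $q$-integrals of the form $\int \Theta_\tau(q)\Theta_\tau(p-q)(1+\tfrac12|\sqrt{R_\tau}(q,p-q)|^2)^{-1}\,\dd q$ are bounded by a $\tau$-dependent constant times a polynomially-bounded (in $p$) quantity that is harmless against Schwartz test functions — this uses that $\hat\rho$, hence $\Theta_\tau$, is Schwartz, so $\Theta_\tau(q)$ alone makes the $q$-integral converge even before using the Sobolev weight. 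Once these finiteness statements are in place, the powers of $n$ fall out of elementary combinatorics and Cauchy--Schwarz. I would also remark, as the excerpt itself notes in Remark~\ref{rem:GSCgenafl}, that \eqref{e:GSC} is only a statement for fixed $\tau$ and is \emph{not} uniform in $\tau$ (the constant $C(\tau)$ blows up like $\nu_\tau\log\tau$), which is precisely why the refined splitting $\genash/\genafl$ is needed later; this lemma is used only to justify well-posedness and the core property for fixed $\tau$.
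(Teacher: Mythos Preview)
Your strategy is essentially the paper's: Cauchy--Schwarz together with the rapid decay of $\Theta_\tau$ for fixed $\tau$. The paper works with $\genap$ rather than $\genam$ and packages things as a single bound $\|\genap\psi\|_{\fock{}{\gamma}{\tau}}\lesssim_\tau n^{3/2}\|\psi\|_{\fock{}{\gamma}{\tau}}$ for all $\gamma\ge 0$, which at $\gamma=2$ gives continuity on $\core$ and at $\gamma=0$ gives \eqref{e:ContA}; for \eqref{e:GSC} it simply cites \cite[Lemma~2.4]{CGT} without reproving it.

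One slip to fix: the leftover factor $|\fe_1\cdot p_i|^2$ is \emph{not} absorbed by ``Schwartz decay of $\hat\psi$'' --- elements of $\core$ lie only in $H^2_\tau$, not in $\cS$, so \eqref{e:ContA} as an $L^2\!\to\!L^2$ statement cannot use any smoothness of $\psi$. The correct absorption is by the mollifier: either write $|p_i|^2\lesssim|q|^2+|p_i-q|^2$ and let $\Theta_\tau(q)\Theta_\tau(p_i-q)$ swallow these quadratic factors (this is what the paper does), or observe directly in your $\genam$ computation that $|\fe_1\cdot p_i|^2(\Theta_\tau\!*\!\Theta_\tau)(p_i)\lesssim_\tau 1$ since $\Theta_\tau\!*\!\Theta_\tau$ is Schwartz, so no polynomial weight ever survives. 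With this correction your bookkeeping $(n-1)!\cdot n^2\cdot(n-1)^2/n!\sim n^3$ for the squared norm indeed gives \eqref{e:ContA}.
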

\begin{remark}\label{rem:WeakvsStrong}
The dependence on $\tau$ in the graded sector condition~\eqref{e:GSC} can be explicitly tracked and it can
be shown that $C=C(\tau)\sim(\log\tau)^{1/2}$. That is, the graded sector condition is {\it not satisfied
uniformly in $\tau$} (see also the discussion in Remark~\ref{rem:GSCgenafl}).
\end{remark}
\begin{proof}
Concerning well-posedness and continuity, they are obvious for $\gensy$,
while for $\genap$ and $\genam$ the argument is similar so we will only focus on the former.
Let $\gamma\geq 0$ and $\psi\in\cS(\R^{2n})\cap\fock{n}{}{\tau}$. By Cauchy-Schwarz, we get
\begin{equs}
\|\genap \psi\|_{\fock{}{\gamma}{\tau}}^2 \lesssim_\tau &(n+1)n^2\, n! \int \Xi_n^\tau(\dd p_{1:n}) |\hat\psi(p_{1:n})|^2\times\label{b:ContM}\\
& \times\int |p_1|^2\big(1 + |\sqrt{R_\tau}(p_1-q,q,p_{2:n})|^2\big)^\gamma  \Theta_\tau(q) \Theta_\tau(p_1-q)\dd q\,.
\end{equs}
Now, bounding $|p_1|^2\lesssim |q|^2+|p_1-q|^2$ and using that $\Theta_\tau$ decays fast (recall its definition in~\eqref{e:Mollifiers}),
the integral over $q$ can be bounded above by a $\tau$-dependent constant times $(1+|\sqrt{R_\tau}p_{1:n}|^2)^{\gamma}$, 
from which we deduce
\begin{equ}
  \label{e:ContM}
\|\genap \psi\|_{\fock{}{\gamma}{\tau}} \lesssim_\tau n^{3/2} \, \|\psi\|_{\fock{}{\gamma}{\tau}}^2\,.
\end{equ}
In particular, by taking $\gamma=2$, we conclude that
the operator $\genap$ can be extended by continuity to a map from $\core$ to $\fock{}{\gamma}{\tau}\subset\fock{}{}{\tau}$,
and, by taking $\gamma=0$, satisfies~\eqref{e:ContA}.

At last it remains to argue~\eqref{e:GSC}. The same bound was obtained
in~\cite[Lemma 2.4]{CGT} in the periodic setting, but the estimate can be similarly shown in the
present context (for $\tau$ fixed!) and the proof is thus omitted.
\end{proof}

We are now ready to complete the proof of Lemma~\ref{l:ActionGen}.

\begin{proof}[of Lemma~\ref{l:ActionGen}]
We have shown above that on $\cS(\R^{2n})\cap\fock{n}{}{\tau}$,
$\gen$ coincides with $\gensy+\genap+\genam$. Lemma~\ref{l:GSC} guarantees that
the operators $\gensy, \genap$ and $\genam$ are continuous from $\core$ to $\fock{}{}{\tau}$
so that, since by Lemma~\ref{lem:Contractivity} $\gen$ is closed, we deduce that
$\core\subset{\rm Dom}(\gen)$ and that, on $\core$, $\gen=\gensy+\genap+\genam$.
The symmetry properties can be straightforwardly checked from the formulas~\eqref{e:gensy}-\eqref{e:genam:fock}
so that we only need to prove that $\core$ is a core for $\gen$, which amounts to show
that the closure of $\gen$ on $\core$ coincides with $\gen$.


At first, we claim that the closure of the range of $(1-\gen)$ restricted to $\core$ is dense in $\fock{}{}{\tau}$.
We will prove such claim at the end, but first we argue that it implies the result.
Let $x \in \mathrm{Dom}(\gen)$ (so that $(1-\gen)x\in\fock{}{}{\tau}$) and
let $(x_n)_{n \geq 1}$ be a sequence in $\core$ such that $(1 - \gen) x_n \to (1 - \gen) x$ in $\fock{}{}{\tau}$,
whose existence is guaranteed by the claim above.
To conclude, it only remains to show that $(x_n)_{n \geq 1}$ converges to $x$ in $\fock{}{}{\tau}$.
Notice that by~\eqref{e:BoundRes} (which is applicable since, for every $n$,
$x_n\in\core\subset\fock{}{1}{\tau}\cap{\rm Dom}(\gen)$), we have
\begin{equ}
\|x_n - x_m\|_\tau \lesssim\|(1 - \gen) x_n - (1 - \gen) x_m\|_\tau \, ,
\end{equ}
so that $(x_n)_{n \geq 1}$ is Cauchy in $\fock{}{}{\tau}$.
Let $\tilde x \in \fock{}{}{\tau}$ be the corresponding limit.
As $\gen$ is closed, $\tilde x \in \mathrm{dom}(\gen)$ and we have
$(1 - \gen) \tilde x = (1 - \gen) x$. Hence,
applying the resolvent to both sides we obtain $\tilde x = x$ as desired.
\medskip

We are left to prove the claim.
For $n\geq 1$, let us define the operator $\gen_{ n}$ acting on
$\fock{\leq n}{}{\tau} \eqdef \bigoplus_{k=0}^n \fock{k}{}{\tau}$ as
\begin{equ}
\cL^\tau_{ n} \eqdef P^{ n} \big(\gensy + \gena \big) P^{ n} \, ,
\end{equ}
where $P^{n}$ is the orthogonal projection of $\fock{}{}{\tau}$ onto $\fock{\leq n}{}{\tau}$.
Now, $-\gensy$ is positive self-adjoint while $P^n\gena P^{n}$
is skew-symmetric and bounded for $\tau$ fixed (see~\eqref{e:ContM}),
therefore $(1 - \gen_{ n})^{-1}$ is a bounded operator on $\fock{\leq n}{}{\tau}$,
taking values in $\fock{\leq n}{2}{\tau}(\subset\core)$.
Note that the inclusion holds
because of the projection: any element in $\fock{\leq n}{2}{\tau}$ is compactly supported
in chaos so that, in particular, it decays sufficiently fast.
Let us fix $\phi \in \fock{}{}{\tau}$ and set
$r^{(n)} \eqdef (1 - \gen_{ n})^{-1}P^n\phi \in\fock{\leq n}{2}{\tau}$.
A consequence of \cite[Proposition 2.8]{CGT}
is that for any $\alpha\geq 0$, there exists a constant $C=C(\tau)>0$ such that $r^{(n)}$ satisfies
\begin{equ}[e:aprioriBAD]
\sup_{k, n} \, (1+k^\alpha)\| r^{(n)}\|_\tau\leq C\,.
\end{equ}
Now, observe that
\begin{equ}
\|(1 - \gen) r^{(n)} - \phi\|_\tau \leq \|(\Id - P^{ n}) \phi\|_\tau + \|\genap r^{(n)}_n\|_\tau\lesssim \|(\Id - P^{ n}) \phi\|_\tau+n^{3/2}\|r^{(n)}_n\|_\tau\, ,
\end{equ}
where in the second bound we exploited~\eqref{e:ContA}. But now, the two terms
at the right hand side vanish as $n\to\infty$ - the first since $\phi\in\fock{}{}{\tau}$,
and the second by the polynomial decay in $n$ given by~\eqref{e:aprioriBAD}.
Thus, the proof is concluded.
\end{proof}

\section{The Replacement Lemmas: integral approximations estimates } \label{s:TechnicalEstimate}

In this appendix, we detail the main technical ingredient which is at the heart of the proof of 
the Replacement Lemmas in Section~\ref{sec:apprG}. 
Before stating it, let us introduce some useful notations which will be used throughout the section. 
For  $p_{1:n} \in \R^{2n}$, $q\in\R^2$, and $i \in \{1, ..., n\}$, we denote by  $q' \eqdef p_i - q$ and set 
\begin{equs}[e:NotationsRepl]
\Gamma^\tau_i \eqdef \tfrac{1}{2} \, |\sqrt{R_\tau} \, (q,q',&p_{1:n\setminus\{i\}})|^2 \, , \qquad \tilde \Gamma^\tau \eqdef |\sqrt{R_\tau} q|^2 + \tfrac{1}{2} |\sqrt{R_\tau} p_{1:n}|^2 \, , \\
\Gamma^{\fe_1}_i &\eqdef \tfrac{1}{2} \, |\fe_1 \cdot (q,q',p_{1:n\setminus\{i\}})|^2 \, .
\end{equs}
Notice that for any $a,b,c\in\R^2$ such that $a+b=c$, 
then  $|\sqrt{R_\tau} a|^2 + |\sqrt{R_\tau}b|^2 \asymp |\sqrt{R_\tau}a|^2 + |\sqrt{R_\tau}c|^2 \asymp |\sqrt{R_\tau}b|^2 + |\sqrt{R_\tau}c|^2$, 
and the same holds replacing  the scaled norm on $\R^2$ with $|\cdot|$. 
In particular, this ensures that, for any $i\in\{1,\dots,n\}$,  $\Gamma^\tau_i\asymp \tilde\Gamma^\tau$.

\begin{lemma} \label{lem:ReplTech}
Let $\PHI \colon \R^3 \to \R$ be a locally Lipschitz function such that there exists $C_\PHI > 0$ for which for every $t > 0$, $i \in \{1,2,3\}$
and $x_1, x_2, x_3 > 0$ where the partial derivatives of $\PHI$ are well-defined, it holds
\begin{equ} \label{e:ReplTechHyp1}
\PHI(t x_1, t x_2, x_3) = t^{-1} \, \PHI(x_1, x_2, x_3)
\end{equ}
\begin{equ} \label{e:ReplTechHyp2}
|\PHI(x_1, x_2, x_3)| \leq C_\PHI\frac{1 \vee \nu_\tau \tfrac{x_2}{x_1}}{x_1 + M \nu_\tau \, x_2} \, , \quad |\partial_i \PHI(x_1, x_2, x_3)| \leq C_\PHI\frac{1 \vee \nu_\tau \tfrac{x_2}{x_1}}{(x_1 + M \nu_\tau \, x_2) \, x_i} \, .
\end{equ}
Then, adopting the notations in~\eqref{e:NotationsRepl}, uniformly over $p_{1:n}\in\R^{2n}$, $i\in\{1,\dots,n\}$ and $M\geq 0$, we have 
\begin{equs}[e:ReplTech]
\Big|\frac{\lambda^2 \nu_\tau^{3/2}}{\pi^2} &\int_{\R^2}  \PHI\big(1 + \Gamma^\tau_i, \Gamma^{\fe_1}_i, L^\tau(\Gamma^\tau_i)\big) \, \cJ_{q,q'}^\tau \, \chi_{q, q'}^\sharp(p_{1:n}) \, \dd q \\
&- \int_0^{L^\tau\big(\tfrac12|\sqrt{R_\tau} p_{1:n}|^2\big)}  \int_0^{\nu_\tau^{-1}} \frac{\PHI(1,\varsigma,\ell)}{\pi \sqrt{\varsigma (1 - \nu_\tau \varsigma)}}\dd \varsigma \dd \ell  \Big| \lesssim  C_\PHI \lambda^2 \frac{1 \vee \log (1+M)}{1 \vee M^{1/2}} \, \nu_\tau \, ,
\end{equs}
where the map $L^\tau$ is given according to~\eqref{e:Leps}, and $\cJ_{q,q'}^\tau$ was introduced in~\eqref{e:diag}. 
\end{lemma}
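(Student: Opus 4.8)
The plan is to reduce the two-dimensional integral over $q$ to a one-dimensional integral by an explicit change of variables adapted to the anisotropic geometry, and then to control the error coming from: (i) replacing $\Gamma_i^\tau$ and $\Gamma_i^{\fe_1}$ by the simpler surrogates $\tilde\Gamma^\tau$ and $|\sqrt{R_\tau}q|^2$ inside $\PHI$ and $\cJ^\tau_{q,q'}$; (ii) the cutoff $\chi^\sharp_{q,q'}$; (iii) the finite range of the $\Theta_\tau$ factors inside $\cJ^\tau_{q,q'}$. First I would perform the substitution $q = \sqrt{R_\tau}\,\tilde q$, which turns $\dd q$ into $\nu_\tau^{1/2}\,\dd\tilde q$ and rewrites $|\sqrt{R_\tau}q|^2 = |\tilde q|^2$, $|\fe_1\cdot q|^2 = \nu_\tau|\fe_1\cdot\tilde q|^2$. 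After using $\Gamma_i^\tau \asymp \tilde\Gamma^\tau = |\tilde q|^2 + \tfrac12|\sqrt{R_\tau}p_{1:n}|^2$ and the Lipschitz bounds \eqref{e:ReplTechHyp2} on $\PHI$ to replace $\Gamma_i^\tau$ by $\tilde\Gamma^\tau$ and $\Gamma_i^{\fe_1}$ by $\nu_\tau|\fe_1\cdot\tilde q|^2/2$ (the cost of each replacement being controlled by $C_\PHI\nu_\tau$ times the integral, using that the relative error in each argument is $O(|\fe_1\cdot p_{1:n}|^2/\tilde\Gamma^\tau)$ and integrating), I would be left with a radial-in-$\tilde q$-modulus but angularly-weighted integrand. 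Passing to polar coordinates $\tilde q = r(\cos\varphi,\sin\varphi)$ and setting $\varsigma \eqdef \nu_\tau^{-1}\sin^2\varphi$ (so that $\nu_\tau|\fe_1\cdot\tilde q|^2 = r^2\nu_\tau\cos^2\varphi$... — more precisely, choosing the angular variable so that the combination appearing in $\PHI(1,\cdot,\ell)$ matches $\varsigma$), the angular integral produces exactly the weight $(\pi\sqrt{\varsigma(1-\nu_\tau\varsigma)})^{-1}$ on $[0,\nu_\tau^{-1}]$, using the homogeneity \eqref{e:ReplTechHyp1} to factor out the $r$-dependence.

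Next I would handle the radial integral. By \eqref{e:ReplTechHyp1}, $\PHI\big(1+\tilde\Gamma^\tau, \ldots\big)$ scales like $(1+\tilde\Gamma^\tau)^{-1}$ in its first two arguments jointly, so the $r$-integral $\int_0^\infty \PHI(\cdots)\,r\,\dd r$, after substituting $s = \tfrac12(2r^2 + |\sqrt{R_\tau}p_{1:n}|^2)$ or similar, becomes $\int$ over a logarithmic measure $\dd s/(1+s)$ — this is precisely the source of the $\log(1 + \tau/(1+x))$ in the definition \eqref{e:Leps} of $L^\tau$. The upper cutoff of this logarithmic integral comes from the support of $\cJ^\tau_{q,q'} = \Theta_\tau(q)\Theta_\tau(q')\Theta_\tau(q+q')$: since $\hat\rho$ is Schwartz, $\Theta_\tau(q) = (2\pi\hat\rho(\tau^{-1/2}\sqrt{R_\tau}^{1/2}q))^2$ effectively restricts $|\sqrt{R_\tau}q|^2 \lesssim \tau$, giving the cutoff at $L^\tau(\tfrac12|\sqrt{R_\tau}p_{1:n}|^2)$, while the lower endpoint at $\tfrac12|\sqrt{R_\tau}p_{1:n}|^2$ gives the argument of $L^\tau$. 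The errors from replacing $\cJ^\tau_{q,q'}$ by the sharp indicator $\1_{|\sqrt{R_\tau}q|^2 \le \tau}$, and from the tails of $\Theta_\tau$, I would bound by noting that $\Theta_\tau$ and its derivative decay faster than any polynomial, so these contribute $O(\nu_\tau^{3/2}\cdot\nu_\tau^{-1/2}) = O(\nu_\tau)$ after the $\dd q$ integration (the factor $\nu_\tau^{3/2}$ from the prefactor, $\nu_\tau^{-1/2}$ from the volume of $\{|\sqrt{R_\tau}q|\lesssim 1\}$).

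The cutoff $\chi^\sharp_{q,q'} = \1_{|\sqrt{R_\tau}q|\wedge|\sqrt{R_\tau}q'| > 2|\sqrt{R_\tau}p_{1:n}|}$ excludes precisely the region where $|\tilde q| \lesssim |\sqrt{R_\tau}p_{1:n}|$, which is a region of $\tilde q$-volume $O(|\sqrt{R_\tau}p_{1:n}|^2)$; there the integrand $\PHI$ is bounded by $C_\PHI(1+\tilde\Gamma^\tau)^{-1} \lesssim C_\PHI$, so this contributes $O(C_\PHI\nu_\tau^{3/2}\cdot|\sqrt{R_\tau}p_{1:n}|^2\cdot\nu_\tau^{-1/2})$; since $|\sqrt{R_\tau}p_{1:n}|^2$ is bounded on the relevant momentum configurations by... — actually it need not be bounded, so here I would instead use that the missing region contributes to the one-dimensional integral only the piece $\ell$ near $L^\tau(\tfrac12|\sqrt{R_\tau}p_{1:n}|^2)$, i.e.\ the discrepancy is absorbed into the definition of the limits of the $\ell$-integral, with the genuinely leftover error being $O(C_\PHI\nu_\tau)$. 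The main obstacle, and the step demanding the most care, is the $M$-dependent bound: when $\Gamma_i^{\fe_1}$ (equivalently $\varsigma$) is large — of order $\nu_\tau^{-1}$ — the factor $1/(x_1 + M\nu_\tau x_2)$ in \eqref{e:ReplTechHyp2} only gives decay $\lesssim 1/(M\varsigma)$, and one must show that the region $\varsigma \gtrsim 1$ contributes at most $C_\PHI\lambda^2\nu_\tau\,\frac{1\vee\log(1+M)}{1\vee M^{1/2}}$; this requires splitting the $\varsigma$-integral at a threshold of order $M$ (or $M^{1/2}$), using the bound $|\PHI(1,\varsigma,\ell)| \lesssim C_\PHI\nu_\tau/(M\nu_\tau\varsigma) = C_\PHI/(M\varsigma)$ against the measure $\dd\varsigma/\sqrt{\varsigma}$ and integrating $\int^{1/\nu_\tau} \varsigma^{-3/2}M^{-1}\dd\varsigma \lesssim M^{-1}$, while carefully tracking that the $\nu_\tau$-prefactor survives and that the logarithm appears only in the crossover region $\varsigma \in [1, M]$.
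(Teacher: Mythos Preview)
Your overall strategy is correct in spirit, but two concrete steps diverge from the paper's proof in ways that, as written, leave genuine gaps.

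First, you replace $\cJ^\tau_{q,q'}$ by the sharp indicator $\1_{|\sqrt{R_\tau}q|^2\le\tau}$, whereas the paper replaces $\cJ^\tau_{q,q'}\chi^\sharp_{q,q'}$ by the smooth weight $[1+(1+\tilde\Gamma^\tau)\tau^{-1}]^{-1}$. This choice is not arbitrary: after using homogeneity~\eqref{e:ReplTechHyp1} to extract a factor $(1+\tilde\Gamma^\tau)^{-1}$ from $\PHI$, the combined $\sigma$-dependence becomes $[(1+\sigma)(1+(1+\sigma)/\tau)]^{-1}$, which is precisely $\pi(\lambda^2\nu_\tau^{3/2})^{-1}$ times the Jacobian of $\ell=L^\tau(\sigma)$. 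Together with the change of variables $(\sigma,\varsigma)=(\tilde\Gamma^\tau,\,|\fe_1\cdot q|^2/|\sqrt{R_\tau}q|^2)$, the target double integral in~\eqref{e:ReplTech} then arises \emph{exactly}, with no further radial error to control. Your sharp cutoff would instead produce $\log\tfrac{1+\tau}{1+x}$ rather than $\log(1+\tfrac{\tau}{1+x})$, and you would need a separate argument for the discrepancy. Similarly, the paper replaces the second argument of $\PHI$ by $\tfrac{1+\tilde\Gamma^\tau}{|\sqrt{R_\tau}q|^2}|\fe_1\cdot q|^2$ (not $|\fe_1\cdot q|^2$ alone), again so that homogeneity yields $\varsigma$ cleanly.

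Second, and more seriously, your account of the $M$-dependence is confused. The factor $\tfrac{1\vee\log(1+M)}{1\vee M^{1/2}}$ does not come from splitting the $\varsigma$-integral of the main term; it comes from the \emph{error} integrals, through the $M\nu_\tau x_2$ term in the denominator of~\eqref{e:ReplTechHyp2}. The paper splits $\R^2$ into $Q_-\eqdef\{\nu_\tau^{1/2}|\fe_1\cdot q|\le 10(1\vee|\sqrt{R_\tau}p_{1:n}|)\}$ and its complement $Q_+$ (based on $|\fe_1\cdot q|$, not $|\sqrt{R_\tau}q|$). On $Q_+$ one has $\chi^\sharp_{q,q'}(p_{1:n})=1$ automatically, and the replacement errors for both $\cJ$ and the arguments of $\PHI$ are controlled via the mean value theorem together with~\eqref{e:Fact3}--\eqref{e:Fact4}, each yielding a factor $(1\vee M^{1/2})^{-1}$. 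On $Q_-$, both integrands are crudely bounded by $C_\PHI(1+|\sqrt{R_\tau}q|^2+|\sqrt{R_\tau}p_{1:n}|^2+M\nu_\tau|\fe_1\cdot q|^2)^{-1}$, and the integral over $Q_-$ is exactly~\eqref{e:Fact2}, which is where the $\log(1+M)$ enters. Your final paragraph conflates main term with error and never identifies this mechanism; in particular, your confusion over the $\chi^\sharp$ contribution (``actually it need not be bounded'') is resolved precisely by this $Q_\pm$ split.
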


\begin{remark} \label{rmk:ReplTech}
We will apply the previous statement to two specific functions, namely 
\begin{equ}
\PHI(\gamma, \gamma', \ell) = \frac{1}{\gamma + g_M^\tau(\ell) \, \gamma'}\,,\qquad\text{and}\qquad \PHI(\gamma, \gamma', \ell) = \frac{f(\ell) \, \gamma'}{(\gamma + g_M^\tau(\ell) \, \gamma')^2}\,,
\end{equ}
for some function $f$ on $\R_+$. 
The former will appear in the Replacement Lemma~\ref{p:1stReplLemma} and the latter in the Recursive Replacement 
Lemma~\ref{p:2ndReplLemma}. It is not hard to see that~\eqref{e:ReplTechHyp1} 
and~\eqref{e:ReplTechHyp2} hold for the first with $C_\PHI=1$, and the same is true for the second 
provided $f$ satisfies the assumptions of Lemma~\ref{p:2ndReplLemma}, with $C_\PHI=6 \, C_f$ and $C_f$ such that~\eqref{e:Assf} 
is verified. 
\end{remark}

In the proof of the above (and in that of Proposition~\ref{p:GA+Norm}), we will exploit 
the basic integral estimates summarised in the next lemma. 

\begin{lemma} \label{l:technical-integral}
Let $\gamma, A > 0$ and $M \geq 0$, then for every $\tau > 0$ it holds
\begin{equs} \label{e:Fact1}
\,&\int_{\R^2}\frac{\dd q}{(A^2 + |\sqrt{R_\tau} q|^2 + M \nu_\tau |\fe_1 \cdot q|^2)^2}\lesssim \nu_\tau^{-1/2} \frac{1}{A^2} \frac{1}{1 \vee M^{1/2}} \qquad\\
&\int_{\R^2}  \, \frac{\1_{\{\nu_\tau^{1/2} \, |\fe_1 \cdot q| \leq A\}}\;\dd q}{A^2 + |\sqrt{R_\tau} q|^2 + M \nu_\tau |\fe_1 \cdot q|^2} \lesssim \nu_\tau^{-1/2}\frac{1 \vee \log(1+M)}{1 \vee M^{1/2}} \, ,\qquad\label{e:Fact2}\\
&\int_{\R^2} \frac{\dd q}{|\sqrt{R_\tau} q|^2 + M \nu_\tau |\fe_1 \cdot q|^2} \Big[\Big(\frac{|\fe_1 \cdot q|}{A}\Big)^\gamma \wedge \Big(\frac{|\fe_1 \cdot q|}{A}\Big)^{-\gamma} \Big]\lesssim_\gamma \nu_\tau^{-1/2}\frac{1}{1 \vee M^{1/2}} \, ,\qquad\label{e:Fact3}\\
&\int_{\R^2} \frac{\dd q}{|\sqrt{R_\tau} q|^2 + M \nu_\tau |\fe_1 \cdot q|^2} \Big[\Big(\frac{|\sqrt{R_\tau} q|}{A}\Big)^\gamma \wedge \Big(\frac{|\sqrt{R_\tau} q|}{A}\Big)^{-\gamma}\Big]\lesssim_\gamma \nu_\tau^{-1/2}\frac{1}{1 \vee M^{1/2}}  \, .\qquad \label{e:Fact4}
\end{equs}
\end{lemma}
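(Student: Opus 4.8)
\textbf{Plan of proof for Lemma~\ref{l:technical-integral}.} All four estimates are elementary scaling computations, and the unifying first step is to remove the anisotropy by the substitution $q = R_\tau^{-1/2} \, p$, i.e.\ $q_1 = \nu_\tau^{-1/2} p_1$, $q_2 = p_2$, whose Jacobian is $\nu_\tau^{-1/2}$. Under this change of variables $|\sqrt{R_\tau} q|^2 = |p|^2$, while $\nu_\tau |\fe_1 \cdot q|^2 = |\fe_1 \cdot p|^2 = p_1^2$, so each integrand becomes isotropic in the numerator/denominator structure except for the extra weight $M p_1^2$ in the denominator. This explains the universal prefactor $\nu_\tau^{-1/2}$ appearing on the right-hand side of all of \eqref{e:Fact1}--\eqref{e:Fact4}: after the substitution, the remaining integrals are $\tau$-independent, and the only task is to track the dependence on $A$, $M$ and $\gamma$.

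For \eqref{e:Fact1}, after the substitution one is left with $\nu_\tau^{-1/2}\int_{\R^2} (A^2 + |p|^2 + M p_1^2)^{-2}\,\dd p$. The plan is to split according to whether $M \leq 1$ or $M > 1$. When $M \leq 1$ the denominator is comparable to $(A^2+|p|^2)^2$ and the integral is $\asymp A^{-2}$ by scaling $p = A\,r$. When $M > 1$, rescale $p_1 = M^{-1/2} A \, s_1$, $p_2 = A \, s_2$, with Jacobian $M^{-1/2} A^2$, to reduce the denominator to $A^4(1 + |s|^2 + \text{l.o.t.})^2$; the resulting integral converges and one collects $A^{-2} M^{-1/2}$. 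Combining the two regimes gives the factor $A^{-2}(1\vee M^{1/2})^{-1}$, which is exactly the claimed bound. For \eqref{e:Fact2}, the same two-regime strategy applies, but now the integrand decays only like $|p|^{-2}$ at infinity, so the cutoff $\1_{\{|p_1| \leq A\}}$ (which becomes $\1_{\{\nu_\tau^{1/2}|\fe_1\cdot q| \leq A\}}$ before substitution) is what makes the $p_1$-integral finite; carrying it out produces a logarithm in the $p_2$ (or radial) variable whose argument is $\asymp 1 + M$, hence the $\log(1+M)$. For $M > 1$ the $M p_1^2$ term in the denominator cuts off the $p_1$-integral at scale $M^{-1/2}A \le A$, which is why the logarithm is still only $\log(1+M)$ and one gains the extra $M^{-1/2}$; for $M\le 1$ the cutoff $|p_1|\le A$ does the job and $\log(1+M)\asymp 1$.

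For \eqref{e:Fact3} and \eqref{e:Fact4}, after the substitution the integrands are $(|p|^2 + M p_1^2)^{-1}$ times, respectively, $\big[(|p_1|/A)^\gamma \wedge (|p_1|/A)^{-\gamma}\big]$ and $\big[(|p|/A)^\gamma \wedge (|p|/A)^{-\gamma}\big]$. In both cases the min-of-powers weight is bounded, decays like $|p|^{-\gamma}$ at infinity, vanishes like $|p|^{\gamma}$ near the origin, and is maximal on the scale $|p| \asymp A$ (resp.\ $|p_1|\asymp A$), which exactly counteracts the logarithmic divergences of $\int (|p|^2+Mp_1^2)^{-1}$ at $0$ and $\infty$. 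The plan is: pass to polar-type coordinates, split the radial integral into $\{|p|\le A\}$ and $\{|p|> A\}$, on each piece bound the weight by the appropriate power and integrate, obtaining a $\gamma$-dependent constant and no logarithm; then handle the $M$-dependence exactly as before — for $M>1$ rescale $p_1 = M^{-1/2}p_1'$ to extract $M^{-1/2}$, noting the weight in \eqref{e:Fact3} only improves under this rescaling since $|p_1|$ shrinks, and for \eqref{e:Fact4} that $|\sqrt{R_\tau}q| = |p| \geq M^{-1/2}|p_1'|$-type comparisons keep everything under control. I do not anticipate a genuine obstacle here; the only mild subtlety is bookkeeping the interplay between the cutoff/weight and the $M p_1^2$ term so that the stated powers of $(1\vee M^{1/2})$ come out cleanly, which is resolved in each case by the uniform two-regime ($M\le 1$ vs.\ $M>1$) dichotomy described above.
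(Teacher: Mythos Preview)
Your proposal is correct and follows essentially the same route as the paper: both reduce all four integrals to elementary two-dimensional computations after extracting the $\nu_\tau^{-1/2}$ via the anisotropic rescaling. The only cosmetic differences are that the paper works directly in coordinates $q=(x,y)$ and absorbs your $M\le 1$/$M>1$ dichotomy into the single lower bound $|\sqrt{R_\tau}q|^2+M\nu_\tau|\fe_1\cdot q|^2\gtrsim y^2+(1\vee M)\nu_\tau x^2$, then integrates out $y$ first (which delivers the $(1\vee M)^{-1/2}$ cleanly); for \eqref{e:Fact4} the paper does exactly your polar-coordinate factorisation.

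One small slip: in your handling of \eqref{e:Fact3}, after the substitution the weight involves $\nu_\tau^{-1/2}|p_1|/A$ rather than $|p_1|/A$ (harmless, since the bound is $A$-independent), and your remark that ``the weight only improves under this rescaling since $|p_1|$ shrinks'' is not literally true for the tent function $t\mapsto t^\gamma\wedge t^{-\gamma}$. The computation still works, though, because after integrating out $p_2$ the remaining $p_1'$-integral $\int_{\R}\tfrac{\dd p_1'}{|p_1'|}\big[(c|p_1'|)^\gamma\wedge(c|p_1'|)^{-\gamma}\big]$ is scale-invariant and equals $2/\gamma$ regardless of the constant $c$.
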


\begin{proof}
For the first three bounds, write $q=(x,y)\in\R^2$ and use the estimate 
$|\sqrt{R_\tau} q|^2 + M \nu_\tau |\fe_1 \cdot q|^2= \tfrac{\nu_\tau}2x^2 +\tfrac12y^2 + M \nu_\tau x^2\gtrsim y^2 +(1\vee M)\nu_\tau x^2$. 
Then, for~\eqref{e:Fact1}, we have 
\begin{equs}
\int_{\R^2}&\frac{\dd q}{(A^2 + |\sqrt{R_\tau} q|^2 + M \nu_\tau |\fe_1 \cdot q|^2)^2}\lesssim \int_\R\int_{\R}\frac{1}{A^4 +y^4+ (1\vee M)^2\nu_\tau^2 x^4}\dd x\dd y\\
&\lesssim \frac{\nu_\tau^{-1/2}}{1\vee M^{1/2}}\int_\R\frac{\dd y}{(A^4+y^4)^{3/4}}\lesssim \frac{\nu_\tau^{-1/2}}{1\vee M^{1/2}}\int_\R\frac{\dd y}{A^3+y^3}\lesssim \frac{\nu_\tau^{-1/2}}{A^2(1\vee M^{1/2})}
\end{equs}
where in the second and the last step we used the elementary bound 
\begin{equ}[e:Ele1]
\int_{\R} \frac{\dd u}{a |u|^\alpha + b} \lesssim_\alpha \frac{1}{a^{1/\alpha} \, b^{1-1/\alpha}} 
\end{equ}
which holds for any $\alpha>1$, uniformly over $a,b>0$. 

Concerning~\eqref{e:Fact2}, we proceed similarly, i.e.
\begin{equs}
\int_{\R^2} & \, \frac{\1_{\{\nu_\tau^{1/2} \, |\fe_1 \cdot q| \leq A\}}\;\dd q}{A^2 + |\sqrt{R_\tau} q|^2 + M \nu_\tau |\fe_1 \cdot q|^2}\lesssim \int_{\R}\int_{\R} \frac{\1_{\{\nu_\tau^{1/2} \, |x| \leq A\}}}{A^2 +y^2+ (1\vee M)\nu_\tau x^2}\dd x\dd y\\
&\lesssim \int_\R\frac{\1_{\{\nu_\tau^{1/2} \, |x| \leq A\}}}{\sqrt{A^2 + (1\vee M)\nu_\tau x^2}}\dd x\lesssim \int_\R\frac{\1_{\{\nu_\tau^{1/2} \, |x| \leq A\}}}{A + (1\vee M^{1/2})\nu_\tau^{1/2} |x|}\dd x\lesssim \nu_\tau^{-1/2}\frac{1 \vee \log(1+M)}{1 \vee M^{1/2}}
\end{equs}
the integral in $y$ being controlled via~\eqref{e:Ele1} with $\alpha=2$ and the last via a direct computation. 

For~\eqref{e:Fact3}, we use once again~\eqref{e:Ele1} with $\alpha=2$ on the integral in $y$, so that  
\begin{equs}[e:Fact3Proof]
\int_{\R^2}& \frac{\dd q}{|\sqrt{R_\tau} q|^2 + M \nu_\tau |\fe_1 \cdot q|^2} \Big[\Big(\frac{|\fe_1 \cdot q|}{A}\Big)^\gamma \wedge \Big(\frac{|\fe_1 \cdot q|}{A}\Big)^{-\gamma} \Big]\\
&\lesssim \int_{\R^2} \frac{(|x|/A)^\gamma \wedge (|x|/A)^{-\gamma} }{y^2 + (M\vee 1) \nu_\tau x^2} \dd y\dd x\lesssim\frac{\nu_\tau^{-1/2}}{1\vee M^{1/2}}\int_{\R_+} \frac{(x/A)^\gamma \wedge (x/A)^{-\gamma} }{x}\dd x
\end{equs}
from which the bound follows by splitting the cases $x<A$ and $x>A$. 

At last, in the integral at the left hand side of~\eqref{e:Fact4} we first apply the change of variables $\tilde q=\sqrt{R_\tau}q$ 
and then pass to polar coordinates, so that it equals
\begin{equs}
\nu_\tau^{-1/2}&\int_{\R^2}\frac{(|q|/A)^\gamma \wedge (|q|/A)^{-\gamma}}{|q|^2+M|\fe_1\cdot q|^2}\dd q\\
&\lesssim \nu_\tau^{-1/2}\int_0^{2 \pi} \frac{\dd \theta}{1 + (M\vee 1) \cos(\theta)^2}\int_{\R_+}\frac{(r/A)^\gamma \wedge (r/A)^{-\gamma} }{r}\dd r \lesssim_\gamma \frac{\nu_\tau^{-1/2}}{1 \vee M^{1/2}}\,:
\end{equs}
the integral over the radial component was bounded as in~\eqref{e:Fact3Proof}, while that of the angular one 
can be obtained by a direct computation. 
\end{proof}

We are now ready to prove Lemma~\ref{lem:ReplTech}. 

\begin{proof}[of Lemma~\ref{lem:ReplTech}]
The bulk of the proof consists of showing that   
\begin{equs}[e:TechReplLemmaGoal]
\frac{\lambda^2 \nu_\tau^{3/2}}{\pi^2}\bigg|& \int_{\R^2}  \PHI\big(1 + \Gamma^\tau_i, \Gamma^{\fe_1}_i, L^\tau(\Gamma^\tau_i)\big) \, \cJ_{q,q'}^\tau \, \chi_{q, q'}^\sharp(p_{1:n}) \, \dd q \\
&- \int_{\R^2}  \, \frac{\PHI\big(1 + \tilde \Gamma^\tau, \tfrac{1 + \tilde \Gamma^\tau}{|\sqrt{R_\tau} q|^2} |\fe_1 \cdot q|^2, L^\tau(\tilde \Gamma^\tau)\big)}{1 + (1 + \tilde \Gamma^\tau)\tau^{-1}}\dd q \bigg| \lesssim  C_\PHI \lambda^2 \frac{1 \vee \log (1+M)}{1 \vee M^{1/2}} \, \nu_\tau\,.
\end{equs}
To see how the latter implies~\eqref{e:ReplTech}, let us manipulate the second term above, which we denote by $I_\PHI$. 
Using the scaling relation~\eqref{e:ReplTechHyp1} and the symmetry of the integrand with respect to 
the inversion $q\mapsto-q$, we see that $I_\PHI$ equals  
\begin{equ}
\frac{\lambda^2 \nu_\tau^{3/2}}{\pi^2} \int_{\R^2} \frac{\PHI\big(1, \tfrac{|\fe_1 \cdot q|^2}{|\sqrt{R_\tau} q|^2}, L^\tau(\tilde \Gamma^\tau)\big)}{(1 + \tilde \Gamma^\tau) \big(1 + (1 + \tilde \Gamma^\tau)\tau^{-1}\big)} \, \dd q = \frac{4 \lambda^2 \nu_\tau^{3/2}}{\pi^2} \int_{\R_+^2} \frac{\PHI\big(1, \frac{|\fe_1 \cdot q|^2}{|\sqrt{R_\tau} q|^2}, L^\tau(\tilde \Gamma^\tau)\big)}{(1 + \tilde \Gamma^\tau) \big(1 + (1 + \tilde \Gamma^\tau)\tau^{-1}\big)} \, \dd q \, .
\end{equ}
Then, we perform two change of variables, first $(\sigma,\varsigma) \eqdef (\tilde\Gamma^\tau, |\fe_1 \cdot q|^2/|\sqrt{R_\tau} q|^2)$, 
for which the determinant of the Jacobian is $\dd \sigma \dd \varsigma = 4 \sqrt{\varsigma (1 - \nu_\tau \varsigma)} \, \dd q$, 
and then $\ell \eqdef L^\tau(\sigma)$, for which instead 
$\dd \ell = \frac{\lambda^2 \nu_\tau^{3/2}}{\pi} [(1 + \sigma) (1 + (1 + \sigma) \tau^{-1})]^{-1}\dd \sigma$. 
As a consequence we obtain 
\begin{equs}[e:ChangeofVariables]
I_\PHI &= \frac{\lambda^2 \nu_\tau^{3/2}}{\pi^2}\int_{0}^{\nu_\tau^{-1}} \int_{|\sqrt{R_\tau} p_{1:n}|^2/2}^{+\infty} \frac{\PHI\big(1, \varsigma, L^\tau(\sigma)\big)}{(1 + \sigma) \big(1 + (1 + \sigma)/\tau\big)}\dd\sigma  \frac{\dd \varsigma}{\sqrt{\varsigma(1-\nu_\tau\varsigma)}}  \\
&= \int_0^{L^\tau\big(\tfrac12|\sqrt{R_\tau} p_{1:n}|^2\big)}  \int_0^{\nu_\tau^{-1}} \frac{\PHI(1,\varsigma,\ell)}{\pi \sqrt{\varsigma (1 - \nu_\tau \varsigma)}}\dd \varsigma \dd \ell \, ,
\end{equs}
which indeed coincides with the second integral at the left hand side of \eqref{e:ReplTech}.
\medskip

We are thus left to show~\eqref{e:TechReplLemmaGoal}. Heuristically, the reason why we expect it to hold 
is that only the regions $|\fe_1 \cdot q|, |\fe_1 \cdot q'| \gg 1 \vee |p_{1:n}|$ and $|q|,|q'| \ll \tau^{-1}$ 
should contribute and, on such regions, the integrands are indeed close. 
To formalise this idea, let $Q_- \eqdef \{\nu_\tau^{1/2} |\fe_1 \cdot q| \leq 10 \, (1 \vee |\sqrt{R_\tau} p_{1:n}|)\}$ 
and $Q_+$ be its complement. Assume the following estimates hold 
\begin{equs} 
&\int_{Q_-} \dd q \, \frac{C}{1 + |\sqrt{R_\tau} q|^2 + |\sqrt{R_\tau} p_{1:n}|^2 + M \nu_\tau |\fe_1 \cdot q|^2} \lesssim C_\PHI \, \frac{1 \vee \log(M)}{1 \vee M^{1/2}} \, \nu_\tau^{-1/2}\,, \label{e:ReplLemmaStep1}\\
&\int_{Q_+} \dd q \, \frac{C}{|\sqrt{R_\tau} q|^2 + M \nu_\tau |\fe_1 \cdot q|^2} \, \Big|\cJ_{q,q'}^\tau - \frac{1}{1 + (1 + \tilde \Gamma^\tau)\tau^{-1}} \Big| \lesssim C_\PHI \, \frac{\nu_\tau^{-1/2}}{1 \vee M^{1/2}} \, , \label{e:ReplLemmaStep2}\\
&\int_{Q_+} \dd q \, \Big|\PHI(1 + \Gamma^\tau_i, \Gamma^{\fe_1}_i, L^\tau(\Gamma^\tau_i)) - \PHI(1 + \tilde \Gamma^\tau, \tfrac{1 + \tilde \Gamma^\tau}{|\sqrt{R_\tau} q|^2} |\fe_1 \cdot q|^2, L^\tau(\tilde \Gamma^\tau))\Big| \lesssim C_\PHI \, \frac{\nu_\tau^{-1/2}}{1 \vee M^{1/2}}.   \qquad\quad\label{e:ReplLemmaStep3}
\end{equs} 
In order to deduce~\eqref{e:TechReplLemmaGoal}, we begin by splitting 
the first integral al the left hand side of~\eqref{e:TechReplLemmaGoal} 
in the regions $Q_-$ and $Q_-$. For the integral over $Q_-$, we bound $\PHI$ via the first inequality in~\eqref{e:ReplTechHyp2}, 
use that $1\vee \nu_\tau\frac{\Gamma^{\fe_1}_i}{1+\Gamma^\tau_i}\lesssim 1$, and 
control what is left with~\eqref{e:ReplLemmaStep1}.  
For the integral over $Q_+$, note that, on $Q_+$, the definition of $\chi^\sharp$ in~\eqref{e:ChiSharp} 
ensures that $\chi^\sharp_{q,q'}(p_{1:n}) = 1$. 
Then, we replace $\cJ_{q,q'}^\tau$ with $[1+(1+\tilde\Gamma^\tau)\tau^{-1}]^{-1}$, by adding and subtracting 
the corresponding term, and estimate the error by treating $\PHI$ as above and exploiting~\eqref{e:ReplLemmaStep2}.  
At last, we want to substitute  $(\Gamma^\tau_i, \Gamma^{\fe_1}_i)$ with 
$(\tilde\Gamma^\tau, \tfrac{1 + \tilde \Gamma^\tau}{|\sqrt{R_\tau} q|^2} |\fe_1 \cdot q|^2)$. Once again, 
we add and subtract the corresponding term, and 
control the difference by bounding $[1+(1+\tilde\Gamma^\tau)\tau^{-1}]^{-1}\leq1$ 
and using~\eqref{e:ReplLemmaStep3}. These last two steps are instrumental to 
reducing the integrand to a form in which the change of variables in~\eqref{e:ChangeofVariables} 
is possible. 
\medskip

Therefore, we are left to show~\eqref{e:ReplLemmaStep1}-\eqref{e:ReplLemmaStep3}. 
The first is the easiest since it can be deduced by simply removing $|\sqrt{R_\tau} p_{1:n}|^2$ from the denominator and then 
applying~\eqref{e:Fact2} from Lemma \ref{l:technical-integral}.

For~\eqref{e:ReplLemmaStep2}, recall the definition of $\rho_\tau$ and $\Theta_\tau$ in~\eqref{e:Mollifiers}, 
and that $\cJ_{q,q'}^\tau=\Theta_\tau(q) \Theta_\tau(q') \Theta_\tau(p_i)$. 
Since $\hat{\rho}_\tau \in \cS(\R^2)$ and $2 \pi \hat{\rho}_\tau(0) = 1$, we have 
\begin{equs}
\big|\cJ_{q,q'}^\tau - 1\big| \lesssim \tau^{-1/2}\max\{|\sqrt{R_\tau} q|, |\sqrt{R_\tau} q'|, |\sqrt{R_\tau} p_i|\}\lesssim \tau^{-1/2}\sqrt{R_\tau} q|,\\
\cJ_{q,q'}^\tau \lesssim \frac{1}{\tau^{-1/2}\max\{|\sqrt{R_\tau} q|, |\sqrt{R_\tau} q'|, |\sqrt{R_\tau} p_i|\}}\lesssim \frac{1}{\tau^{-1/2}|\sqrt{R_\tau} q|} \, ,
\end{equs}
where we also used that on $Q_+$, $|\sqrt{R_\tau} q| \asymp \max\{|\sqrt{R_\tau} q|, |\sqrt{R_\tau} q'|, |\sqrt{R_\tau} p_i|\}$. 
Therefore, 
\begin{equs}
\Big|\cJ_{q,q'}^\tau-\frac{1}{1 + (1+\tilde \Gamma)\tau^{-1}}\Big|\leq& \1_{\{\tau^{-1/2} |\sqrt{R_\tau} q| \leq 1\}}\Big(\big|\cJ_{q,q'}^\tau-1\big|+\Big|1 - \frac{1}{1 + (1+\tilde \Gamma)\tau^{-1}}\Big|\Big)\\
&+\1_{\{\tau^{-1/2} |\sqrt{R_\tau} q| > 1\}}\Big(\cJ_{q,q'}^\tau+\frac{1}{1 + (1+\tilde \Gamma)\tau^{-1}}\Big)\\
\lesssim& \tau^{-1/2} |\sqrt{R_\tau} q| \, \1_{\{\tau^{-1/2} |\sqrt{R_\tau} q| \leq 1\}} + \frac{\1_{\{\tau^{-1/2} |\sqrt{R_\tau} q| > 1\}}}{\tau^{-1/2} |\sqrt{R_\tau} q|}\,.
\end{equs}
Plugging the above estimate into the left hand side of~\eqref{e:ReplLemmaStep2}, 
we can upper bound it (modulo an absolute constant) by 
\begin{equ}
\int_{\R^2} \dd q \, \bigg(\frac{\tau^{-1/2} |\sqrt{R_\tau} q| \1_{\{\tau^{-1/2} |\sqrt{R_\tau} q| \leq 1\}}}{|\sqrt{R_\tau} q|^2 + M \nu_\tau |\fe_1 \cdot q|^2}  + \frac{(\tau^{-1/2} |\sqrt{R_\tau} q|)^{-1} \1_{\{\tau^{-1/2} |\sqrt{R_\tau} q| > 1\}}}{|\sqrt{R_\tau} q|^2 + M \nu_\tau |\fe_1 \cdot q|^2}\bigg) \lesssim \frac{  \nu_\tau^{-1/2}}{1 \vee M^{1/2}}\,,
\end{equ}
the last step being a consequence of~\eqref{e:Fact4}, applied with $\gamma=1$. 

It remains to prove~\eqref{e:ReplLemmaStep3}. Let us first control the integrand as  
\begin{equs}
|\PHI(1&+\Gamma^\tau_i, \Gamma^{\fe_1}_i, L^\tau(\Gamma^\tau_i)) - \PHI(1+\tilde\Gamma^\tau, \tfrac{1+\tilde \Gamma^\tau}{|\sqrt{R_\tau} q|^2} |\fe_1 \cdot q|^2, L^\tau(\tilde \Gamma^\tau))| \\
\leq& |\PHI(1+\Gamma^\tau_i, \Gamma^{\fe_1}_i, L^\tau(\Gamma^\tau_i)) - \PHI(1+\tilde\Gamma^\tau, \Gamma^{\fe_1}_i, L^\tau(\tilde \Gamma^\tau))| \\
&+ |\PHI(1+\tilde\Gamma^\tau, \Gamma^{\fe_1}_i, L^\tau(\tilde \Gamma^\tau)) - \PHI(1+\tilde\Gamma^\tau, \tfrac{\tilde \Gamma^\tau}{|\sqrt{R_\tau} q|^2} |\fe_1 \cdot q|^2, L^\tau(\tilde \Gamma^\tau))| \\
&+ |\PHI(1+\tilde\Gamma^\tau, \tfrac{\tilde \Gamma^\tau}{|\sqrt{R_\tau} q|^2} |\fe_1 \cdot q|^2, L^\tau(\tilde \Gamma^\tau)) - \PHI(1+\tilde\Gamma^\tau, \tfrac{1+\tilde \Gamma^\tau}{|\sqrt{R_\tau} q|^2} |\fe_1 \cdot q|^2, L^\tau(\tilde \Gamma^\tau))| \\
\lesssim& \frac{C_\PHI}{ |\sqrt{R_\tau} q|^2 + M \nu_\tau |\fe_1 \cdot q|^2}\bigg(\frac{|\Gamma^\tau_i-\tilde\Gamma^\tau|}{|\sqrt{R_\tau} q|^2}+\frac{\big|\Gamma^{\fe_1}_i-\frac{\tilde \Gamma^\tau}{|\sqrt{R_\tau} q|^2}|\fe_1\cdot q|^2\big|}{|\fe_1\cdot q|^2}+\frac{1}{|\sqrt{R_\tau} q|^2}\bigg)\,, 
\end{equs}
where in the last step we used mean value theorem, assumption~\eqref{e:ReplTechHyp2} on the gradient of $\PHI$, 
which in particular gives 
\begin{equs}
|\partial_\gamma \PHI(\gamma, \gamma', L^\tau(\gamma))| 
&\leq |\partial_1 \PHI(\gamma, \gamma', L^\tau(\gamma))| + |\partial_3 \PHI(t, \gamma', L^\tau(\gamma))| \times |\partial_\gamma L^\tau(\gamma)| \\
&\lesssim \frac{ C_\PHI(1 \vee \nu_\tau \frac{\gamma'}{\gamma})}{(\gamma + M \nu_\tau \gamma') \gamma} + \frac{ C_\PHI(1 \vee \nu_\tau \frac{\gamma'}{\gamma})}{(\gamma + M \nu_\tau \gamma') L^\tau(\gamma)} \frac{L^\tau(\gamma)}{\gamma} \lesssim C_\PHI\frac{ 1 \vee \nu_\tau \frac{\gamma'}{\gamma}}{(\gamma + M \nu_\tau \gamma') \gamma} \,, 
\end{equs}
that $\Gamma^\tau_i\asymp\tilde\Gamma^\tau\gtrsim |\sqrt{R_\tau} q|^2$ 
and that, on $Q_+$, $\Gamma^{\fe_1}_i\asymp \tfrac{\tilde \Gamma^\tau}{|\sqrt{R_\tau} q|^2}|\fe_1\cdot q|^2\gtrsim |\fe_1 \cdot q|^2$. 
Now, 
\begin{equs}
|\Gamma^\tau_i - \tilde \Gamma^\tau| &= |\langle \sqrt{R_\tau} p_i, \sqrt{R_\tau} q\rangle| \leq |\sqrt{R_\tau} p_i| \, |\sqrt{R_\tau} q|\,,\\
|\Gamma^{\fe_1}_i - |\fe_1 \cdot q|^2| &\leq |\fe_1 \cdot p_i| |\fe_1 \cdot q| + \frac{1}{2} |\fe_1 \cdot p_{1:n}|^2 \, , \\
\Big||\fe_1 \cdot q|^2 - \frac{1+\tilde \Gamma}{|\sqrt{R_\tau} q|^2} |\fe_1 \cdot q|^2\Big| &\leq \frac{(1+|\sqrt{R_\tau} p_{1:n}|^2) \, |\fe_1 \cdot q|^2}{|\sqrt{R_\tau} q|^2} \, ,
\end{equs}
so that, putting everything together, we obtain 
\begin{equs}
\int_{Q_+}& \dd q \, \Big|\PHI(1 + \Gamma^\tau_i, \Gamma^{\fe_1}_i, L^\tau(\Gamma^\tau_i)) - \PHI(1 + \tilde \Gamma^\tau, \tfrac{1 + \tilde \Gamma^\tau}{|\sqrt{R_\tau} q|^2} |\fe_1 \cdot q|^2, L^\tau(\tilde \Gamma^\tau))\Big| \\
&\lesssim \int_{Q_+}  \frac{C_\PHI\, \dd q }{ |\sqrt{R_\tau} q|^2 + M \nu_\tau |\fe_1 \cdot q|^2}\bigg(\frac{|\sqrt{R_\tau} p_i|}{|\sqrt{R_\tau} q|}+\frac{|\fe_1 \cdot p_i|}{|\fe_1 \cdot q|} + \frac{|\fe_1 \cdot p_{1:n}|^2}{|\fe_1 \cdot q|^2} + \frac{1+|\sqrt{R_\tau} p_{1:n}|^2}{|\sqrt{R_\tau} q|^2}\bigg)\\
&\lesssim C_\PHI \, \frac{\nu_\tau^{-1/2}}{1 \vee M^{1/2}}
\end{equs}
which holds thanks to~\eqref{e:Fact3} and~\eqref{e:Fact4} in Lemma~\ref{l:technical-integral}. 
Therefore, the proof of~\eqref{e:ReplLemmaStep3} 
(and of the lemma) is complete. 
\end{proof}

\end{appendix}

\section*{Acknowledgments} 
We wish to thank Scott Armstrong, Harry Giles, Massimiliano Gubinelli, Felix Otto and Peter Morfe 
for enlightnening discussions.
G.~C. gratefully acknowledges financial support
via the UKRI FL fellowship ``Large-scale universal behaviour of Random
Interfaces and Stochastic Operators'' MR/W008246/1. The research of Q.~M. and F.~T. was funded 
by the Austrian Science Fund (FWF) 10.55776/F1002. 
For open access purposes, the authors have applied a CC BY public copyright license to any author accepted manuscript version arising from this submission.
\bibliography{refs}
\bibliographystyle{Martin}

\end{document}